\begin{document}

% ==========================================================================
% Define time as Month, Year

\def\today{\ifcase\month\or
  January\or February\or March\or April\or May\or June\or
  July\or August\or September\or October\or November\or December\fi \space \number\year}
% ==========================================================================

% ==========================================================================
% Reference to Figures, Tables, etc
\newcommand{\fref}[1]{Figure~\ref{#1}}
\newcommand{\tref}[1]{Table~\ref{#1}}
\newcommand{\eref}[1]{Equation~\ref{#1}}
\newcommand{\cref}[1]{Chapter~\ref{#1}}
\newcommand{\sref}[1]{Section~\ref{#1}}
\newcommand{\ssref}[1]{Subsection~\ref{#1}}
\newcommand{\aref}[1]{Appendix~\ref{#1}}
\newcommand{\assref}[1]{Assumption~\ref{#1}}
% ==========================================================================

% ==========================================================================
% Bibliography shortcuts

% ********
% Journals

% IEEE
\newcommand{\TSP}{IEEE Trans. Signal Processing}
\newcommand{\SelectedTopicsSignalProcessing}{IEEE J. Selected Topics in Signal Processing}
\newcommand{\SigProcMagazine}{IEEE Sig. Proc. Mag.}
\newcommand{\ProcIEEE}{Proceedings of the IEEE}

\newcommand{\TAC}{IEEE Trans. Autom. Control}
\newcommand{\ControlSystemsMagazine}{IEEE Control Syst. Mag.}
\newcommand{\TControlSystemsTechnology}{IEEE Trans. Control Systems Technology}
\newcommand{\TManCybernetics}{IEEE Trans. Systems, Man, and Cybernetics}

\newcommand{\TINFO}{IEEE Trans. Info. Theory}

\newcommand{\TPowerSys}{IEEE Trans. Power Systems}

\newcommand{\SelectedAreasCommunications}{IEEE J. Selected Areas in Communications}
\newcommand{\TransWirelessCommunications}{IEEE Trans. Wireless Comm.}
\newcommand{\TransCommunications}{IEEE Trans. Communications}
\newcommand{\EuropeanTransTelecom}{European Trans. Telecommunications}

% Optimization
\newcommand{\JournalOptimTheory}{J. Optimization Theory and Appl.}
\newcommand{\NumerischeMathematik}{Numerische Mathematik}
\newcommand{\MathematicalProgramming}{Math. Program.}

% Other
\newcommand{\ComptesRendus}{Comptes Rendus de l'Academie des Sciences (Paris)}
% ********

% ***********
% Conferences

\newcommand{\ICASSP}{IEEE Intern. Conf. Acoustics, Speech, and Sig. Processing (ICASSP)}
\newcommand{\CDC}{IEEE Intern. Conf. Decision and Control (CDC)}
\newcommand{\Asilomar}{Asilomar Conf. Signals, Systems, and Computers}
\newcommand{\IPSN}{Intern. Conf. Information Proc. in Sensor Networks (IPSN)}
\newcommand{\WorkshopSignalProcAdvancesWirelessComm}{IEEE Workshop Signal Proc. Advances in Wireless Comunications}
\newcommand{\Eusipco}{European Signal Proc. Conf. (Eusipco)}
\newcommand{\AmericanControlConf}{American Control Conf.}
\newcommand{\Allerton}{Allerton Conf. Communications, Control, and Computing}
% ***********

% ***********
% Unpublished
\newcommand{\Arxiv}[1]{preprint: \url{#1}}
\newcommand{\OptimizationOnline}[1]{preprint: \url{#1}}
% ***********

% ==========================================================================

  % =============================================================================
  % Front matter

	\frontmatter

\newgeometry{top=1.0in,
bottom=1.0in,
left=0.8in,
right=0.8in}

\newcommand{\spacebetweenlinesT}{\vspace{0.1em}}     % Space between lines

\begin{titlepage}					

    \begin{center}

    \begin{pspicture}(17.5,3)
			\rput[lb](0,1.17){\includegraphics[width=3.2cm]{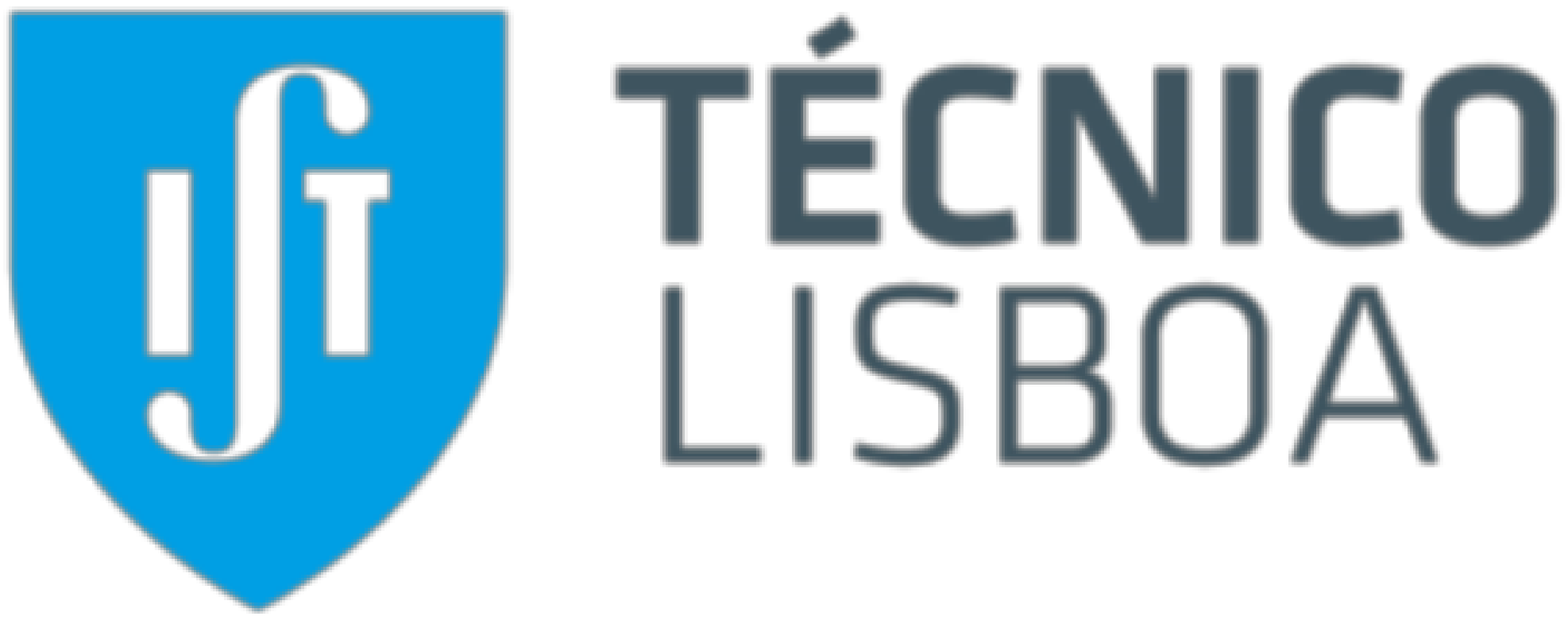}}
			\rput[lr](17.5,1.5){\includegraphics[width=3.0cm]{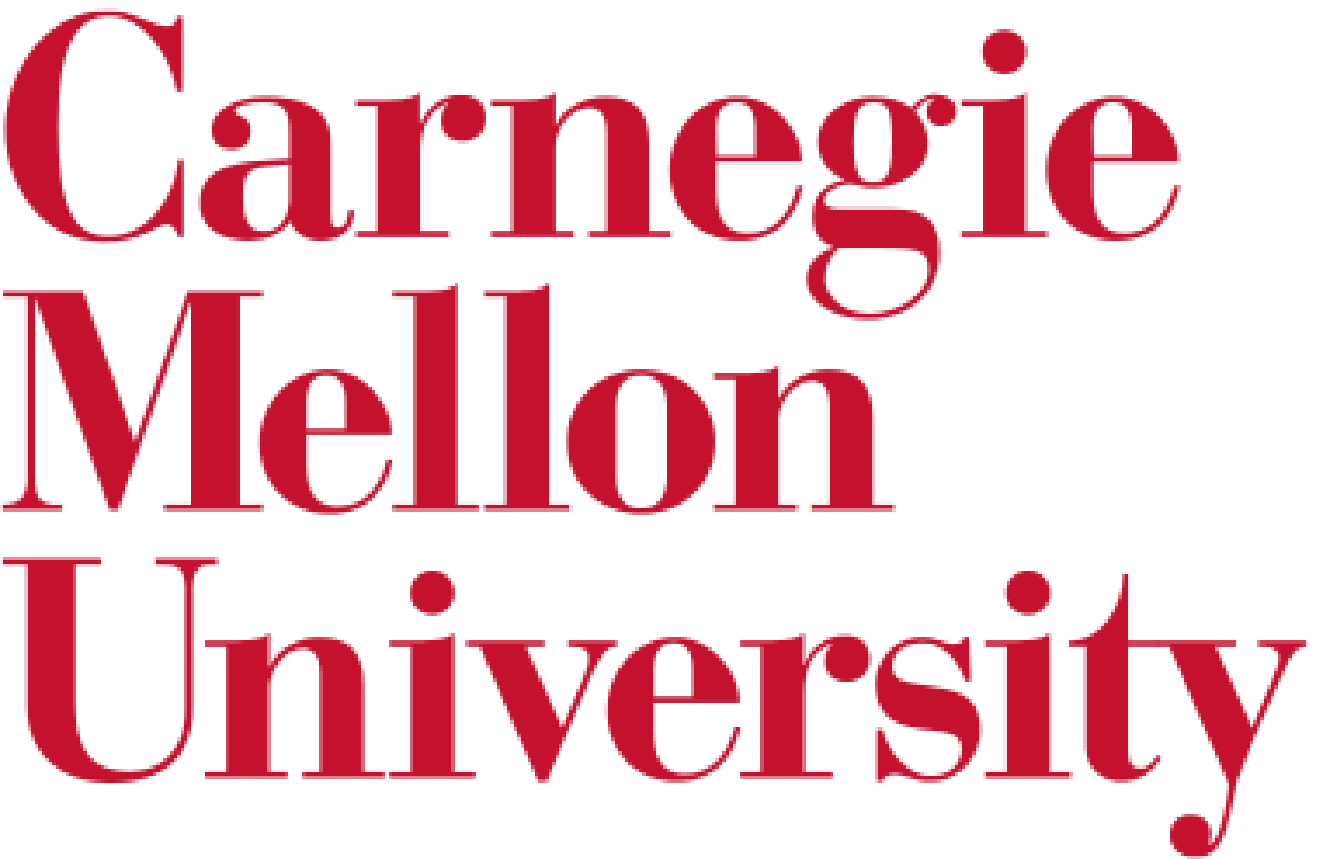}}
			\rput(8.75,2.27){\Large \bfseries \sffamily UNIVERSIDADE T\'ECNICA DE LISBOA}
			\rput(8.75,1.62){\Large \bfseries \sffamily INSTITUTO SUPERIOR T\'ECNICO}
			
			\rput(8.75,0.69){\Large \bfseries \sffamily CARNEGIE MELLON UNIVERSITY}
    	%\psgrid
    \end{pspicture}

    \hfill
    
    \psscalebox{1.05}{
		\begin{pspicture}(8,5.0)
				\def\nodesimp{
            \pscircle*[linecolor=black!50!white](0,0){0.3}
        }
        \rput(1,3.7){\rnode{N1}{\nodesimp}}
        \rput(0.3,2){\rnode{N2}{\nodesimp}}
        \rput(0.3,0.3){\rnode{N3}{\nodesimp}}
        \rput(2.5,1.7){\rnode{N4}{\nodesimp}}
        \rput(5,2){\rnode{N5}{\nodesimp}}
        \rput(3.8,3){\rnode{N6}{\nodesimp}}
        \rput(6.7,3.2){\rnode{N7}{\nodesimp}}
        \rput(3.5,0.5){\rnode{N8}{\nodesimp}}
        \rput(6.3,0.9){\rnode{N9}{\nodesimp}}
        \rput(4.1,4.1){\rnode{N10}{\nodesimp}}

        \ncline[nodesep=0.37cm]{-}{N1}{N2}
        \ncline[nodesep=0.37cm]{-}{N1}{N4}
        \ncline[nodesep=0.37cm]{-}{N2}{N3}
        \ncline[nodesep=0.37cm]{-}{N2}{N4}
        \ncline[nodesep=0.37cm]{-}{N4}{N5}
        \ncline[nodesep=0.37cm]{-}{N4}{N6}
        \ncline[nodesep=0.37cm]{-}{N5}{N6}
        \ncline[nodesep=0.37cm]{-}{N5}{N7}
        \ncline[nodesep=0.37cm]{-}{N3}{N4}
        \ncline[nodesep=0.37cm]{-}{N1}{N6}
        \ncline[nodesep=0.37cm]{-}{N6}{N7}
        \ncline[nodesep=0.37cm]{-}{N8}{N4}
        \ncline[nodesep=0.37cm]{-}{N8}{N5}
        \ncline[nodesep=0.37cm]{-}{N8}{N9}
        \ncline[nodesep=0.37cm]{-}{N8}{N3}
        \ncline[nodesep=0.37cm]{-}{N8}{N6}
        \ncline[nodesep=0.37cm]{-}{N9}{N5}
        \ncline[nodesep=0.37cm]{-}{N9}{N7}
        \ncline[nodesep=0.37cm]{-}{N10}{N6}
        \ncline[nodesep=0.37cm]{-}{N10}{N7}
        \ncline[nodesep=0.37cm]{-}{N10}{N1}
        \ncline[nodesep=0.37cm]{-}{N10}{N5}

        %\psgrid
		\end{pspicture}
		}

    \hfill
    
    {\huge \textbf{Communication-Efficient Algorithms \vspace{0.4em}\\For Distributed Optimization}}
    
    \vspace{0.7cm}
    
    {\large \textsc{Jo\~ao Filipe de Castro Mota}}

    \vfill
    
    {\large
			\textbf{Advisors}	\spacebetweenlinesT \\}
		{\raggedright \large 
			\hspace{2.02cm}Pedro Aguiar, Instituto Superior T\'ecnico, Technical University of Lisbon \spacebetweenlinesT \\
			\hspace{2.02cm}Markus P\"uschel, ETH Zurich \& Carnegie Mellon University \spacebetweenlinesT \\
			\hspace{2.02cm}Jo\~ao Xavier, Instituto Superior T\'ecnico, Technical University of Lisbon \\
		}
    
    \vfill

    {\large
		Thesis approved in public session to obtain the dual PhD degree in
		\spacebetweenlinesT \\
		{\large Electrical and Computer Engineering}
    }

    \vfill

		{\large \today}

    \end{center}
    %\vspace{1.0cm}

\end{titlepage}

\restoregeometry                  % Cover of the thesis

\newcommand{\spaceadvisors}{\vspace{0.1em}}

\vspace*{0.5cm}

\begin{flushleft}
	\textit{\large Doctoral Dissertation Committee:} \vspace{0.17em}\\

	Professor \textsc{Pedro Aguiar} (Advisor), Instituto Superior T\'ecnico, Technical University of Lisbon  \spaceadvisors \\

	Professor \textsc{Jos\'e M.\ F.\ Moura}, Carnegie Mellon University \spaceadvisors \\

	Professor \textsc{Markus P\"uschel} (Advisor), ETH Zurich \& Carnegie Mellon University \spaceadvisors \\

	Professor \textsc{Alejandro Ribeiro}, University of Pennsylvania \spaceadvisors \\

	Professor \textsc{Jo\~ao Xavier} (Advisor), Instituto Superior T\'ecnico, Technical University of Lisbon

\end{flushleft}

\vfill

{\large \bfseries Keywords:} \,\,{\large Distributed algorithms, distributed optimization, alternating direction method of multipliers, sensor networks, compressed sensing, model predictive control, support vector machines, network flows, communication-efficiency, network coloring.}

\vspace{0.8cm}

{\large \bfseries Palavras-chave:} \,\,{\large Algoritmos distribu\'idos, optimiza\c{c}\~ao distribu\'ida, m\'etodo alternado dos multiplicadores de Lagrange, redes de sensores, aquisi\c{c}\~ao comprimida de sinais, controlo preditivo, m\'aquinas de aprendizagem autom\'atica, redes de fluxos, effici\^encia nas comunica\c{c}\~oes, colora\c{c}\~ao em redes.}

\restoregeometry

\cleardoublepage
	
\begin{Dedication}
		\ldots to my parents, Lu\'is and Helena.
\end{Dedication}		              % Dedication
	\cleardoublepage

	\pagestyle{plain}                                 % Pages start (roman)

			% IST formatting
			\begin{Resumo}

	Esta tese aborda o desenho de algoritmos distribu\'idos para resolver problemas de optimiza\c{c}\~ao. O cen\'ario \'e uma rede com~$P$ n\'os, onde cada n\'o tem acesso exclusivo a uma fun\c{c}\~ao de custo~$f_p$; todos os n\'os devem cooperar a fim de minimizar a soma de todas as fun\c{c}\~oes, $f_1 + \cdots + f_P$. In\'umeros problemas nas \'areas de processamento de sinal, controlo, e aprendizagem autom\'atica podem ser formulados desta maneira. Como crit\'erio de desempenho, adoptamos o n\'umero de comunica\c{c}\~oes entre os n\'os, j\'a que comunicar \'e  frequentemente a opera\c{c}\~ao que mais energia consome e, muitas vezes, tamb\'em a mais lenta. As duas principais contribui\c{c}\~oes desta tese s\~ao um esquema de classifica\c{c}\~ao de problemas de optimiza\c{c}\~ao distribu\'idos e um conjunto respectivo de algoritmos eficientes.

	A classe de problemas de optimiza\c{c}\~ao que consideramos \'e bastante geral, j\'a que assumimos que cada fun\c{c}\~ao pode depender, n\~ao necessariamente de todas as componentes da vari\'avel de optimiza\c{c}\~ao, mas de um n\'umero arbitr\'ario de componentes. Esta assump\c{c}\~ao permite-nos ir al\'em do que \'e normalmente assumido em optimiza\c{c}\~ao distribu\'ida e criar estrutura adicional que pode ser explorada para reduzir o n\'umero de comunica\c{c}\~oes. Esta estrutura forma a base do nosso esquema de classifica\c{c}\~ao, que identifica casos particulares mais simples; por exemplo, o problema mais comum em optimiza\c{c}\~ao distribu\'ida, onde cada fun\c{c}\~ao depende de todas as componentes.
	
	Os algoritmos que esta tese prop\~oe s\~ao distribu\'idos no sentido em que n\~ao h\'a nenhum n\'o central a controlar a rede ou a realizar c\'alculos de forma centralizada, todas as comunica\c{c}\~oes ocorrem exclusivamente entre n\'os vizinhos, e a informa\c{c}\~ao associada a cada n\'o \'e sempre processada localmente. Ilustramos os nossos algoritmos em v\'arias aplica\c{c}\~oes, entre as quais consenso de m\'edias, m\'aquinas de aprendizagem autom\'atica (\textit{support vector machines}), redes de fluxos, e v\'arios cen\'arios distribu\'idos em \textit{compressed sensing}. A tese tamb\'em prop\~oe um novo paradigma para modelar problemas de controlo distribu\'ido usando o conceito de \textit{model predictive control}. Atrav\'es de um conjunto extensivo de resultados experimentais, mostramos que os algoritmos propostos requerem menos comunica\c{c}\~oes para convergir do que os algoritmos distribu\'idos mais eficientes da literatura, incluindo algoritmos desenhados especificamente para uma aplica\c{c}\~ao particular.
 	 
\end{Resumo}                 % Abstract
	
	\begin{Abstract}

 	This thesis is concerned with the design of distributed algorithms for solving optimization problems. The particular scenario we consider is a network with~$P$ compute nodes, where each node~$p$ has exclusive access to a cost function~$f_p$. We design algorithms in which all the nodes cooperate to find the minimum of the sum of all the cost functions, $f_1 + \cdots + f_P$. Several problems in signal processing, control, and machine learning can be posed as such optimization problems. Given that communication is often the most energy-consuming operation in networks and, many times, also the slowest one, it is important to design distributed algorithms with low communication requirements, that is, communication-efficient algorithms. The two main contributions of this thesis are a classification scheme for distributed optimization problems of the kind explained above and a set of corresponding communication-efficient algorithms.

 	The class of optimization problems we consider is quite general, since we allow that each function may depend on arbitrary components of the optimization variable, and not necessarily on all of them. In doing so, we go beyond the commonly used assumption in distributed optimization and create additional structure that can be explored to reduce the total number of communications. This structure is captured by our classification scheme, which identifies particular instances of the problem that are easier to solve. One example is the standard distributed optimization problem, in which all the functions depend on all the components of the variable.

 	All our algorithms are distributed in the sense that no central node coordinates the network, all the communications occur exclusively between neighboring nodes, and the data associated with each node is always processed locally. We show several applications of our algorithms, including average consensus, support vector machines, network flows, and several distributed scenarios for compressed sensing. We also propose a new framework for distributed model predictive control, which can be solved with our algorithms. Through extensive numerical experiments, we show that our algorithms outperform prior distributed algorithms in terms of communication-efficiency, even some that were specifically designed for a particular application.

\end{Abstract}                   % Abstract
	\begin{Acknowledgments}

	This thesis is the result of a complicated sequence of events. I would like to take the opportunity to thank here some of the people who, directly or indirectly, influenced, changed, or caused those events.

	The direct causers of the main events were, undoubtedly, my advisors: Jo\~ao Xavier, Pedro Aguiar, and Markus P\"uschel. The three of them gave me the support, the insight, and the knowledge that made this thesis possible. I learned a lot from them, both academically and non-academically. Most importantly, no matter how busy they were, they could always find time to answer my questions, to take care of bureaucracy that involved me, and to meet in our regular meetings. Also, I want to say that I had lots of fun in those yearly (work!) trips to several towns in Portugal. Thank you for all of that!

	I would like to thank my thesis committee members, Jos\'e Moura and Alejandro Ribeiro, for all the insight and suggestions. During my PhD, and especially during the years I spent in CMU, Jos\'e was always very supportive. On the few occasions that we discussed research, Jos\'e showed me how to look at my research from a different perspective. I would also like to thank Alejandro for arranging everything when I visited him in Philadelphia.

	The person who convinced me to enter the CMU/Portugal PhD program was Jo\~ao Paulo Costeira. He has always been in the background, doing whatever is needed to make this program great, and providing a comfortable layer between all the bureaucracy that lies under such a big program and the students (including myself). He has also put me in contact with people and projects from the real world! Another early causer of the events that led to this thesis was Victor Barroso, who invited me to participate in research meetings at ISR, and subsequently introduced me to $2/3$ of my future advisors.

	During my PhD, I had the opportunity to collaborate and to discuss research with several people. I would like to thank them for that. Some of these people are Michael Rabbat, Jo\~ao Miranda Lemos, Gabriela Hug, Andr\'e Martins, M\'ario Figueiredo, Petros Boufounos, Qing Ling, Ricardo Lima, Bruno Sinopoli, Stephen Boyd, Soummya Kar, Aurora Schmidt, Pedro Guerreiro, Ricardo Cabral, Christian Conte, Stefan Richter, Paul Goulart, Christian Berger, Jer\'onimo Rodrigues, Claudia Soares, Brian Swenson, Dusan Jakoveti\'c, Dragana Bajovic, Sabina Zejnilovic, Pinar Oguz, Dario Figueira, June Zhang, Qixing Liu, Matthias Althoff, Alysson Bessani, Paulo Oliveira, Bruce Krogh, Marija Ili\'c, Susana Brand\~ao, Nicholas O'Donoughue, Nikos Arechiga, Kyri Baker, Aliaksei Sandryhaila, Marek Telgarsky, Augusto Santos, Bernardo Pires, Ceyhun Eksin, Divyanshu Vats, Akshay Rajans, Ehsan Zamanizadeh, Jhi-Young Joo, Sanja Cvijic, Lu\'is Brand\~ao, Franz Franchetti, Xiahui Wang (Eeyore), Luca Parolini, Rohan Chabukswar, Joel Harley, Rodrigo Belo, Joya Deri, Jim Weimer, and S\'ergio Pequito. Also, thank you to Daniel McFarlin and Vas Chellappa for helping me out with several issues with GNU/Linux and MPI. Some of the experiments shown in this thesis were run on a computer cluster kindly provided by Florin Manolache.

	Conducting research while jumping back and forth over a large ocean is not possible without proper funding and an excellent team ``taking care of things.'' So I would like to thank the CMU/Portugal program and Funda\c{c}\~ao para a Ci\^encia e Tecnologia (FCT) for the grant SFRH/BD/33520/2008, provided through the Carnegie Mellon/Portugal Program and managed by the Information and Communication Technologies Institute (ICTI). Some work was partially funded by the FCT grants CMU-PT/SIA/0026/2009 and PEst-OE/EEI/LA0009/2009. I am also grateful to all the staff involved at CMU, IST, and the CMU/Portugal program, especially to Ana Mateus, Carolyn Patterson, Susana Santana, Alexandra Ara\'ujo, Ana Santos, Filomena Viegas, Lori Spears, Claire Bauerle, Tara Moe, Elaine Lawrence, and Samantha Goldstein.

	No single piece of this thesis would be possible without the early support of both of my parents, Lu\'is and Helena, who always encouraged me no matter what direction I chose. Their support has been a constant throughout my entire education and, for this and other reasons, the minimum I can give back is to dedicate this thesis to them.

	My sister, Renata, has also always provided constant encouragement, kept me in a good mood, and was a source of inspiration. My uncle Manuel and aunt F\'atima, and my uncle Alexandre and aunt Lili, and cousin Afonso have always encouraged me in my studies and instilled in me an interest in science from an early age. I would also like to thank the family friends Lourdes and Francisco Celestino for their support and friendship.

	Finally, I have no words to describe my gratitude to my wife, Kate. Thank you for all your support, kindness, and love. Thank you also for making sure that, during the writing of this thesis, I had proper nutrition, rest, and also fun; thanks for proofreading some parts of the thesis also. Now I promise that I'll do my homework for our piano lessons.

\end{Acknowledgments}            % Acknowledgments

	\tableofcontents                                  % Table of contents
	\listoftables
	\listoffigures
	% =============================================================================

	% =============================================================================
	% Body of the text

	\mainmatter
	\pagestyle{fancy}

\chapter{Introduction}
\label{Ch:Introduction}

	Optimization theory has contributed to many fields in engineering by providing efficient algorithms that solve nontrivial real world problems. Notable examples can be found in signal processing, control engineering, and machine learning~\cite{Boyd04-ConvexOptimization,Bertsekas99-NonlinearProgramming,BenTal01-LecturesModernConvexOptimization}. On the other hand, over the last years, some computation platforms on which these algorithms may be executed have become distributed. For example, computers are now equipped with several processing devices, allowing for parallel computation. Also, complex systems such as power grids or water distribution systems are composed of several interconnected components, each with some processing power, and thus are distributed by nature. In addition, the data to be processed is often generated at different locations as, for example, in sensor networks, or in the internet. All these factors ask for algorithms that process data or control systems in a distributed way. However, it is challenging to design optimization algorithms that are matched to distributed resources. Part of the reason is that efficient centralized algorithms, such as for example interior-point methods, cannot be easily adapted to distributed scenarios. The high-level goal of this thesis is to advance the design of distributed algorithms for solving optimization problems.

	\section{Overview}
	\label{Sec:IntroOverview}

	\fref{Fig:IntroNetwork} will help us describe the main problem addressed by this thesis. The figure shows a network with~$10$ nodes, where each node~$p$ holds a function~$f_p$. Our goal is to make all nodes cooperate in order to find a minimizer of the sum of all the functions:
	\begin{equation}\label{Eq:IntroProb}\tag{P}
		\begin{array}{ll}
			\underset{x \in\mathbb{R}^n}{\text{minimize}} & f_1(x_{S_1}) + f_2(x_{S_2}) + \cdots + f_P(x_{S_P})\,.
		\end{array}
	\end{equation}
	where~$x \in \mathbb{R}^n$ is the optimization variable. Each function~$f_p$ in~\eqref{Eq:IntroProb} depends on the components of the variable~$x$ that are indexed by the set~$S_p \subseteq \{1,\ldots,n\}$, and we use~$x_{S_p}$ to denote those components. For example, if the function at node~$3$ depends on components~$x_1$, $x_5$, $x_8$, and~$x_{10}$, then~$S_3 = \{1,5,8,10\}$ and~$f_3(x_{S_3}) = f_3(x_1,x_5,x_8,x_{10})$. We require each function~$f_p$ to be private to node~$p$, i.e., no other node in the network has access to it. The edges of the network represent communication links; this means, for example, that node~$3$ in \fref{Fig:IntroNetwork} can communicate only with its neighbors: nodes~$2$, $4$, and~$8$. Given such a network, an algorithm that solves~\eqref{Eq:IntroProb} is considered \textit{distributed} if it uses no central node, no all-to-all communications, and if the privacy requirement for each function~$f_p$ is satisfied. In this thesis, we aim to solve~\eqref{Eq:IntroProb}, and related problems, with distributed algorithms that are communication-efficient, i.e., that use a minimal amount of communication. Communication-efficiency is an essential requirement, for example, when the nodes are battery-operated devices, such as in sensor-networks, since communication is usually very energy-demanding.

	\begin{figure}
  \centering
  \psscalebox{0.95}{
		\begin{pspicture}(7,4.8)
				\def\nodesimp{
            \pscircle*[linecolor=black!65!white](0,0){0.3}
        }
        \rput(1,3.7){\rnode{N1}{\nodesimp}}
        \rput(0.3,2){\rnode{N2}{\nodesimp}}
        \rput(0.3,0.3){\rnode{N3}{\nodesimp}}
        \rput(2.5,1.7){\rnode{N4}{\nodesimp}}
        \rput(5,2){\rnode{N5}{\nodesimp}}
        \rput(3.8,3){\rnode{N6}{\nodesimp}}
        \rput(6.7,3.2){\rnode{N7}{\nodesimp}}
        \rput(3.5,0.5){\rnode{N8}{\nodesimp}}
        \rput(6.3,0.9){\rnode{N9}{\nodesimp}}
        \rput(4.1,4.1){\rnode{N10}{\nodesimp}}

        \rput(1,3.7){\footnotesize \textcolor{black!3!white}{$1$}}      \rput[rb](0.68765248,3.94987802){$f_1$}
        \rput(0.3,2){\footnotesize \textcolor{black!3!white}{$2$}}      \rput[rb](0.017157,2.282843){$f_2$}
        \rput(0.3,0.3){\footnotesize \textcolor{black!3!white}{$3$}}    \rput[rt](0.017157,0.017157){$f_3$}
        \rput(2.5,1.7){\footnotesize \textcolor{black!3!white}{$4$}}    \rput[t](2.5,1.3){$f_4$}
        \rput(5,2){\footnotesize \textcolor{black!3!white}{$5$}}        \rput[t](5.0,1.6){$f_5$}
        \rput(3.8,3){\footnotesize \textcolor{black!3!white}{$6$}}      \rput[rb](3.51715729,3.28284271){$f_6$}
        \rput(6.7,3.2){\footnotesize \textcolor{black!3!white}{$7$}}    \rput[lb](6.982843,3.482843){$f_7$}
        \rput(3.5,0.5){\footnotesize \textcolor{black!3!white}{$8$}}    \rput[tr](3.21715729,0.21715729){$f_8$}
        \rput(6.3,0.9){\footnotesize \textcolor{black!3!white}{$9$}}    \rput[lt](6.58284271,0.61715729){$f_9$}
        \rput(4.1,4.1){\footnotesize \textcolor{black!3!white}{$10$}}   \rput[lb](4.41234752,4.34987802){$f_{10}$}

        \ncline[nodesep=0.37cm]{-}{N1}{N2}
        \ncline[nodesep=0.37cm]{-}{N1}{N4}
        \ncline[nodesep=0.37cm]{-}{N2}{N3}
        \ncline[nodesep=0.37cm]{-}{N2}{N4}
        \ncline[nodesep=0.37cm]{-}{N4}{N5}
        \ncline[nodesep=0.37cm]{-}{N4}{N6}
        \ncline[nodesep=0.37cm]{-}{N5}{N6}
        \ncline[nodesep=0.37cm]{-}{N5}{N7}
        \ncline[nodesep=0.37cm]{-}{N3}{N4}
        \ncline[nodesep=0.37cm]{-}{N1}{N6}
        \ncline[nodesep=0.37cm]{-}{N6}{N7}
        \ncline[nodesep=0.37cm]{-}{N8}{N4}
        \ncline[nodesep=0.37cm]{-}{N8}{N5}
        \ncline[nodesep=0.37cm]{-}{N8}{N9}
        \ncline[nodesep=0.37cm]{-}{N8}{N3}
        \ncline[nodesep=0.37cm]{-}{N8}{N6}
        \ncline[nodesep=0.37cm]{-}{N9}{N5}
        \ncline[nodesep=0.37cm]{-}{N9}{N7}
        \ncline[nodesep=0.37cm]{-}{N10}{N6}
        \ncline[nodesep=0.37cm]{-}{N10}{N7}
        \ncline[nodesep=0.37cm]{-}{N10}{N1}
        \ncline[nodesep=0.37cm]{-}{N10}{N5}

        %\psgrid
   \end{pspicture}
  }
  \bigskip
  \caption[Illustration of the main problem of the thesis.]{
		Illustration of the main problem of the thesis: each node in the network holds a private function and the goal is to minimize the sum of all the functions.
	}
  \label{Fig:IntroNetwork}
  \end{figure}
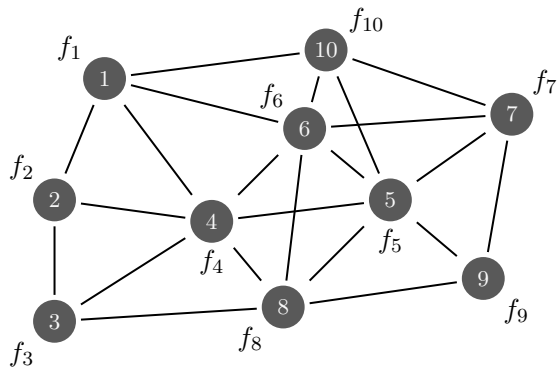

	\mypar{Simple example} Consider an inference problem on a sensor network~\cite{Rabbat04-DistributedOptimizationSensorNetworks,Bertsekas10-IncrementalGradientSubgradientProximal}, and suppose that each function~$f_p$ depends on all the components of the optimization variable~$x \in \mathbb{R}^n$, i.e., $S_p = \{1,\ldots,n\}$, for all~$p$ or, more compactly, $\cap_{p=1}^P S_p = \{1,\ldots,n\}$. While each node in the network represents a sensor with computing abilities, each edge indicates direct sensor communication, for instance, through a wireless connection. We want to estimate a parameter $\bar{\theta} \in \mathbb{R}^n$ (e.g., a set of environmental parameters~\cite{Akyildiz02-WirelessNetworksASurvey}), by using noisy measurements from all nodes. Let~$\theta_p$ be the measurement of~$\bar{\theta}$ taken at node~$p$. Assuming the noise is independent across nodes, finding the maximum log-likelihood estimate of~$\bar{\theta}$ can be written as~\eqref{Eq:IntroProb} with~$\cap_{p=1}^P S_p = \{1,\ldots,n\}$. For example, if the noise is Gaussian with zero mean and its covariance is the identity matrix, each~$f_p(x)$ is given by $(1/2)\|x - \theta_p\|^2$, and the resulting problem is known as the \textit{average consensus problem}~\cite{DeGroot74-ReachingConsensus}. In this case, the solution to~\eqref{Eq:IntroProb} is simply $x^\star = (1/P)\sum_{p=1}^P \theta_p$, that is, the maximum log-likelihood estimation of~$\bar{\theta}$ is the average of all the measurements. However, in our distributed scenario, node~$p$ is the only node who knows~$\theta_p$, and this makes computing the above average challenging. This simple example shows that to compute a solution of~\eqref{Eq:IntroProb} the nodes have to communicate, either by exchanging their private data or by exchanging their estimates of the problem's solution. What they exchange and how they do it is determined by the distributed algorithm they use.

	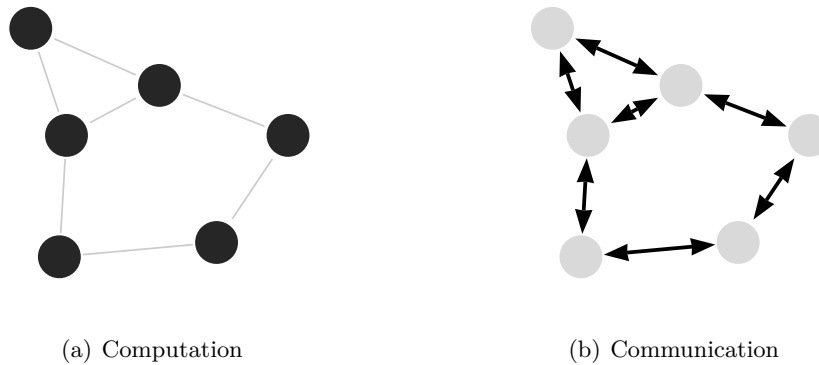
\begin{figure}
	\centering
	\subfigure[Computation]{\label{SubFig:IntroDistrAlgComputation}
		\psscalebox{0.95}{
			\begin{pspicture}(4.4,5)
				\def\nodesimp{
					\pscircle*[linecolor=black!85!white](0,0){0.3}
        }

				\rput(0.5,4.1){\rnode{C1}{\nodesimp}}   %\rput(0.5,4.1){\small \textcolor{white}{$1$}}
        \rput(1.0,2.6){\rnode{C2}{\nodesimp}}   %\rput(1.0,2.6){\small \textcolor{white}{$2$}}
        \rput(0.9,0.9){\rnode{C3}{\nodesimp}}   %\rput(0.9,0.9){\small \textcolor{white}{$3$}}
        \rput(3.1,1.1){\rnode{C4}{\nodesimp}}   %\rput(3.1,1.1){\small \textcolor{white}{$4$}}
        \rput(4.1,2.6){\rnode{C5}{\nodesimp}}   %\rput(4.1,2.6){\small \textcolor{white}{$5$}}
        \rput(2.3,3.3){\rnode{C6}{\nodesimp}}   %\rput(2.3,3.3){\small \textcolor{white}{$6$}}

				\psset{nodesep=0.33cm,linewidth=0.7pt,linecolor=black!20!white}
        \ncline{-}{C1}{C2}
        \ncline{-}{C1}{C6}
        \ncline{-}{C2}{C3}
        \ncline{-}{C2}{C6}
        \ncline{-}{C3}{C4}
        \ncline{-}{C4}{C5}
        \ncline{-}{C5}{C6}

				%\psgrid
			\end{pspicture}
		}
  }
  \hspace{2cm}
  \subfigure[Communication]{\label{SubFig:IntroDistrAlgCommunication}
		\psscalebox{0.95}{
			\begin{pspicture}(4.4,5)
				\def\nodesimp{
					\pscircle*[linecolor=black!15!white](0,0){0.3}
        }

				\rput(0.5,4.1){\rnode{C1}{\nodesimp}}   %\rput(0.5,4.1){\small \textcolor{black!35!white}{$1$}}
        \rput(1.0,2.6){\rnode{C2}{\nodesimp}}   %\rput(1.0,2.6){\small \textcolor{black!35!white}{$2$}}
        \rput(0.9,0.9){\rnode{C3}{\nodesimp}}   %\rput(0.9,0.9){\small \textcolor{black!35!white}{$3$}}
        \rput(3.1,1.1){\rnode{C4}{\nodesimp}}   %\rput(3.1,1.1){\small \textcolor{black!35!white}{$4$}}
        \rput(4.1,2.6){\rnode{C5}{\nodesimp}}   %\rput(4.1,2.6){\small \textcolor{black!35!white}{$5$}}
        \rput(2.3,3.3){\rnode{C6}{\nodesimp}}   %\rput(2.3,3.3){\small \textcolor{black!35!white}{$6$}}

				%\psset{nodesep=0.33cm,linewidth=0.7pt}
        %\ncline{-}{C1}{C2}
        %\ncline{-}{C1}{C6}
        %\ncline{-}{C2}{C3}
        %\ncline{-}{C2}{C6}
        %\ncline{-}{C3}{C4}
        %\ncline{-}{C4}{C5}
        %\ncline{-}{C5}{C6}

        %\psset{nodesep=0.34cm,arrowsize=6pt,arrowinset=0.1,linewidth=1.5pt,offset=-0.2cm}
        \psset{nodesep=0.32cm,arrowsize=7pt,arrowinset=0.05,linewidth=1.5pt}
        \ncline{<->}{C1}{C2}
        \ncline{<->}{C6}{C1}
        \ncline{<->}{C2}{C3}
        \ncline{<->}{C2}{C6}
        \ncline{<->}{C3}{C4}
        \ncline{<->}{C4}{C5}
        \ncline{<->}{C5}{C6}

				%\psgrid
			\end{pspicture}
		}
  }

  \caption[The two steps of a distributed algorithm.]{
		The two steps of a distributed algorithm. The nodes iteratively perform \text{(a)} computations and \text{(b)} broadcast the results of those computation to their neighbors.
	}
  \label{Fig:IntroDistributedAlg}
  \end{figure}

	\mypar{Distributed algorithms}
	A distributed algorithm computes a solution~$x^\star$ of~\eqref{Eq:IntroProb} while satisfying the requirement that each function~$f_p$ remains private to node~$p$. Typically, each iteration of a distributed algorithm consists of the two steps shown in \fref{Fig:IntroDistributedAlg}: \text{(a)} a computation step, and \text{(b)} a communication step. In the computation step, all nodes update their estimates of the components of~$x^\star$. Usually, each node~$p$ updates its estimates by combining information given by its private function~$f_p$ with information given by the estimates of its neighbors from the prior communication step. All these estimates are then exchanged in the subsequent communication step. Although all nodes in \fref{Fig:IntroDistributedAlg} are performing each of the two steps in parallel, this is not required for a distributed algorithm. Actually, as we will see, in environments such as wireless networks it might be impossible to perform the communication step \text{(b)} in parallel, because of packet collisions. In the average consensus example given above, a popular choice for the computation step~\text{(a)} is to linearly combine the estimate of node~$p$ with the estimates of its neighbors~$\mathcal{N}_p$. That is, the estimate of node~$p$, $x_p$, is updated as
	\begin{equation}\label{Eq:IntroAverageConsensus}
		x_p^{k+1} = a_{pp}\,x_p^k + \sum_{j \in \mathcal{N}_p}a_{pj}\, x_j^k\,,
	\end{equation}
	 where each $a_{pj}$ is a positive number, $a_{pp} + \sum_{j \in \mathcal{N}_p} a_{pj} = 1$, and $k$ denotes the iteration number. The computation scheme~\eqref{Eq:IntroAverageConsensus} implies that the nodes exchange their estimates $x_p^k$ at each communication step (\fref{SubFig:IntroDistrAlgCommunication}). This family of algorithms for the average consensus problem has been widely studied in the literature~\cite{DeGroot74-ReachingConsensus,Boyd04-FastLinearIterationsforDistributedAveraging,Erseghe11-FastConsensusByADMM,Olshevsky11-ConvergenceSpeedDistributedConsensusAveraging,Oreshkin10-OptimizationAnalysisDistrAveraging,Kar09-DistributedConsensusAlgsSN}.

	 In this thesis, we propose algorithms that solve not only the average consensus problem, but the entire class~\eqref{Eq:IntroProb}. We will see that this class contains several other problems that are relevant in signal processing, control theory, machine learning, and other areas. Solving~\eqref{Eq:IntroProb} in full generality, however, is challenging because the sets~$S_p$ are arbitrary. Our approach consists of identifying particular cases of~\eqref{Eq:IntroProb} that are easier to solve, designing algorithms for those cases, and then generalizing them to the most difficult cases. To do that, we introduce a scheme to classify instances of~\eqref{Eq:IntroProb}, as overviewed next. The outcome of our approach will be an algorithm solving~\eqref{Eq:IntroProb} in full generality. Despite its generality, our algorithm achieves performances better than prior distributed algorithms, even including some that were designed for a particular application.

	\begin{figure}
  \centering
  \subfigure[Global variable]{\label{SubFig:GlobalVar}
    \psscalebox{0.95}{
      \begin{pspicture}(5,5)
        \def\nodesimp{
          \pscircle*[linecolor=black!65!white](0,0){0.3}
        }

        \rput(0.4,4.1){\rnode{C1}{\nodesimp}}   \rput(0.4,4.1){\small \textcolor{white}{$1$}}
        \rput(0.9,2.6){\rnode{C2}{\nodesimp}}   \rput(0.9,2.6){\small \textcolor{white}{$2$}}
        \rput(0.8,0.9){\rnode{C3}{\nodesimp}}   \rput(0.8,0.9){\small \textcolor{white}{$3$}}
        \rput(3.0,1.1){\rnode{C4}{\nodesimp}}   \rput(3.0,1.1){\small \textcolor{white}{$4$}}
        \rput(4.0,2.6){\rnode{C5}{\nodesimp}}   \rput(4.0,2.6){\small \textcolor{white}{$5$}}
        \rput(2.2,3.3){\rnode{C6}{\nodesimp}}   \rput(2.2,3.3){\small \textcolor{white}{$6$}}

        \ncline[nodesep=0.33cm,linewidth=0.9pt]{-}{C1}{C2}
        \ncline[nodesep=0.33cm,linewidth=0.9pt]{-}{C1}{C6}
        \ncline[nodesep=0.33cm,linewidth=0.9pt]{-}{C2}{C3}
        \ncline[nodesep=0.33cm,linewidth=0.9pt]{-}{C2}{C6}
        \ncline[nodesep=0.33cm,linewidth=0.9pt]{-}{C3}{C4}
        \ncline[nodesep=0.33cm,linewidth=0.9pt]{-}{C4}{C5}
        \ncline[nodesep=0.33cm,linewidth=0.9pt]{-}{C5}{C6}

        \rput[lb](-0.11,4.45){ $f_1(x_1,x_2,x_3)$}
        \rput[lt](1.1,2.4){$f_2(x_1,x_2,x_3)$}
        \rput[lt](0.1,0.56){$f_3(x_1,x_2,x_3)$}
        \rput[t](3.7,0.74){$f_4(x_1,x_2,x_3)$}
        \rput[b](4.5,2.97){$f_5(x_1,x_2,x_3)$}
        \rput[b](2.9,3.68){$f_6(x_1,x_2,x_3)$}

        %\psgrid
      \end{pspicture}
    }
  }
  \hspace{2cm}
  \subfigure[Non-connected variable]{\label{SubFig:NonconnectedVar}
    \psscalebox{0.95}{
      \begin{pspicture}(5,5)
        \def\nodesimp{
          \pscircle*[linecolor=black!65!white](0,0){0.3}
        }

        \rput(0.4,4.1){\rnode{C1}{\nodesimp}}   \rput(0.4,4.1){\small \textcolor{white}{$1$}}
        \rput(0.9,2.6){\rnode{C2}{\nodesimp}}   \rput(0.9,2.6){\small \textcolor{white}{$2$}}
        \rput(0.8,0.9){\rnode{C3}{\nodesimp}}   \rput(0.8,0.9){\small \textcolor{white}{$3$}}
        \rput(3.0,1.1){\rnode{C4}{\nodesimp}}   \rput(3.0,1.1){\small \textcolor{white}{$4$}}
        \rput(4.0,2.6){\rnode{C5}{\nodesimp}}   \rput(4.0,2.6){\small \textcolor{white}{$5$}}
        \rput(2.2,3.3){\rnode{C6}{\nodesimp}}   \rput(2.2,3.3){\small \textcolor{white}{$6$}}

        \ncline[nodesep=0.33cm,linewidth=0.9pt]{-}{C1}{C2}
        \ncline[nodesep=0.33cm,linewidth=0.9pt]{-}{C1}{C6}
        \ncline[nodesep=0.33cm,linewidth=0.9pt]{-}{C2}{C3}
        \ncline[nodesep=0.33cm,linewidth=0.9pt]{-}{C2}{C6}
        \ncline[nodesep=0.33cm,linewidth=0.9pt]{-}{C3}{C4}
        \ncline[nodesep=0.33cm,linewidth=0.9pt]{-}{C4}{C5}
        \ncline[nodesep=0.33cm,linewidth=0.9pt]{-}{C5}{C6}

        \rput[lb](0.0,4.45){ $f_1(x_1,x_2)$}
        \rput[lt](1.1,2.4){$f_2(x_2,x_3)$}
        \rput[lt](0.2,0.56){$f_3(x_1,x_2,x_3)$}
        \rput[t](3.5,0.74){$f_4(x_1,x_3)$}
        \rput[b](4.5,2.97){$f_5(x_1,x_2)$}
        \rput[b](2.6,3.65){$f_6(x_2)$}

        %\psgrid
      \end{pspicture}
    }
  }
  \caption[Two instances of~\eqref{Eq:IntroProb} for a variable with~$3$ components.]{
		Two instances of~\eqref{Eq:IntroProb} for a variable with~$3$ components, $x = (x_1,x_2,x_3)$. In \text{(a)},
		the variable is~\textit{global} (and thus connected) because all the functions depend on all the components.
		In \text{(b)}, the variable is \textit{non-connected} because~$x_1$ induces a subgraph that is not connected.
  }
  \label{Fig:IntroExamplesVariable}
	\end{figure}
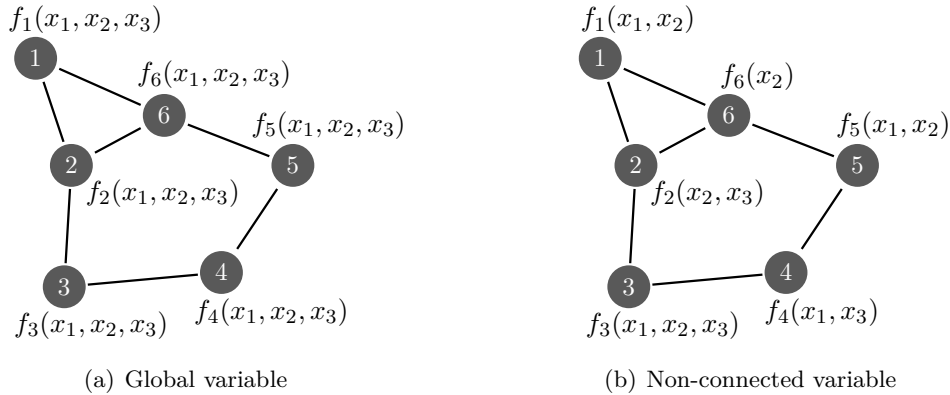

	\mypar{Classification scheme}
	The most popular instance of~\eqref{Eq:IntroProb} is illustrated in \fref{SubFig:GlobalVar}: each function depends on \textit{all} the components of the variable, $\cap_{p=1}^P S_p = \{1,\ldots,n\}$. Rewriting~\eqref{Eq:IntroProb} for this case, we have
	\begin{equation}\label{Eq:IntroGlobalProb}\tag{G}
		\underset{x}{\text{minimize}} \,\,\, f_1(x) + f_2(x) + \cdots + f_P(x)\,,
	\end{equation}
	which is the instance of~\eqref{Eq:IntroProb} for which most distributed algorithms have been designed. In our classification scheme, formally introduced later in \sref{Sec:ClassificationScheme} and visualized in \fref{Fig:IntroClassification}, we say that problem~\eqref{Eq:IntroGlobalProb} has a \textit{global variable}. Although many applications can be written as~\eqref{Eq:IntroGlobalProb}, many others are instances of~\eqref{Eq:IntroProb} with a non-global variable. In fact, our main motivation for considering the generic problem~\eqref{Eq:IntroProb} stems from its ability to model problems where each node is interested only in a subset of the problem's parameters or variables, rather than in all of them. This is typical in large-scale systems, for example, in large plants, in the power grid, and in the internet. A fundamental assumption we make is that if node~$p$ depends on components~$x_{S_p}$, then that node is interested in computing the optimal value for those components only, and not for any of the other components. For example, node~$4$ in \fref{SubFig:NonconnectedVar} depends on components~$x_1$ and~$x_3$, which means that it will compute the optimal value for these components, but not for~$x_2$. The flexibility introduced in~\eqref{Eq:IntroProb} by the sets~$S_p$, however, produces instances that are difficult to solve, given the previous assumption. \fref{SubFig:NonconnectedVar} shows an example: the component~$x_1$ appears in the functions of nodes~$1$, $3$, $4$, and~$5$, but not in the functions of nodes~$2$ and~$6$. This means that node~$1$ is ``isolated'' from all the other nodes that also depend on~$x_1$; indeed, nodes~$2$ and~$6$ are not interested in computing an optimal value for~$x_1$, let alone exchanging estimates of it. In other words, the subgraph of the nodes that depend on~$x_1$ is not connected and, for this reason, we say that the variable in this case is \textit{non-connected}. Of course, computing an optimal solution of~\eqref{Eq:IntroProb} in this case will invariably require selecting one of the nodes~$2$ or~$6$ to retransmit estimates of~$x_1$, so that all the nodes depending on this component can agree on an optimal value for it. In the small example of \fref{SubFig:NonconnectedVar}, it is indifferent to select either node~$2$ or node~$6$ for this task, but in larger networks, and for arbitrary sets~$S_p$, we should select the nodes in such a way that the total number of communications is minimized. Our solution for this problem involves computing Steiner trees and is explained in \cref{Ch:ConnectedNonConnected}.

	The concepts of global variable and non-connected variable are concepts of the classification scheme we introduce in this thesis. These concepts and the ones of connected, mixed, and star-shaped variable will be formally defined in \sref{Sec:ClassificationScheme}, but their relation can be visualized in \fref{Fig:IntroClassification}. Roughly, the variable of~\eqref{Eq:IntroProb} is divided into two classes: connected and non-connected. These are, in fact, the most relevant classes in our classification scheme for two reasons: they form a partition of the all the instances of the variable of~\eqref{Eq:IntroProb}, and addressing them requires completely different techniques. These two classes thus comprise the first level of our classification scheme. The second level consists of the following subclasses: global, star-shaped, and mixed. These subclasses neither are mutually disjoint nor do they cover all instances of the variable of~\eqref{Eq:IntroProb}. However, they are relevant both because they are much simpler instances of~\eqref{Eq:IntroProb}, and because they have been solved with several distributed algorithms. Most of the algorithms that solve these subclasses, however, cannot be easily generalized to solve the entire connected and non-connected classes. In this thesis, we propose an algorithm that solves~\eqref{Eq:IntroProb} for all classes and subclasses of variables.

\mypar{Overview of some applications}
	In this thesis we will consider several applications that arise in distributed contexts and that can be written as instances of~\eqref{Eq:IntroProb}. The recent field of compressed sensing~\cite{Donoho06-CompressedSensing,Candes06-RobustUncertaintyPrinciplesExactSignalReconstructionHighlyIncomplete} provides a rich collection of such problems: basis pursuit (BP)~\cite{Donoho98-AtomicDecompositionBasisPursuit}, basis pursuit denoising (BPDN)~\cite{Donoho98-AtomicDecompositionBasisPursuit}, and the least absolute shrinkage and selection operator (lasso)~\cite{Tibshirani96-RegressionShrinkageLasso}, among others. These compressed sensing problems are convex and provide heuristics for finding sparse solutions of linear systems. Although finding the sparsest solution of a linear system is NP-hard, compressed sensing theory establishes conditions under which the previous problems find an optimal (i.e., sparsest) solution. There is an increasing interest in solving compressed sensing in distributed scenarios, where either the columns or the rows of the matrix defining the linear system are spread over several nodes. We reformulate the above compressed sensing problems as~\eqref{Eq:IntroProb}, some with a global variable and others with a mixed one; some of these reformulations are novel and are presented in this thesis for the first time.

	We will see that training a support vector machine (SVM)\cite[Ch.7]{Bishop06-PatternRecognitionMachineLearning} requires solving an optimization problem that can be easily recast as~\eqref{Eq:IntroGlobalProb}. Roughly, given a database with two classes of datapoints, the goal in training an SVM is to find the hyperplane that best separates the two classes of datapoints. When the datapoints are distributed among several sites, training an SVM arises naturally as a distributed optimization problem. Therefore, solving this problem with a distributed algorithm has the advantages of not requiring the transmission of the private databases to a remote location, and of providing more robustness (if one node fails, the remaining nodes can still train the SVM, yet, with less data).

	Many systems can be modeled as networked dynamical systems~\cite{Slijak07-LargeScaleDynamicSystems}. Specifically, each system is seen as the node of a network and has associated a state, a control input, or both. The state of a given node is influenced not only by its own state and control input (or simply, input), but also by the states and inputs of its neighbors. An effective control strategy for this type of systems is distributed model predictive control (D-MPC)~\cite{Camponogara02-DistributedMPC}, which consists of the following. First, at each time instant, each node senses its own state; then, the nodes collectively solve an optimization problem that finds the best set of control inputs for a future time-horizon. These inputs are computed in such a way that their application to the systems will lead the nodes' states to a given goal and, at the same time, they will minimize some ``energy function.'' Although the nodes know an optimal set of inputs for all the time instants in the time-horizon, they will only use the input for the next time instant. The reason is to mitigate the impact of modeling and sensing errors. So, in the next time instant, after applying the previously computed input, each node senses its state and cooperates with the other nodes to solve the D-MPC optimization problem, now with new data. This procedure is repeated at each time instant. In this thesis, we provide a new framework for formulating D-MPC problems, and also communication-efficient algorithms to solve them.

	We also mention that several network flow problems can be recast as~\eqref{Eq:IntroProb} with a star-shaped variable. These are optimization problems formulated on directed networks where physical items can flow through the edges of the network. As a consequence, certain conservation laws have to be satisfied and are typically written as problem constraints. Network flow problems arise in several contexts~\cite{Ahuja93-NetworkFlows}, for example, in determining best energy policies in the power grid. After some reformulations, network flow problems can be recast as~\eqref{Eq:IntroProb} and, hence, can be solved with the algorithms we propose here.

	\mypar{Overview of the proposed algorithms}
	Problem reformulation plays a key role in the design of distributed optimization algorithms. In fact, we will see throughout this thesis that it impacts significantly the final algorithm. Our strategy for solving instances of~\eqref{Eq:IntroProb}, and ultimately~\eqref{Eq:IntroProb} in full generality, consists of reformulating those instances into a format such that well-known centralized optimization algorithms become naturally distributed.

	Our reformulations make use of a concept that has rarely appeared in high-level distributed algorithms, such as the ones considered in this thesis. That concept is \textit{network coloring}, an assignment of colors to the nodes of a network such that no two neighboring nodes have the same color (for convenience, instead of colors, we just use natural numbers). Assuming that a coloring scheme is available beforehand is realistic in many distributed scenarios, especially in wireless networks. For example, wireless networks require protocols known as media access control (MAC) to avoid packet collisions, i.e., that one node receives two messages at the same time and in the same frequency (assuming there is only one receive antenna). Some MAC protocols, such as time division multiple access (TDMA), rely on network coloring.

	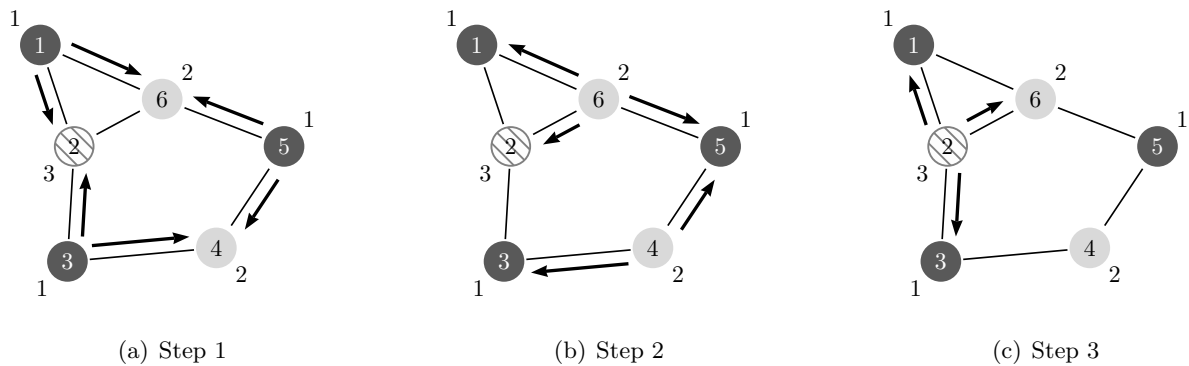
\begin{figure}
	\subfigure[Step 1]{\label{SubFig:Step1}
		\psscalebox{0.9}{
			\begin{pspicture}(4.9,5)
				\def\nodesimp{
					\pscircle*[linecolor=black!65!white](0,0){0.3}
        }
        \def\nodeB{
            \pscircle*[linecolor=black!15!white](0,0){0.3}
        }
        \def\nodeC{
            \pscircle[fillstyle=vlines*,linecolor=black!50!white,hatchcolor=black!50!white](0,0){0.3}
        }

				\rput(0.5,4.1){\rnode{C1}{\nodesimp}}   \rput(0.5,4.1){\small \textcolor{white}{$1$}}
        \rput(1.0,2.6){\rnode{C2}{\nodeC}}      \rput(1.0,2.6){\small \textcolor{black}{$2$}}
        \rput(0.9,0.9){\rnode{C3}{\nodesimp}}   \rput(0.9,0.9){\small \textcolor{white}{$3$}}
        \rput(3.1,1.1){\rnode{C4}{\nodeB}}      \rput(3.1,1.1){\small \textcolor{black}{$4$}}
        \rput(4.1,2.6){\rnode{C5}{\nodesimp}}   \rput(4.1,2.6){\small \textcolor{white}{$5$}}
        \rput(2.3,3.3){\rnode{C6}{\nodeB}}      \rput(2.3,3.3){\small \textcolor{black}{$6$}}

        \ncline[nodesep=0.33cm,linewidth=0.7pt]{-}{C1}{C2}
        \ncline[nodesep=0.33cm,linewidth=0.7pt]{-}{C1}{C6}
        \ncline[nodesep=0.33cm,linewidth=0.7pt]{-}{C2}{C3}
        \ncline[nodesep=0.33cm,linewidth=0.7pt]{-}{C2}{C6}
        \ncline[nodesep=0.33cm,linewidth=0.7pt]{-}{C3}{C4}
        \ncline[nodesep=0.33cm,linewidth=0.7pt]{-}{C4}{C5}
        \ncline[nodesep=0.33cm,linewidth=0.7pt]{-}{C5}{C6}

        \rput[rb](0.21715729,4.38284271){\small $1$}
        \rput[rt](0.71715729,2.31715729){\small $3$}
        \rput[rt](0.61715729,0.61715729){\small $1$}
        \rput[lt](3.38284271,0.81715729){\small $2$}
        \rput[lb](4.38284271,2.88284271){\small $1$}
        \rput[lb](2.58284271,3.58284271){\small $2$}

        \ncline[nodesep=0.38cm,arrowsize=5pt,arrowinset=0.1,linewidth=1.4pt,offset=-0.2cm]{->}{C1}{C2}
        \ncline[nodesep=0.38cm,arrowsize=5pt,arrowinset=0.1,linewidth=1.4pt,offset=+0.2cm]{->}{C1}{C6}
        \ncline[nodesep=0.38cm,arrowsize=5pt,arrowinset=0.1,linewidth=1.4pt,offset=-0.2cm]{->}{C3}{C2}
        \ncline[nodesep=0.38cm,arrowsize=5pt,arrowinset=0.1,linewidth=1.4pt,offset=+0.2cm]{->}{C3}{C4}
        \ncline[nodesep=0.38cm,arrowsize=5pt,arrowinset=0.1,linewidth=1.4pt,offset=+0.2cm]{->}{C5}{C4}
        \ncline[nodesep=0.38cm,arrowsize=5pt,arrowinset=0.1,linewidth=1.4pt,offset=-0.2cm]{->}{C5}{C6}

				%\psgrid
			\end{pspicture}
		}
  }
  \hfill
  \subfigure[Step 2]{\label{SubFig:Step2}
		\psscalebox{0.9}{
			\begin{pspicture}(4.9,5)
				\def\nodesimp{
					\pscircle*[linecolor=black!65!white](0,0){0.3}
        }
        \def\nodeB{
            \pscircle*[linecolor=black!15!white](0,0){0.3}
        }
        \def\nodeC{
            \pscircle[fillstyle=vlines*,linecolor=black!50!white,hatchcolor=black!50!white](0,0){0.3}
        }

				\rput(0.5,4.1){\rnode{C1}{\nodesimp}}   \rput(0.5,4.1){\small \textcolor{white}{$1$}}
        \rput(1.0,2.6){\rnode{C2}{\nodeC}}      \rput(1.0,2.6){\small \textcolor{black}{$2$}}
        \rput(0.9,0.9){\rnode{C3}{\nodesimp}}   \rput(0.9,0.9){\small \textcolor{white}{$3$}}
        \rput(3.1,1.1){\rnode{C4}{\nodeB}}      \rput(3.1,1.1){\small \textcolor{black}{$4$}}
        \rput(4.1,2.6){\rnode{C5}{\nodesimp}}   \rput(4.1,2.6){\small \textcolor{white}{$5$}}
        \rput(2.3,3.3){\rnode{C6}{\nodeB}}      \rput(2.3,3.3){\small \textcolor{black}{$6$}}

        \ncline[nodesep=0.33cm,linewidth=0.7pt]{-}{C1}{C2}
        \ncline[nodesep=0.33cm,linewidth=0.7pt]{-}{C1}{C6}
        \ncline[nodesep=0.33cm,linewidth=0.7pt]{-}{C2}{C3}
        \ncline[nodesep=0.33cm,linewidth=0.7pt]{-}{C2}{C6}
        \ncline[nodesep=0.33cm,linewidth=0.7pt]{-}{C3}{C4}
        \ncline[nodesep=0.33cm,linewidth=0.7pt]{-}{C4}{C5}
        \ncline[nodesep=0.33cm,linewidth=0.7pt]{-}{C5}{C6}

        \rput[rb](0.21715729,4.38284271){\small $1$}
        \rput[rt](0.71715729,2.31715729){\small $3$}
        \rput[rt](0.61715729,0.61715729){\small $1$}
        \rput[lt](3.38284271,0.81715729){\small $2$}
        \rput[lb](4.38284271,2.88284271){\small $1$}
        \rput[lb](2.58284271,3.58284271){\small $2$}

        \ncline[nodesep=0.38cm,arrowsize=5pt,arrowinset=0.1,linewidth=1.4pt,offset=-0.2cm]{->}{C6}{C1}
        \ncline[nodesep=0.38cm,arrowsize=5pt,arrowinset=0.1,linewidth=1.4pt,offset=+0.2cm]{->}{C6}{C2}
        \ncline[nodesep=0.38cm,arrowsize=5pt,arrowinset=0.1,linewidth=1.4pt,offset=+0.2cm]{->}{C6}{C5}
        \ncline[nodesep=0.38cm,arrowsize=5pt,arrowinset=0.1,linewidth=1.4pt,offset=+0.2cm]{->}{C4}{C3}
        \ncline[nodesep=0.38cm,arrowsize=5pt,arrowinset=0.1,linewidth=1.4pt,offset=-0.2cm]{->}{C4}{C5}

				%\psgrid
			\end{pspicture}
		}
  }
  \hfill
  \subfigure[Step 3]{\label{SubFig:Step3}
		\psscalebox{0.9}{
			\begin{pspicture}(4.9,5)
				\def\nodesimp{
					\pscircle*[linecolor=black!65!white](0,0){0.3}
        }
        \def\nodeB{
            \pscircle*[linecolor=black!15!white](0,0){0.3}
        }
        \def\nodeC{
            \pscircle[fillstyle=vlines*,linecolor=black!50!white,hatchcolor=black!50!white](0,0){0.3}
        }

				\rput(0.5,4.1){\rnode{C1}{\nodesimp}}   \rput(0.5,4.1){\small \textcolor{white}{$1$}}
        \rput(1.0,2.6){\rnode{C2}{\nodeC}}      \rput(1.0,2.6){\small \textcolor{black}{$2$}}
        \rput(0.9,0.9){\rnode{C3}{\nodesimp}}   \rput(0.9,0.9){\small \textcolor{white}{$3$}}
        \rput(3.1,1.1){\rnode{C4}{\nodeB}}      \rput(3.1,1.1){\small \textcolor{black}{$4$}}
        \rput(4.1,2.6){\rnode{C5}{\nodesimp}}   \rput(4.1,2.6){\small \textcolor{white}{$5$}}
        \rput(2.3,3.3){\rnode{C6}{\nodeB}}      \rput(2.3,3.3){\small \textcolor{black}{$6$}}

        \ncline[nodesep=0.33cm,linewidth=0.7pt]{-}{C1}{C2}
        \ncline[nodesep=0.33cm,linewidth=0.7pt]{-}{C1}{C6}
        \ncline[nodesep=0.33cm,linewidth=0.7pt]{-}{C2}{C3}
        \ncline[nodesep=0.33cm,linewidth=0.7pt]{-}{C2}{C6}
        \ncline[nodesep=0.33cm,linewidth=0.7pt]{-}{C3}{C4}
        \ncline[nodesep=0.33cm,linewidth=0.7pt]{-}{C4}{C5}
        \ncline[nodesep=0.33cm,linewidth=0.7pt]{-}{C5}{C6}

        \rput[rb](0.21715729,4.38284271){\small $1$}
        \rput[rt](0.71715729,2.31715729){\small $3$}
        \rput[rt](0.61715729,0.61715729){\small $1$}
        \rput[lt](3.38284271,0.81715729){\small $2$}
        \rput[lb](4.38284271,2.88284271){\small $1$}
        \rput[lb](2.58284271,3.58284271){\small $2$}

        \ncline[nodesep=0.38cm,arrowsize=5pt,arrowinset=0.1,linewidth=1.4pt,offset=+0.2cm]{->}{C2}{C1}
        \ncline[nodesep=0.38cm,arrowsize=5pt,arrowinset=0.1,linewidth=1.4pt,offset=+0.2cm]{->}{C2}{C6}
        \ncline[nodesep=0.38cm,arrowsize=5pt,arrowinset=0.1,linewidth=1.4pt,offset=+0.2cm]{->}{C2}{C3}

				%\psgrid
			\end{pspicture}
		}
  }
  \caption[Illustration of how the algorithms operate according to the coloring of the network.]{
		Illustration of how the algorithms operate according to the coloring of the network. The coloring scheme has three colors: nodes~$1$, $3$, and~$5$ have color~$1$, nodes~$4$ and~$6$ have color~$2$, and node~$2$ has color~$3$.
  }
  \label{Fig:IllustrationAlgs}
  \end{figure}

	Figure~\ref{Fig:IllustrationAlgs} shows how the algorithms we propose work as a function of the coloring scheme. The network in this figure has three colors: nodes~$1$, $3$, and~$5$ have color~$1$, nodes~$4$ and~$6$ have color~$2$, and node~$2$ has color~$3$. The algorithms we propose are iterative, and each iteration is divided into a number of steps equal to the number of colors. Figure~\ref{Fig:IllustrationAlgs} thus has~$3$ subfigures, each one corresponding to a step. In each step, all the nodes with the same color perform the same tasks in parallel, as illustrated in subfigures~\ref{SubFig:Step1}, \ref{SubFig:Step2}, and~\ref{SubFig:Step3}. These subfigures show the communication pattern occurring in each step. From an high-level point of view, the tasks performed by node~$p$ consist of:
	\begin{enumerate}
		\item finding new estimates for the components that~$f_p$ depends on, by solving
		$$
			\text{minimize} \,\,\, f_p(\cdot) + \text{``quadratic term,''}
		$$
		that is, node~$p$ minimizes the sum of~$f_p$ and a quadratic term. That quadratic term depends on the network structure as well as on previous estimates of the neighbors of node~$p$. Solving the above optimization problem corresponds to evaluating the proximity operator of the function~$f_p$ and, many times, this can be done in a simple way.

		\item sending the new estimates to the neighboring nodes.
	\end{enumerate}
	Finally, we note that the concept of network coloring required by our algorithms coincides with the concept of network coloring commonly used in low-level communication protocols, namely, MAC protocols~\cite[Ch.6]{Krishnamachari05-NetworkingWirelessSensors}. The goal of MAC protocols is to avoid packet collisions due to the hidden node and the exposed node problems~\cite[\S6.2.2]{Krishnamachari05-NetworkingWirelessSensors}. For example, in \fref{SubFig:Step1}, node~$6$ is receiving simultaneous messages from nodes~$1$ and~$5$. If the messages are in the same frequency and node~$6$ has one antenna only, this results in a packet collision and the nodes have to retransmit their messages. Time division multiple access (TDMA), for example, is a MAC protocol that avoids packet collisions by using a second-order coloring scheme: each node cannot have the same color as its neighbors and as its neighbors' neighbors. Such a coloring scheme works for our algorithms as well and, for this reason, the high-level structure of our algorithms is not altered by low-level protocols when they are implemented in networks that use TDMA as a MAC protocol.

	The strategy we use to derive our distributed algorithms consists of reformulating the problems we want to solve in such a way that we can apply well known centralized optimization algorithms. 	Regarding our choice for these algorithms, we will focus on the \textit{Alternating Direction Method of Multipliers} (ADMM), more specifically on an extended version of it: the multi-block, or extended, ADMM~\cite{Han12-NoteOnADMM}. ADMM was proposed in the seventies by~\cite{Glowinski75-ADMMFirst,Gabay76-ADMMFirst} to solve linearly constrained optimization problems, using a ``divide-and-conquer'' approach. In the eighties and nineties, ADMM was shadowed by the popular interior point methods, which solve small- and medium-sized problems very efficiently, but in a centralized way. Lately, ADMM has regained attention from the optimization community, because of its wide applicability and its ability to deal with large-scale and distributed scenarios. Notably, ADMM has been applied to solve some of the problems addressed in this thesis, in particular, instances of~\eqref{Eq:IntroProb} with a global variable and with a star-shaped variable. In spite of that, little is still known about its behavior. For instance, partial results on the convergence rate of ADMM, or a proof of the convergence of the multi-block ADMM, were established only very recently.
	%One goal of this thesis is to understand what factors influence the behavior of ADMM, in particular, its convergence rate.\mynote{To do!}

	\begin{figure}[h]
     \centering
     \begin{pspicture}(8.0,5.5)
       \rput[bl](0.25,0.70){\includegraphics[width=7.5cm]{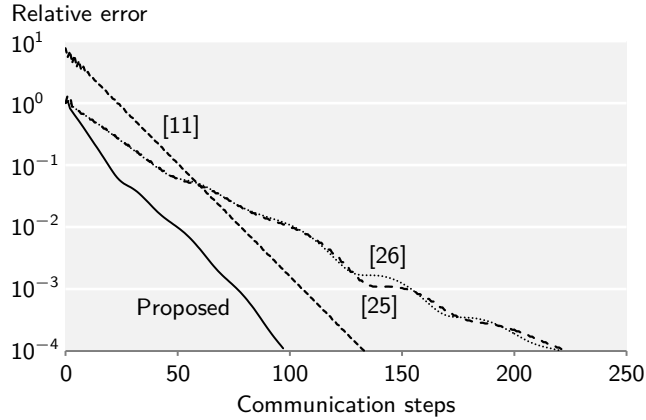}}
       \rput[b](4.00,-0.1){\footnotesize \textbf{\sf Communication steps}}
       \rput[bl](-0.45,5.17){\mbox{\footnotesize \textbf{{\sf Relative error}}}}

       \rput[r](0.20,4.89){\footnotesize $\mathsf{10^{1\phantom{-}}}$}
       \rput[r](0.20,4.08){\footnotesize $\mathsf{10^{0\phantom{-}}}$}
       \rput[r](0.20,3.26){\footnotesize $\mathsf{10^{-1}}$}
       \rput[r](0.20,2.46){\footnotesize $\mathsf{10^{-2}}$}
       \rput[r](0.20,1.63){\footnotesize $\mathsf{10^{-3}}$}
       \rput[r](0.20,0.81){\footnotesize $\mathsf{10^{-4}}$}

       \rput[t](0.280,0.59){\footnotesize $\mathsf{0}$}
       \rput[t](1.769,0.59){\footnotesize $\mathsf{50}$}
       \rput[t](3.260,0.59){\footnotesize $\mathsf{100}$}
       \rput[t](4.748,0.59){\footnotesize $\mathsf{150}$}
       \rput[t](6.240,0.59){\footnotesize $\mathsf{200}$}
			 \rput[t](7.734,0.59){\footnotesize $\mathsf{250}$}

			 \rput[rt](2.47,1.48){\footnotesize \textbf{\sf Proposed}}
       \rput[bl](1.55,3.60){\footnotesize \textbf{\sf \cite{Oreshkin10-OptimizationAnalysisDistrAveraging}}}
       \rput[rt](4.70,1.52){\footnotesize \textbf{\sf \cite{Schizas08-ConsensusAdHocWSNsPartI}}}
       \rput[bl](4.30,1.87){\footnotesize \textbf{\sf \cite{Zhu09-DistributedInNetworkChannelCoding}}}

       %\psgrid
     \end{pspicture}
     \caption[Comparison of the performance of a proposed algorithm with prior algorithms for the average consensus problem.]{
				Comparison of the performance of a proposed algorithm with prior algorithms for the average consensus problem. The network is a randomly generated geometric network with $2000$ nodes.
     }
     \label{Fig:IntroExperimentalResults}
	\end{figure}

\mypar{Example of performance results}
	\fref{Fig:IntroExperimentalResults} shows as example the performance of the algorithm we propose for~\eqref{Eq:IntroGlobalProb} when applied to the average consensus problem (scalar case, i.e., $n=1$). The plot shows the relative error of the solution estimates versus the number of communication steps. We say that a communication step has occurred whenever all the nodes have updated their estimates and transmitted them to their neighbors. This is equivalent to saying that the number of communication steps is the total number of communications divided by~$2E$, twice the number of edges. A communication, in this case, is defined as an ``edge usage.'' For example, in \fref{Fig:IllustrationAlgs}, there are~$6$ communications in \text{(a)}, $5$ communications in \text{(b)}, and~$3$ communications in \text{(c)}. And one communication step in that figure is comprised of steps~$1$, $2$, and~$3$. The number of communication steps, therefore, provides a direct measure of communication-efficiency. The network we used in the experiments of \fref{Fig:IntroExperimentalResults} has~$P = 2000$ nodes and was randomly generated as a geometric network with parameter $\sqrt{\log(P)/P} \simeq 0.06$. The figure shows that, among all algorithms, the proposed one required the least amount of communications (i.e., communication steps) to achieve any error between $10^0$ and $10^{-4}$. The other algorithms in the figure are~\cite{Schizas08-ConsensusAdHocWSNsPartI,Zhu09-DistributedInNetworkChannelCoding}, which solve the entire global problem class~\eqref{Eq:IntroGlobalProb}, and~\cite{Oreshkin10-OptimizationAnalysisDistrAveraging}, which is considered the most efficient consensus algorithm~\cite{Erseghe11-FastConsensusByADMM}, but it can only solve the average consensus problem and not any other problem in the class~\eqref{Eq:IntroGlobalProb}. Actually, if we consider the convergence rate, i.e., the slopes of the error lines, the proposed algorithm and~\cite{Oreshkin10-OptimizationAnalysisDistrAveraging} have roughly the same performance. In fact, they have the same slope, but~\cite{Oreshkin10-OptimizationAnalysisDistrAveraging} exhibits an offset, since it requires a special initialization. All the other algorithms were initialized with zeros.

	This experimental result reveals the surprising fact that, although the algorithms we propose solve an entire problem class, they can sometimes achieve the same performance as the best algorithms for a particular application. This is particularly surprising for the average consensus problem, since it is the simplest and the most thoroughly studied distributed problem.

\section{Goals of the thesis}

	We next summarize the goals of the thesis and then we explain each requirement in detail.

	\begin{center}
		\begin{minipage}{0.92\textwidth}
			\singlespace
			\begin{shaded}
				\medskip
				We aim to design, analyze, and implement algorithms that solve optimization problems of the form~\eqref{Eq:IntroProb} on networks. The algorithms should be
				\begin{list}{}{\setlength\itemindent{-0.15in}\setlength\itemsep{0.1in}}
          \item \textbf{Distributed:} no node has complete knowledge about the problem data and no central node is allowed; also, each node communicates only with its neighbors;
          \item \textbf{Communication-efficient:} the number of communications they use is minimized;
					\item \textbf{Network-independent:} the algorithms run on networks with arbitrary topology and their output is independent of the network.
				\end{list}
			\smallskip
			\end{shaded}
		\end{minipage}
	\end{center}

	\mypar{Distributed}
	A distributed algorithm only makes sense in an environment where both the data and the computing power are distributed. In such an environment, an algorithm is considered distributed if it fulfills three requirements. First, local data to a given node should remain private to that node. This enforces local computations, since any computation involving a piece of data has to be performed at the node where that data belongs to. In our problem~\eqref{Eq:IntroProb}, data of node~$p$ is encoded in the function~$f_p$ and, thus, we require~$f_p$ to be private to node~$p$; this means that no other node has full knowledge of~$f_p$ at any time during and before the execution of the algorithm.
	%Some algorithms, however, require each node to broadcast some piece of information to all the other nodes beforehand, but never the full information about the $f_p$'s.

	The second requirement is that there should not exist any central or special node. Such a node would coordinate all the other nodes and would make the data at a given node reachable to any other node in a very small number of hops. Actually, an algorithm that satisfies the first requirement but not the second one is usually called a \textit{parallel algorithm}~\cite{Bertsekas97-ParallelDistributed}. Indeed, according to~\cite{Bertsekas97-ParallelDistributed}, parallel algorithms run on systems where computing devices are at a small distance of each other and may be controlled by a central entity. Distributed algorithms, in contrast, run on systems where computing devices are located far apart, making centralized coordination inconvenient; in the latter, there is also little control on the network topology.

	Finally, the third requirement is that each node communicates only with neighboring nodes. Although this is equivalent to forbidding a central node, we explicitly state this requirement in order to exclude platforms that allow all-to-all communications. For example, an algorithm running on a computer cluster and using function calls from a message passing interface (MPI)~\cite{Dongarra96-MPITheCompleteReference} implementation, such as \verb#MPI_Bcast# or \verb#MPI_Reduce#, cannot be considered distributed; at most, it is parallel. We mention that sometimes distributed algorithms are also referred to as \textit{decentralized algorithms}.

	\mypar{Communication-efficient}
	In a centralized algorithm, the execution time and the closely related floating-point operation (FLOP) count are the most common performance metrics: the lower these metrics are, the more efficient an algorithm is. In distributed scenarios, however, other metrics arise. For example, computing accurate solutions is challenging in scenarios where there is communication noise. In that case, slower algorithms that are noise-resilient may be preferable to faster algorithms that are noise-sensitive. Another example is energy consumption. In many distributed scenarios, e.g., sensor networks, nodes rely on batteries and therefore have a limited source of energy. In these situations, increasing the lifespan of the network becomes the main priority. As communication in battery-operated devices is currently the most energy-consuming operation~\cite{Akyildiz02-WirelessNetworksASurvey,Fischione11-DesignPrinciplesWSN-chapter}, this priority translates into having algorithms with low communication requirements. The performance metric adopted in this thesis will then be the number of communications: the lower the number of communications an algorithm uses, the more efficient that algorithm will be. Hence, our goal will be to design distributed optimization algorithms that use the fewest communications possible.

	\mypar{Network-independent}
	The last requirement we impose on distributed algorithms is \textit{network independence}. This simply means that the output of the algorithm, i.e., the estimate of the solution returned by the algorithm, should be independent of the network topology. For instance, the algorithms should output the same solution estimate whether they are run on a densely or on a sparsely connected network. Naturally, the performance of the algorithms, i.e., the number of iterations or communications they use to compute that estimate, will in general vary with the network topology.

\section{A classification scheme for distributed optimization}
\label{Sec:ClassificationScheme}

	Our strategy for achieving the goals of this thesis is a divide-and-conquer one: first, we identify instances of~\eqref{Eq:IntroProb} that are easier to solve; then, we combine the solutions we designed for the simpler instances to solve~\eqref{Eq:IntroProb} in full generality. For convenience, we reproduce~\eqref{Eq:IntroProb} here:
  \begin{equation}\label{Eq:IntroProb}\tag{P}
		\begin{array}{ll}
			\underset{x \in\mathbb{R}^n}{\text{minimize}} & f_1(x_{S_1}) + f_2(x_{S_2}) + \cdots + f_P(x_{S_P})\,.
		\end{array}
	\end{equation}
	In this section, we formally introduce our classification scheme for the variable of problem~\eqref{Eq:IntroProb}. Before doing that, however, we need the concept of \textit{communication network}.

	\subsection{Communication network}
	\label{SubSec:CommunicationNetwork}

	The communication network is the physical network through which the computing devices, seen as network nodes, communicate. We represent the communication network with an undirected graph~$\mathcal{G}=(\mathcal{V},\mathcal{E})$, where $\mathcal{V}$ and $\mathcal{E}$ are the set of nodes and the set of edges, respectively. The cardinality of these sets, i.e., the number of nodes and the number of edges, will be denoted with~$P=|\mathcal{V}|$ and~$E=|\mathcal{E}|$, respectively. \fref{Fig:IntroNetwork} shows an example of a graph representing a communication network with~$P=10$ nodes and~$E = 21$ edges. An edge belongs to the communication network, say $(i,j) \in \mathcal{E}$, if and only if nodes~$i$ and~$j$ communicate directly. For example, nodes~$1$ and~$10$ in \fref{Fig:IntroNetwork} are neighbors: this means they can exchange messages with each other, because there is a communication link connecting them. We use the following convention: if~$(i,j) \in \mathcal{E}$, then~$i<j$. Throughout this thesis, we will assume that the communication network~$\mathcal{G}$ is connected and that its topology does not vary with time.

	\mypar{Functions associated to nodes}
  Associated with each node, there is a function depending on the components of a variable~$x \in \mathbb{R}^n$. The function at node~$p$ is denoted with $f_p: \mathbb{R}^{n_p} \xrightarrow{} \mathbb{R} \cup \{+\infty\}$, where~$n_p$ is the cardinality of the set~$S_p$. As explained before, we use~$S_p \subseteq \{1,\ldots,n\}$ to denote the components of~$x \in \mathbb{R}^n$ that function~$f_p$ depends on. Of course, $1 \leq n_p = |S_p| \leq n$. We assume that each node~$p$ is interested in computing the optimal value only of the components of~$x$ that are indexed by~$S_p$. We will see that in some situations, however, it is difficult, or even impossible, to solve instances of~\eqref{Eq:IntroProb} without forcing some nodes to receive and transmit components of~$x$ that are not indexed by their sets~$S_p$. To make our problem well-defined, we assume that each component of the variable appears in at least one of the nodes, that is, $\cup_{p=1}^P S_p = \{1,\ldots,n\}$.

  Unless otherwise stated, we that assume~$f_p$ is closed and convex~\cite{Boyd04-ConvexOptimization,BenTal01-LecturesModernConvexOptimization,Bertsekas99-NonlinearProgramming,Lemarechal04-FundamentalsConvexAnalysis,Nesterov04-IntroductoryLecturesConvexOptimization}, and not identically~$+\infty$. Note that our definition for each~$f_p$ allows it to take the value~$+\infty$; as a consequence, node~$p$ can impose constraints on the variable~$x$ implicitly, via indicator functions. An indicator function of a given set~$S \subset \mathbb{R}^{n}$ is defined as $\text{i}_S:\mathbb{R}^{n} \xrightarrow{} \mathbb{R} \cup \{+\infty\}$,
  $$
		\text{i}_S(x) =
		\left\{
			\begin{array}{ll}
				0 &,\,\, x \in S \\
				+\infty &,\,\, x \not\in S\,.
			\end{array}
		\right.
  $$
  Including an indicator function $\text{i}_S(x)$ in the objective of a minimization problem forces $x \in S$, since otherwise the optimal (minimal) value is~$+\infty$. Each function~$f_p$ is \textit{private}, i.e., at all times during and before the execution of the algorithm, only node~$p$ knows~$f_p$. As explained before, this privacy rule formalizes our wish to derive a distributed algorithm by enforcing local computations; namely, all computations involving~$f_p$ have to be done at node~$p$. This makes sense in scenarios where each~$f_p$ encodes a database that should be known only at node~$p$, or simply to make use of all the distributed computing resources as, for example, in a sensor network, where each sensor has some processing power available for computation.

	\subsection{Variable classification}

	Although each function is uniquely associated to a single node, the same does not happen for each component of~$x \in \mathbb{R}^n$, the optimization variable. This creates an additional structure and motivates our classification scheme. Essential to our classification scheme is the concept of induced subgraph.

	\mypar{Induced subgraph}
	Let~$x_l \in \mathbb{R}$ denote the $l$th component of the optimization variable~$x \in \mathbb{R}^n$.	We define the subgraph induced by~$x_l$ similarly to how~\cite[Ch.1]{Bollobas08-ModernGraphTheory} defines the subgraph induced by a set of nodes. In our case, these nodes are the ones whose functions depend on~$x_l$. To be more concrete, given a communication network~$\mathcal{G}$, the \textit{subgraph induced by $x_l$} is the subgraph $\mathcal{G}_l = (\mathcal{V}_l,\mathcal{E}_l) \subseteq \mathcal{G}$, where $\mathcal{V}_l$ is the set of nodes whose functions depend on~$x_l$, and an edge $(i,j)$ belongs to~$\mathcal{E}_l$ only if $(i,j) \in \mathcal{E}$ and both nodes~$i$ and~$j$ belong to~$\mathcal{V}_l$. As an example, \fref{SubFig:ClassLocalVariableHighlighted} highlights the subgraph induced by the component~$x_2$ in the setting of \fref{SubFig:ClassLocalVariable}: the set of nodes and the set of edges of this induced subgraph~$\mathcal{G}_2$ are, respectively, $\mathcal{V}_2 = \{1,2,3,6\}$ and $\mathcal{E}_2 = \{(1,2),(1,6),(2,3),(2,6)\}$. Note that neither~$f_4$ nor~$f_5$ depend on~$x_2$.

	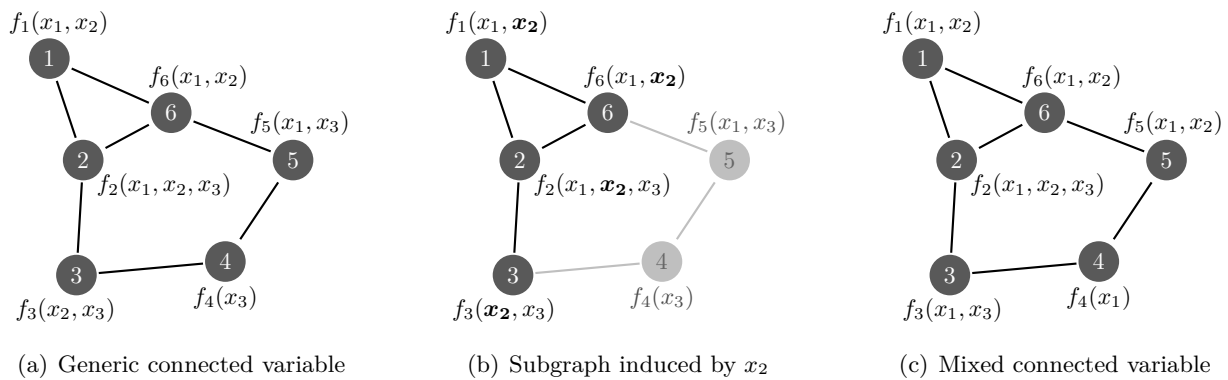
\begin{figure}
  \centering
  \subfigure[Generic connected variable]{\label{SubFig:ClassLocalVariable}
    \psscalebox{0.9}{
      \begin{pspicture}(4.9,4.8)
        \def\nodesimp{
          \pscircle*[linecolor=black!65!white](0,0){0.3}
        }

        \rput(0.5,4.1){\rnode{C1}{\nodesimp}}   \rput(0.5,4.1){\small \textcolor{white}{$1$}}
        \rput(1.0,2.6){\rnode{C2}{\nodesimp}}   \rput(1.0,2.6){\small \textcolor{white}{$2$}}
        \rput(0.9,0.9){\rnode{C3}{\nodesimp}}   \rput(0.9,0.9){\small \textcolor{white}{$3$}}
        \rput(3.1,1.1){\rnode{C4}{\nodesimp}}   \rput(3.1,1.1){\small \textcolor{white}{$4$}}
        \rput(4.1,2.6){\rnode{C5}{\nodesimp}}   \rput(4.1,2.6){\small \textcolor{white}{$5$}}
        \rput(2.3,3.3){\rnode{C6}{\nodesimp}}   \rput(2.3,3.3){\small \textcolor{white}{$6$}}

        \ncline[nodesep=0.33cm,linewidth=0.9pt]{-}{C1}{C2}
        \ncline[nodesep=0.33cm,linewidth=0.9pt]{-}{C1}{C6}
        \ncline[nodesep=0.33cm,linewidth=0.9pt]{-}{C2}{C3}
        \ncline[nodesep=0.33cm,linewidth=0.9pt]{-}{C2}{C6}
        \ncline[nodesep=0.33cm,linewidth=0.9pt]{-}{C3}{C4}
        \ncline[nodesep=0.33cm,linewidth=0.9pt]{-}{C4}{C5}
        \ncline[nodesep=0.33cm,linewidth=0.9pt]{-}{C5}{C6}

        \rput[lb](-0.1,4.45){\small $f_1(x_1,x_2)$}
        \rput[lt](1.2,2.4){\small $f_2(x_1,x_2,x_3)$}
        \rput[lt](0,0.56){\small $f_3(x_2,x_3)$}
        \rput[t](3.1,0.74){\small $f_4(x_3)$}
        \rput[b](4.2,2.97){\small $f_5(x_1,x_3)$}
        \rput[b](2.7,3.65){\small $f_6(x_1,x_2)$}

        %\psgrid
      \end{pspicture}
    }
  }
  \hfill
  \subfigure[Subgraph induced by $x_2$]{\label{SubFig:ClassLocalVariableHighlighted}
    \psscalebox{0.9}{
      \begin{pspicture}(4.9,4.8)
        \def\nodesimp{
          \pscircle*[linecolor=black!65!white](0,0){0.3}
        }
        \def\nodeshigh{
          \pscircle*[linecolor=black!25!white](0,0){0.3}
        }

        \rput(0.5,4.1){\rnode{C1}{\nodesimp}}    \rput(0.5,4.1){\small \textcolor{white}{$1$}}
        \rput(1.0,2.6){\rnode{C2}{\nodesimp}}    \rput(1.0,2.6){\small \textcolor{white}{$2$}}
        \rput(0.9,0.9){\rnode{C3}{\nodesimp}}    \rput(0.9,0.9){\small \textcolor{white}{$3$}}
        \rput(3.1,1.1){\rnode{C4}{\nodeshigh}}   \rput(3.1,1.1){\small \textcolor{black!60!white}{$4$}}
        \rput(4.1,2.6){\rnode{C5}{\nodeshigh}}   \rput(4.1,2.6){\small \textcolor{black!60!white}{$5$}}
        \rput(2.3,3.3){\rnode{C6}{\nodesimp}}    \rput(2.3,3.3){\small \textcolor{white}{$6$}}

        \ncline[nodesep=0.33cm,linewidth=0.9pt]{-}{C1}{C2}
        \ncline[nodesep=0.33cm,linewidth=0.9pt]{-}{C1}{C6}
        \ncline[nodesep=0.33cm,linewidth=0.9pt]{-}{C2}{C3}
        \ncline[nodesep=0.33cm,linewidth=0.9pt]{-}{C2}{C6}
        \ncline[nodesep=0.33cm,linewidth=0.9pt,linecolor=black!25!white]{-}{C3}{C4}
        \ncline[nodesep=0.33cm,linewidth=0.9pt,linecolor=black!25!white]{-}{C4}{C5}
        \ncline[nodesep=0.33cm,linewidth=0.9pt,linecolor=black!25!white]{-}{C5}{C6}

        \rput[lb](-0.1,4.45){\small $f_1(x_1,\boldsymbol{x_2})$}
        \rput[lt](1.2,2.4){\small $f_2(x_1,\boldsymbol{x_2},x_3)$}
        \rput[lt](0,0.56){\small $f_3(\boldsymbol{x_2},x_3)$}
        \rput[t](3.1,0.74){\small \textcolor{black!70!white}{$f_4(x_3)$}}
        \rput[b](4.2,2.97){\small \textcolor{black!70!white}{$f_5(x_1,x_3)$}}
        \rput[b](2.7,3.65){\small $f_6(x_1,\boldsymbol{x_2})$}

        %\psgrid
      \end{pspicture}
    }
  }
  \hfill
  \subfigure[Mixed connected variable]{\label{SubFig:ClassMixedVariable}
    \psscalebox{0.9}{
      \begin{pspicture}(4.9,4.8)
        \def\nodesimp{
          \pscircle*[linecolor=black!65!white](0,0){0.3}
        }

        \rput(0.5,4.1){\rnode{C1}{\nodesimp}}   \rput(0.5,4.1){\small \textcolor{white}{$1$}}
        \rput(1.0,2.6){\rnode{C2}{\nodesimp}}   \rput(1.0,2.6){\small \textcolor{white}{$2$}}
        \rput(0.9,0.9){\rnode{C3}{\nodesimp}}   \rput(0.9,0.9){\small \textcolor{white}{$3$}}
        \rput(3.1,1.1){\rnode{C4}{\nodesimp}}   \rput(3.1,1.1){\small \textcolor{white}{$4$}}
        \rput(4.1,2.6){\rnode{C5}{\nodesimp}}   \rput(4.1,2.6){\small \textcolor{white}{$5$}}
        \rput(2.3,3.3){\rnode{C6}{\nodesimp}}   \rput(2.3,3.3){\small \textcolor{white}{$6$}}

        \ncline[nodesep=0.33cm,linewidth=0.9pt]{-}{C1}{C2}
        \ncline[nodesep=0.33cm,linewidth=0.9pt]{-}{C1}{C6}
        \ncline[nodesep=0.33cm,linewidth=0.9pt]{-}{C2}{C3}
        \ncline[nodesep=0.33cm,linewidth=0.9pt]{-}{C2}{C6}
        \ncline[nodesep=0.33cm,linewidth=0.9pt]{-}{C3}{C4}
        \ncline[nodesep=0.33cm,linewidth=0.9pt]{-}{C4}{C5}
        \ncline[nodesep=0.33cm,linewidth=0.9pt]{-}{C5}{C6}

        \rput[lb](-0.1,4.45){\small $f_1(x_1,x_2)$}
        \rput[lt](1.2,2.4){\small $f_2(x_1,x_2,x_3)$}
        \rput[lt](0.2,0.56){\small $f_3(x_1,x_3)$}
        \rput[t](3.1,0.74){\small $f_4(x_1)$}
        \rput[b](4.2,2.97){\small $f_5(x_1,x_2)$}
        \rput[b](2.7,3.65){\small $f_6(x_1,x_2)$}

        %\psgrid
      \end{pspicture}
    }
  }
  \caption[Example of a generic connected variable and a mixed connected variable.]{
    Example of \text{(a)} a generic connected variable and \text{(c)} a mixed connected variable. \text{(b)} highlights the subgraph induced by the component~$x_2$ in \text{(a)}. The communication network is the same in all cases. In \text{(c)}, the component~$x_1$ is global, and~$x_2$ and~$x_3$ induce connected subgraphs.
  }
  \label{Fig:IntroClassificationExamplesVariable}
  \end{figure}

	\mypar{Component-wise classification of \boldmath{$x$}}
	We classify each component~$x_l$ according to its induced subgraph~$\mathcal{G}_l$ the following way: $x_l$ is
	\begin{itemize}
		\item \textit{connected} if $\mathcal{G}_l$ is a connected subgraph, and is \textit{non-connected} otherwise;
		\item \textit{global} if its induced subgraph coincides with the communication network, i.e., $\mathcal{G}_l = \mathcal{G}$;
		\item \textit{star-shaped} if $\mathcal{G}_l$ is a star graph.
	\end{itemize}
	By star graph we mean a graph in which there exists a node who is a neighbor of all the other nodes; the remaining nodes can also be neighbors between themselves. For example, the subgraph induced by variable~$x_2$ in \fref{SubFig:ClassLocalVariableHighlighted} is a star, because every node is a neighbor of node~$2$; therefore, $x_2$ is star-shaped. If a component is star-shaped, it can be handled in a centralized way, since the node in the center of the star can act as a central node. It can be checked that component~$x_1$ in \fref{SubFig:ClassLocalVariable} is also star-shaped, with node~$6$ in the center, but component~$x_3$ is not. All the components of the variable in that figure, however, are connected, since the respective subgraphs are connected. Naturally, a star-shaped variable is always connected. An example of a global component is given in \fref{SubFig:ClassMixedVariable}: the subgraph induced by~$x_1$ coincides with communication graph and, thus, $x_1$ is global. In other words, all the functions in \fref{SubFig:ClassMixedVariable} depend on~$x_1$. Again, a global component is always connected, since its induced subgraph coincides with the communication network, which we assume connected. Unless the communication network is a star, a global variable is never star-shaped. We had already illustrated a non-connected component in \fref{SubFig:NonconnectedVar}: the subgraph induced by~$x_1$ in that network is not connected, and thus~$x_1$ non-connected.

	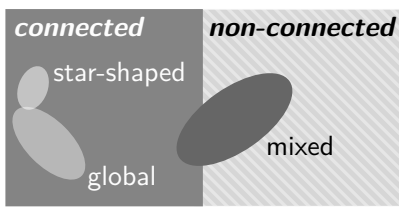
\begin{figure}
  \centering
  \psscalebox{1.05}{
    \begin{pspicture}(5.0,2.4)

				\psframe[fillstyle=vlines*,fillcolor=black!16!white,hatchcolor=black!10!white,hatchwidth=1.2pt,hatchsep=1.8pt,hatchangle=45,linewidth=0pt,linecolor=white](2.4,0)(5,2.5)
				\psframe*[linecolor=black!48!white,linewidth=0pt](0,0)(2.5,2.5)

				\rput[tl](0.1,2.41){\small \textsf{\textcolor{white}{\textbf{\textit{connected}}}}}
				\rput[tr](4.95,2.41){\small \textsf{\textcolor{black}{\textbf{\textit{non-connected}}}}}

				\psset{blendmode=2}

				\psset{linestyle=none,fillstyle=shape}
				\rput{-45}(0.55,0.8){\psellipse[fillcolor=black!30!white](0.0,0.0)(0.6,0.3)}
				\rput[b](1.47,0.2){\small \textsf{\textcolor{white}{global}}}

				\rput{70}(0.35,1.5){\psellipse[fillcolor=black!15!white](0.0,0.0)(0.3,0.2)}
				\rput[l](0.61,1.68){\small \textsf{\textcolor{white}{star-shaped}}}

				\rput{35}(2.9,1.1){\psellipse*[linecolor=black!60!white](0.0,0.0)(0.85,0.4)}
				\rput[tl](3.3,0.9){\small \textsf{\textcolor{black}{mixed}}}

        %\psgrid
   \end{pspicture}
  }

  \caption[Our classification scheme for the variable of problem~\eqref{Eq:IntroProb}.]{
		Our classification scheme for the variable~$x \in \mathbb{R}^n$ of problem~\eqref{Eq:IntroProb}. The variable is either connected or non-connected. Global and star-shaped variables are particular instances of a connected variable, and a mixed variable can be connected or non-connected.
	}
  \label{Fig:IntroClassification}
	\end{figure}

	\mypar{Classification of \boldmath{$x$}}
	With the component-wise classification of components, we are now in conditions to classify the full optimization variable~$x \in \mathbb{R}^n$ in~\eqref{Eq:IntroProb}. The proposed classification scheme is shown in \fref{Fig:IntroClassification}. There, the variable~$x$ is either
	\begin{itemize}
		\item \textit{connected} if all the components~$x_l$ of~$x \in \mathbb{R}^n$ are connected, for~$l = 1,\ldots,n$; or
		\item \textit{non-connected} if~$x$ has at least one non-connected component.
	\end{itemize}
	For example, while the variable in \fref{SubFig:ClassLocalVariable} is connected, because~$x_1$, $x_2$, and~$x_3$ are connected, the variable in \fref{SubFig:NonconnectedVar} is non-connected, because~$x_1$ is non-connected (in spite of~$x_2$ and~$x_3$ being connected). Note that the connected and non-connected classes partition the entire class of the variable~$x$ (see \fref{Fig:IntroClassification}). This distinction between a connected and a non-connected variable is the most important one in our classification scheme. In fact, we will see in \cref{Ch:ConnectedNonConnected} that they have to be addressed with different techniques.

	To the best of our knowledge, no algorithm has ever been designed (purposefully) to solve~\eqref{Eq:IntroProb} with a generic connected or non-connected variable. However, there are algorithms solving it with global, mixed (connected), and star-shaped variables, defined as follows. The variable~$x \in \mathbb{R}^n$ is
	\begin{itemize}
		\item \textit{global} if all its components are global: $\cap_{p=1}^P S_p = \{1,\ldots,n\}$;
		\item \textit{mixed} if it has at least one global component and at least one non-global component: $\cap_{p=1}^P S_p \neq \Bigl\{\emptyset, \{1,\ldots,n\}\Bigr\}$;
		\item \textit{star-shaped} if all its components are star-shaped.
	\end{itemize}
	We have already seen an example of a global variable in \fref{SubFig:GlobalVar}. When~\eqref{Eq:IntroProb} has a global variable, it can be written simply as
  \begin{equation}\label{Eq:IntroGlobalProb}\tag{G}
		\underset{x}{\text{minimize}} \,\,\, f_1(x) + f_2(x) + \cdots + f_P(x)\,.
	\end{equation}
	Because of the assumption that the communication network is always connected, a global variable is also always connected. A mixed variable, in turn, can be either connected or non-connected. It is connected if all the non-global components are connected, and non-connected if at least one of the non-global components is non-connected. \fref{SubFig:ClassMixedVariable} shows an example of a mixed variable that is connected, since the non-global components~$x_2$ and~$x_3$ are connected. Problem~\eqref{Eq:IntroProb} with a mixed variable can be written as
	\begin{equation}	\label{Eq:IntroMixed}\tag{M}
		\begin{array}{ll}
			\underset{x = (y,z) \in\mathbb{R}^n}{\text{minimize}} & f_1(y,z_{S_1}) + f_2(y,z_{S_2}) + \cdots + f_P(y,z_{S_P})\,,
		\end{array}
	\end{equation}
	where the variable~$x$ was decomposed into its global components~$y$ and into its non-global components~$z$. 	Finally, all the components of a star-shaped variable are like~$x_2$ in \fref{SubFig:ClassLocalVariableHighlighted}. Note that in \fref{Fig:IntroClassification} the star-shaped class intersects with the global class; this happens when the variable is global and the communication network is a star.

	Summarizing, our classification scheme partitions the variable of problem~\eqref{Eq:IntroProb} into two classes, shown as rectangles of \fref{Fig:IntroClassification}: connected and non-connected. These classes are the most fundamental ones, since they require different solution methods. The subclasses shown as ellipsoids in \fref{Fig:IntroClassification} identify easier instances of~\eqref{Eq:IntroProb}. Also, each one of these subclasses has been addressed with prior distributed optimization algorithms. In reality, while several algorithms have been proposed for the global and the star-shaped subclasses, we only found one distributed algorithm, in~\cite{Tan06-DistributedOptimizationCoupledSystemsNUM}, solving an instance of~\eqref{Eq:IntroProb} with a mixed variable. That instance is actually a very particular one: the variable is connected and all the non-global components are star-shaped. The classification scheme of \fref{Fig:IntroClassification} will also guide us throughout the thesis: we first address the global subclass, which is not only the subclass for which most of the distributed optimization algorithms have been proposed, but also the simplest one; in particular, the notation required to handle problems in this subclass is simpler, since we do not need the indexing sets~$S_p$. Then, we address the connected class, by generalizing the algorithm for the global class. And, finally, we generalize the connected class algorithm to handle both a connected and a non-connected variable, that is, to handle any instance of~\eqref{Eq:IntroProb}.

\section{Contributions}

	We list the main contributions of this thesis:
	\begin{itemize}
		\item We provide a classification scheme for the class~\eqref{Eq:IntroProb} of distributed optimization problems. Although it borrows some aspects from factor graphs~\cite{Kschischang01-FactorGraphsAndTheSum-ProductAlgorithm} (actually the same aspects that are used in~\cite{Boyd11-ADMM}), it establishes a relation between the (abstract) optimization problem to be solved and the (concrete) computational platform, in our case, the communication network. This classification scheme plays a fundamental role in the thesis, not only by providing a framework to develop our algorithms, but also by allowing us to organize prior work and applications.

		\item We develop a set of distributed algorithms to solve~\eqref{Eq:IntroProb} that are communication-efficient. The order in which we present our algorithms in the next chapters goes from the most specific to the most general, a pattern that corresponds to the order in which they were developed. More specifically, we first present an algorithm for the global class~\eqref{Eq:IntroGlobalProb}, then we present an algorithm for the connected class and, lastly, we present an algorithm that solves any instance of~\eqref{Eq:IntroProb}. All these algorithms are distributed, network-independent, and communication-efficient. In particular, we will see that they usually require less communications to converge than prior distributed algorithms.

		\item We apply our algorithms to several application problems from engineering and computer science. Some of these applications are novel, i.e., to the best of our knowledge, they have never been solved with distributed algorithms. This includes several compressed sensing problems and distributed model predictive control (D-MPC). Actually, we propose a new framework for D-MPC that considerably extends the modeling capability of the prior framework.

		\item To assess the performance of our algorithms, we provide extensive benchmarks with prior algorithms. This required an implementation of all the algorithms, including prior distributed optimization algorithms, and also algorithms that are specific to a given application.
	\end{itemize}

%The classification scheme uses two layers of information: the network topology, and the set of dependencies of each function on the components of the variable.

\section{Organization}

	The remainder of the thesis is organized as follows.

	\begin{itemize}
		\item In \cref{Ch:relatedWork}, we provide background on distributed and parallel algorithms for optimization including, for example, decomposition methods and the alternating direction method of multipliers. Although these methods are not distributed, they work as building blocks for distributed algorithms, which are presented subsequently. We also discuss prior distributed algorithms, organized according to the subclasses defined by our classification scheme.

		\item In \cref{Ch:GlobalVariable}, we present our algorithm for the global class, which is not only the most common class, but also the simplest one conceptually and notationally. This will allow us to introduce our main ideas without complicated notation. The chapter starts by stating the problem formally and discussing the general assumptions we make. Then, several applications are given, some of which are novel. Finally, the algorithm is derived and experimental results are shown.

		\item \cref{Ch:ConnectedNonConnected} has a similar structure, but now addresses our main problem~\eqref{Eq:IntroProb} in full generality. It starts with restating the problem and discussing the assumptions. Next, several potential applications are shown, including a new framework for D-MPC. Also, we present in that chapter the only algorithm we found in the literature that, after some adaptations, can also solve our problem in full generality. After that, we derive our algorithm, first assuming a general connected variable, and then moving to a non-connected one. The chapter ends with the presentation of several experimental results.

		\item Our conclusions and possible directions for future work are presented in \cref{Ch:conclusions}. There, we also restate our major contributions and discuss current limitations of our algorithms.

	\end{itemize}

% 	In \cref{Ch:relatedWork} we use the classification scheme proposed in \cref{Ch:Classification} to organize and review prior work on distributed optimization. We start with algorithms and techniques that are not distributed, but work as building blocks of distributed algorithms. Then, we review prior work on each of the classes.
%
% 	In Chapters~\ref{Ch:GlobalVariable}, \ref{Ch:ConnectedNonConnected}, and \ref{Ch:MixedVariable}, we address the global, the partial, and the mixed classes, respectively. In each of these chapters, we state the problem to be solved and discuss the assumptions we make. Then we describe in detail application problems for the class. Next, the algorithm for the respective class is derived and analyzed, and experimental results are shown. 	The structure of \cref{Ch:GlobalVariable} (global class) differs slightly from the structure of Chapters \ref{Ch:ConnectedNonConnected} and \ref{Ch:MixedVariable}, since the global class has no subclasses. The advantage of presenting the global class first is that it is simpler both conceptually and notationally. We also mention that the algorithm derived in~\cref{Ch:ConnectedNonConnected} for the partial class is the most general, since it contains the algorithms for the other classes as particular cases. For this reason, we do not provide a complete derivation of the algorithm for the mixed class in \cref{Ch:MixedVariable}. It is important, though, to address each problem class separately, as shown in \cref{Ch:Classification}.

\chapter{Background and Related Work}
\label{Ch:relatedWork}

	Distributed and parallel algorithms not only are relevant in the real world, but also are challenging to design. They provide several advantages over their centralized counterparts, for example, the ability to process distributed data and, notably, significant computational speed-ups. In the context of optimization, parallel algorithms, including decomposition methods, date back to the sixties with the works of Dantzig and Wolfe~\cite{Dantzig60-DecompositionPrincipleLPs}, Benders~\cite{Benders62-PartitioningProcedures}, and Everett~\cite{Everett63-GeneralizedLagrangeMultiplierMethod}. Research on distributed optimization algorithms started later, in the mid-eighties, with the work of Tsitsiklis, Bertsekas, and Athans~\cite{Tsitsiklis86-DistributedAsynchronousDeterministicAndStochasticGradientAlgorithms} and has boomed in the past ten years, motivated by the widespread of sensor networks~\cite{Rabbat04-DistributedOptimizationSensorNetworks}. Nowadays, distributed optimization finds application in sensor networks (localization~\cite{Soares12-DCOOLNET}, clustering~\cite{Forero11-DistributedClusteringUsingWSN}, etc), in cognitive radio~\cite{Bazerque10-DistributedSpectrumSensingCognitiveRadio}, in machine learning~\cite{Forero10-ConsensusBasedDistributedSVMs,Vazquez06-DistributedSVMs,Flouri08-ConsensusAlgsSVM,Boyd11-ADMM}, and in the control of complex systems such as irrigation canals~\cite{Conte12-ComputationalAspectsDistributedMPC} and the power grid~\cite{Kekatos12-DistributedRobustPowerStateEstimation,Boyd12-DynamicNetworkEnergyManagementProximalMessagePassing,Nedic13-DistributedConstrainedOptimPrimalDual,Anese13-DistributedOptimalPowerFlowForSmartMicrogrids,Necoara11-ParallelAndDistributedOptimizationMethodsEstimationAndControlInNetworks}.

	In this chapter, we use the classification scheme developed in the previous chapter to organize existing work on distributed optimization. Since most of this work builds upon parallel methods, we first overview relevant work on parallel methods, with special emphasis on the \textit{Alternating Direction Method of Multipliers} (ADMM), since it will play a key role in this thesis.

\section{Building blocks: non-distributed, parallel algorithms}

	We start by reviewing some methods that, although not distributed, work as building blocks of distributed algorithms. There are three subsections: one dedicated to decomposition methods, another dedicated to block-coordinate minimization methods, and the last one dedicated to augmented Lagrangian methods, which includes ADMM.

	\subsection{Decomposition methods}
	\label{SubsSec:DecompositionMethods}

		Decomposition methods are the precursors of distributed optimization methods. Their goal, as the name indicates, is to decompose a complex problem into smaller, simpler ones. Yet, they are not considered distributed, because they generally require a master node coordinating several slave nodes. The prototypical problem they solve is
		\begin{equation}\label{Eq:RelatedWorkDecompositionMethods}
			\begin{array}{ll}
				\underset{x_1,\ldots,x_P}{\text{minimize}} & f_1(x_1) + f_2(x_2) + \cdots + f_P(x_P) \\
				\text{subject to} & A_1 x_1 + A_2 x_2 + \cdots + A_P x_P = b\,,
			\end{array}
		\end{equation}
		where the variable is $x = (x_1,\ldots,x_P) \in \mathbb{R}^n$, with $x_p \in \mathbb{R}^{n_p}$, and $n_1 + \cdots + n_P = n$. Each function~$f_p:\mathbb{R}^{n_p} \xrightarrow{} \mathbb{R}$ is assumed convex. Problem~\eqref{Eq:RelatedWorkDecompositionMethods} is coupled through its constraint $Ax = \begin{bmatrix}A_1 & A_2 & \cdots & A_P\end{bmatrix}x = b \in \mathbb{R}^m$, which is always assumed feasible. %Problem~\eqref{Eq:RelatedWorkDecompositionMethods} is also known as a monotropic program.

		Similarly to distributed methods, decomposition methods solve~\eqref{Eq:RelatedWorkDecompositionMethods} by assigning a pair~$(f_p,A_p)$ to one device (or node) but, in contrast to distributed methods, all devices (or nodes) are controlled by a master node. Occasionally, the structure of the matrix~$A$ allows discarding the master node and the decomposition method becomes distributed.
		%In \cref{Ch:MixedVariable} we will see how to solve~\eqref{Eq:RelatedWorkDecompositionMethods} in a fully distributed manner.
		Decomposition methods are divided into primal and dual methods, and comprehensive references on the topic are~\cite[\S6.4]{Bertsekas99-NonlinearProgramming}, \cite[Ch.3]{Bertsekas97-ParallelDistributed}, and~\cite{Palomar06-TutorialDecompositionMethods}.

		\mypar{Primal decomposition}
		To solve~\eqref{Eq:RelatedWorkDecompositionMethods} through primal decomposition, we rewrite it as
		\begin{equation}\label{Eq:RelatedWorkPrimalDecomposition}
			\begin{array}[t]{ll}
				\underset{y_1,\ldots,y_P}{\text{minimize}} & \phi_1(y_1) + \phi_2(y_2) + \cdots + \phi_P(y_P)\\
				\text{subject to} & y_1 + y_2 + \cdots + y_P = b\,,
			\end{array}
		\end{equation}
		where each~$y_p \in \mathbb{R}^m$ is a new variable, and each function $\phi_p:\mathbb{R}^m \xrightarrow{} \mathbb{R} \cup \{+\infty\}$ is defined as
		\begin{equation}\label{Eq:RelatedWorkPrimalDecompositionFunct}
			\phi_p(y_p) :=
			\begin{array}[t]{cl}
				\underset{x_p}{\inf} & f_p(x_p) \\
				\text{s.t.} & y_p = A_p x_p\,.
			\end{array}
		\end{equation}
		For simplicity, we assume that each~$A_p$ has full row rank, which implies that~$\phi_p$ is defined over all~$\mathbb{R}^m$. Given a master node and~$P$ slave nodes, the master node solves the master problem~\eqref{Eq:RelatedWorkPrimalDecomposition} and delegates to slave node~$p$ the task of handling computations involving~$\phi_p$. Typically, the master problem~\eqref{Eq:RelatedWorkPrimalDecomposition} is solved with a first-order minimization method, such as the projected subgradient method. It can be shown that the subgradient of~$\phi_p$ at a point~$y_p$ is given by~$-\lambda_p$, where~$\lambda_p$ is the (optimal) dual variable associated to the constraint of~\eqref{Eq:RelatedWorkPrimalDecompositionFunct}; see sections 5.4.4 and 6.4.2 of~\cite{Bertsekas99-NonlinearProgramming} for more details. Therefore, in primal decomposition, the master node updates $y = (y_1,\ldots,y_P)$ as
		\begin{equation}\label{Eq:RelatedWorkPrimalDecompositionUpdates}
			y^{k+1} = \Bigl[y^k + \alpha_k \lambda^k\Bigr]_{\{1_n^\top y = b\}}\,,
		\end{equation}
		where $\bigl[\cdot\bigr]_{\{1_n^\top y = b\}}$ denotes the projection onto the set $\{y \in \mathbb{R}^n\,:\,1_n^\top y = b \}$, $\alpha_k$ is a positive stepsize, $1_n \in \mathbb{R}^n$ is a vector of ones, and $\lambda^k = (\lambda_1^k,\ldots,\lambda_P^k)$ is the vector of dual variables at iteration~$k$. At each iteration, the master node sends~$y_p^k$ to slave node~$p$, who then solves the problem in~\eqref{Eq:RelatedWorkPrimalDecompositionFunct} and returns $\lambda_p^k$ to the master node. The master node, in turn, updates~$y$ as in~\eqref{Eq:RelatedWorkPrimalDecompositionUpdates} and moves on to the next iteration. According to our previous definitions, primal decomposition is not distributed since it requires a master node playing the role of a central node.

		\mypar{Dual decomposition} Dual decomposition methods, rather than solving~\eqref{Eq:RelatedWorkDecompositionMethods} directly, solve its dual problem instead:
		\begin{equation}\label{Eq:RelatedWorkDualDecomposition}
				\underset{\lambda}{\text{minimize}} \,\,\, f_1^\star(A_1^\top \lambda) + f_2^\star(A_2^\top \lambda) + \cdots + f_P^\star(A_P^\top \lambda) - b^\top \lambda\,,
		\end{equation}
		where $\lambda \in \mathbb{R}^{m}$ is the dual variable and $f_p^\star:\mathbb{R}^m \xrightarrow{} \mathbb{R}$ is the convex conjugate of $f_p$, defined as
		\begin{equation}\label{Eq:RelatedWorkDualDecompositionConjugate}
			f_p^\star(\lambda) =
			\underset{x_p}{\sup}\,\,\, \lambda^\top x_p - f_p(x_p)\,.
		\end{equation}
		While~\eqref{Eq:RelatedWorkDecompositionMethods} is coupled through its constraint, \eqref{Eq:RelatedWorkDualDecomposition} is coupled through its objective (since all conjugate functions depend on~$\lambda$). As in the primal decomposition, given a master node and~$P$ slave nodes, the master node solves the master problem~\eqref{Eq:RelatedWorkDualDecomposition} and delegates to slave node~$p$ the task of handling~$f_p^\star$. Whenever each function~$f_p$ is strictly convex, there is only one minimizer $x_p(\lambda)$ of the problem in~\eqref{Eq:RelatedWorkDualDecompositionConjugate} for a given~$\lambda$. Hence, in this case, after the master node finds a dual solution~$\lambda^\star$ to~\eqref{Eq:RelatedWorkDualDecomposition}, the $p$th block of the optimal primal solution of~\eqref{Eq:RelatedWorkDecompositionMethods} can be found in the $p$th slave node as $x_p(\lambda^\star)$. In other words, when each function~$f_p$ is strictly convex, a primal solution is immediately available after solving the dual problem~\eqref{Eq:RelatedWorkDualDecomposition}.
		Again, the master problem~\eqref{Eq:RelatedWorkDualDecomposition} can be solved with first-order minimization methods, such as the subgradient method. The subgradient of~$f_p^\star \circ A_p^\top$, where~$\circ$ denotes composition, at a point~$\lambda$ is $A_px_p(\lambda)$, where~$x_p(\lambda)$ solves the problem in~\eqref{Eq:RelatedWorkDualDecompositionConjugate} \cite[Prop.B.25(b)]{Bertsekas99-NonlinearProgramming}. Hence, in dual decomposition, the master node updates $\lambda$ as
		\begin{equation}\label{Eq:RelatedWorkDualDecompositionUpdates}
			\lambda^{k+1} = \lambda^k - \alpha_k (A_1x_1(\lambda^k) + A_2x_2(\lambda^k) + \cdots + A_Px_P(\lambda^k) - b)\,,
		\end{equation}
		where~$\alpha_k > 0$ is the stepsize at iteration~$k$. At each iteration, the master node sends~$\lambda^k$ to all slave nodes and each slave node~$p$, in turn, returns $A_p x_p(\lambda^k)$ to the master node. Similarly to primal decomposition, dual decomposition also requires a central node (the master node) and, therefore, it is not distributed. Other dual decomposition methods are the Dantzig-Wolfe decomposition~\cite{Dantzig60-DecompositionPrincipleLPs},\cite[\S6.4.1]{Bertsekas99-NonlinearProgramming} and the Benders decomposition~\cite{Benders62-PartitioningProcedures}.

		Whenever each~$f_p$ is strongly convex with parameter~$\mu$, $f_p^\star$ is differentiable and its gradient $\nabla f_p^\star$ is Lipschitz continuous with constant $1/\mu$~\cite[Th. 4.2.2]{Lemarechal04-FundamentalsConvexAnalysis}, \cite{Vandenberghe11-DualDecomposition-lecs}. In that case, a faster algorithm can be applied, for example, the gradient method or even Nesterov's fast gradient method. These are explained next.

		\mypar{First-order minimization methods}
		Decomposition methods generally use first-order methods to solve the master problem, i.e., methods that use only first-order (sub)derivatives. Consider, for example,
		\begin{equation}\label{Eq:RelatedWorkFirstOrder}
			\begin{array}{ll}
				\underset{x}{\text{minimize}} & f(x) \\
				\text{subject to} & x \in X\,,
			\end{array}
		\end{equation}
		where $X \subseteq \mathbb{R}^n$ is a closed convex set and~$f:\mathbb{R}^n\xrightarrow{} \mathbb{R}$ is a convex function. If~$f$ is not differentiable, an appropriate method to find a minimizer of~\eqref{Eq:RelatedWorkFirstOrder} is the projected subgradient method:
		\begin{equation}\label{Eq:RelatedWorkSubgradientMethod}
			x^{k+1} = \Bigl[x^k - \alpha_k d^k\Bigr]_X\,,
		\end{equation}
		where $x^k$ is the estimate at iteration~$k$, $d^k$ is the subgradient of~$f$ at the point~$x^k$, i.e., $d^k \in \partial f(x^k)$,\footnote{The subdifferential~$\partial f$ of a convex function~$f$ at a point~$x$ is defined as $\partial f(x) = \{d\,:\, f(y) \geq f(x) + d^\top (y - x)\,,\, \forall_y\}$. Any point $d$ belonging to the subdifferential~$\partial f(x)$ is called subgradient of the function~$f$ at the point~$x$.} $\bigl[\cdot\bigr]_X$ is the projection operator onto the set~$X$, and~$\alpha_k > 0$ is the stepsize at iteration~$k$. The projected subgradient method is non-descent, that is, it does not guarantee that the cost function $f(x^k)$ decreases at every iteration. However, under the assumption that~$f$ is Lipschitz continuous, i.e., that there exists~$L>0$ such that $\|f(y) - f(x)\| \leq L\|y - x\|$ holds for all~$x,y$, and under an appropriate choice for the stepsize sequence $\{\alpha_k\}_{k=0}^{\infty}$, the best cost function estimate $f_{\text{best}}^{k} := \min_{0 \leq l \leq k}\, f(x^l)$ converges to the optimal value~$f^\star$ of~\eqref{Eq:RelatedWorkFirstOrder}. This is guaranteed, for example, by a square summable but not summable stepsize sequence, for instance, $\alpha_k = 1/(1+k)$. A constant stepsize sequence~$\alpha_k = \alpha$, for all~$k$, in contrast, only guarantees that~$f_{\text{best}}^{k}$ converges to a neighborhood of~$f^\star$. Even when convergence is guaranteed, the method is rather slow, since $f_{\text{best}}^{k} - f^\star$ converges to zero at rate $O(1/\sqrt{k})$. Extensive information about subgradient methods can be found in~\cite[\S8.2]{Bertsekas03-ConvexAnalysisAndOptimization},\cite[Ch.3]{Nesterov04-IntroductoryLecturesConvexOptimization}~\cite{Vandenberghe11-SubgradientMethod-lecs,Boyd07-SubgradientMethod-lecs}.

		When the function~$f$ is continuously differentiable and its gradient~$\nabla f$ is Lipschitz-continuous with constant~$L$, i.e., $\|\nabla f(y) - \nabla f(x)\| \leq L\|y - x\|$ for all $x,y \in \mathbb{R}^n$, more efficient methods can be applied. In fact, for a differentiable function, $\partial f(x) = \{\nabla f(x)\}$, and the iterations~\eqref{Eq:RelatedWorkSubgradientMethod} become the projected gradient method. In contrast with subgradient methods, gradient methods are descent and converge even with a fixed stepsize $\alpha_k= \alpha \in (0,1/L]$, for all~$k$. Moreover, $f(x^k) - f^\star$ converges to zero at rate $O(1/k)$. Gradient methods are studied extensively in~\cite[Ch.1,2]{Bertsekas99-NonlinearProgramming}\cite[Ch.1,2]{Nesterov04-IntroductoryLecturesConvexOptimization}\cite{Beck02-ConvergenceRateAnalysisGradientBasedAlgorithms-thesis,Beck09-FISTA,Beck10-ConvexOptimizatoinSignalProcessingCommunications-BookChapter}.

		Surprisingly, a small modification of the projected gradient yields a method whose error $f(x^k) - f^\star$ decreases at rate $O(1/k^2)$, as discovered by Nesterov. The problem assumptions are the same as in the projected gradient method. An instance of Nesterov's method is
		\begin{equation}\label{Eq:RelatedWorkNesterovAlg}
			\begin{array}{l}
				x^{k+1} = \Bigl[y^k - \alpha_k \,\nabla f(y^k)\Bigr]_X \vspace{0.3cm}\\
				y^{k+1} = x^{k+1} + \frac{k-1}{k+2}(x^{k+1} - x^k)\,,
			\end{array}
		\end{equation}
		which requires no significant additional computation with respect to~\eqref{Eq:RelatedWorkSubgradientMethod}. Yet, it not only has better bounds on the rate of convergence, but it also converges much faster in practice.
		For more information about accelerated first-order methods, see~\cite{Nesterov04-IntroductoryLecturesConvexOptimization,Beck09-FISTA,Beck10-ConvexOptimizatoinSignalProcessingCommunications-BookChapter,Tseng08-AcceleratedProximalGradientMethodsConvexConcaveOptimization,Nesterov10-FirstOrderMethodsConvexOptimizationInexactOracle,Nesterov05-SmoothMinimizationOfNonSmoothFunctions,Aspremont08-SmoothOptimizationWithApproximateGradient,Vandenberghe11-FastProximalGradientMethods-lecs}.

	\subsection{Block-coordinate minimization methods}

	Block-coordinate methods are appropriate when fixing some of the variables in an optimization problem makes the problem easier to solve. Consider, for example,
	\begin{equation}\label{Eq:RelatedWorkBlockCoordinate}
		\begin{array}{cl}
			\underset{x = (x_1,\ldots,x_P)}{\text{minimize}} & f(x_1,x_2,\ldots,x_P) \\
			\text{subject to} & x \in X_1 \times X_2 \times \cdots \times X_P\,,
		\end{array}
	\end{equation}
	where the variable is $x = (x_1,\ldots,x_P) \in \mathbb{R}^{n}$ with~$x_p \in \mathbb{R}^{n_p}$ and $n_1 + \cdots + n_P = n$. The function $f:\mathbb{R}^n \xrightarrow{} \mathbb{R}$ is assumed convex, and each set~$X_p \subseteq \mathbb{R}^{n_p}$ is assumed closed and convex. Block-coordinate minimization methods solve~\eqref{Eq:RelatedWorkBlockCoordinate} via a sequence of minimization problems with respect to one block variable while the other blocks are fixed, i.e.,
	$$
		\begin{array}{ll}
			\underset{\xi}{\text{minimize}} & f(x_1,\ldots,x_{p-1},\xi,x_{p+1},\ldots,x_P) \\
			\text{subject to} & \xi \in X_p\,.
		\end{array}
	$$
	Two important types of block-coordinate methods are nonlinear Jacobi and nonlinear Gauss-Seidel.

	\mypar{Nonlinear Jacobi}
	The nonlinear Jacobi method is defined as
	\begin{equation}\label{Eq:RelatedWorkNonlinearJacobi}
		x_p^{k+1} =
		\begin{array}[t]{cl}
			\underset{x_p}{\arg\min} & f(x_1^k, \ldots, x_{p-1}^k, x_p, x_{p+1}^k, \ldots, x_P^k)\,, \\
			\text{s.t.} & x_p \in X_p
		\end{array}
		\qquad
		p = 1,\ldots,P\,,
	\end{equation}
	where the $p$th minimization is taken with respect to~$x_p$. Since updating~$x_p^k$ to~$x_p^{k+1}$ requires all the other block components to be fixed at~$x_j^k$, for $j \neq p$, which were found in the previous iteration, the updates can be carried out in parallel. Convergence of the nonlinear Jacobi method to a minimizer of~\eqref{Eq:RelatedWorkBlockCoordinate} is guaranteed whenever~$f$ is differentiable and the mapping $x - \gamma \nabla f(x)$ is a contraction for any~$\gamma > 0$~\cite[Prop.3.10]{Bertsekas97-ParallelDistributed}. Another Jacobi-type method requiring milder assumptions is the diagonal quadratic approximation~\cite{Zakarian95-NonlinearJacobi-thesis,Mota08-DistributedAlgorithmsSparseApproximation-thesis}.

	\mypar{Nonlinear Gauss-Seidel}
	The nonlinear Gauss-Seidel method is defined as
	\begin{equation}\label{Eq:RelatedWorkBlockCoordinateNGS}
		x_p^{k+1} =
		\begin{array}[t]{cl}
			\underset{x_p}{\arg\min} & f(x_1^{k+1}, \ldots, x_{p-1}^{k+1}, x_p, x_{p+1}^k, \ldots, x_P^k)\,, \\
			\text{s.t.} & x_p \in X_p
		\end{array}
		\qquad
		p = 1,\ldots,P\,.
	\end{equation}
	In contrast with Jacobi methods, updating~$x_p$ at iteration~$k$ requires knowing the current estimates of the first~$p-1$ blocks, i.e., $x_j^{k+1}$ for~$j < p$. Hence, all updates have to be carried out sequentially. The order of the sequence, however, can change from iteration to iteration, and the convergence to a minimizer of~\eqref{Eq:RelatedWorkBlockCoordinate} is guaranteed whenever each problem in~\eqref{Eq:RelatedWorkBlockCoordinateNGS} has a unique solution and a regularity condition is satisfied~\cite{Tseng01-ConvergenceBlockCoordinateDescent}. For example, differentiability and strict convexity of~$f$ implies that regularity condition is satisfied (see the errata of proposition 2.7.1 of~\cite{Bertsekas99-NonlinearProgramming}, available at~\url{http://www.athenasc.com/nlperrata.pdf}).

	\subsection{Augmented Lagrangian methods}
	\label{SubSec:AugmentedLagrangianMethods}

	Augmented Lagrangian methods are important tools for distributed optimization, even though they were not designed for that purpose. They date back to penalty methods, where a constrained problem is solved via a sequence of unconstrained problems. Although relying on duality, augmented Lagrangian methods are guaranteed to find a primal solution even when the cost function is not strictly convex. This gives them a clear advantage over ``simple'' duality-based methods, such as dual decomposition. On the other hand, they do not distribute as easily as ``simple'' duality-based methods, because of the augmented term in the augmented Lagrangian.

	\mypar{Method of multipliers}
	Discovered independently by Hestenes~\cite{Hestenes69-MethodOfMultipliers} and by Powell~\cite{Powell69-MethodNonlinearConstraintsMinimizationProblems-BookChapter}, the method of multipliers solves the constrained problem
	\begin{equation}\label{Eq:RelatedWorkAugmentedLagrangianMoM}
		\begin{array}{ll}
			\underset{x}{\text{minimize}} & f(x) \\
			\text{subject to} & Ax = b\,,
		\end{array}
	\end{equation}
	where the function $f:\mathbb{R}^n \xrightarrow{} \mathbb{R} \cup \{+\infty\}$ is closed and convex, $b \in \mathbb{R}^m$, and the linear system $Ax = b$ is feasible. Using~$\lambda \in \mathbb{R}^m$ to denote the dual variable, the augmented Lagrangian of~\eqref{Eq:RelatedWorkAugmentedLagrangianMoM} is
	$$
		L_\rho(x;\lambda) = f(x) + \lambda^\top (Ax - b) + \frac{\rho}{2}\|Ax - b\|^2\,,
	$$
	where $\rho > 0$ is the augmented Lagrangian parameter. Note that the augmented Lagrangian differs from the ordinary Lagrangian in the augmented term $(\rho/2)\|Ax - b\|^2$. The method of multipliers solves~\eqref{Eq:RelatedWorkAugmentedLagrangianMoM} by minimizing the augmented Lagrangian with respect to~$x$, keeping the dual variable~$\lambda$ fixed at~$\lambda^k$, and then by updating~$\lambda$ in a gradient-based way. That is, it iterates
	\begin{align}
		x^{k+1} &= \underset{x}{\arg\min}\,\,\, L_\rho(x;\lambda^k) \label{Eq:RelatedWorkMoM1}\\
		\lambda^{k+1} &= \lambda^k + \rho (Ax^{k+1} - b)\,.
		\label{Eq:RelatedWorkMoM2}
	\end{align}
	Note that~\eqref{Eq:RelatedWorkMoM2} is indeed a gradient iteration: the dual function $L_\rho(\lambda) := \inf_x\, L_\rho(x;\lambda)$ is differentiable and its gradient is given by $Ax(\lambda) - b$, where~$x(\lambda)$ minimizes~$L_\rho(\cdot;\lambda)$.\footnote{This is true even when $A$ does not have full column rank. To see that, write $L_\rho(\lambda)$ as $L_\rho(\lambda) = \inf_z\, \Psi(z) + \lambda^\top z + \frac{\rho}{2}\|z\|^2$, where~$\Psi(z) := \inf_x\{f(x)\,:\, z = Ax - b\}$ is a convex function~\cite[\S3.2.5]{Boyd04-ConvexOptimization}. The quadratic term $\|z\|^2$ makes the objective strictly convex and, therefore, the problem defining~$L_\rho(\lambda)$ in terms of~$z$ has a unique minimizer $z(\lambda)$ for each~$\lambda$. It follows that the subdifferential of~$L_\rho$ is the singleton $\{z(\lambda)\}$. For each~$\lambda$, there can be several~$x(\lambda)$'s solving the problem defining~$\Psi(z(\lambda))$.} Furthermore, it can be shown that the gradient $Ax(\lambda) - b$ is Lipschitz continuous with constant~$1/\rho$ \cite[Th. 4.2.2]{Lemarechal04-FundamentalsConvexAnalysis}, \cite{Vandenberghe11-DualDecomposition-lecs}.	Rockafellar~\cite{Rockafellar76-AugmentedLagrangiansAndApplicationsOfProximalPointAlgorithm} showed that the iterations~\eqref{Eq:RelatedWorkMoM1}-\eqref{Eq:RelatedWorkMoM2} are actually an application of the proximal minimization algorithm to the dual problem of~\eqref{Eq:RelatedWorkAugmentedLagrangianMoM} (see also~\cite[\S3.4.4]{Bertsekas97-ParallelDistributed} and~\cite[Ch.3]{Eckstein89-SplittingMethodsMonotoneOperatorsParallelOptimization-thesis}). Therefore, the conditions under which the method of multipliers converges are very mild; see~\cite{Bertsekas76-MethodMultipliersSurvey,Bertsekas96-ConstrainedOptimizationLagrangeMultiplierMethods,Rockafellar73-MultiplierMethodHestenesPowellAppliedConvexProgramming}~\cite[\S3.4.4]{Bertsekas97-ParallelDistributed}\cite[\S4.2]{Bertsekas99-NonlinearProgramming} for a detailed analysis and for related methods. Nevertheless, the optimization problem in~\eqref{Eq:RelatedWorkMoM1} is usually nonseparable, because of the augmented term $(\rho/2)\|Ax - b\|^2$. This makes the method of multipliers difficult to apply in distributed optimization. We next present an alternative that, while preserving the good convergence properties of the method of multipliers, it suits distributed optimization better.

	\mypar{Alternating Direction Method of Multipliers}
		The Alternating Direction Method of Multipliers (ADMM) is an augmented Lagrangian method introduced in the mid-seventies by Glowinski and Marrocco~\cite{Glowinski75-ADMMFirst} and by Gabay and Mercier~\cite{Gabay76-ADMMFirst}. It solves
		\begin{equation}\label{Eq:RelatedWorkADMMProb}
			\begin{array}{ll}
				\underset{x_1,x_2}{\text{minimize}} & f_1(x_1) + f_2(x_2) \\
				\text{subject to} & A_1 x_1 + A_2 x_2 = b\,,
			\end{array}
		\end{equation}
		where $f:\mathbb{R}^{n_1} \xrightarrow{} \mathbb{R} \cup \{+\infty\}$ and $g:\mathbb{R}^{n_2} \xrightarrow{} \mathbb{R} \cup \{+\infty\}$ are closed convex functions, and $A_1 \in \mathbb{R}^{m \times n_1}$ and $A_2 \in \mathbb{R}^{m \times n_2}$ are full column rank matrices. The augmented Lagrangian of~\eqref{Eq:RelatedWorkADMMProb} is
		$$
			L_\rho(x_1,x_2;\lambda) = f_1(x_1) + f_2(x_2) + \lambda^\top (A_1x_1 + A_2x_2 - b) + \frac{\rho}{2}\|A_1x_1 + A_2x_2 - b\|^2\,,
		$$
		where the parameter~$\rho > 0$ is assumed fixed. ADMM minimizes $L_\rho$ first with respect to~$x_1$, then with respect to~$x_2$, and it finally updates the dual variable~$\lambda$ as in the method of multipliers:
		\begin{align}
			x_1^{k+1} &= \underset{x_1}{\arg\min}\,\,\, L_\rho(x_1,x_2^k;\lambda^k)
			\label{Eq:RelatedWorkADMMIter1}
			\\
			x_2^{k+1} &= \underset{x_2}{\arg\min}\,\,\, L_\rho(x_1^{k+1},x_2;\lambda^k)
			\label{Eq:RelatedWorkADMMIter2}
			\\
			\lambda^{k+1} &= \lambda^k + \rho (A_1x_1^{k+1} + A_2x_2^{k+1} - b)\,.
			\label{Eq:RelatedWorkADMMIter3}
		\end{align}
		ADMM can be seen as the application of the method of multipliers to problem~\eqref{Eq:RelatedWorkADMMProb}, where the minimization with respect to the primal variable~$(x_1,x_2)$ consists of just one Gauss-Seidel pass. Surprisingly, it solves~\eqref{Eq:RelatedWorkADMMProb} with the same accuracy level as the method of multipliers does, by using a few more iterations; see~\cite{Eckstein12-AugmentedLagrangianADMM} for a detailed comparison between ADMM and the method of multipliers. Curiously, both methods are instances of the proximal point algorithm~\cite{Martinet70-RegularisationInequations,Martinet72-DeterminationPointFixe,Rockafellar76-MonotoneOperatorsProximalPointAlgorithms}: while the method of multipliers results from applying iteratively the resolvent operator to the subdifferential of the dual function of~\eqref{Eq:RelatedWorkAugmentedLagrangianMoM}~\cite{Rockafellar76-AugmentedLagrangiansAndApplicationsOfProximalPointAlgorithm}, ADMM results from applying iteratively the Douglas-Rachford operator~\cite{Douglas56-NumericalSolutionHeatEquation,Lions79-SplittingAlgorithms-DouglasRachford} to the subdifferential of the dual function of~\eqref{Eq:RelatedWorkADMMProb}, as discovered by Gabay~\cite{Gabay83-ApplicationsMethodMultipliersVariationalInequalities-BookChapter}. An excellent account on this topic, including an introduction to monotone operator theory, is given by Eckstein~\cite[Ch.3]{Eckstein89-SplittingMethodsMonotoneOperatorsParallelOptimization-thesis} (see also~\cite{Eckstein92-DouglasRachford}). Alternative proofs for the convergence of ADMM that do not use any monotone operator theory include~\cite[\S3.4.4]{Bertsekas97-ParallelDistributed} and~\cite{Boyd11-ADMM,Mota11-ADMMProof}. Roughly, ADMM converges whenever $f$ and~$g$ are closed and convex, \eqref{Eq:RelatedWorkADMMProb} is solvable, and strong duality holds. When~$A_1$ and~$A_2$ do not have full column rank, the sequence $(x_1^k,x_2^k)$ might not converge, even though $f_1(x_1^k) + f_2(x_2^k)$ and $\lambda^k$ converge~\cite[p.260]{Bertsekas97-ParallelDistributed}. Regarding the augmented Lagrangian parameter~$\rho$, the proofs of the convergence hold for any positive, fixed~$\rho$. Since, in practice, the value of~$\rho$ significantly affects the performance of the algorithm, it is common to use heuristics to adapt $\rho$ along the iterations~\cite{Boyd12-DynamicNetworkEnergyManagementProximalMessagePassing,Boyd11-ADMM}. These heuristics, however, cannot be easily implemented in distributed environments, because they require information from all the nodes at each iteration.

		Until recently, the known proofs for the convergence of ADMM did not allow to derive a convergence rate. It was known, however, that ADMM converged linearly for linear programs~\cite{Eckstein90-ADMMForLP}. More recently, a series of works has derived bounds for the convergence rate of ADMM, many times, under assumptions stronger than the ones required to prove plain convergence. For example, \cite{He12-OnTheConvergenceRateOfTheDouglasRachfordADM} proved that the primal and the dual variables converge in an ergodic sense at rate of~$O(1/k)$. The same rate was established in~\cite{He12-OnNonErgodicADMMConvRate} in a non-ergodic sense. The work~\cite{Goldstein12-FastAlternatingDirectionOptimizationMethods} proved that the cost function of the dual problem converges to the optimal value at rate $O(1/k)$ and, as a consequence, the square of the primal and dual residuals~\cite{Boyd11-ADMM} converge to zero at the same rate. It is assumed, however, that at least one of the functions~$f_1$ or~$f_2$ is strongly convex. Inspired by Nesterov's gradient method, \cite{Goldstein12-FastAlternatingDirectionOptimizationMethods} also proposes a modification to ADMM whose dual cost function converges at rate~$O(1/k^2)$. Note that both $O(1/k)$ and $O(1/k^2)$ are sublinear rates.\footnote{We say that a sequence $\{x^k\}$ converges linearly (more appropriately, R-linearly) to $x^\star$ if there exists $M>0$ and~$c>1$ such that $\|x^k - x^\star\| \leq \frac{M}{c^k}$, for a sufficiently large~$k$.} When ADMM is applied to the average consensus problem (after a suitable reformulation to make it distributed, as we will see later), linear convergence can be proved~\cite{Erseghe11-FastConsensusByADMM}. For general quadratic problems, \cite{Boley12-LinearConvergenceOfADMMOnAModelProblem} conjectured that linear convergence also holds, which was later proved in~\cite{Ghadimi13-OptimalParameterSelectionForADMM}. More recently, Deng and Yin~\cite{Deng12-GlobalLinearConvergenceGeneralizedADMM} showed that a generalized version of ADMM converges linearly in terms of the primal and the dual estimates when at least one of the functions~$f_1$ or~$f_2$ is strongly convex, differentiable, and has a Lipschitz continuous gradient. Work that establishes convergence rates for modified versions of ADMM includes~\cite{Tseng91-ApplicationsSplittingAlgorithm,Goldfarb09-FastMultipleSplittingAlgs,Goldfarb10-FastAlternatingLinearizationMethodsSumTwoConvexFunctions,Wei13-OnTheConvergenceAsynchronousDistributedADMM}.

		The recent stream of theoretical work on ADMM in recent years has been motivated by its application in many areas. For example, ADMM has been applied to image processing~\cite{Afonso11-AugmentedLagrangianConstrainedOptimizationImagingInverseProblems-CSALSA}, to localization~\cite{Soares12-DCOOLNET}, and to several statistical and machine learning problems~\cite{Martins11-AnAugmentedLagrangianApproachMAP,Boyd11-ADMM}. Reference~\cite{Boyd11-ADMM}, in particular, provides a survey on ADMM from an optimization perspective and describes many applications in statistics and machine learning.

		\mypar{Multi-block ADMM}
		The multi-block ADMM is a natural generalization of ADMM when, instead of the variable being partitioned into two blocks, $x_1$ and~$x_2$, as in~\eqref{Eq:RelatedWorkADMMProb}, it is partitioned into a finite number~$C$. Sometimes this method is also known as generalized ADMM or extended ADMM. Since there are other methods named generalized ADMM, we will refer to it as multi-block ADMM or as extended ADMM. More specifically, the multi-block ADMM solves
		\begin{equation}\label{Eq:RelatedWorkADMMProbExtended}
			\begin{array}{ll}
				\underset{x_1,\ldots,x_C}{\text{minimize}} & f_1(x_1) + f_2(x_2) + \cdots + f_C(x_C) \\
				\text{subject to} & A_1x_1 + A_2 x_2 + \cdots + A_C x_C = b\,,
			\end{array}
		\end{equation}
		by iterating
		\begin{align}
			x_1^{k+1} &= \underset{x_1}{\arg\min}\,\,\, L_\rho(x_1,x_2^k,\ldots,x_C^k;\lambda^k)
			\label{Eq:RelatedWorkADMMProbExtendedADMMIter1}
			\\
			x_2^{k+1} &= \underset{x_2}{\arg\min}\,\,\, L_\rho(x_1^{k+1},x_2, x_3^k,\ldots,x_C^k;\lambda^k)
			\label{Eq:RelatedWorkADMMProbExtendedADMMIter2}
			\\
			&\phantom{1}\vdots \\
			x_C^{k+1} &= \underset{x_C}{\arg\min}\,\,\, L_\rho(x_1^{k+1},x_2^{k+1},\ldots,x_{C-1}^{k+1},x_C;\lambda^k)
			\label{Eq:RelatedWorkADMMProbExtendedADMMIter3}
			\\
			\lambda^{k+1} &= \lambda^k + \rho \,\sum_{c=1}^C A_c x_c^{k+1} \,.
			\label{Eq:RelatedWorkADMMProbExtendedADMMIter4}
		\end{align}
		Note that~\eqref{Eq:RelatedWorkADMMProbExtended} is the same problem as~\eqref{Eq:RelatedWorkDecompositionMethods}, the problem solved by decomposition methods.	In this case, the augmented Lagrangian is
		$$
			L_\rho(x_1,x_2,\ldots,x_C;\lambda) = \sum_{c=1}^C f_c(x_c) + \lambda^\top \Bigl(\sum_{c=1}^C A_c x_c - b\Bigr) + \frac{\rho}{2}\Bigl\|\sum_{c=1}^C A_cx_c - b\Bigr\|^2\,.
		$$
		It is assumed that each function~$f_c:\mathbb{R}^{n_c}\xrightarrow{}\mathbb{R}\cup \{+\infty\}$ is closed and convex, and that each matrix~$A_c \in \mathbb{R}^{m\times n_c}$ has full column rank. When~$C=2$, the multi-block ADMM~\eqref{Eq:RelatedWorkADMMProbExtendedADMMIter1}-\eqref{Eq:RelatedWorkADMMProbExtendedADMMIter4} becomes the $2$-block ADMM~\eqref{Eq:RelatedWorkADMMIter1}-\eqref{Eq:RelatedWorkADMMIter3}. The only known proof of convergence of the multi-block ADMM is due to Han and Yuan~\cite{Han12-NoteOnADMM} and it assumes that all functions~$f_1$, \ldots, $f_C$ are strongly convex. The following theorem summarizes the known convergence results for the multi-block ADMM, including its particular version, the $2$-block ADMM.
		\begin{theorem}[\cite{Mota11-ADMMProof,Han12-NoteOnADMM}]\label{Teo:RelatedWorkConvergenceADMM}
			\hfill

			\medskip
			\noindent
			Let~$f_c:\mathbb{R}^{n_c}\xrightarrow{} \mathbb{R} \cup \{+\infty\}$ be a closed convex function over~$\mathbb{R}^{n_c}$, not identically $+\infty$, and let~$A_c$ be an $m\times n_c$ matrix, for~$c=1,\ldots,C$. Assume that~\eqref{Eq:RelatedWorkADMMProbExtended} is solvable and that either
			\newcounter{TeoConvADMM}
			\begin{list}{(\alph{TeoConvADMM})}{\usecounter{TeoConvADMM}}
				\item $C=2$ and each~$A_c$ has full column-rank, or
				\item $C\geq2$, each~$f_c$ is strongly convex with modulus $\mu_c$ and
				\begin{equation}\label{Eq:RelatedWorkExtendedADMMConditionRho}
					0 < \rho < \underset{c=1,\ldots,C}{\min} \,\,\,\frac{2\mu_c}{3(C-1)\,\sigma_{\max}^2(A_c)}\,,
				\end{equation}
				where~$\sigma_{\max}(\cdot)$ denotes the largest singular value of a matrix.
			\end{list}
			Then, the sequence~$\{(x_1^k,\ldots,x_C^k,\lambda^k)\}$ generated by~\eqref{Eq:RelatedWorkADMMProbExtendedADMMIter1}-\eqref{Eq:RelatedWorkADMMProbExtendedADMMIter4} converges to $(x_1^\star, \ldots, x_C^\star, \lambda^\star)$, where $(x_1^\star, \ldots, x_C^\star)$ solves~\eqref{Eq:RelatedWorkADMMProbExtended} and~$\lambda^\star$ solves the dual problem of~\eqref{Eq:RelatedWorkADMMProbExtended}:
			$
				\min_\lambda b^\top \lambda + \sum_{c=1}^C f_c^\star(-A_c^\top \lambda)
			$,
			where~$f_c^\star$ is the convex conjugate of~$f_c$, $c=1,\ldots,C$.
		\end{theorem}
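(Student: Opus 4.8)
The plan is to prove both cases by exhibiting a nonnegative energy (Lyapunov) functional of the iterates that is monotonically nonincreasing, with decrements that are summable and therefore force the optimality residuals to vanish. First I would record the saddle-point characterization: since \eqref{Eq:RelatedWorkADMMProbExtended} is solvable and strong duality holds, there exists $(x_1^\star,\ldots,x_C^\star,\lambda^\star)$ with $\sum_{c=1}^C A_c x_c^\star = b$ and $-A_c^\top \lambda^\star \in \partial f_c(x_c^\star)$ for every $c$. Next I would write down the first-order optimality condition produced by each arg-min in \eqref{Eq:RelatedWorkADMMProbExtendedADMMIter1}--\eqref{Eq:RelatedWorkADMMProbExtendedADMMIter3}, namely that $x_c^{k+1}$ satisfies $-A_c^\top\bigl(\lambda^k + \rho(\sum_{j\le c}A_j x_j^{k+1} + \sum_{j>c}A_j x_j^k - b)\bigr)\in\partial f_c(x_c^{k+1})$. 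Subtracting these relations from the saddle-point relations and pairing with $x_c^{k+1}-x_c^\star$ through (strong) monotonicity of $\partial f_c$ is the engine of the entire argument.

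For case (a) the cleanest route is the operator-splitting identity recalled in \sref{SubSec:AugmentedLagrangianMethods}: the $2$-block ADMM is exactly the Douglas--Rachford iteration applied to the dual of \eqref{Eq:RelatedWorkADMMProb}, i.e.\ to finding a zero of the sum of the two maximal monotone operators $\partial(f_1^\star\circ(-A_1^\top))$ and $\partial(f_2^\star\circ(-A_2^\top))$. Invoking the known convergence of Douglas--Rachford for maximal monotone operators (valid for any fixed $\rho>0$) yields convergence of $\lambda^k$ and of $f_1(x_1^k)+f_2(x_2^k)$; the full-column-rank hypothesis on $A_1,A_2$ then upgrades this to convergence of the primal iterates, since $A_c(x_c^k-x_c^\star)\to 0$ forces $x_c^k\to x_c^\star$. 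Equivalently, one can run the direct energy argument below specialized to $C=2$, where no Gauss--Seidel cross-terms appear and strong convexity is not needed.

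For case (b) I would follow Han and Yuan: take an energy functional of the form
$$
 V^k \;=\; \tfrac{1}{\rho}\,\bigl\|\lambda^k-\lambda^\star\bigr\|^2 \;+\; \rho\sum_{c=2}^{C}\Bigl\|\textstyle\sum_{j\ge c}A_j\,(x_j^k-x_j^\star)\Bigr\|^2,
$$
substitute the per-step optimality conditions together with the dual update \eqref{Eq:RelatedWorkADMMProbExtendedADMMIter4}, and exploit strong convexity through $\langle g_c^{k+1}-g_c^\star,\,x_c^{k+1}-x_c^\star\rangle \ge \mu_c\|x_c^{k+1}-x_c^\star\|^2$ for selected subgradients $g_c^{k+1}\in\partial f_c(x_c^{k+1})$, $g_c^\star\in\partial f_c(x_c^\star)$. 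This generates descent terms $\mu_c\|x_c^{k+1}-x_c^\star\|^2$. The Gauss--Seidel ordering also produces cross-terms coupling block $c$ to the not-yet-updated blocks; bounding each via Cauchy--Schwarz introduces the factors $\sigma_{\max}^2(A_c)$ and, after summing over the coupled blocks, the multiplicity $(C-1)$. The condition \eqref{Eq:RelatedWorkExtendedADMMConditionRho} is exactly what makes the strong-convexity gains dominate these cross-terms, delivering $V^{k+1}\le V^k - \eta\sum_{c=1}^C\|x_c^{k+1}-x_c^\star\|^2$ for some $\eta>0$. Telescoping then forces $x_c^{k+1}-x_c^\star\to 0$ for each $c$, and \eqref{Eq:RelatedWorkADMMProbExtendedADMMIter4} yields $\lambda^k\to\lambda^\star$.

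The main obstacle is genuinely case (b): naive multi-block ADMM need not converge in the absence of strong convexity, so the heart of the proof is the careful accounting of the sequential cross-terms and the verification that \eqref{Eq:RelatedWorkExtendedADMMConditionRho} precisely balances them against the $\mu_c$ descent, with the constant $3(C-1)$ emerging from how the Gauss--Seidel coupling is split and estimated. By contrast, case (a) is comparatively routine once the Douglas--Rachford correspondence (or the $C=2$ specialization of the energy argument) is in hand.
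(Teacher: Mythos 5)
The paper does not actually prove this theorem itself: it is imported as stated, with case \text{(a)} deferred to~\cite{Mota11-ADMMProof} (which the surrounding text describes as a generalization of the direct, non-operator-theoretic arguments of~\cite{Bertsekas97-ParallelDistributed} and~\cite{Boyd11-ADMM}) and case \text{(b)} to Han and Yuan~\cite{Han12-NoteOnADMM}. Measured against those sources, your sketch is a faithful reconstruction of the intended arguments: the saddle-point characterization, the per-block optimality conditions paired with (strong) monotonicity of $\partial f_c$, and the bookkeeping that balances the $\mu_c$ descent against the Gauss--Seidel cross-terms to produce the constant $3(C-1)$ in~\eqref{Eq:RelatedWorkExtendedADMMConditionRho} are exactly the ingredients of Han and Yuan's proof, and your $C=2$ specialization of the energy argument is what the cited reference for case \text{(a)} actually runs. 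Your primary route for case \text{(a)} via the Douglas--Rachford correspondence is a genuinely different, operator-theoretic path from the cited one; it buys convergence for any fixed $\rho>0$ with little computation at the price of importing the Douglas--Rachford convergence theorem, whereas the direct energy argument is self-contained and is the one that generalizes to the multi-block setting.

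Two loose ends are worth tightening. First, existence of a dual optimum $\lambda^\star$ is not implied by solvability of~\eqref{Eq:RelatedWorkADMMProbExtended} alone; you correctly invoke ``strong duality holds,'' but this (or a constraint qualification guaranteeing a KKT pair) must be taken as a standing hypothesis, as the paper's own discussion of the $2$-block case does. Second, in case \text{(b)} the final step ``telescoping forces $x_c^k\to x_c^\star$, and~\eqref{Eq:RelatedWorkADMMProbExtendedADMMIter4} yields $\lambda^k\to\lambda^\star$'' is too quick: the dual update only shows that the increments $\lambda^{k+1}-\lambda^k$ vanish, which does not by itself give convergence of $\lambda^k$. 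What closes the argument is that your functional $V^k$ is nonincreasing and bounded below, so $\|\lambda^k-\lambda^\star\|$ converges for every dual solution $\lambda^\star$ (Fej\'er monotonicity with respect to the dual solution set); boundedness then yields a convergent subsequence whose limit is verified to be dual optimal from the vanishing residuals, and Fej\'er monotonicity promotes subsequential convergence to convergence of the whole sequence. The same remark applies to the primal iterates in case \text{(a)}: vanishing residuals plus full column rank give $x_c^k-x_c^\star\to 0$ only after the analogous limit-point argument identifying the limit.
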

		A proof for case \text{(a)} can be found in~\cite{Mota11-ADMMProof}, which generalizes the proofs of~\cite{Bertsekas97-ParallelDistributed,Boyd11-ADMM}. A proof for case \text{(b)} can be found in~\cite{Han12-NoteOnADMM}. It is believed that the multi-block ADMM~\eqref{Eq:RelatedWorkADMMProbExtendedADMMIter1}-\eqref{Eq:RelatedWorkADMMProbExtendedADMMIter4} still converges for any finite~$C>2$ whenever each function $f_c$ is closed and convex and each matrix~$A_c$ has full column rank, i.e., that the generalization of Theorem~\ref{Teo:RelatedWorkConvergenceADMM} under case~\text{(a)} still holds. This belief is based on empirical evidence~\cite{He12-AlternatingDirectionMethodGaussianBackSubstitution}, but its proof remains still an open problem. So far, there are only proofs of convergence for similar algorithms that are either slower~\cite{Luo13-LinearConvergenceADMM} or that cannot be implemented (at least, straightforwardly) in distributed scenarios~\cite{He12-AlternatingDirectionMethodGaussianBackSubstitution}. In fact, \cite{Luo13-LinearConvergenceADMM} proves that a modification of the iterates~\eqref{Eq:RelatedWorkADMMProbExtendedADMMIter1}-\eqref{Eq:RelatedWorkADMMProbExtendedADMMIter4} converges linearly when each function~$f_c$ is strictly convex, differentiable, and has a Lipschitz continuous gradient. That modification consists of changing the stepsize~$\rho$ in~\eqref{Eq:RelatedWorkADMMProbExtendedADMMIter4} to a smaller number. When that number is sufficiently small, linear convergence can be proved. However, in practice, reducing the stepsize makes the algorithm slower. We note that the distributed algorithms proposed in this thesis are based on the multi-block ADMM, and that we started using them~\cite{Mota11-DistributedBP-ArxivV2} even before there was a proof of convergence~\cite{Han12-NoteOnADMM}.

\section{Distributed algorithms}

	To the best of our knowledge, the problem we aim to solve, \eqref{Eq:IntroProb}, has been considered before only with the following types of variable: global, star-shaped, and mixed (where all non-global components are star-shaped); see \fref{Fig:IntroClassification} from \cref{Ch:Introduction} for a visualization of the relation between these types of variables. We next review distributed algorithms that were designed for these types of variables, or for applications that can be written as~\eqref{Eq:IntroProb} with such variables.

	We mention that~\cite[\S7.2]{Boyd11-ADMM} proposes an algorithm based on the $2$-block ADMM for solving~\eqref{Eq:IntroProb} with a generic variable. However, it either requires a platform supporting all-to-all communications (equivalently, a central node), or running, at each iteration, a consensus algorithm on each induced subgraph~\cite[\S10.1]{Boyd11-ADMM}. This makes that algorithm not distributed in our sense. Actually, that algorithm becomes distributed only when the variable is star-shaped. We also mention that we found only one distributed algorithm in the literature that can solve~\eqref{Eq:IntroProb} when the variable is non-global and non-star-shaped, but still connected. That algorithm, also based on the $2$-block ADMM, was proposed in~\cite{Kekatos12-DistributedRobustPowerStateEstimation} for state estimation of power systems, a problem formulated as~\eqref{Eq:IntroProb} with a star-shaped variable. In \cref{Ch:ConnectedNonConnected}, we generalize that algorithm for a generic connected variable, and then for a non-connected variable. This means that the algorithm in~\cite{Kekatos12-DistributedRobustPowerStateEstimation} can also solve~\eqref{Eq:IntroProb} in full generality, after proper modifications. Our experimental results, however, show that it always requires more communications to converge to a solution of~\eqref{Eq:IntroProb} than the algorithm we propose.

	\subsection{Global class}

		Among all the classes, the global problem class~\eqref{Eq:IntroGlobalProb} is the most well studied. For convenience, we recall that~\eqref{Eq:IntroGlobalProb} is written as
		\begin{equation}\label{Eq:GlobalProblem}\tag{G}
			\begin{array}{ll}
				\underset{x \in \mathbb{R}^n}{\text{minimize}} & f_1(x) + f_2(x) + \cdots + f_P(x)\,,
			\end{array}
		\end{equation}
		where all functions depend on all the components of the variable~$x$. Although several applications can be posed naturally as~\eqref{Eq:GlobalProblem}, the application that triggered the interest on the design of distributed algorithms for~\eqref{Eq:GlobalProblem} was the average consensus~\cite{DeGroot74-ReachingConsensus}. Indeed, this was the motivating application in~\cite{Rabbat04-DistributedOptimizationSensorNetworks}, which designed probably the first distributed algorithm for the class~\eqref{Eq:GlobalProblem}, an incremental subgradient algorithm. Other important pioneer work includes gradient- and subgradient-based algorithms by Nedi\'c, Ozdaglar, and collaborators~\cite{Nedic07-OnTheRateConvergenceDistributedSubgradientMultiAgent-CDC,Nedic09-DistributedSubgradientMethodsMultiAgentOptimization,Lobel11-DistributedMultiAgentOptimization,Nedic10-ConvexOptimizatoinSignalProcessingCommunications-BookChapter,Ram09-AsynchronousGossipAlgorithmsForStochasticOptimization}, whose work was inspired by~\cite{Tsitsiklis86-DistributedAsynchronousDeterministicAndStochasticGradientAlgorithms,Tsitsiklis84-ProblemsInDecentralizedDecisionMaking-thesis}. At the same time, the first distributed, ADMM-based algorithm was proposed by Schizas, Ribeiro, and Giannakis~\cite{Schizas08-ConsensusAdHocWSNsPartI}.

		We next review four categories of algorithms for~\eqref{Eq:GlobalProblem}: incremental, (sub)gradient-based, double-looped, and ADMM-based. Special emphasis will be given to the latter, since the algorithms we propose are also based on ADMM.

		\mypar{Incremental methods} An incremental (sub)gradient method solves problems with the format~\eqref{Eq:GlobalProblem} with the same scheme as the (sub)gradient method~\eqref{Eq:RelatedWorkSubgradientMethod}. However, instead of using the (sub)gradient of the entire objective~$f_1 + \cdots + f_P$, it only uses the (sub)gradient of one function~$f_p$ at a time. More concretely, it consists of
		\begin{equation}\label{Eq:RelatedWorkIncrementalGradient}
			x^{k+1} = \Bigl[ x^k - \alpha_k \nabla \tilde{f}_{i_k}(x^k)\Bigr]_{X_p}\,,
		\end{equation}
		where we decomposed each $f_p = \tilde{f}_p + \text{i}_{X_p}$ into its real-valued part~$\tilde{f}_p$ and into its infinity-valued (or constraint-enforcing) part $\text{i}_{X_p}$. To simplify notation, we assumed in~\eqref{Eq:RelatedWorkIncrementalGradient} that each function~$\tilde{f}_p$ is differentiable; if not, just replace $\nabla \tilde{f}_p(x^k)$ by any subgradient of~$\tilde{f}_p$ at the point~$x^k$. The sequence~$\{i_k\}$ takes values in $\{1,2,\ldots,P\}$ and determines the order of the updates, which can be deterministic or randomized. Surveys about incremental methods, including convergence analysis, can be found in~\cite[\S1.5.2,\S6.3.2]{Bertsekas99-NonlinearProgramming} and~\cite{Bertsekas10-IncrementalGradientSubgradientProximal}. Roughly, incremental (sub)gradient methods progress faster than their non-incremental counterparts far from the solution, but are slower near the solution~\cite{Bertsekas10-IncrementalGradientSubgradientProximal}. Since they use the (sub)gradient of only one function at each iteration, they can be implemented naturally in a distributed scenario, with a single node performing the update~\eqref{Eq:RelatedWorkIncrementalGradient} at each time instant, in a round-robin fashion. This was done in~\cite{Rabbat04-DistributedOptimizationSensorNetworks,Rabbat05-QuantizedIncrementalAlgorithmsDistributedOptimization} for a deterministic sequence~$\{i_k\}$ and in~\cite{Johansson09-RandomizedIncrementalSubgradient} for a randomized one. The work~\cite{Bertsekas10-IncrementalGradientSubgradientProximal} surveys these methods and, in addition, presents an unified view of incremental (sub)gradient methods, incremental proximal methods, and their combination. In general, incremental methods have slow convergence rates; and, in distributed optimization, they have the disadvantage of making just a single node active at each time instant. The algorithms we propose here, besides exhibiting faster convergence rates, have a higher degree of parallelism, even though not all nodes are active at the same time, i.e., they are not fully parallel.

		\mypar{(Sub)gradient-based}
		If we apply the (sub)gradient algorithm~\eqref{Eq:RelatedWorkSubgradientMethod} directly to problem~\eqref{Eq:GlobalProblem}, the resulting algorithm is non-distributed, since updating~$x$ at iteration~$k$ requires the (sub)gradients of all the functions at the point~$x^k$. Therefore, using (sub)gradient algorithms to solve~\eqref{Eq:GlobalProblem} in a distributed way requires either reformulating~\eqref{Eq:GlobalProblem} into another equivalent problem, or changing the (sub)gradient algorithm.

		The first option was taken in~\cite{Rabbat05-GeneralizedConsensusAlgs}, where~\eqref{Eq:GlobalProblem} was rewritten as
		\begin{equation}\label{Eq:RelatedWorkRabbat05DualDecomposition}
			\begin{array}{ll}
				\underset{x_1,\ldots,x_P}{\text{minimize}} & f_1(x_1) + f_2(x_2) + \cdots + f_P(x_P) \\
				\text{subject to} & x_p - x_j \geq 0\,,\quad j \in \mathcal{N}_p\,,\quad p = 1,\ldots,P\,,
			\end{array}
		\end{equation}
		where each variable $x_p \in \mathbb{R}^n$, held at node~$p$, is a clone of the original variable~$x \in \mathbb{R}^n$. Recall that~$\mathcal{N}_p$ denotes the set of neighbors of node~$p$. This reformulation increases the size of the optimization variable in~\eqref{Eq:IntroGlobalProb} from~$n$ to~$Pn$ and adds~$2E$ constraints (two constraints per each edge $(i,j) \in \mathcal{E}$: $x_i - x_j \geq 0$ and $x_j - x_i \geq 0$, which implies $x_i = x_j$ and, thus, the equivalence between~\eqref{Eq:GlobalProblem} and~\eqref{Eq:RelatedWorkRabbat05DualDecomposition}). Note that problem~\eqref{Eq:RelatedWorkRabbat05DualDecomposition} has the same format as~\eqref{Eq:RelatedWorkDecompositionMethods}, the problem that decomposition methods solve. Indeed, \cite{Rabbat05-GeneralizedConsensusAlgs} then applies the dual decomposition method described in \ssref{SubsSec:DecompositionMethods} (the generalization of the dual decomposition from equality-constrained problems to inequality-constrained ones is straightforward). If we rewrite the constraints of~\eqref{Eq:RelatedWorkRabbat05DualDecomposition}  in matrix form, the matrices corresponding to each~$A_p$ in~\eqref{Eq:RelatedWorkDecompositionMethods} have a special format: the nonzero entries correspond either to the variable of node~$p$ or to the variables of its neighbors. This is makes dual decomposition yield a distributed algorithm. Since in~\cite{Rabbat05-GeneralizedConsensusAlgs} each function is assumed strictly convex, the dual function is differentiable and the gradient algorithm can be applied to solve the dual problem.

		The second option (of changing the (sub)gradient algorithm) was taken in a series of works, including~\cite{Nedic07-OnTheRateConvergenceDistributedSubgradientMultiAgent-CDC,Nedic09-DistributedSubgradientMethodsMultiAgentOptimization,Lobel11-DistributedMultiAgentOptimization,Nedic10-ConvexOptimizatoinSignalProcessingCommunications-BookChapter}. These works study the convergence of a (sub)gradient algorithm coupled with a consensus scheme:
		\begin{equation}\label{Eq:RelatedWorkSubgradientConsensus}
			x_p^{k+1} = \biggl[\sum_{j \in \mathcal{N}_p \cup \{p\}}a_{pj}^kx_j^k - \alpha_k \nabla f_p(x^k)\biggr]_{X_p}\,,
		\end{equation}
		where we used the same simplifications as in~\eqref{Eq:RelatedWorkIncrementalGradient}. In~\eqref{Eq:RelatedWorkSubgradientConsensus}, $a_{pj}^k > 0$ models the influence node~$j$ exerts on node~$p$ at iteration~$k$. So, at each iteration~$k$, node~$p$ receives the estimates $x_j^k$ from its neighbors $\mathcal{N}_p$, averages them with its own estimate~$x_p^k$, and then performs a projected (sub)gradient step, where the (sub)gradient that is used is the one given by its private function~$f_p$. It is generally assumed that $\sum_j a_{ij}^k = 1$. When all functions $f_p$ are zero and all the sets $X_p$ are the full space $\mathbb{R}^n$, \eqref{Eq:RelatedWorkSubgradientConsensus} becomes the familiar consensus scheme~\eqref{Eq:IntroAverageConsensus}; and when the network is reduced to a single node, \eqref{Eq:RelatedWorkSubgradientConsensus} becomes the familiar (sub)gradient algorithm~\eqref{Eq:RelatedWorkSubgradientMethod}. The linearity of the algorithm~\eqref{Eq:RelatedWorkSubgradientConsensus} and the nonexpansiveness property of the projection operator allow an extensive study of the algorithm. In particular, there are proofs of convergence even when the network edges appear and disappear randomly over time. The resulting algorithm, however, inherits the slow convergence properties of the (sub)gradient algorithm, making it communication-inefficient. Variations of~\eqref{Eq:RelatedWorkSubgradientConsensus} have also been explored~\cite{Johansson08-SubgradientMethodsAndConsensusAlgorithms-CDC,Zhu12-OnDistributedConvexOptimizationUnderInequalityEqualityConstraints,Zhu10-OnDistributedOptimizationUnderInequalityPrimalDualMethods,Sayed12-DiffusionAdaptationDistributedOptimization}. For example, \cite{Johansson08-SubgradientMethodsAndConsensusAlgorithms-CDC} considers the update $x_p^{k+1} = \bigl[\sum_{j \in \mathcal{N}_p \cup \{p\}}a_{pj}^k (x_j^k - \alpha_k \nabla f_j(x^k))\bigr]_{X_p}$ and, thus, the (sub)gradient update occurs before transmission; the work~\cite{Zhu12-OnDistributedConvexOptimizationUnderInequalityEqualityConstraints,Zhu10-OnDistributedOptimizationUnderInequalityPrimalDualMethods} applies~\eqref{Eq:RelatedWorkSubgradientConsensus} to the dual of a constrained optimization problem.

		After noticing that \eqref{Eq:RelatedWorkSubgradientConsensus} is the application of the (sub)gradient algorithm~\eqref{Eq:RelatedWorkSubgradientMethod} to a problem related (but not equivalent) to~\eqref{Eq:GlobalProblem}, \cite{Jakovetic11-FastDistributedMethods} proposed an improvement based on Nesterov's fast gradient algorithm~\eqref{Eq:RelatedWorkNesterovAlg}. The problem algorithm~\eqref{Eq:RelatedWorkSubgradientConsensus} actually solves is
		\begin{equation}\label{Eq:RelatedWorkJakoveticReform}
			\underset{x = (x_1,\ldots,x_P)}{\text{minimize}}\,\, f_1(x_1) + \cdots + f_P(x_P) + \frac{\alpha \,k}{2} \sum_{(i,j) \in \mathcal{E}} \|x_i - x_j\|^2\,,
		\end{equation}
		where~$x_p \in \mathbb{R}^n$ is the copy of~$x$ held by node~$p$, $\alpha > 0$ is a constant, and~$k$ is the iteration number. The first term of the objective of~\eqref{Eq:RelatedWorkJakoveticReform} is the original objective~\eqref{Eq:GlobalProblem}, where the variable~$x$ was replaced by its copy~$x_p$ at the $p$th function~$f_p$; the second term is a consensus-inducing term, in the sense that different values of the copies between neighbors are penalized. As the iterations go on, the second term becomes more important, forcing the nodes to achieve a consensus on their copies. 	Note that problems~\eqref{Eq:GlobalProblem} and~\eqref{Eq:RelatedWorkJakoveticReform} are not equivalent and that this provides an additional reason why algorithms based on~\eqref{Eq:RelatedWorkSubgradientConsensus} usually converge slowly. The algorithms we propose in this thesis reformulate~\eqref{Eq:GlobalProblem} into problems that are equivalent to the original one and, thus, do not have this drawback.

		Other distributed algorithms that solve~\eqref{Eq:GlobalProblem} with algorithms based on (sub)gradient methods include~\cite{Duchi12-DualAveragingDistributedOptimization}, which hinges on a dual averaging algorithm by Nesterov~\cite{Nesterov09-PrimalDualSubgradientMethodsConvexProblems}, and \cite{Ram09-AsynchronousGossipAlgorithmsForStochasticOptimization}, which studies a gossip-based version of~\eqref{Eq:RelatedWorkSubgradientConsensus}, i.e., only two neighboring nodes communicate at each time instant. The work~\cite{Ghadimi13-AcceleratedGradientMethodsNetworkedOptimization} solves~\eqref{Eq:RelatedWorkDualDecomposition}, i.e., the dual of~\eqref{Eq:RelatedWorkDecompositionMethods}, using Polyak's heavy-ball method~\cite{Polyak87-IntroductionToOptimization}.

		\mypar{Double-looped algorithms}
		A reformulation of~\eqref{Eq:GlobalProblem} similar to~\eqref{Eq:RelatedWorkRabbat05DualDecomposition}, but that uses half the constraints, is
		\begin{equation}\label{Eq:RelatedWorkEdgeBasedReformulation}
			\begin{array}{ll}
				\underset{x_1,\ldots,x_P}{\text{minimize}} & f_1(x_1) + f_2(x_2) + \cdots + f_P(x_P) \\
				\text{subject to} & x_i = x_j\,,\quad (i,j) \in \mathcal{E}\,,
			\end{array}
		\end{equation}
		where the copies associated to each node are enforced to be the same through the edges of the network. Similarly to what we saw for~\eqref{Eq:RelatedWorkRabbat05DualDecomposition}, if we apply dual decomposition to~\eqref{Eq:RelatedWorkEdgeBasedReformulation}, the result is a distributed algorithm. Unless it is assumed that each function~$f_p$ is strictly convex, it is not possible, however, to recover a primal solution after having solved the dual problem. An alternative is to use augmented Lagrangian methods, for example, the method of multipliers~\eqref{Eq:RelatedWorkMoM1}-\eqref{Eq:RelatedWorkMoM2}. The augmented term, however, precludes the minimization~\eqref{Eq:RelatedWorkMoM1} from being carried out in a distributed way. A known workaround is to use an additional loop: an iterative algorithm such as the nonlinear Jacobi~\eqref{Eq:RelatedWorkNonlinearJacobi} or the nonlinear Gauss-Seidel~\eqref{Eq:RelatedWorkBlockCoordinateNGS}. In fact, this has been done for solving problem~\eqref{Eq:RelatedWorkDecompositionMethods} in~\cite{Ruszczynski92-AugmentedLagrangianDecomposition} (method of multipliers concatenated with the diagonal quadratic approximation) and in~\cite{Zakarian95-NonlinearJacobi-thesis} (method of multipliers concatenated with the nonlinear Jacobi method). In our work~\cite{Mota11-BasisPursuitInSensorNetworks-ICASSP}, which is not included in this thesis, we applied Nesterov's gradient algorithm~\eqref{Eq:RelatedWorkNesterovAlg} to both loops, for solving basis pursuit, a problem that can be written as~\eqref{Eq:GlobalProblem}, as we will see in the next chapter. Another relevant work is~\cite{Jakovetic11-CooperativeConvexOptimization}, which solves~\eqref{Eq:GlobalProblem} with the method of multipliers concatenated with a randomized nonlinear Gauss-Seidel method, and uses a reformulation identical to~\eqref{Eq:RelatedWorkEdgeBasedReformulation}; see~\cite{Necoara13-RandomCoordinateDescentAlgorithmsMultiAgentConvexOptimization} for related work. A difficulty that arises when implementing double-looped algorithms is determining a distributed, robust stopping criterion for the inner loop. Implementing double-looped algorithms in a communication-efficient manner is therefore very challenging.

		\mypar{ADMM-based}
		If we apply ADMM to the reformulations~\eqref{Eq:RelatedWorkRabbat05DualDecomposition}, \eqref{Eq:RelatedWorkEdgeBasedReformulation}, and similar ones, we get, in general, distributed algorithms that do not suffer the lack of parallelism of incremental methods, the slow rates of convergence of (sub)gradient-based methods, and the cumbersome two loops of double-looped algorithms.

		\begin{algorithm}
			\small
			\caption{\cite{Schizas08-ConsensusAdHocWSNsPartI}}
			\label{Alg:Schizas}
			\algrenewcommand\algorithmicrequire{\textbf{Initialization:}}
			\begin{algorithmic}[1]
			\Require Choose $\rho \in \mathbb{R}$; for all $p \in \mathcal{V}$, set $x_p^0 = \mu_p^0 = \eta_p^0 = 0_n \in \mathbb{R}^n$ and $\tau_p = 1/(\rho(D_p + 1))$; set $k=0$

			\Repeat
			\ForAll{$p \in \mathcal{V}$ [in parallel]}

			\Statex

			\State Compute
				$
					z_p^{k+1} = \tau_p \,\mu_p^k + \frac{1}{D_p+1}\sum_{j \in \mathcal{N}_p^+} x_j^k
				$
				and exchange $z_p^{k+1}$ with neighbors $\mathcal{N}_p$
				\label{SubAlg:RelatedWorkSchizasUpdateZ}

			\Statex

			\State Compute
				$
					x_p^{k+1} = \text{prox}_{\,\tau_p f_p}\Bigl(\frac{1}{D_p + 1} \sum_{j \in \mathcal{N}_p^+}z_j^{k+1} - \tau_p \, \eta_p^k\Bigr)
				$
				and exchange $x_p^{k+1}$ with neighbors $\mathcal{N}_p$
				\label{SubAlg:RelatedWorkSchizasUpdateX}

			\Statex

			\State Update the dual variables:
				\begin{align*}
					\mu_p^{k+1} &= \mu_p^k + \frac{1}{\tau_p} \biggl(\frac{1}{D_p + 1}\sum_{j \in \mathcal{N}_p^+} x_j^{k+1} - z_p^{k+1} \biggr)
					\\
					\eta_p^{k+1} &= \eta_p^k + \frac{1}{\tau_p} \biggl(x_p^{k+1} - \frac{1}{D_p+1}\sum_{j \in \mathcal{N}_p^+} z_j^{k+1} \biggr)
				\end{align*}
				\label{SubAlg:RelatedWorkSchizasUpdateDual}

			\vspace{-0.2cm}

			\State $k \gets k+1$
			\EndFor
			\Until{some stopping criterion is met}
    \end{algorithmic}
    \end{algorithm}

		As said before, the first distributed algorithm based on ADMM was proposed in~\cite{Schizas08-ConsensusAdHocWSNsPartI}, for solving a particular instance of~\eqref{Eq:GlobalProblem} in the context of estimation. That algorithm, however, can be easily generalized to solve the entire class~\eqref{Eq:GlobalProblem} and is shown as Algorithm~\ref{Alg:Schizas}, explained later. \aref{App:ADMMAlgsGlobalClass} shows the derivation of Algorithm~\ref{Alg:Schizas}: we show this derivation for completeness and because, to our best knowledge, there is no reference in the literature where the algorithm is derived to solve the entire class~\eqref{Eq:GlobalProblem}. The derivation applies the $2$-block ADMM to the following reformulation of~\eqref{Eq:GlobalProblem}:
		\begin{equation}\label{Eq:RelatedWorkSchizasReformulation}
			\begin{array}{ll}
				\underset{\bar{x},\bar{z}}{\text{minimize}} & f_1(x_1) + f_2(x_2) + \cdots + f_P(x_P) \\
				\text{subject to} & x_p = z_j\,, \quad j \in \mathcal{N}_p^+\,,\quad p = 1,\ldots,P\,,
			\end{array}
		\end{equation}
		where each node~$p$ has two copies of~$x$: $x_p \in \mathbb{R}^n$ and~$z_p \in \mathbb{R}^n$.\footnote{As pointed out in~\cite{Schizas08-ConsensusAdHocWSNsPartI}, if there are cliques in the network and, in each clique, only one node is chosen to have the second copy of~$x$, say~$z_p$, problems~\eqref{Eq:GlobalProblem} and~\eqref{Eq:RelatedWorkSchizasReformulation} are still equivalent. In that case, we can even go further and reduce each clique to one node. Since this is a very specific case, we will ignore it and assume that there are no cliques or, if there are, that each node has two copies of~$x$ anyway.} The optimization variable is $(\bar{x},\bar{z}) = (x_1,\ldots,x_P,z_1,\ldots,z_P) \in (\mathbb{R}^n)^{2P}$, which makes problem~\eqref{Eq:RelatedWorkSchizasReformulation} have~$2P$ times more variables than the original problem~\eqref{Eq:GlobalProblem}. In~\eqref{Eq:RelatedWorkSchizasReformulation}, we used $\mathcal{N}_p^+ = \mathcal{N}_p \cup \{p\}$ to denote the extended neighborhood of node~$p$, i.e., its set of neighbors~$\mathcal{N}_p$ and itself. Problem~\eqref{Eq:RelatedWorkSchizasReformulation} then has $2E + P$ constraints, since there are~$2$ constraints per edge $(i,j) \in \mathcal{E}$, $x_i = z_j$ and $x_j = z_i$, and each node~$p$ constrains~$x_p = z_p$. Regarding Algorithm~\ref{Alg:Schizas}, it is fully parallel, as all nodes perform the same tasks at the same time. In the initialization, $\rho$ is the augmented Lagrangian parameter and is assumed fixed and known by all the nodes. At each node~$p$, there is an auxiliary variable~$\tau_p$ that depends on~$\rho$ and on~$D_p = |\mathcal{N}_p|$, the number of neighbors of node~$p$. The algorithm consists of three operations, in two of each there is a communication step. Specifically, in step~\ref{SubAlg:RelatedWorkSchizasUpdateZ} (resp.\ \ref{SubAlg:RelatedWorkSchizasUpdateX}) each node updates $z_p$ (resp.\ $x_p$) and exchanges it with its neighbors. Note that updating~$x_p$ in step~\ref{SubAlg:RelatedWorkSchizasUpdateX} requires the variables~$z_j$ from the neighbors $j \in \mathcal{N}_p$. Note also that while the update of~$z_p$ is linear and independent of the function~$f_p$, the update of~$x_p$ involves the prox operator of a scaled version of~$f_p$. The prox operator of a  closed convex function~$f:\mathbb{R}^q \xrightarrow{} \mathbb{R} \cup \{+\infty\}$ is defined as
		\begin{equation}\label{Eq:DefinitionProxOperator}
			\text{prox}_f(x) = \underset{y}{\arg\min}\,\, f(y) + \frac{1}{2}\|y - x\|^2\,.
		\end{equation}
		This operator, introduced in~\cite{Moreau62-FonctionsConvexesDualesEtPointsProximaux}, arises in ADMM-based algorithms, since each ADMM subproblem (cf.\ \eqref{Eq:RelatedWorkADMMIter1}-\eqref{Eq:RelatedWorkADMMIter2}) is a quadratic problem that can always be written in terms of the prox operator. The prox operator has many properties; see~\cite{Pesquet11-ProximalSplittingMethodsInSignalProcessing-BookChapter} for an extensive list. After performing step~\ref{SubAlg:RelatedWorkSchizasUpdateX} in Algorithm~\ref{Alg:Schizas}, node~$p$ updates two dual variables, $\mu_p$ and~$\eta_p$, using the new values of~$x_j$ and~$z_j$, for~$j \in \mathcal{N}_p^+$. The convergence of the algorithm is guaranteed by the convergence results for the $2$-block ADMM~\eqref{Eq:RelatedWorkADMMIter1}-\eqref{Eq:RelatedWorkADMMIter3}.

    \begin{algorithm}
			\small
			\caption{\cite{Zhu09-DistributedInNetworkChannelCoding}}
			\label{Alg:Zhu}
			\algrenewcommand\algorithmicrequire{\textbf{Initialization:}}
			\begin{algorithmic}[1]
			\Require Choose $\rho \in \mathbb{R}$; for all $p \in \mathcal{V}$, set $x_p^0 = \mu_p^0 = 0_n \in \mathbb{R}^n$ and $\tau_p = 1/(2\rho D_p)$; set~$k=0$
			\Repeat

			\ForAll{$p \in \mathcal{V}$ [in parallel]}

			\Statex

			\State Compute
				$
					x_p^{k+1} = \text{prox}_{\tau_p f_p}\Bigl(\frac{1}{2D_p} \sum_{j \in \mathcal{N}_p} (x_p^k + x_j^k) - \tau_p \mu_p^k\Bigr)
				$
				and exchange $x_p^{k+1}$ with neighbors~$\mathcal{N}_p$
				\label{SubAlg:RelatedWorkZhuUpdateX}

			\Statex

			\State Update the dual variable
				$
					\mu_p^{k+1} = \mu_p^k + \frac{1}{2\tau_p}\bigl(x_p^{k+1} - \frac{1}{D_p}\sum_{j \in \mathcal{N}_p}x_j^{k+1}\bigr)
				$
				\label{SubAlg:RelatedWorkZhuUpdateDual}

			\vspace{0.2cm}
			\State $k \gets k+1$

			\EndFor
    \Until{some stopping criterion is met}
    \end{algorithmic}
    \end{algorithm}

    The second distributed algorithm based on ADMM was proposed in~\cite{Zhu09-DistributedInNetworkChannelCoding} to solve the average consensus problem, in the context of channel decoding. As~\cite{Schizas08-ConsensusAdHocWSNsPartI}, it can also be easily generalized to solve the entire class~\eqref{Eq:GlobalProblem}. Indeed, that algorithm was used in~\cite{Erseghe11-FastConsensusByADMM,Bazerque10-DistributedSpectrumSensingCognitiveRadio,Forero10-ConsensusBasedDistributedSVMs,Mateos10-DistributedSparseLinearRegression} to solve several other problems in signal processing and machine learning that can be recast as~\eqref{Eq:GlobalProblem}. Algorithm~\ref{Alg:Zhu} shows an adaptation of the algorithm proposed in~\cite{Zhu09-DistributedInNetworkChannelCoding} to solve the entire class~\eqref{Eq:GlobalProblem}; its derivation is shown in \aref{SubApp:DerivationZhu}. As in Algorithm~\ref{Alg:Schizas}, the derivation applies the $2$-block ADMM, but to a different reformulation of~\eqref{Eq:GlobalProblem}. Namely, starting with the equivalent problem~\eqref{Eq:RelatedWorkEdgeBasedReformulation}, \cite{Zhu09-DistributedInNetworkChannelCoding} adds~$E$ new variables, each one associated to an edge~$(i,j) \in \mathcal{E}$ of the network, and rewrites~\eqref{Eq:RelatedWorkEdgeBasedReformulation} as
    \begin{equation}\label{Eq:RelatedWorkZhuReformulation}
    	\begin{array}{ll}
    		\underset{\bar{x},\bar{z}}{\text{minimize}} & f_1(x_1) + f_2(x_2) + \cdots + f_P(x_P) \\
    		\text{subject to} & x_i = z_{ij}\,, \quad (i,j) \in \mathcal{E} \\
    		                  & x_j = z_{ij}\,, \quad (i,j) \in \mathcal{E}\,,
    	\end{array}
    \end{equation}
    where~$(\bar{x},\bar{z}) = (x_1,\ldots,z_P,\ldots,z_{ij},\ldots) \in (\mathbb{R}^n)^{P+E}$ is the optimization variable. Problem~\eqref{Eq:RelatedWorkZhuReformulation} then has~$P+E$ more variables (each of size~$n$) than~\eqref{Eq:GlobalProblem} and introduces~$2E$ constraints. In \aref{SubApp:DerivationZhu}, we show how the application of the $2$-block ADMM~\eqref{Eq:RelatedWorkADMMIter1}-\eqref{Eq:RelatedWorkADMMIter3} to~\eqref{Eq:RelatedWorkZhuReformulation} yields Algorithm~\ref{Alg:Zhu}. Note that the application of the same algorithm to a different problem reformulation yields a different, yet more efficient, algorithm. In particular, Algorithm~\ref{Alg:Zhu} has only one communication step per iteration, whereas Algorithm~\ref{Alg:Schizas} has two. The communication step occurs in step~\ref{SubAlg:RelatedWorkZhuUpdateX}, where each node~$p$ updates its estimate~$x_p$ by computing the prox operator of~$\tau_p f_p$, and then broadcasts the new estimate to its neighbors~$\mathcal{N}_p$. Note that~$\bar{z}$, the variable that was introduced in~\eqref{Eq:RelatedWorkZhuReformulation}, is absent of Algorithm~\ref{Alg:Zhu} since, as shown in \aref{SubApp:DerivationZhu}, it can be eliminated. The notable work~\cite{Erseghe11-FastConsensusByADMM} provides a thorough analysis of Algorithms~\ref{Alg:Schizas} and~\ref{Alg:Zhu} applied to the average consensus problem. Namely, it establishes linear convergence, proposes a scheme to select the augmented Lagrangian parameter~$\rho$, and studies the factors that influence their convergence. More recently, the work~\cite{Yin13-LinearlyConvergentDecentralizedConsensusOptimizationADMM-ICASSP,Yin13-LinearConvergenceADMMDecentralizedOptimization}, based on the results of~\cite{Deng12-GlobalLinearConvergenceGeneralizedADMM}, establishes the linear convergence of Algorithm~\ref{Alg:Zhu} whenever each function~$f_p$ is strongly convex, differentiable, and its gradient is Lipschitz continuous. It also studies the factors that influence the convergence rate of the algorithm and, based on that study, proposes a scheme to select the augmented Lagrangian parameter~$\rho$. Although that scheme gives a reasonable value for~$\rho$, it does not give the optimal one, i.e., it is usually possible to select a better one by trial-and-error. This partly explains why in the experimental results presented in this thesis we always try several values for~$\rho$, through grid search, and select the one that yields the best result.

    The algorithm we propose for~\eqref{Eq:GlobalProblem}, rather than using the $2$-block ADMM, applies the multi-block ADMM~\eqref{Eq:RelatedWorkADMMProbExtendedADMMIter1}-\eqref{Eq:RelatedWorkADMMProbExtendedADMMIter4} directly to reformulation~\eqref{Eq:RelatedWorkEdgeBasedReformulation}. Although we cannot establish a convergence rate (since that is still an open problem for the multi-block ADMM), we show through extensive experimental results that the resulting algorithm outperforms both Algorithms~\ref{Alg:Schizas} and~\ref{Alg:Zhu} in terms of the number of communications.

    \mypar{Other splitting methods}
    We already mentioned that ADMM is an application of the Douglas-Rachford splitting operator to finding the zeros of a given monotone operator. Besides ADMM, other splitting algorithms can be applied and yield distributed optimization algorithms. One example is in~\cite{Ling12-MultiBlockAlternatingDirectionMethodParallelSplittingConsensusOptimization}, which applies a parallel splitting scheme directly to reformulation~\eqref{Eq:RelatedWorkEdgeBasedReformulation}, as the algorithm we propose. Our experimental results show, however, that our algorithm outperforms the algorithm proposed in~\cite{Ling12-MultiBlockAlternatingDirectionMethodParallelSplittingConsensusOptimization} in terms of the number of communications.%\mynote{Elaborate more}

    We also mention that~\cite{Iutzeler13-AsynchronousDistributedOptimizationRandomizedADMM} proposed an asynchronous distributed algorithm for~\eqref{Eq:GlobalProblem} using a randomized version of the Douglas-Rachford operator. Their experimental results show, however, that the resulting algorithm requires more communications to converge than by using the synchronous version. A gossip-based distributed ADMM-based algorithm has been recently proposed in~\cite{Wei13-OnTheConvergenceAsynchronousDistributedADMM} and has been shown to converge with rate~$O(1/k)$.

	\subsection{Star-shaped class}

	Somehow differently from the global class~\eqref{Eq:GlobalProblem}, distributed algorithms for the star-shaped class have been motivated mainly by specific applications, and not by the goal of solving an entire class of optimization problems. Such a motivating applications include network utility maximization (NUM), network flow problems, state estimation in power systems, and distributed model predictive control (D-MPC). For this reason, we will organize this section application-wise rather than algorithm-wise. Some applications, most notably D-MPC, arise naturally in scenarios where the variable is non-global and non-star-shaped. In fact, one of the contributions of this thesis is a new framework for D-MPC that uses a generic connected, or even non-connected, variable; this will be addressed in \cref{Ch:ConnectedNonConnected}.

	\mypar{Network utility maximization}
	Consider a network whose edges have a finite transmission capacity and whose nodes are either packet sources, packet sinks, or packet re-transmitters. Each source sends packets to one sink through a specific, pre-chosen route along the network. Associated to each source~$s$ there is an utility function~$U_s$ (increasing and concave) that depends on~$x_s$, the rate at which source~$s$ sends packets. The goal of network utility maximization (NUM), proposed in~\cite{Kelly97-ChargingRateControlElasticTraffic,Kelly98-RateControlCommunicationNetworksShadowPrices}, is to maximize the sum of the utilities of all the sources, while satisfying the link capacity constraints:
	\begin{equation}\label{Eq:RelatedWorkNUM}
		\begin{array}{ll}
			\underset{x = (x_1,\ldots,x_S)}{\text{maximize}} & \sum_{s=1}^S U_s(x_s) \\
			\text{subject to} & Rx = c \\
			                  & x \geq 0\,,
		\end{array}
	\end{equation}
	where the $l$th row of the routing matrix~$R$ has ones in entries corresponding to sources that use link~$l$ and zeros elsewhere. The $l$th entry of vector~$c$ has the capacity of link~$l$. Note that problem~\eqref{Eq:RelatedWorkNUM} is a particular instance of~\eqref{Eq:RelatedWorkDecompositionMethods}. It has been used to model congestion control on the Internet~\cite{Kelly97-ChargingRateControlElasticTraffic,Low99-OptimizationFlowControl,Low02-UnderstandingVegas} and scheduling problems~\cite{Shakkotai07-NetworkOptimizationAndControl}; see also the surveys \cite{Palomar06-TutorialDecompositionMethods,Chiang07-LayeringAsOptimizationDecomposition}. If we build an auxiliary network indicating which links are used by each source then, as we will see in \cref{Ch:ConnectedNonConnected}, a dual problem of~\eqref{Eq:RelatedWorkNUM} can be written as~\eqref{Eq:IntroProb} with a star-shaped variable.  Actually, if we apply a gradient or a subgradient method directly to that dual problem, we obtain a distributed algorithm because all the induced subgraphs are stars. This is done in~\cite{Low99-OptimizationFlowControl}, which proposes and analyzes synchronous and asynchronous versions of the gradient method for a dual problem of~\eqref{Eq:RelatedWorkNUM}; curiously, the TCP/IP Vegas protocol, which was designed as an ad hoc congestion control protocol, is interpreted in~\cite{Low02-UnderstandingVegas} as a gradient method solving that dual problem. With the goal of improving the speed to convergence, Newton-like methods have also been proposed, for example, a diagonally scaled version of the gradient method with Hessian information in the diagonal~\cite{Low00-OptimizationFlowNewton}, and a Newton method where the descent direction is computed approximately~\cite{Bertsekas11-CentralizedDistributedNewton,Wei10-DistributedNewtonMethod,Wei11-DistributedNewtonMethodNUM}. More recently, \cite{Beck13-OptimalDistributedGradientMethodsNetworkResourceAllocationProblems} took advantage of the strong concavity of typical utility functions, which implies that their conjugate is differentiable with Lipschitz continuous gradients, and proposed applying Nesterov's gradient method~\eqref{Eq:RelatedWorkNesterovAlg} with a choice for a Lipschitz constant that does not require knowing all the utilities at a central location. Then, it proved that the primal estimates converge at rate $O(1/k)$ to their optimal values.

	In all these methods, the communication between the source nodes and the used links can be done implicitly, i.e., without sending additional numbers over the network: only by increasing or decreasing the sending rate at which each source sends its packets, and by discarding or not packets that arrive to a given link, an implicit communication can be established. The algorithm we propose for~\eqref{Eq:IntroProb}, in contrast, requires explicit communication between the source nodes and the links; however, it exhibits faster convergence to the equilibrium.

	\mypar{Distributed model predictive control}
	Model predictive control (MPC), also known as receding horizon control, is an efficient control scheme for discrete-time systems. Dating back to the early sixties~\cite{Zadeh62-OnOptimalControlAndLinearProgramming,Propoi63-UseOfLPMethodsForSynthesizingSampledDataAutomaticSystems}, MPC became very popular in the petro-chemical industry in the early eighties, as surveyed in~\cite{Morari89-ModelPredictiveControlTheoryAndPractice}. The interest in applying MPC to distributed systems, however, arose later, in the nineties~\cite{Acar92-SomeExamplesDecentralizedRecedingHorizonControl-CDC,Fawal98-OptimalControlComplexIrrigationSystemsAugmentedLagrangian}. The setting is a network of systems, each of which has associated a state, a control input, or both. Each system interacts with neighboring systems in two ways: through system dynamics and through communication. Interaction through system dynamics means that the state of each system is influenced by the states and control inputs of neighboring systems; sometimes, neighboring systems also have coupled goals (or efficiency measures). Interaction through communication refers to the ability that each system has to exchange messages with neighboring systems and, thus, it corresponds to what we call communication network. MPC in this scenario is usually referred to as distributed MPC (D-MPC). The goal in each instance of D-MPC is to make the systems cooperate to find an optimal set of inputs, i.e., control inputs that drive the state of each system from an initial (measured) state to a predefined goal, while minimizing the energy to do so. This can be cast as an optimization problem with the format of~\eqref{Eq:IntroProb}, as we will see in \cref{Ch:ConnectedNonConnected}. To the best of our knowledge, all prior work on D-MPC has assumed that interaction through dynamics coincides with interaction through communication. That is, if two systems have coupled dynamics, i.e., the state of one of them is influenced by the state or input of the other, then they necessarily communicate directly. According the classification scheme introduced in \cref{Ch:Introduction}, the variable in this case is star-shaped. In this thesis, we introduce a new framework for D-MPC, where coupled systems do not necessarily need to communicate directly. We also present potential applications for this new framework.

	Early work on D-MPC has focused on studying stability and performance of heuristics whose solutions are not guaranteed to be optimal. For example, \cite{Jia01-DistributedModelPredictiveControl} proposes a one-step scheme where each system solves a local optimization problem that incorporates state predictions from its neighbors; this is preceded by a communication step, where state predictions are exchanged between neighboring nodes. For related methods, see~\cite{Camponogara02-DistributedMPC,Keviczky06-DecentralizedRecedingHorizonControl,Venkat05-StabilityOptimalityDistributedMPC}.

	D-MPC has also been tackled with optimization-based algorithms, not always completely distributed, that find exact solutions. For example, \cite{Fawal98-OptimalControlComplexIrrigationSystemsAugmentedLagrangian} proposes an augmented Lagrangian method where the augmented term is linearized, a method now known as split inexact Uzawa method in the image processing community~\cite{Zhang10-BregmanizedNonlocalRegularizationForDeconvolutionSparseReconstructions,Zhang10-UnifiedPrimalDualAlgorithmFrameworkBasedBregmanIteration}. The resulting algorithm is not distributed, since it requires a central node. Distributed algorithms for D-MPC include dual decomposition with the subgradient method~\cite{Wakasa08-DecentralizedMPCDualDecomp} (as described in \ssref{SubsSec:DecompositionMethods}), distributed interior-point methods~\cite{Camponogara11-DistributedOptimizationMPCLinearDynamicNetworks}, and more recently, fast gradient methods~\cite{Conte12-ComputationalAspectsDistributedMPC} and ADMM~\cite{Conte12-ComputationalAspectsDistributedMPC,Summers12-DistributedModelPredictiveControlViaADMM}. In particular, \cite{Conte12-ComputationalAspectsDistributedMPC,Summers12-DistributedModelPredictiveControlViaADMM} apply the ADMM method proposed in~\cite{Boyd11-ADMM}, which becomes distributed whenever the variable is star-shaped. This is, in fact, the case since, as mentioned before, all prior work on D-MPC assumes that interaction through dynamics coincides with interaction through communication.

	The algorithms we propose for D-MPC require less communications to achieve convergence than all these algorithms. In addition, they solve D-MPC in scenarios that have never been considered before: problems with a connected variable that is neither global nor star-shaped, and problems with a non-connected variable. Both cases model systems that are coupled through their dynamics, but cannot communicate directly.

	\mypar{Network flows}
	Beyond NUM and D-MPC, there is an extensive literature on network flow problems, some of which can be formulated as~\eqref{Eq:IntroProb} as well. In a typical network flow problem, each component of the optimization variable is associated to an edge of the network, and the function at each node depends on the variables associated to its incident edges. Hence, the variable is star-shaped; actually, each induced subgraph is very simple: it consists of two nodes and an edge connecting them. The first optimization algorithm solving a network flow problem was Dantzig's simplex method~\cite[Ch.19-20]{Dantzig63-LinearProgrammingAndExtensions}. Extensive information about network flows, including specialized algorithms (most of them centralized), can be found in the surveys~\cite{Ahuja91-SomeRecentAdvancesInNetworkFlows,Goldberg89-NetworkFlowAlgorithms} and in the books~\cite{Ahuja93-NetworkFlows,Bertsekas98-NetworkOptimization}.

	Regarding distributed algorithms for network flows, dual decomposition methods generally yield distributed algorithms. For example, by assuming strict convexity on the cost functions, \cite{Bertsekas87-DistributedAsynchronousRelaxationMethodsConvexNetworkFlowProblems} computes the dual of a network flow problem and proposes to solve it with an asynchronous Gauss-Seidel method. The application of a subgradient method to a similar problem is analyzed in~\cite{Boyd04-SimultaneousRoutingAndResourceAllocationViaDualDecomposition}. More recently, \cite{Trdlicka10-DistributedMultiCommodityNetworkFlowAlgorithmForEnergyOptimalRoutingWSN} proposed a double-looped algorithm, where the outer loop uses the proximal minimization algorithm (to overcome the lack of strict convexity of the primal objective) and the inner loop uses the gradient method. The work~\cite{Jadbabaie09-DistributedNewtonMethodNetworkOptimization,Zargham11-AcceleratedDualDescentNetworkOptimization,Zargham12-AcceleratedDualDescentForNetworkFlowOptimization,Bertsekas11-CentralizedDistributedNewton} proposes a distributed algorithm for network flows based on Newton's method, where the Newton direction is computed approximately. Although the resulting method requires the cost functions to be strongly convex and twice differentiable, it is proven to converge superlinearly to a neighborhood of the problem's solution. This contrasts with the algorithm we propose for~\eqref{Eq:GlobalProblem}, which only requires the cost functions to be convex, possibly non-differentiable. Our algorithm thus requires assumptions much less restrictive that the assumptions of methods based on dual decomposition or on Newton's algorithm. Additionally, as will be shown in \cref{Ch:ConnectedNonConnected}, the algorithm we propose requires less communications to converge than the algorithm in~\cite{Jadbabaie09-DistributedNewtonMethodNetworkOptimization,Zargham11-AcceleratedDualDescentNetworkOptimization,Zargham12-AcceleratedDualDescentForNetworkFlowOptimization}.

	\subsection{Mixed class}

	The mixed problem class~\eqref{Eq:IntroMixed}, reproduced here for convenience,
	\begin{equation}	\label{Eq:IntroMixed}\tag{M}
		\begin{array}{ll}
			\underset{x = (y,z) \in\mathbb{R}^n}{\text{minimize}} & f_1(y,z_{S_1}) + f_2(y,z_{S_2}) + \cdots + f_P(y,z_{S_P})\,,
		\end{array}
	\end{equation}
	has rarely appeared in literature, despite its generality and possible applications. One instance of~\eqref{Eq:IntroMixed} has appeared in~\cite{Tan06-DistributedOptimizationCoupledSystemsNUM} (see also~\cite[\S IV-B]{Palomar06-TutorialDecompositionMethods}) as a dual of a NUM problem with coupled objectives. We will look at this problem with more detail in \cref{Ch:ConnectedNonConnected}. Such a problem can model cooperative systems, e.g., systems where the rate allocated to one source depends on the rate allocated to the cluster that source belongs to, or competitive systems, e.g., wireless power control or digital subscribed line (DSL) spectrum management where signal-to-interference ratios (SIR) are dependent on transmit powers of other users. The method proposed in~\cite{Tan06-DistributedOptimizationCoupledSystemsNUM} is distributed and consists of solving that dual problem (which has the format of~\eqref{Eq:IntroMixed}) with a gradient method. Actually, the application of the gradient method to~\eqref{Eq:IntroMixed} in~\cite{Tan06-DistributedOptimizationCoupledSystemsNUM} yields a distributed algorithm, because the non-global components, $z$ in~\eqref{Eq:IntroMixed}, are star-shaped.

  We will also use the framework of~\eqref{Eq:IntroMixed} to solve in a distributed way a compressed sensing problem with a data partitioning that has never been considered before. More concretely, basis pursuit denoising (BPDN), and a related problem that we call \textit{reversed lasso} have been solved in a distributed way with a row partition~\cite{Mota13-DADMM,Bazerque10-DistributedSpectrumSensingCognitiveRadio,Mateos10-DistributedSparseLinearRegression} and with a column partition~\cite{Mota13-DADMM}, respectively. The reverse cases, i.e., BPDN with a column partition and reversed lasso with a row partition, have never been solved before. We will show in \cref{Ch:ConnectedNonConnected} that reversed lasso with a row partition can be formulated as~\eqref{Eq:IntroMixed}, and therefore can be solved with the algorithms we propose here.

\chapter{Global Class}
\label{Ch:GlobalVariable}

	This chapter addresses the global class~\eqref{Eq:GlobalProblem} and is based on the publications~\cite{Mota12-DADMM-ICASSP,Mota12-ConsensusOnColoredNetworks-CDC,Mota12-DistributedBP,Mota11-ADMMProof,Mota13-DADMM}. The chapter is organized into four sections. In \sref{Sec:GC:ProblemStatement}, the problem is formally stated and the assumptions are clearly identified. In \sref{Sec:GC:Applications}, we describe some applications that can be written as~\eqref{Eq:GlobalProblem}. Special emphasis is given to \ssref{SubSec:GC:SparseSolutions}, since it contains novel contributions, such as writing some distributed compressed sensing problems as~\eqref{Eq:GlobalProblem}. Then, in~\sref{Sec:GC:Derivation}, we propose our algorithm for the global class~\eqref{Eq:GlobalProblem} and analyze it. Finally, in \sref{Sec:GC:ExperimentalResults}, we show the performance of the proposed algorithm against prior algorithms by running extensive simulations. These show that, while solving the entire class~\eqref{Eq:GlobalProblem}, our algorithm is as efficient as algorithms that were specifically designed for particular applications and, often, it is even better.

	\section{Problem statement}
	\label{Sec:GC:ProblemStatement}

	The global problem class~\eqref{Eq:GlobalProblem} consists of minimizing the sum of~$P$ functions where each function depends on all the components of~$x$. For convenience, let us rewrite~\eqref{Eq:GlobalProblem} here:
	\begin{equation}\tag{G}
		\begin{array}{ll}
			\underset{x \in \mathbb{R}^n}{\text{minimize}} & f_1(x) + f_2(x) + \cdots + f_P(x)\,.
		\end{array}
	\end{equation}
	We make the following assumptions:
	\begin{assumption}\label{Ass:GPfunctions}
		Each function~$f_p : \mathbb{R}^n \xrightarrow{} \mathbb{R} \cup \{+\infty\}$ is closed and convex over~$\mathbb{R}^n$ and not identically~$+\infty$.
	\end{assumption}
	\begin{assumption}\label{Ass:GPsolvable}
		Problem~\eqref{Eq:GlobalProblem} is solvable, i.e., it has at least one solution~$x^\star$.
	\end{assumption}
	In \assref{Ass:GPfunctions} we use the concept of an extended real-valued function~$f$, which can take infinite values and is defined over all~$\mathbb{R}^n$. Such a function is closed and convex if its epigraph $\text{epi}\,f := \{(x,r) \in \mathbb{R}^n \times \mathbb{R}\,:\, f(x) \leq r\}$ is closed and convex, respectively~\cite[\S1.2]{Bertsekas03-ConvexAnalysisAndOptimization}, \cite[\S B.1]{Lemarechal04-FundamentalsConvexAnalysis}. Alternatively, a function is closed if it is lower semicontinuous or if all its sublevel sets are closed~\cite[Prop.1.2.2]{Bertsekas03-ConvexAnalysisAndOptimization}, \cite[Prop.1.2.2]{Lemarechal04-FundamentalsConvexAnalysis}. Considering extended real-valued functions simplifies the notation without losing generality: as explained before, each node~$p$ can constrain variable~$x$ to belong to a given set~$S$, i.e., $x \in S$, through an indicator function~$\text{i}_S$, defined as $\text{i}_S(x) = 0$ if~$x \in S$, and $\text{i}_S(x) = +\infty$ if $x\not\in S$.

	We associate problem~\eqref{Eq:GlobalProblem} to a communication network~$\mathcal{G} = (\mathcal{V},\mathcal{E})$ with~$P = |\mathcal{V}|$ nodes and $E = |\mathcal{E}|$ edges: the $p$th node of the network is the only node who knows function~$f_p$ or, in other words, function~$f_p$ is private to node~$p$. Regarding the network, we assume:
	\begin{assumption}\label{Ass:GPConnectedStatic}
		The network is connected and its topology does not vary with time.
	\end{assumption}
	\begin{assumption}\label{Ass:GPColoring}
		A coloring scheme~$\mathcal{C}$ of the network is available; each node knows its own color and the color of its neighbors.
	\end{assumption}
	The concept of network coloring was explained in~\sref{Sec:IntroOverview}: it is an assignment of numbers, called colors, to the nodes such that no neighboring nodes have the same color. Formally, each node is assigned a color in~$\mathcal{C} = \{1,\ldots,C\}$, where~$C :=|\mathcal{C}|$ is the total number of colors, and $\mathcal{C}(p)$ denotes the color of node~$p$. The coloring scheme~$\mathcal{C}$ is called \textit{proper} (or valid) if $\mathcal{C}(i) \neq \mathcal{C}(j)$, for all $(i,j) \in \mathcal{E}$.	Our goal is to \textit{design a distributed algorithm that solves~\eqref{Eq:GlobalProblem} while keeping the function~$f_p$ private to node~$p$}. Recall that a distributed algorithm is one that uses no central or special node and no all-to-all communications.

	\mypar{Discussion of the assumptions}
	Compared to prior algorithms for the global class~\eqref{Eq:GlobalProblem}, the problem Assumptions~\ref{Ass:GPfunctions} and~\ref{Ass:GPsolvable} are very general, while the network Assumptions~\ref{Ass:GPConnectedStatic} and~\ref{Ass:GPColoring} are more restrictive, yet realistic in some scenarios. In fact, what \assref{Ass:GPfunctions} asks is the problem to be convex, a minimal requirement to guarantee that we can find a global minimizer of~\eqref{Eq:GlobalProblem}. In \assref{Ass:GPsolvable}, we require that the problem is well-posed by having at least one solution.

	Regarding the network assumptions, assuming a fixed network topology as in \assref{Ass:GPConnectedStatic} is a common first step in distributed optimization. Some algorithms, however, are proven to converge under intermittent link failures, e.g., \cite{Rabbat05-GeneralizedConsensusAlgs,Lobel11-DistributedMultiAgentOptimization,Jakovetic11-CooperativeConvexOptimization}. These algorithms, in turn, require more assumptions on the functions in~\eqref{Eq:GlobalProblem}. In fact, there seems to be a curious tradeoff between the problem assumptions and the network assumptions: the algorithms that relax the network assumptions usually require more restrictive problem assumptions, and vice-versa. Regarding \assref{Ass:GPColoring}, this assumption is new in the context of distributed optimization and will underlie the construction of our algorithm. Recall that finding the minimum number of colors a network can be colored with is NP-hard~\cite{Garey79-ComputersAndIntractability}, except for bipartite networks. The minimum number of colors required to color a network~$\mathcal{G}$ is called the \textit{chromatic number} and is represented with~$\chi(\mathcal{G})$. Assuming that~$\chi(\mathcal{G})$ is known and that~$\chi(\mathcal{G}) > 2$ (i.e., the network is not bipartite), coloring~$\mathcal{G}$ with~$\chi(\mathcal{G})$ colors is NP-hard as well. Given its importance in wireless networks, there are several approximation algorithms to compute coloring schemes of networks, some of which are distributed~\cite{Kuhn06-ComplexityDistributedGraphColoring,Leith06-ConvergenceDistributedLearningAlgorithmsOptimalWirelessChannelAllocation,Duffy08-ComplexityAnalysisDecentralizedGraphColoring,Linial92-LocalityDistributedGraphAlgorithms}. For example, \cite{Kuhn06-ComplexityDistributedGraphColoring} proposes a coloring scheme that uses~$O(D_{\max})$ colors while requiring~$O(D_{\max}/\log^2(D_{\max}) + \log^\star(P))$ iterations to compute them, where~$D_{\max} := \max\{D_p\,:\, p \in \mathcal{V}\}$ is the maximum degree of a node in the network. Another coloring scheme using less iterations, but more colors, more specifically, $O(\log^\star(P))$ iterations and~$O(D_{\max}^2)$ colors, is proposed in~\cite{Linial92-LocalityDistributedGraphAlgorithms}. In this thesis, we assume that a coloring scheme with~$C$ colors is given and we will ignore how it was obtained. Consequently, the additional number of communications to obtain the scheme will also be ignored in the comparison with other algorithms. Although all the other algorithms use no coloring scheme (all nodes work in parallel), the comparison is fair for two reasons: first, if an algorithm is run several times on the same network, for example, for different data, coloring the network just needs to be done once, before the first instantiation; after running the algorithm several times, the coloring cost becomes diluted.	The second, and perhaps more important, reason is that in networks where the transmission medium is shared, for example, in wireless networks or even in Ethernet cables, the nodes cannot communicate in parallel without using a medium access control (MAC) protocol~\cite[Ch.5-6]{Kurose05-ComputerNetworking},\cite{Krishnamachari05-NetworkingWirelessSensors}. For example, in wireless networks, one node cannot receive two different messages from its neighbors at the same time and at the same frequency (unless it uses more than one receive antenna~\cite{Choi10-AchievingSingleChannelDuplex}). This creates the hidden and the exposed node problems~\cite[\S6.2.2]{Krishnamachari05-NetworkingWirelessSensors}, which are prevented by the use of MAC protocols. For data-intensive algorithms, such as the ones considered in this thesis, schedule-based MAC protocols are the most energy-efficient~\cite[\S6.7]{Krishnamachari05-NetworkingWirelessSensors}. Time division multiple access (TDMA) is such a protocol which, in addition, is also based on network coloring. The particular coloring scheme used by TDMA can also be used for the algorithms we propose; thus, our algorithms integrate naturally with TDMA. Prior algorithms for distributed optimization, in contrast, assume no particular MAC protocol. The second part of \assref{Ass:GPColoring} will be discussed when we introduce our algorithm; briefly, it allows discarding a centralized entity controlling all the nodes that have the same color (recall that they are not neighbors) and, because of that, it is essential in making our algorithm distributed.

	\section{Applications}
	\label{Sec:GC:Applications}

	There are many engineering problems that can be written as~\eqref{Eq:GlobalProblem}. Here, we will focus on problems that arise in networks and, consequently, that can be solved via distributed algorithms. We address two types of problems: inference problems, which include average consensus and support vector machines (SVMs), and sparse solutions of linear systems, which include several compressed sensing problems.

	\subsection{Inference problems}

		\mypar{Average consensus}
		Consider the scalar version of the inference problem described in \cref{Ch:Introduction}: a sensor network composed of~$P$ nodes is deployed to estimate a parameter~$\bar{\theta} \in \mathbb{R}$. The estimation uses measurements from all the sensors, which are assumed noisy. Let~$\theta_p$ denote the measurement at node~$p$. When the noise is independent across nodes, Gaussian, with zero mean, and identity covariance matrix, the maximum log-likelihood estimation of~$\bar{\theta}$ is given by \textit{average consensus}~\cite{DeGroot74-ReachingConsensus}:
		\begin{equation}\label{Eq:Consensus}
			\begin{array}{ll}
				\underset{x}{\text{minimize}} & \frac{1}{2}(x - \theta_1)^2 + \frac{1}{2}(x - \theta_2)^2 + \cdots + \frac{1}{2}(x - \theta_P)^2\,.
			\end{array}
		\end{equation}
		Average consensus has been widely studied in the literature, and many distributed algorithms have been proposed to solve it~\cite{Jadbabaie03-CoordinationGroupsMobileAutonomousAgents,Boyd04-FastLinearIterationsforDistributedAveraging,Boyd06-RandomizedGossipAlgorithms,OlfatiSaber07-ConsensusAndCooperationInNetworkMultiAgentSystems,Kar09-DistributedConsensusAlgsSN,Oreshkin10-OptimizationAnalysisDistrAveraging,Moura10-GossipSurvey,Olshevsky11-ConvergenceSpeedDistributedConsensusAveraging}. Curiously, most of these algorithms are not optimization-based, in the sense that they do not view the consensus problem as the distributed optimization problem~\eqref{Eq:Consensus}; rather, they simply solve it with a linear update scheme, such as~\eqref{Eq:IntroAverageConsensus}. Work that has addressed average consensus by devising a distributed optimization algorithm for~\eqref{Eq:Consensus} includes~\cite{Rabbat04-DistributedOptimizationSensorNetworks,Rabbat05-QuantizedIncrementalAlgorithmsDistributedOptimization,Schizas08-ConsensusAdHocWSNsPartI,Zhu09-DistributedInNetworkChannelCoding,Erseghe11-FastConsensusByADMM}. In particular, \cite{Erseghe11-FastConsensusByADMM} analyzes Algorithms~\ref{Alg:Schizas} and~\ref{Alg:Zhu}, described in \cref{Ch:relatedWork}, applied to consensus. Despite the vast quantity of algorithms for the average consensus, we will see that the algorithm we propose for the global class~\eqref{Eq:GlobalProblem} has a performance similar to the most efficient algorithms, if not better.

		\mypar{Support vector machine (SVM)}
		Another important inference problem is a support vector machine (SVM)~\cite[Ch.7]{Bishop06-PatternRecognitionMachineLearning}. Training an SVM consists of finding the parameters~$(s,r) \in \mathbb{R}^{n-1} \times \mathbb{R}$ of an hyperplane~$\{x \in \mathbb{R}^{n-1}\,:\, s^\top x = r\}$ that best separates two classes of points. These points are given as~$(x_k,y_k) \in \mathbb{R}^{n-1} \times \mathbb{R}$, where~$y_k \in \{-1,1\}$ indicates the class of the point~$x_k$. Finding these parameters usually involves solving an optimization problem, for example,
		\begin{equation}\label{Eq:SVM}
			\begin{array}{ll}
				\underset{s,r,\xi}{\text{minimize}} & \frac{1}{2} \|s\|^2 + \beta \,1_K^\top \xi \\
				\text{subject to} & y_k(s^\top x_k - r) \geq 1 - \xi_k\,,\quad k=1,\ldots,K \\
				& \xi \geq 0\,,
			\end{array}
		\end{equation}
		where~$K$ is the total number of points, $\beta > 0$ is a tradeoff parameter, and~$\xi \in \mathbb{R}^K$ is a vector of slack variables. In a network scenario, we assume each node knows~$m_p$ points, but all the nodes cooperate to solve the global problem~\eqref{Eq:SVM}. This problem can be written as~\eqref{Eq:GlobalProblem} by setting
		\begin{equation}\label{Eq:GlobalFunctionNodeSVM}
			f_p(s,r) =
			\begin{array}[t]{cl}
				\underset{\bar{\xi}_p}{\inf} & \frac{1}{2P}\|s\|^2 + \beta\, 1_{m_p}^\top \bar{\xi}_p \\
				\text{s.t.} & Y_p (X_p s - r 1_{m_p}) \geq 1_{m_p} - \bar{\xi}_p \\
				& \bar{\xi}_p \geq 0\,,
			\end{array}
		\end{equation}
		where~$Y_p$ is a diagonal matrix with the labels~$y_k$ of the points of node~$p$ in the diagonal, and~$X_p$ is an $m_p \times n$ matrix with each row containing~$x_k^\top$, ordered the same way as~$Y_p$. The variable~$x$ in~\eqref{Eq:GlobalProblem} corresponds to~$(s,r)$, since the slack variables~$\bar{\xi}_p$ are internal to each node. This distributed SVM problem has been solved in~\cite{Forero10-ConsensusBasedDistributedSVMs} with Algorithm~\ref{Alg:Zhu}. See~\cite{Predd09-ACollaborativeTrainingAlgorithmForDistributedLearning} for a related message-passing method.

		\subsection{Sparse solutions of linear systems}
		\label{SubSec:GC:SparseSolutions}

		Another application we consider is finding sparse solutions of distributed linear systems. This is mainly motivated by the recent field of compressed sensing~\cite{Donoho06-CompressedSensing,Candes06-RobustUncertaintyPrinciplesExactSignalReconstructionHighlyIncomplete}, which establishes a new paradigm for signal acquisition and sampling. Surveys on the topic include~\cite{Candes06-CompressiveSampling,Candes08-IntroductionToCompressiveSampling,Baraniuk07-CompressiveSensing,Bryan13-MakingDoWithLess}. While acquisition of signals in compressed sensing is usually simple, reconstructing them afterwards is more complicated and it involves solving an optimization problem. In noiseless scenarios, the most common problem is \textit{basis pursuit} (BP)~\cite{Donoho98-AtomicDecompositionBasisPursuit}:
		\begin{equation}\label{Eq:BasisPursuit}
			\begin{array}{ll}
				\underset{x \in \mathbb{R}^n}{\text{minimize}} & \|x\|_1 \\
				\text{subject to} & Ax = b\,,
			\end{array}
		\end{equation}
		where~$x \in \mathbb{R}^n$ is the variable and~$\|x\|_1$ denotes the $\ell_1$-norm of~$x$, defined as $\|x\|_1 := \sum_{i=1}^n |x_i|$. The matrix~$A \in \mathbb{R}^{m \times n}$ and the vector~$b \in \mathbb{R}^m$ are associated to the acquisition process, and we assume they are given. The linear system~$Ax = b$ is underdetermined, i.e., $m < n$, and the matrix~$A$ is usually assumed full rank, so that the linear system is feasible for any~$b$. This is common in compressed sensing, since the entries of~$A$ are usually drawn randomly and in an independent way. In noisy scenarios other problems are used. An example is \textit{basis pursuit denoising} (BPDN)~\cite{Donoho98-AtomicDecompositionBasisPursuit}:
		\begin{equation}\label{Eq:BPDN}
			\begin{array}{ll}
				\underset{x \in \mathbb{R}^n}{\text{minimize}} & \frac{1}{2}\|Ax - b\|^2 + \beta\|x\|_1\,,
			\end{array}
		\end{equation}
		where~$\beta > 0$ is a tradeoff parameter and~$\|z\|$ denotes the $\ell_2$-norm of~$z \in \mathbb{R}^q$, i.e., $\|z\| = \sqrt{\sum_{i=1}^q z_i^2}$. There is also a problem that we will call \textit{reversed lasso}~\cite{Fuchs05-RecoveryExactSparseRepresentationsPresenceBoundedNoise,Donoho06-StableRecoverySparseOvercompleteRepresentationsPresenceNoise,Tropp06-JustRelax}:
		\begin{equation}\label{Eq:ReverseLasso}
			\begin{array}{ll}
				\underset{x \in \mathbb{R}^n}{\text{minimize}} & \|x\|_1 \\
				\text{subject to} & \|Ax - b\| \leq \sigma\,,
			\end{array}
		\end{equation}
		where~$\sigma > 0$ is a known bound on the noise magnitude, and a problem called the \textit{least absolute shrinkage and selection operator} (lasso)~\cite{Tibshirani96-RegressionShrinkageLasso}:
		\begin{equation}\label{Eq:lasso}
			\begin{array}{ll}
				\underset{x \in \mathbb{R}^n}{\text{minimize}} & \frac{1}{2}\|Ax - b\|^2 \\
				\text{subject to} & \|x\|_1 \leq \gamma\,,
			\end{array}
		\end{equation}
		where~$\gamma > 0$ is a known parameter. Problems~\eqref{Eq:BasisPursuit}-\eqref{Eq:lasso} provide heuristics to find sparse solutions of the linear system~$A x = b$. In fact, it was established in~\cite{Natarajan95-SparseApproximateSolutionToLinearSystems} that finding a sparsest solution of that linear system is NP-hard. Such a problem would be written as~\eqref{Eq:BasisPursuit} with the cost function replaced by the cardinality of the vector~$x$, $\text{card}(x)$. We thus see that~\eqref{Eq:BasisPursuit} approximates the non-continuous, non-convex function~$\text{card}(x)$ by the convex function~$\|x\|_1$, resulting in a convex (and hence easier) problem. The same approximation motivates problems~\eqref{Eq:BPDN}-\eqref{Eq:lasso}, but in the scenario where~$b$ may not be expressed exactly as a linear combination of the columns of~$A$. The theory of compressed sensing establishes conditions on the matrix~$A$ under which approximating $\text{card}(x)$ by~$\|x\|_1$ in, for example, BP~\eqref{Eq:BasisPursuit} yields an exact approximation: this means that the NP-hard problem obtained from~\eqref{Eq:BasisPursuit} by replacing~$\|x\|_1$ with~$\text{card}(x)$ has the same solution as the convex problem BP~\eqref{Eq:BasisPursuit}. Surprisingly, some types of random matrices satisfy those conditions with overwhelming probability. For more details see, for example, \cite{Candes05-DecodingByLinearProgramming,Rudelson08-SparseReconstructionFourierGaussianMeasurements,Candes08-RIPAndItsImplicationsForCompressedSensing,Baraniuk08-SimpleProofRIP}.

		Problems~\eqref{Eq:BPDN}, \eqref{Eq:ReverseLasso}, and~\eqref{Eq:lasso} are all related through duality and, therefore, are equivalent in some sense, provided their parameters~$\beta$, $\sigma$, and~$\gamma$ are chosen appropriately. Among these problems, \eqref{Eq:ReverseLasso} is the one to which compressed sensing results apply directly~\cite{Candes08-RIPAndItsImplicationsForCompressedSensing,Tropp06-JustRelax}, in spite of never have been coined a specific name. Apparently, sometimes it is also called lasso~\cite{Candes08-IntroductionToCompressiveSampling}, but we avoid that name to prevent confusion with the original lasso~\eqref{Eq:lasso}. Instead, we will call it \textit{reversed lasso} since, compared to lasso, its objective and constraints are reversed. Note that when~$\sigma=0$, the reversed lasso becomes BP~\eqref{Eq:BasisPursuit}. We are interested in solving the compressed sensing problems~\eqref{Eq:BasisPursuit}-\eqref{Eq:lasso} in the distributed scenarios described next.

		\begin{figure}
			\centering
			\includegraphics[scale=1.15]{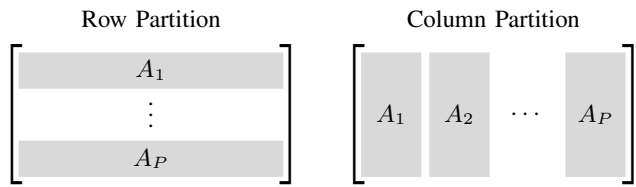}
			\medskip
			\caption[Row partition and column partition of~$A$ into~$P$ blocks.]{
				Row partition and column partition of~$A$ into~$P$ blocks. A block in the row (resp. column) partition is a set of rows (resp. columns).
				}
			\label{Fig:PartitionOfA}
		\end{figure}

		\mypar{Distributed scenarios: row and column partition}
		We consider two different scenarios for splitting the data in matrix~$A$ and vector~$b$ among the nodes of a network with~$P$ nodes. These are called \textit{row partition} and \textit{column partition}, and are visualized in \fref{Fig:PartitionOfA}. In the row (resp.\ column) partition, each node stores a block of rows (resp.\ columns) of~$A$. While in the row partition vector~$b$ is partitioned similarly to~$A$, with each node storing the corresponding subblock, in the column partition we assume all nodes know the full vector~$b$. More specifically, node~$p$ knows~$(A_p,b_p) \in \mathbb{R}^{m_p \times n} \times \mathbb{R}^{m_p}$ in the row partition and knows~$(A_p,b) \in \mathbb{R}^{m \times n_p} \times \mathbb{R}^m$ in the column partition. Naturally, we have $m = \sum_{p=1}^P m_p$ and $n = \sum_{p=1}^P n_p$.

		The row partition scenario arises naturally when applying compressed sensing in a sensor network. For instance, suppose the nodes of the network are interested in estimating a high-dimensional but sparse vector~$x \in \mathbb{R}^n$, for example, an ultra-wide band but spectrally sparse radio signal. Each node in the network is equipped with a low bandwidth antenna and, hence, any signal acquisition has to be done at a rate far below the Nyquist rate. By using a random demodulator~\cite{Candes08-IntroductionToCompressiveSampling,Tropp10-BeyondNyquist}, compressed sensing can be applied, and each row of the linear system~$Ax = b$ represents one measurement (performed at a low acquisition rate). Therefore, if node~$p$ takes~$m_p$ linear measurements of~$x$, we have exactly the row partition scenario. This setting appeared in~\cite{Nowak06-CompressiveWirelessSensing,Rabbat08-CompressedSensingNetworkedData}, where several applications are described. It is assumed there, however, that the signal reconstruction, i.e., solving one of the problems~\eqref{Eq:BPDN}-\eqref{Eq:lasso} is done in a centralized way, in a fusion center. The algorithms we propose in this thesis allow reconstructing the signal on the network, without using any fusion center. Furthermore, all nodes will know the signal when the algorithm finishes. Other applications include distributed target localization~\cite{Cevher08-DistributedTargetLocalizationSpatialSparsity} and distributed field reconstruction~\cite{Schmidt13-ScalableSensorNetworkFieldRobustBasisPursuit-thesis}.

		One application of the column partition is described in~\cite{Romberg08-EfficientSeismicForwardModeling}, in the context of forward modeling in geological applications. The goal is to find the Green's function, represented by a vector~$x$, of a model of the earth's surface. The authors of~\cite{Romberg08-EfficientSeismicForwardModeling} propose deploying a set of sources and a set of receivers over some geographical area and have all the sources emit a signal simultaneously. The receivers capture a linear superposition of all the emitted signals. The proposed way to find~$x$ is by solving BP~\eqref{Eq:BasisPursuit}, where a set of columns of~$A$ is associated to a source. This is clearly our column partition scenario. The distance between all the devices in this application makes a distributed solution convenient, such as the ones provided by our algorithms.

		We will see next how BP, BPDN, and lasso with a row partition are naturally recast as~\eqref{Eq:GlobalProblem}. Then, we will consider the less trivial case of a column partition, for all the problems~\eqref{Eq:BasisPursuit}-\eqref{Eq:lasso}. The only problem that will be missing is reversed lasso with a row partition. However, in \cref{Ch:ConnectedNonConnected}, we will be able to recast it as~\eqref{Eq:IntroProb}, not with a global variable, but with a mixed one.

		\mypar{Row partition: BP, BPDN, and lasso}
		Consider a row partition as shown in \fref{Fig:PartitionOfA}. Then, BP~\eqref{Eq:BasisPursuit} can be written as~\eqref{Eq:GlobalProblem} by setting as the function of node~$p$
		\begin{equation}\label{Eq:GlobalRecastBP}
			f_p(x) = \frac{1}{P}\|x\|_1 + \text{i}_{A_px=b_p}(x)\,,
		\end{equation}
		where $\text{i}_{A_px=b_p}(x)$ is the indicator function of the set $\{x\,:\, A_p x = b_p\}$. Similarly, BPDN~\eqref{Eq:BPDN} can be written as~\eqref{Eq:GlobalProblem} by setting as the function of node~$p$
		\begin{equation}\label{Eq:GlobalRecastBPDN}
			f_p(x) = \frac{1}{2}\|A_p x - b_p\|^2 + \frac{\beta}{P}\|x\|_1\,.
		\end{equation}
		Note that the parameter~$\beta$ and the number of nodes~$P$ is assumed to be known by all nodes. Lasso~\eqref{Eq:lasso} can also be written easily as~\eqref{Eq:GlobalProblem} by setting
		\begin{equation}\label{Eq:GlobalRecastLasso}
			f_p(x) = \frac{1}{2}\|A_p x - b_p\|^2 + \text{i}_{\|x\|_1 \leq \gamma}(x)
		\end{equation}
		as the function of node~$p$. Here, the parameter~$\gamma$ is also assumed to be known at all nodes. Each function~$f_p$ in~\eqref{Eq:GlobalRecastBP}-\eqref{Eq:GlobalRecastLasso} contains data that is known only by node~$p$: namely, the pair~$(A_p,b_p)$. All these functions are closed and convex. Furthermore, the extended real-valued function~\eqref{Eq:GlobalRecastBP} (resp.\ \eqref{Eq:GlobalRecastLasso}) is not identically~$+\infty$ whenever~$A_p$ has full rank (resp.\ $\gamma$ is positive). BPDN with a row partition was solved in~\cite{Bazerque10-DistributedSpectrumSensingCognitiveRadio,Mateos10-DistributedSparseLinearRegression} with Algorithm~\ref{Alg:Zhu}, viewing it as an instance of~\eqref{Eq:GlobalProblem} with~\eqref{Eq:GlobalRecastBPDN}.

		\mypar{Column partition: duality and regularization}
		We now turn into a column partition and recast all the problems~\eqref{Eq:BasisPursuit}-\eqref{Eq:lasso} as~\eqref{Eq:GlobalProblem}. We will need duality to do this. However, plain duality will not be enough to recover primal solutions from dual solutions, since the problems we dualize have cost functions that are not strictly convex. We will thus use regularization and, in the case of BP, the concept of \textit{exact regularization}. We introduce this concept together with a result by Friedlander and Tseng~\cite{Fridlander07-ExactRegularizationCVXPrograms}. Consider the following conic program
		\begin{equation}\label{Eq:ConicProgram}
			\begin{array}{ll}
				\underset{x}{\text{minimize}} & c^\top x \\
				\text{subject to} & x \in \mathcal{K} \\
				                  & Ax = b\,,
			\end{array}
		\end{equation}
		where~$c \in \mathbb{R}^n$, $A \in \mathbb{R}^{m \times n}$, and~$b \in \mathbb{R}^m$ are given, and $\mathcal{K} \subseteq \mathbb{R}^n$ is a nonempty, closed, convex cone. Problem~\eqref{Eq:ConicProgram} is assumed to have a nonempty solution set~$\mathcal{S}$. Consider now a regularization function~$\phi:\mathbb{R}^n \xrightarrow{} \mathbb{R}$ such that all sublevel sets of~$\mathcal{S}$, i.e., $\{x \in \mathcal{S}\,:\, \phi(x) \leq \alpha\}$, are bounded for all~$\alpha$. A result in~\cite{Fridlander07-ExactRegularizationCVXPrograms}, more specifically in corollary~2.3 of~\cite{Fridlander07-ExactRegularizationCVXPrograms}, states that when~$\mathcal{K}$ is polyhedral, i.e., $\mathcal{K} = \{x\,:\, v_i^\top x \leq 0\,,i=1,\ldots,q\}$ for some set of~$q$ vectors~$v_i \in \mathbb{R}^n$, then the regularization of~\eqref{Eq:ConicProgram} with~$\phi$ is exact. This means that there exists a $\bar{\delta} > 0$ such that the set of solutions of the regularized problem
		\begin{equation}\label{Eq:ConicProgramRegularized}
			\begin{array}{ll}
				\underset{x}{\text{minimize}} & c^\top x  + \frac{\delta}{2}\phi(x)\\
				\text{subject to} & x \in \mathcal{K} \\
				                  & Ax = b
			\end{array}
		\end{equation}
		is contained in the set of solutions~$\mathcal{S}$ of~\eqref{Eq:ConicProgram}, for all $0 \leq \delta \leq \bar{\delta}$ \cite[Cor.2.3]{Fridlander07-ExactRegularizationCVXPrograms}. As mentioned in~\cite{Fridlander07-ExactRegularizationCVXPrograms}, this is a generalization of exact regularization results for linear programs~\cite{Mangasarian79-NonlinearPerturbationLPs,Friedlander06-ExactRegularizationLPs}. Experimental results in~\cite{Fridlander07-ExactRegularizationCVXPrograms} suggest that the above result is true even when~$\mathcal{K}$ is not polyhedral, namely, when~$\mathcal{K}$ is the Lorenz cone~$\{(x,t)\;:\, \|x\| \leq t\}$ (also known as the ice-cream cone and as the second-order cone). That cone will actually arise in some of our problems for which, inspired by the results in~\cite{Fridlander07-ExactRegularizationCVXPrograms}, we will perform the above regularization. For BP, we will use the exact regularization result, since BP is equivalent to a linear program, which is the simplest instance of a conic program. Regarding the choice of~$\delta$, we are unaware of any method that finds~$\bar{\delta}$ without first solving the unregularized problem~\eqref{Eq:ConicProgram}. Therefore, we will choose~$\delta$ based on trial-and-error. According to our experiments, $\delta \in [10^{-3},10^{-1}]$ allows computing an optimal solution with reasonable accuracy most of the times. In~\cite[\S7]{Fridlander07-ExactRegularizationCVXPrograms}, it is reported that $\delta = 10^{-4}$ yielded an optimal solution in $85\%$ of their experiments.

		\mypar{Column partition: BP}
		We start with BP~\eqref{Eq:BasisPursuit}. Consider the regularization function~$\phi(x) = (1/4)\|x\|^2$ and the regularized problem
		\begin{equation}\label{Eq:RegularizedBP}
			\begin{array}{ll}
				\underset{x}{\text{minimize}} & \|x\|_1 + \frac{\delta}{2} \|x\|^2 \\
				\text{subject to} & Ax = b\,.
			\end{array}
		\end{equation}
		Then, by the previous discussion, the following theorem follows.
		\begin{theorem}
		\hfill

		\medskip
		\noindent
			Problem~\eqref{Eq:RegularizedBP} is an exact regularization of BP~\eqref{Eq:BasisPursuit}, i.e., there exists a $\bar{\delta} > 0$ such that the solution of~\eqref{Eq:RegularizedBP}, with $0 \leq \delta \leq \bar{\delta}$, is always a solution of BP.
		\end{theorem}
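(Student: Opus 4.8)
The plan is to realize BP as an instance of the conic program~\eqref{Eq:ConicProgram} with a \emph{polyhedral} cone and then invoke the exact-regularization result of Friedlander and Tseng quoted above. First I would rewrite BP by introducing an auxiliary vector $t \in \mathbb{R}^n$ that dominates $|x|$ componentwise, giving the linear program
\begin{equation*}
  \begin{array}{ll}
    \underset{(x,t)}{\text{minimize}} & 1_n^\top t \\
    \text{subject to} & -t \le x \le t \\
                      & Ax = b\,.
  \end{array}
\end{equation*}
This has the form~\eqref{Eq:ConicProgram} with variable $w=(x,t)$, cost $c=(0_n,1_n)$, equality constraint $Ax=b$, and cone $\mathcal{K}=\{(x,t)\,:\,t_i+x_i\ge 0,\ t_i-x_i\ge 0,\ i=1,\ldots,n\}$. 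Since $\mathcal{K}$ is carved out by finitely many homogeneous linear inequalities, it is a nonempty, closed, convex, \emph{polyhedral} cone, so the hypothesis of corollary~2.3 of~\cite{Fridlander07-ExactRegularizationCVXPrograms} on $\mathcal{K}$ is met. At every minimizer the complementary structure forces $t_i=|x_i|$, so this LP is equivalent to BP and its optimal value equals $\|x\|_1$.

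Next I would verify the remaining hypothesis, namely that the regularizer $\phi(x,t)=\tfrac14\|x\|^2$ has bounded sublevel sets relative to the LP solution set $\mathcal{S}$. Because $Ax=b$ is feasible ($A$ full rank, $m<n$), the optimal value $p^\star:=\min\{\|x\|_1:Ax=b\}$ is finite, and the projection of $\mathcal{S}$ onto the $x$-coordinates equals $\{x:Ax=b\}\cap\{x:\|x\|_1\le p^\star\}$, the intersection of an affine set with an $\ell_1$-ball, which is compact; since $t=|x|$ on $\mathcal{S}$, the full set $\mathcal{S}$ is compact as well. Hence $\{(x,t)\in\mathcal{S}:\phi(x,t)\le\alpha\}$ is bounded for every $\alpha$, as required. (Coercivity of $\phi$ in $x$ alone would also bound these sublevel sets, but the compactness of $\mathcal{S}$ makes the point cleanly.)

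With both hypotheses in place, corollary~2.3 of~\cite{Fridlander07-ExactRegularizationCVXPrograms} yields a $\bar{\delta}'>0$ such that for all $0\le\delta'\le\bar{\delta}'$ every solution of the regularized program~\eqref{Eq:ConicProgramRegularized}, here $\underset{(x,t)}{\text{minimize}}\ 1_n^\top t+\tfrac{\delta'}{2}\phi(x,t)$ subject to $(x,t)\in\mathcal{K}$, $Ax=b$, lies in $\mathcal{S}$. Finally I would translate this back to~\eqref{Eq:RegularizedBP}: since $\phi$ does not involve $t$, the minimizer still satisfies $t_i=|x_i|$, so the regularized conic program collapses to $\min\ \|x\|_1+\tfrac{\delta'}{8}\|x\|^2$ subject to $Ax=b$. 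Absorbing the constant factor by setting $\delta=\delta'/4$ (so that $\tfrac{\delta'}{8}\|x\|^2=\tfrac{\delta}{2}\|x\|^2$) gives exactly~\eqref{Eq:RegularizedBP}, and $\bar{\delta}:=\bar{\delta}'/4$ works; any solution of~\eqref{Eq:RegularizedBP} then lies in $\mathcal{S}$, i.e., solves BP. (The added strictly convex quadratic also makes this minimizer unique, a bonus not needed for the statement.)

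The main obstacle I anticipate is bookkeeping rather than depth: lifting BP to a cone \emph{with the regularizer written on the lifted variable} while ensuring that (i) the boundedness hypothesis is genuinely a statement about $\mathcal{S}$, where $t$ is pinned to $|x|$, and not about the full feasible cone, on which $\phi$ is \emph{not} coercive; and (ii) the constant factors relating $\phi=\tfrac14\|x\|^2$ in~\eqref{Eq:ConicProgramRegularized} to the $\tfrac{\delta}{2}\|x\|^2$ in~\eqref{Eq:RegularizedBP} are tracked correctly so that the asserted $\bar{\delta}$ is positive.
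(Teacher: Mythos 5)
Your proof is correct and follows essentially the same route as the paper: lift BP to the linear program in $(x,t)$ with the polyhedral cone $\{(x,t):-t\le x\le t\}$, invoke corollary~2.3 of~\cite{Fridlander07-ExactRegularizationCVXPrograms}, and collapse the regularized lifted program back to~\eqref{Eq:RegularizedBP}. The only difference is cosmetic: the paper regularizes the full lifted variable with $\phi(x,t)=\tfrac14(\|x\|^2+\|t\|^2)$, so the $t$-elimination contributes an extra $\tfrac{\delta}{4}\|x\|^2$ and no rescaling of $\delta$ is needed, whereas your $\phi(x,t)=\tfrac14\|x\|^2$ forces the rescaling $\delta=\delta'/4$ and the (correctly handled) extra check that the sublevel sets of $\phi$ restricted to the compact solution set $\mathcal{S}$ are bounded even though $\phi$ is not coercive in $t$.
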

		\begin{proof}
		\hfill

		\medskip
		\noindent
			We use the exact regularization results of~\cite{Fridlander07-ExactRegularizationCVXPrograms,Mangasarian79-NonlinearPerturbationLPs,Friedlander06-ExactRegularizationLPs}, as explained before. First, we recast BP as a problem with the same format as~\eqref{Eq:ConicProgram}:
			\begin{equation}\label{Eq:ProofExactRegularizationBP}
				\begin{array}{ll}
					\underset{x,t}{\text{minimize}} & 1_n^\top t \\
					\text{subject to} & Ax = b \\
					                  & -t \leq x \leq t\,,
				\end{array}
			\end{equation}
			where~$t \in \mathbb{R}^n$ is an epigraph variable and~$1_n \in \mathbb{R}^n$ is the vector of ones. Problem~\eqref{Eq:ProofExactRegularizationBP} has the same format as~\eqref{Eq:ConicProgram} by making the correspondence $c = (0_n,1_n)$ and~$\mathcal{K} = \{(x,t)\,:\, -t \leq x \leq t\}$, which is a polyhedral cone that is nonempty, closed, and convex. The corresponding regularized problem~\eqref{Eq:ConicProgramRegularized} with $\phi(z) = (1/4)\|z\|^2$ is
			\begin{align*}
				\begin{array}[t]{ll}
					\underset{x,t}{\text{minimize}} & 1_n^\top t  + \frac{\delta}{4}\|x\|^2 + \frac{\delta}{4}\|t\|^2\\
					\text{subject to} & Ax = b \\
					                  & -t \leq x \leq t
				\end{array}
				\qquad
				&\Longleftrightarrow
				\qquad
				\begin{array}[t]{ll}
					\underset{x}{\text{minimize}} & \frac{\delta}{4}\|x\|^2 \\
					\text{subject to} & Ax = b
				\end{array}
				+
				\begin{array}[t]{cl}
					\underset{t}{\inf} & 1_n^\top t + \frac{\delta}{4}\|t\|^2 \\
					\text{s.t.} & -t \leq x \leq t
				\end{array}
				\\
				&\Longleftrightarrow
				\qquad
				\begin{array}[t]{ll}
					\underset{x}{\text{minimize}} & \|x\|_1 + \frac{\delta}{2}\|x\|^2 \\
					\text{subject to} & Ax = b\,,
				\end{array}
			\end{align*}
			which is~\eqref{Eq:RegularizedBP}. In the last equivalence, we used the fact that, for a fixed~$x$,
			\begin{equation}\label{Eq:ProofRegularizationBPIdentity}
				\begin{array}[t]{cl}
					\underset{t}{\inf} & 1_n^\top t + \frac{\delta}{4}\|t\|^2 \\
					\text{s.t.} & -t \leq x \leq t\,.
				\end{array}
				\quad
				=
				\quad
				\|x\|_1 + \frac{\delta}{4}\|x\|^2\,.
			\end{equation}
		\end{proof}
		Now, let~$\lambda \in \mathbb{R}^m$ be a dual variable associated to the constraint of~\eqref{Eq:RegularizedBP}. The dual problem is
		\begin{align}
			&
				\underset{\lambda}{\text{maximize}}\,\,\,\, b^\top \lambda +  \underset{x}{\inf}\,\, \Bigl[\|x\|_1 + \frac{\delta}{2}\|x\|^2 - \lambda^\top Ax\Bigr]
			\label{Eq:BPDual1}
			\\
			\Longleftrightarrow
			\qquad&
				\underset{\lambda}{\text{maximize}}\,\,\,\, b^\top \lambda +  \sum_{p=1}^P\,\underset{x_p}{\inf}\,\, \Bigl[\|x_p\|_1 + \frac{\delta}{2}\|x_p\|^2 - \lambda^\top A_px_p\Bigr]
			\label{Eq:BPDual2}
			\\
			\Longleftrightarrow
			\qquad&
			  \underset{\lambda}{\text{minimize}}\,\,\,\, -b^\top \lambda +  \sum_{p=1}^P\,\underset{x_p}{\sup}\,\, \biggl[ (A_p^\top \lambda)^\top x_p - \Bigl(\|x_p\|_1 + \frac{\delta}{2}\|x_p\|^2\Bigr) \biggr]
			\label{Eq:BPDual3}
			\\
			\Longleftrightarrow
			\qquad&
			  \underset{\lambda}{\text{minimize}}\,\,\,\, \sum_{p=1}^P \, \Bigl(h_p^\star(A_p^\top \lambda) - \frac{1}{P}b^\top \lambda\Bigr)\,,
			\label{Eq:BPDual4}
		\end{align}
		which has the format of~\eqref{Eq:GlobalProblem} with the function at node~$p$ given by~$f_p(\lambda) = h_p^\star(A_p^\top \lambda) - (1/P)b^\top \lambda$. From~\eqref{Eq:BPDual1} to~\eqref{Eq:BPDual2}, we used the column partition and the fact that all terms inside the infimum decouple. From~\eqref{Eq:BPDual2} to~\eqref{Eq:BPDual3}, we switched from a maximization problem to a minimization one. And, in~\eqref{Eq:BPDual4}, we defined $h_p^\star$ as being the convex conjugate of the function~$h_p(x_p) = \|x_p\|_1 + (\delta/2)\|x_p\|^2$, for each~$p$. Note that the global variable is the dual variable~$\lambda$; also, after an optimal value~$\lambda^\star$ has been found (or better, agreed by all the nodes), the $p$th component of the corresponding primal solution~$x^\star$ is available at the $p$th node. Each component is given by soft-thresholding:
		\begin{equation}\label{Eq:BPSoftSolution}
			x_i
			=
			\left\{
			  \begin{array}{ll}
			  	\frac{1}{\delta}\Bigl((A_p^\top \lambda)_i - 1\Bigr)_i &,\,\, (A_p^\top \lambda)_i > 1 \\
			  	\frac{1}{\delta}\Bigl((A_p^\top \lambda)_i + 1\Bigr)_i &,\,\, (A_p^\top \lambda)_i < -1 \\
			  	0 &\,\, -1 \leq (A_p^\top \lambda)_i \leq 1\,,
			  \end{array}
			\right.
		\end{equation}
		for~$i$ belonging to the indices of the columns of~$A_p$; see \aref{App:ConjugateFunctions} for the derivation of~\eqref{Eq:BPSoftSolution}.

		\mypar{Column partition: BPDN}
		We now move to BPDN with a column partition. We will also use regularization but, this time, we will not have an exact regularization result. To regularize BPDN~\eqref{Eq:BPDN} the same way as BP, we first rewrite it with the format of~\eqref{Eq:ConicProgram}:
		\begin{equation}\label{Eq:RegularizationBPDN}
			\begin{array}{ll}
				\underset{x,t,u,v}{\text{minimize}} & \frac{1}{2}v + \beta\,1_n^\top t \\
				\text{subject to} & \|u\|^2 \leq v \\
				                  & -t \leq x \leq t \\
				                  & u = Ax - b\,,
			\end{array}
		\end{equation}
		where $t \in \mathbb{R}^n$ and~$v \in \mathbb{R}$ are epigraph variables, and~$u \in \mathbb{R}^m$ is an auxiliary variable. Problem~\eqref{Eq:RegularizationBPDN} has the same structure as~\eqref{Eq:ConicProgram}, since its objective is linear, the last two constraints are also linear, and the cone~$\mathcal{K}$ is the Cartesian product $\mathcal{K} = \mathcal{K}_{x,t} \times \mathcal{K}_{u,v}$, where $\mathcal{K}_{x,t} = \{(x,t)\,:\, -t \leq x \leq t\}$ is polyhedral, but~$\mathcal{K}_{u,v} = \{(u,v)\,:\, \|u\|^2 \leq v\}$ is not. Using the function~$\phi(z) = (1/4)\|z\|^2$ to regularize~\eqref{Eq:RegularizationBPDN}, we obtain
		\begin{align}
			&
			  \begin{array}{ll}
				  \underset{x,t,u,v}{\text{minimize}} & \frac{1}{2}v + \beta 1_n^\top t  + \frac{\delta}{4}\Bigl(\|x\|^2 + \|t\|^2 + \|u\|^2 + v^2\Bigr)\\
				  \text{subject to} & \|u\|^2 \leq v \\
				                    & -t \leq x \leq t \\
				                    & u = Ax - b\,,
			  \end{array}
			  \label{Eq:RegularizationBPDN1}
			\\
			\Longleftrightarrow
			\qquad&
			  \begin{array}[t]{ll}
			  	\underset{x}{\text{minimize}} & \frac{\delta}{4}\|x\|^2 + \frac{\delta}{4}\|Ax - b\|^2
			  \end{array}
			  +
			  \begin{array}[t]{cl}
					\underset{t}{\inf} & \beta 1_n^\top t + \frac{\delta}{4}\|t\|^2 \\
					\text{s.t.} & -t \leq x \leq t
				\end{array}
				+
				\begin{array}[t]{cl}
					\underset{v}{\inf} & \frac{1}{2}v + \frac{\delta}{4}v^2\\
					\text{s.t.} & \|Ax - b\|^2 \leq v \\
				\end{array}
			\label{Eq:RegularizationBPDN2}
			\\
			\Longleftrightarrow
			\qquad&
			  \begin{array}{ll}
			  	\underset{x}{\text{minimize}} & (\frac{1}{2}+\frac{\delta}{4})\|Ax - b\|^2 + \beta \|x\|_1 + \frac{\delta}{2}\|x\|^2 + \frac{\delta}{4}\|Ax - b\|^4\,.
			  \end{array}
			\label{Eq:RegularizationBPDN3}
		\end{align}
		From~\eqref{Eq:RegularizationBPDN1} to~\eqref{Eq:RegularizationBPDN2}, we replaced~$u$ by~$Ax - b$. From~\eqref{Eq:RegularizationBPDN2} to~\eqref{Eq:RegularizationBPDN3}, we used~\eqref{Eq:ProofRegularizationBPIdentity} with the weight~$\beta$ and eliminated the epigraph variable~$v$.

		Although our next steps are also valid for~\eqref{Eq:RegularizationBPDN3}, we will discard the last term of its objective, for simplicity. That is, we will solve instead:
		\begin{equation}\label{Eq:RegularizationBPDNFinal}
			\begin{array}{ll}
			  	\underset{x}{\text{minimize}} & (\frac{1}{2}+\frac{\delta}{4})\|Ax - b\|^2 + \beta \|x\|_1 + \frac{\delta}{2}\|x\|^2\,.
			  \end{array}
		\end{equation}
		We now introduce an auxiliary variable~$y \in \mathbb{R}^m$ and write~\eqref{Eq:RegularizationBPDNFinal} equivalently as
		\begin{equation}\label{Eq:ManipulationBPDN1}
			\begin{array}{ll}
			  	\underset{x}{\text{minimize}} & (\frac{1}{2}+\frac{\delta}{4})\|y\|^2 + \beta \|x\|_1 + \frac{\delta}{2}\|x\|^2 \\
			  	\text{subject to} & Ax = b + y\,.
			  \end{array}
		\end{equation}
		Associate a dual variable~$\lambda \in \mathbb{R}^m$ to the constraint of~\eqref{Eq:ManipulationBPDN1} and compute the dual problem:
		\begin{align}
			&
			  	\underset{\lambda}{\text{maximize}} \,\,\, b^\top \lambda + \inf_y \,\, \Bigl((\frac{1}{2}+\frac{\delta}{4})\|y\|^2 + \lambda^\top y\Bigr)
			  	+
			  	\inf_x \,\, \Bigl(\beta \|x\|_1 + \frac{\delta}{2}\|x\|^2 - \lambda^\top A x\Bigr)
			\label{Eq:ManipulationBPDN2}
			\\
			\Longleftrightarrow
			\qquad&
			  	\underset{\lambda}{\text{minimize}} \,\,\, b^\top \lambda + \frac{1}{2+\delta}\|\lambda\|^2 + \sum_{p=1}^P \bar{h}_p^\star(A_p^\top \lambda)
			\label{Eq:ManipulationBPDN3}
			\\
			\Longleftrightarrow
			\qquad&
			    \underset{\lambda}{\text{minimize}} \,\,\, \sum_{p=1}^P \,\biggl[\bar{h}_p^\star(A_p^\top \lambda) + \frac{1}{P}b^\top \lambda + \frac{1}{(2+\delta)P}\|\lambda\|^2\biggr]\,,
			\label{Eq:ManipulationBPDN4}
		\end{align}
		which has the format of~\eqref{Eq:GlobalProblem} with $f_p(\lambda) = \bar{h}_p^\star(A_p^\top \lambda) + (1/P)b^\top \lambda + (1/((2+\delta)P))\|\lambda\|^2$ as the function of each node~$p$. From~\eqref{Eq:ManipulationBPDN2} to~\eqref{Eq:ManipulationBPDN3}, we switched from a maximization problem to a minimization one, and used the fact that the infimum problem in~$y$ has a closed-form expression. Also, the infimum in~$x$ was decomposed into blocks, and $\bar{h}_p^\star$ denotes the convex conjugate of the function~$\bar{h}_p(x_p) = \beta \|x_p\|_1 + (\delta/2)\|x_p\|^2$. From~\eqref{Eq:ManipulationBPDN3} to~\eqref{Eq:ManipulationBPDN4}, we just grouped terms. Note that solving~\eqref{Eq:ManipulationBPDN4} is not equivalent to solving BPDN for two reasons: first because we used regularization for which there are no exactness results and, second, because we ignored the quartic term in~\eqref{Eq:RegularizationBPDN3}.

		\mypar{Column partition: reversed lasso}
		Regarding reversed lasso~\eqref{Eq:ReverseLasso}, we will also regularize it and, again, we will not have any exact regularization guarantee. To do the regularization the same way as before, we first rewrite it with the format of~\eqref{Eq:ConicProgram}:
		\begin{equation}\label{Eq:RegularizationReversedLasso}
			\begin{array}{ll}
				\underset{x,t,u,v}{\text{minimize}} & 1_n^\top t \\
				\text{subject to} & \|u\| \leq v \\
													& -t \leq x \leq t \\
													& u = Ax - b \\
				                  & v = \sigma\,,
			\end{array}
		\end{equation}
		where~$t \in \mathbb{R}^n$ is, again, an epigraph variable, and~$u \in \mathbb{R}^m$ and~$v \in \mathbb{R}$ are auxiliary variables, introduced to make a cone appear. Problem~\eqref{Eq:RegularizationReversedLasso} has indeed the same structure as~\eqref{Eq:ConicProgram}, since the objective is linear, the last two constraints are linear equalities, and the cone~$\mathcal{K}$ is the Cartesian product of two cones: $\mathcal{K} = \mathcal{K}_{x,t} \times \mathcal{K}_{u,v}$, where $\mathcal{K}_{x,t} = \{(x,t)\,:\, -t \leq x \leq t\}$ is polyhedral, and~$\mathcal{K}_{u,v} = \{(u,v)\,:\, \|u\| \leq v\}$ is the Lorenz cone and, thus, not polyhedral. By regularizing~\eqref{Eq:RegularizationReversedLasso} with the function $\phi(z) = (1/4)\|z\|^2$, we obtain
		\begin{align}
			&
				\begin{array}{ll}
					\underset{x,t,u,v}{\text{minimize}} & 1_n^\top t + \frac{\delta}{4}\Bigl(\|x\|^2 + \|t\|^2 + \|u\|^2 + v^2\Bigr) \\
					\text{subject to} & \|u\| \leq v\,,\quad v = \sigma \\
														& -t \leq x \leq t \\
														& u = Ax - b \\
				\end{array}
			\label{Eq:RegularizarionReversedLasso1}
			\\
			\Longleftrightarrow
			\qquad&
			  \begin{array}[t]{ll}
					\underset{x}{\text{minimize}} & \frac{\delta}{4}\|x\|^2 \\
				\end{array}
				+
				\begin{array}[t]{cl}
					\underset{t}{\inf} & 1_n^\top t + \frac{\delta}{4}\|t\|^2 \\
					\text{s.t.} & -t \leq x \leq t
				\end{array}
				+
				\begin{array}[t]{cl}
					\underset{u}{\inf} & \frac{\delta}{4}\|u\|^2 \\
					\text{s.t.} & u = Ax - b \\
					            & \|u\| \leq \sigma
				\end{array}
			\label{Eq:RegularizarionReversedLasso2}
			\\
			\Longleftrightarrow
			\qquad&
			  \begin{array}[t]{ll}
			  	\underset{x}{\text{minimize}} & \|x\|_1 + \frac{\delta}{2}\|x\|^2  + \frac{\delta}{4}\|Ax - b\|^2\\
			  	\text{subject to} & \|Ax - b\| \leq \sigma\,.
			  \end{array}
			\label{Eq:RegularizarionReversedLasso3}
		\end{align}
		From~\eqref{Eq:RegularizarionReversedLasso1} to~\eqref{Eq:RegularizarionReversedLasso2}, we used the constraint~$v = \sigma$. From~\eqref{Eq:RegularizarionReversedLasso2} to~\eqref{Eq:RegularizarionReversedLasso3}, we used~\eqref{Eq:ProofRegularizationBPIdentity} and the fact that
		$$
			\begin{array}[t]{cl}
					\underset{u}{\inf} & \frac{\delta}{4}\|u\|^2 \\
					\text{s.t.} & u = Ax - b \\
					            & \|u\| \leq \sigma
			\end{array}
			\,\,
			=
			\,\,\,\,
			\text{i}_{\|Ax - b\| \leq \sigma}(x) + \frac{\delta}{4}\|Ax - b\|^2\,.
		$$
		In contrast with BP and similarly to BPDN, there is no proof that~\eqref{Eq:RegularizarionReversedLasso3} is an exact regularization of reversed lasso, although experimental results in~\cite{Fridlander07-ExactRegularizationCVXPrograms} suggest that exact regularization might occur for the Lorenz cone. In our experimental results, discussed later, we solved~\eqref{Eq:RegularizarionReversedLasso3} using $\delta = 10^{-2}$ and the corresponding solutions never differed more than $0.5\%$ from the ``true'' solution.

		We next introduce an auxiliary variable~$y \in \mathbb{R}^m$ in~\eqref{Eq:RegularizarionReversedLasso3}, yielding
		\begin{equation}\label{Eq:ReversedLassoDualization1}
			\begin{array}{ll}
				\underset{x,y}{\text{minimize}} & \|x\|_1 + \frac{\delta}{2}\|x\|^2  + \frac{\delta}{4}\|y\|^2\\
				\text{subject to} & \|y\| \leq \sigma \\
				                  & y = Ax - b\,.
			\end{array}
		\end{equation}
		Now, associate a dual variable~$\lambda \in \mathbb{R}^m$ to the last constraint of~\eqref{Eq:ReversedLassoDualization1} and compute the dual problem (without dualizing the first constraint). This gives
		\begin{align}
			&
			  \underset{\lambda}{\text{maximize}} \,\,\,\, b^\top \lambda
			  +
			  \begin{array}[t]{cl}
			  	\underset{y}{\inf} & \lambda^\top y  + \frac{\delta}{4}\|y\|^2\\
			  	\text{s.t.}        & \|y\| \leq \sigma
			  \end{array}
			  +
			  \,\,
			  \underset{x}{\inf} \,\, \Bigl[\|x\|_1 + \frac{\delta}{2}\|x\|^2 - \lambda^\top Ax\Bigr]
			\label{Eq:ReversedLassoDualization2}
			\\
			\Longleftrightarrow
			\qquad&
			  \underset{\lambda}{\text{maximize}} \,\,\,\, b^\top \lambda
			  -
			  \sigma \|\lambda\|
			  +
			  \,\,
			  \underset{x}{\inf} \,\, \Bigl[\|x\|_1 + \frac{\delta}{2}\|x\|^2 - \lambda^\top Ax\Bigr]
			\label{Eq:ReversedLassoDualization3}
			\\
			\Longleftrightarrow
			\qquad&
			  \underset{\lambda}{\text{minimize}} \,\,\,\, \sum_{p=1}^P \Bigl( h_p^\star(A_p^\top \lambda) + \frac{\sigma}{P}\|\lambda\| - \frac{1}{P}b^\top \lambda\Bigr)\,.
			\label{Eq:ReversedLassoDualization4}
		\end{align}
		From~\eqref{Eq:ReversedLassoDualization2} to~\eqref{Eq:ReversedLassoDualization3}, we noticed that the problem in~$y$ has a closed-form solution that can be computed by solving its dual problem. Namely, its optimal objective is $(\delta/2)\sigma^2 - \sigma \|\lambda\|$. From~\eqref{Eq:ReversedLassoDualization3} to~\eqref{Eq:ReversedLassoDualization4}, we made the column partition explicit and took exactly the same steps as in the manipulations~\eqref{Eq:BPDual1}-\eqref{Eq:BPDual4}, since the problem in~$x$ is exactly the same as in~\eqref{Eq:BPDual1}. In fact, notice that by setting~$\sigma = 0$ in~\eqref{Eq:ReversedLassoDualization4} we obtain~\eqref{Eq:BPDual4}, exactly the same way we obtain BP from reversed lasso in the primal domain. Problem~\eqref{Eq:ReversedLassoDualization4} has the format of~\eqref{Eq:GlobalProblem} and, similarly to BP, the $p$th block-component of the primal solution of~\eqref{Eq:RegularizarionReversedLasso3} can be obtained at node~$p$ after solving the dual problem~\eqref{Eq:ReversedLassoDualization4}: the expression for each component is~\eqref{Eq:BPSoftSolution}, the same as for BP. However, for the reversed lasso, we do not have the theoretical guarantee that, for a small enough~$\delta$, the solution of the regularized problem~\eqref{Eq:RegularizarionReversedLasso3} is also a solution of the original~\eqref{Eq:ReverseLasso}.

		\mypar{Column partition: lasso}
		Finally we address lasso. As with BPDN and the reversed lasso, the regularization we use here is not proven to be exact. Again, we start by rewriting~\eqref{Eq:lasso} as~\eqref{Eq:ConicProgram}:
		\begin{align}
			  \begin{array}[t]{ll}
					\underset{x}{\text{minimize}} & \frac{1}{2}\|Ax - b\|^2 \\
					\text{subject to} & \|x\|_1 \leq \gamma
				\end{array}
			\qquad
			&\Longleftrightarrow
			\qquad
				\begin{array}[t]{ll}
					\underset{x}{\text{minimize}} & \|Ax - b\| \\
					\text{subject to} & \|x\|_1 \leq \gamma
				\end{array}
			\notag
			\\
			&\Longleftrightarrow
			\qquad
			  \begin{array}[t]{ll}
			  	\underset{x,t,u,v}{\text{minimize}} & v \\
			  	\text{subject to} & \|u\| \leq v \\
			  	                  & \|x\|_1 \leq t \\
			  	                  & u = Ax - b \\
			  	                  & t = \gamma\,,
			  \end{array}
			\label{Eq:RegularizationLasso1}
		\end{align}
		where, for simplicity, we did not represent the constraint~$\|x\|_1 \leq t$ as a set of linear inequalities. This can indeed be done by writing~$2^n$ inequalities of the form $r_i^\top x \leq t$, where each~$r_i \in \mathbb{R}^n$ has $\pm 1$ in its entries; there are~$2^n$ such vectors. 	Therefore, \eqref{Eq:RegularizationLasso1} has the same format as~\eqref{Eq:ConicProgram}, where the objective is linear, the last two constraints are linear equations, and the first two constraints represent the cone~$\mathcal{K}$, which is the Cartesian product of a polyhedral closed convex cone $\mathcal{K}_{x,t} = \{(x,t)\,:\, r_i^\top x \leq t,\, i=1,\ldots,2^n\}$ and the Lorenz cone $\mathcal{K}_{u,v} = \{(u,v)\,:\, \|u\|\leq v\}$. We now regularize problem~\eqref{Eq:RegularizationLasso1} the same way we regularized the previous problems:
		\begin{align}
			&
				\begin{array}[t]{ll}
					\underset{x,t,u,v}{\text{minimize}} & v + \frac{\delta}{4}\Bigl(\|x\|^2 + t^2 + \|u\|^2 + v^2\Bigr)\\
					\text{subject to} & \|u\| \leq v \\
														& \|x\|_1 \leq t \\
														& u = Ax - b \\
														& t = \gamma
				\end{array}
			\label{Eq:RegularizationLasso2}
			\\
			\Longleftrightarrow
			\qquad&
				\begin{array}[t]{ll}
					\underset{x}{\text{minimize}} & \frac{\delta}{4}\|x\|^2 + \frac{\delta}{4}\|Ax - b\|^2\\
					\text{subject to} & \|x\|_1 \leq \gamma
				\end{array}
				+\,\,
				\begin{array}[t]{cl}
					\underset{v}{\inf} & \frac{\delta}{4}v^2 + v \\
					\text{s.t.} & \|Ax - b\| \leq v
				\end{array}
			\label{Eq:RegularizationLasso3}
			\\
			\Longleftrightarrow
			\qquad&
				\begin{array}[t]{ll}
					\underset{x}{\text{minimize}} & \|Ax - b\| + \frac{\delta}{2}\|Ax - b\|^2 + \frac{\delta}{4}\|x\|^2 \\
					\text{subject to} & \|x\|_1 \leq \gamma\,.
				\end{array}
			\label{Eq:RegularizationLasso4}
		\end{align}
		From~\eqref{Eq:RegularizationLasso2} to~\eqref{Eq:RegularizationLasso3}, we eliminated the linear constraints. From~\eqref{Eq:RegularizationLasso3} to~\eqref{Eq:RegularizationLasso4}, we used the fact that the optimal value of the problem in~$v$, for a fixed~$x$, is $(\delta/4)\|Ax - b\|^2 + \|Ax - b\|$. Now, introduce an auxiliary variable~$y \in \mathbb{R}^m$ in~\eqref{Eq:RegularizationLasso4}:
		$$
			\begin{array}{ll}
				\underset{x,y}{\text{minimize}} & \|y\| + \frac{\delta}{2}\|y\|^2 + \frac{\delta}{4}\|x\|^2 \\
				\text{subject to} & \|x\|_1 \leq \gamma \\
				                  & y = Ax - b\,,
			\end{array}
		$$
		and compute the dual problem by dualizing both constraints ($\mu$ and~$\lambda$ will be the dual variables associated to the first and second constraints, respectively). We get
		\begin{align}
			&
				\begin{array}[t]{ll}
					\underset{\lambda,\mu}{\text{maximize}} & b^\top \lambda - \gamma \mu \\
					\text{subject to} & \mu \geq 0
				\end{array}
				+
				\underset{y}{\inf}\,\, \biggl[\|y\| + \frac{\delta}{2}\|y\|^2 + \lambda^\top y\biggr]
				+
				\underset{x}{\inf}\,\, \biggl[\mu \|x\|_1 + \frac{\delta}{4}\|x\|^2 - \lambda^\top Ax\biggr]
			\notag
			\\
			\Longleftrightarrow
			\qquad&
			  \begin{array}[t]{ll}
					\underset{\lambda,\mu}{\text{minimize}} & \gamma \mu  - b^\top \lambda
						+ g^\star(-\lambda) \\
					\text{subject to} & \mu \geq 0
				\end{array}
				+
				\underset{x}{\sup} \,\,\bigg[(A^\top \lambda)^\top x - \mu \|x\|_1 - \frac{\delta}{4}\|x\|^2\biggr]
			\label{Eq:DualizationLasso1}
			\\
			\Longleftrightarrow
			\qquad&
			  \begin{array}[t]{ll}
					\underset{\lambda,\mu}{\text{minimize}} & \gamma \mu  - b^\top \lambda
						+ g^\star(-\lambda) \\
					\text{subject to} & \mu \geq 0
				\end{array}
				+
				\sum_{p=1}^P \, \underset{x_p}{\sup} \,\,\bigg[(A_p^\top \lambda)^\top x_p - \mu \|x_p\|_1 - \frac{\delta}{4}\|x_p\|^2\biggr]
			\label{Eq:DualizationLasso2}
			\\
			\Longleftrightarrow
			\qquad&
					\underset{\lambda,\mu}{\text{minimize}} \,\,\, \sum_{p=1}^P
						\biggl(
							\frac{1}{P}\bigl(\gamma \mu  - b^\top \lambda + g^\star(-\lambda) \bigr)
							+
							l_p(A_p^\top \lambda, \mu) + \text{i}_{\{\mu \geq 0\}}(\mu)
						\biggr)\,.
			\label{Eq:DualizationLasso3}
		\end{align}
		In~\eqref{Eq:DualizationLasso1}, $g^\star$ is the convex conjugate of $g(y) = \|y\| + (\delta/2)\|y\|^2$. We show in \aref{App:ConjugateFunctions} that
		\begin{equation}\label{Eq:ConvexConjugateNormPlusNormSquared}
			g^\star(\eta)
			=
			\underset{x}{\sup}\,\,\, \biggl(\eta^\top x - \|x\| - \frac{\delta}{2}\|x\|^2\biggr)
			=
			\left\{
				\begin{array}{ll}
					0 &,\,\, \|\eta\| \leq 1 \\
					\frac{1}{2\delta}\Bigl(\|\lambda\|^2 - 2\|\lambda\| + 1\Bigr) &,\,\, \|\eta\| > 1\,.
				\end{array}
			\right.
		\end{equation}
		From~\eqref{Eq:DualizationLasso1} to~\eqref{Eq:DualizationLasso2}, we just made the column partition explicit and, in~\eqref{Eq:DualizationLasso3}, we defined
		\begin{equation}\label{Eq:DualizationLasso4}
			l_p(\eta,\mu) = \underset{x_p}{\sup} \biggl[\eta^\top x_p - \mu\|x_p\|_1 - \frac{\delta}{4}\|x_p\|^2\biggr]\,.
		\end{equation}
		Note that~\eqref{Eq:DualizationLasso3} has the same format as~\eqref{Eq:GlobalProblem}. Because of regularization, the objective in the supremum problem in~\eqref{Eq:DualizationLasso4} is strictly concave, which means that, after the nodes agree on an optimal dual solution $(\lambda^\star,\mu^\star)$, the $p$th component of the primal solution of~\eqref{Eq:RegularizationLasso4} will be available at the $p$th node; see \aref{App:ConjugateFunctions} for the particular expression.

	\section{Algorithm derivation}
	\label{Sec:GC:Derivation}

		We now present our algorithm for the global class~\eqref{Eq:GlobalProblem}. As mentioned before, our strategy consists of reformulating~\eqref{Eq:GlobalProblem} as~\eqref{Eq:RelatedWorkEdgeBasedReformulation} and then we applying the multi-block ADMM. For convenience, we recall reformulation~\eqref{Eq:RelatedWorkEdgeBasedReformulation}
		\begin{equation}\label{Eq:GlobalEdgeBasedReformulation}
			\begin{array}{ll}
				\underset{x_1,\ldots,x_P}{\text{minimize}} & f_1(x_1) + f_2(x_2) + \cdots + f_P(x_P)\\
				\text{subject to} & x_i = x_j\,,\quad (i,j) \in \mathcal{E}\,,
			\end{array}
		\end{equation}
		where~$x_p \in \mathbb{R}^n$ is the copy of the original variable~$x \in \mathbb{R}^n$ and is held by node~$p$. The optimization variable is now the collection of all the copies: $\bar{x}=(x_1,\ldots,x_P) \in (\mathbb{R}^n)^P$. All these copies are forced to be equal through the constraints of~\eqref{Eq:GlobalEdgeBasedReformulation}, which state that, for each edge $(i,j)$ in the network, the copies of nodes~$i$ and~$j$ are equal. Since by \assref{Ass:GPConnectedStatic} the network is assumed connected, there are no isolated nodes and, hence, all the copies are equal. Consequently, problems~\eqref{Eq:GlobalProblem} and~\eqref{Eq:GlobalEdgeBasedReformulation} are equivalent.

		\mypar{Matrix representation}
		Recall that, according to \assref{Ass:GPColoring}, we assume the network has a coloring scheme~$\mathcal{C}$ with~$C = |\mathcal{C}|$ colors. We use $\mathcal{C}_c \subset \mathcal{V}$ to denote the set of nodes that have color~$c \in \mathcal{C}$, and $\mathcal{C}(p)$ to denote the color of node~$p$. Also, the number of nodes with color~$c$ is represented with $C_c = |\mathcal{C}_c|$. Without loss of generality and to simplify our derivation, we will assume that the nodes are numbered according to this coloring scheme as: $\mathcal{C}_1 = \{1,2,\ldots,C_1\}$, $\mathcal{C}_2 = \{C_1+1,C_1+2,\ldots,C_1+C_2\}$, \ldots, i.e., the first~$C_1$ nodes have color~$1$, the next~$C_2$ nodes have color~$\mathcal{C}_2$, and so on. Now, notice that the constraints in problem~\eqref{Eq:GlobalEdgeBasedReformulation} can be written in matrix format as $(B^\top \otimes I_n) \bar{x} = 0$, where~$B \in \mathbb{R}^{P \times E}$ is the node-arc incidence matrix, $\otimes$ is the Kronecker product, and~$I_n$ is the identity matrix in~$\mathbb{R}^n$. In the node-arc incidence matrix, each column is associated to an edge of the network~$(i,j) \in \mathcal{E}$, with $1$ in the $i$th entry, $-1$ in the $j$th entry, and zeros in the remaining entries. Given our assumption on the ordering of the nodes and the coloring scheme, we can write
		$
			(B^\top \otimes I_n) \bar{x} = (B_1^\top \otimes I_n) \bar{x}_1 + (B_2^\top \otimes I_n) \bar{x}_2 + \cdots + (B_C^\top \otimes I_n) \bar{x}_C
		$,
		where~$\bar{x}_c$ collects the copies of the nodes in~$\mathcal{C}_c$, i.e.,
		$$
			\bar{x} = (
				\underbrace{x_1,\ldots,x_{C_1}}_{\bar{x}_1},
				\underbrace{x_{C_1 + 1},\ldots,x_{C_1+C_2}}_{\bar{x}_2},
				\ldots,
				\underbrace{x_{P-C_p + 1}, \ldots,x_P}_{\bar{x}_C})\,,
		$$
		and the matrix~$B$ is partitioned by rows accordingly. 	Therefore, \eqref{Eq:GlobalEdgeBasedReformulation} can be written as
		\begin{equation}\label{Eq:GlobalEdgeBasedReformulationMatrix}
			\begin{array}{ll}
				\underset{(x_1,\ldots,x_P)}{\text{minimize}} & \sum_{p \in \mathcal{C}_1} f_p(x_p) + \cdots + \sum_{p \in \mathcal{C}_C} f_p(x_p) \\
				\text{subject to} & (B_1^\top \otimes I_n) \bar{x}_1 + (B_2^\top \otimes I_n) \bar{x}_2  + \cdots + (B_C^\top \otimes I_n) \bar{x}_C= 0\,,
			\end{array}
		\end{equation}
		where we also grouped the terms in the objective according to the colors of the nodes. We next apply the multi-block ADMM to~\eqref{Eq:GlobalEdgeBasedReformulationMatrix}.

	\mypar{Applying the multi-block ADMM}
	We introduced the multi-block ADMM in \ssref{SubSec:AugmentedLagrangianMethods}. Our reformulations of~\eqref{Eq:GlobalProblem} resulted in problem~\eqref{Eq:GlobalEdgeBasedReformulationMatrix}, which has the  format of~\eqref{Eq:RelatedWorkADMMProbExtended}, the problem the multi-block ADMM solves. If we apply the multi-block ADMM~\eqref{Eq:RelatedWorkADMMProbExtendedADMMIter1}-\eqref{Eq:RelatedWorkADMMProbExtendedADMMIter4} directly to~\eqref{Eq:GlobalEdgeBasedReformulationMatrix}, we will see that the update of~$\bar{x}_c$ yields~$C_c$ independent problems which can consequently be solved in parallel. For example, the first block variable~$\bar{x}_1$ is updated as
	\begin{equation}\label{Eq:GlobalDerivation1}
		\bar{x}_1^{k+1} = \underset{\bar{x}_1 = (x_1,\ldots,x_{C_1})}{\arg\min}
		\sum_{p \in \mathcal{C}_1}\, f_p(x_p) + {\lambda^k}^\top (B_1^\top \otimes I_n) \bar{x}_1 + \frac{\rho}{2}\biggl\|(B_1^\top \otimes I_n)\bar{x}_1 + \sum_{c=2}^C (B_c^\top \otimes I_n) \bar{x}_c^k\biggr\|^2\,,
	\end{equation}
	where the terms not depending~$\bar{x}_1$ were dropped. Developing the quadratic term in~\eqref{Eq:GlobalDerivation1},
	\begin{multline}\label{Eq:GlobalDerivation2}
		\biggl\|(B_1^\top \otimes I_n)\bar{x}_1 + \sum_{c=2}^C (B_c^\top \otimes I_n) \bar{x}_c^k\biggr\|^2\\
		=
		\bar{x}_1^\top (B_1 B_1^\top \otimes I_n) \bar{x}_1 + 2\,\bar{x}_1^\top \biggl(\sum_{c=2}^C (B_1B_c^\top \otimes I_n) \bar{x}_c^k\biggr) + \biggl\|\sum_{c=1}^C (B_c^\top \otimes I_n) \bar{x}_c^k\bigg\|^2\,.
	\end{multline}
	In the first term of~\eqref{Eq:GlobalDerivation2}, $B_1 B_1^\top$ is the first diagonal block (of size $C_1 \times C_1$) of the network Laplacian. Because the first~$C_1$ nodes have the same color and, hence, cannot be neighbors, the matrix $B_1 B_1^\top$ is diagonal. The $p$th entry in the diagonal is the degree~$D_p$ of node~$p$. Therefore, the first term of~\eqref{Eq:GlobalDerivation2} can be written as $\bar{x}_1^\top (B_1 B_1^\top \otimes I_n) \bar{x}_1 = \sum_{p \in \mathcal{C}_1} D_p \|x_p\|^2$. In the second term, $B_1B_c^\top$ is an off-diagonal block of the Laplacian and depicts the links between the nodes with color~$1$ and the nodes with color~$c$. Namely, if node~$i$ has color~$1$ and node~$j$ has color~$c$ and they are neighbors, i.e., $(i,j) \in \mathcal{E}$, then the $ij$th entry of~$B_1B_c^\top$ will be~$-1$. Therefore, the second term is written equivalently as $2\,\bar{x}_1^\top \Bigl(\sum_{c=2}^C (B_1B_c^\top \otimes I_n) \bar{x}_c^k\Bigr) = -2\sum_{p \in \mathcal{C}_1}\sum_{j \in \mathcal{N}_p} x_p^\top x_j^k$. Finally, the last term of~\eqref{Eq:GlobalDerivation2} does not depend on~$\bar{x}_1$ and hence can be dropped. These simplifications render problem~\eqref{Eq:GlobalDerivation1} equivalent to
	\begin{equation}\label{Eq:GlobalDerivation3}
		\bar{x}_1^{k+1} = \underset{\bar{x}_1 = (x_1,\ldots,x_{C_1})}{\arg\min} \,\,
		\sum_{p \in \mathcal{C}_1} f_p(x_p) + \Bigl(\gamma_p^k - \rho \sum_{j \in \mathcal{N}_p}x_j^k\Bigr)^\top x_p + \frac{\rho D_p}{2}\|x_p\|^2\,,
	\end{equation}
	where $\gamma_p^k := \sum_{j \in \mathcal{N}_p} \lambda_{pj}^k	$	was obtained from the second term of~\eqref{Eq:GlobalDerivation1} as
	\begin{align}
		  {\lambda^k}^\top (B_1^\top \otimes I_n)\bar{x}_1
		=
		  ((B_1 \otimes I_n)\lambda^k)^\top \bar{x}_1
		=
		  \sum_{p \in \mathcal{C}_1}\underbrace{\sum_{j \in \mathcal{N}_p} {\lambda_{pj}^k}^\top}_{{\gamma_p^k}\top} x_j^k\,.
		\label{Eq:GlobalDerivation4}
	\end{align}
	In the last equality in~\eqref{Eq:GlobalDerivation4}, we used the fact that the $p$th entry of the vector~$(B_1 \otimes I_n)\lambda^k$ is given by $\sum_{j \in \mathcal{N}_p} \lambda_{pj}^k$. Note that we decomposed the dual variable as $(\ldots,\lambda_{ij},\ldots)$, where~$\lambda_{ij}$ is associated to the constraint $x_i = x_j$, i.e., the edge between node~$i$ and node~$j$. Given our convention that $(i,j) \in \mathcal{E}$ implies that $i < j$ (see \ssref{SubSec:CommunicationNetwork}), $\lambda_{ij}$ is only defined for~$i<j$. It is clear that problem~\eqref{Eq:GlobalDerivation3} decomposes into $C_1$ problems that can be solved in parallel. Namely, node~$p$ updates its copy~$x_p$ as
	\begin{align}
		  x_p^{k+1}
		&=
		  \underset{x_p}{\arg\min} \,\,\, f_p(x_p) + \Bigl(\gamma_p^k - \rho \sum_{j \in \mathcal{N}_p}x_j^k\Bigr)^\top x_p + \frac{\rho D_p}{2}\|x_p\|^2
		\notag
		\\
		&=
		  \text{prox}_{\tau_p f_p}\biggl( \frac{1}{D_p}\sum_{j \in \mathcal{N}_p} x_j^k - \tau_p \gamma_p^k\biggr)\,,
		\label{Eq:GlobalDerivation5}
	\end{align}
	where the prox operator was defined in~\eqref{Eq:DefinitionProxOperator} and $\tau_p = 1/(\rho D_p)$. The problems with respect to the other block variables can be decomposed into parallel problems the same way. The only difference is the definition of~$\gamma_p^k$, which is different due to the nodes' ordering. Its general definition is
	\begin{equation}\label{Eq:GlobalDerivation6}
		\gamma_p^k =
		\sum_{\begin{subarray}{c}j \in \mathcal{N}_p \\ p < j\end{subarray}} \lambda_{pj}^k
		-
		\sum_{\begin{subarray}{c}j \in \mathcal{N}_p \\ p > j\end{subarray}} \lambda_{jp}^k \,.
	\end{equation}
	Note that, from~\eqref{Eq:GlobalDerivation5}, each node~$p$ needs to know the aggregate sum~$\gamma_p^k$, but not the individual~$\lambda_{ij}$'s. According to the multi-block ADMM iterations, namely~\eqref{Eq:RelatedWorkADMMProbExtendedADMMIter4}, each~$\lambda_{ij}$, for~$(i,j) \in \mathcal{E}$, is updated as $	\lambda_{ij}^{k+1} = \lambda_{ij}^k + \rho \,(x_i^{k+1} - x_j^{k+1})$. 	Replacing this update in the definition of~$\gamma_p^k$ in~\eqref{Eq:GlobalDerivation6}, we get
	\begin{align*}
		  \gamma_p^{k+1}
		&=
		  \sum_{\begin{subarray}{c}j \in \mathcal{N}_p \\ p < j\end{subarray}} \lambda_{pj}^{k+1}
			-
			\sum_{\begin{subarray}{c}j \in \mathcal{N}_p \\ p > j\end{subarray}} \lambda_{jp}^{k+1}
		\\
		&=
		  \sum_{\begin{subarray}{c}j \in \mathcal{N}_p \\ p < j\end{subarray}} \lambda_{pj}^k
		  +
		  \rho
		  \sum_{\begin{subarray}{c}j \in \mathcal{N}_p \\ p < j\end{subarray}}
		  \Bigl(x_p^{k+1} - x_j^{k+1}\Bigr)
		  -
		  \sum_{\begin{subarray}{c}j \in \mathcal{N}_p \\ p > j\end{subarray}} \lambda_{jp}^k
		  -
			\rho
			\sum_{\begin{subarray}{c}j \in \mathcal{N}_p \\ p > j\end{subarray}}
			\Bigl(x_j^{k+1} - x_p^{k+1}\Bigr)
		\\
		&=
			\underbrace{
		  \sum_{\begin{subarray}{c}j \in \mathcal{N}_p \\ p < j\end{subarray}} \lambda_{pj}^k
		  -
		  \sum_{\begin{subarray}{c}j \in \mathcal{N}_p \\ p > j\end{subarray}} \lambda_{jp}^k
		  }_{= \gamma_p^k}
		  +
		  \rho
		  \sum_{\begin{subarray}{c}j \in \mathcal{N}_p \\ p < j\end{subarray}}
		  \Bigl(x_p^{k+1} - x_j^{k+1}\Bigr)
		  +
		  \rho
		  \sum_{\begin{subarray}{c}j \in \mathcal{N}_p \\ p > j\end{subarray}}
		  \Bigl(x_p^{k+1} - x_j^{k+1}\Bigr)
		\\
		&=
		  \gamma_p^k + \rho \sum_{j \in \mathcal{N}_p} \Bigl(x_p^{k+1} - x_j^{k+1}\Bigr)\,.
	\end{align*}

	\begin{algorithm}
		\small
		\caption{Algorithm for the global class (D-ADMM)}
		\label{Alg:GlobalClass}
		\algrenewcommand\algorithmicrequire{\textbf{Initialization:}}
		\begin{algorithmic}[1]
			\Require Choose $\rho \in \mathbb{R}$; for all $p \in \mathcal{V}$, set $x_p^0 = \gamma_p^0 = 0_n \in \mathbb{R}^n$ and $\tau_p = 1/(\rho D_p)$; set $k=0$

			\Repeat
			\ForAll{$c=1,\ldots,C$}
			\ForAll{$p \in \mathcal{C}_c$ [in parallel]}

			\vspace{0.14cm}

			\State Compute the average
				$$
					z_p^k =\frac{1}{D_p}\biggl( \sum_{\begin{subarray}{c}j \in \mathcal{N}_p\\ \mathcal{C}(j) < \mathcal{C}(p) \end{subarray}} x_j^{k+1} + \sum_{\begin{subarray}{c}j \in \mathcal{N}_p\\ \mathcal{C}(j) > \mathcal{C}(p) \end{subarray}} x_j^{k} \biggr)
				$$
				\label{SubAlg:GlobalAverage}

			\Statex

			\State Update
				$
					x_p^{k+1} = \text{prox}_{\tau_p f_p}\Bigl( z_p^k - \tau_p \gamma_p^k\Bigr)
				$
				and send $x_p^{k+1}$ to neighbors $\mathcal{N}_p$
				\label{SubAlg:GlobalProx}

			\EndFor

			\EndFor

			\vspace{0.2cm}

			\ForAll{$p \in \mathcal{V}$ [in parallel]}

			\State Update the dual variable $\gamma_p^{k+1} = \gamma_p^k + \rho \sum_{j \in \mathcal{N}_p} \Bigl(x_p^{k+1} - x_j^{k+1}\Bigr)$
				\label{SubAlg:GlobalDual}
			\EndFor

			\vspace{0.2cm}

			\State $k \gets k+1$

			\Until{some stopping criterion is met}
		\end{algorithmic}
  \end{algorithm}

	\mypar{D-ADMM: algorithm for the global class} The resulting algorithm is shown as Algorithm~\ref{Alg:GlobalClass}, which we named D-ADMM in~\cite{Mota13-DADMM}, after Distributed-ADMM.
	Algorithm~\ref{Alg:GlobalClass} solves~\eqref{Eq:GlobalEdgeBasedReformulationMatrix}, and hence~\eqref{Eq:GlobalProblem}, by creating~$C$ groups of nodes according to the coloring scheme. The nodes within each group perform the same tasks in parallel, as illustrated before in Figure~\ref{Fig:IllustrationAlgs}. These tasks consist of computing the average of the solution estimates by the neighbors (step~\ref{SubAlg:GlobalAverage}), computing the prox of the scaled function~$\tau_p f_p$ at the point indicated in step~\ref{SubAlg:GlobalProx}, and then sending the new solution estimate to the neighbors. Note that in the computation of the average~$z_p^k$ of a given node~$p$, in step~\ref{SubAlg:GlobalAverage}, there are two kinds of estimates: ones that were computed in the current iteration~$k$, i.e., $x_j^{k+1}$ and ones that were computed in the previous iteration~$k-1$, i.e., $x_j^k$. The first kind are estimates of the neighbors with a color smaller than the color of node~$p$, that is, $\mathcal{C}(j)<\mathcal{C}(p)$. Node~$p$ has access to these estimates because the nodes with smaller colors have performed steps~\ref{SubAlg:GlobalAverage} and~\ref{SubAlg:GlobalProx} before. The second kind are estimates of the neighbors with a color larger than the color of node~$p$, $\mathcal{C}(j) > \mathcal{C}(p)$, and were transmitted in the previous iteration. Note that, in contrast with its derivation, Algorithm~\ref{Alg:GlobalClass} does not assume that the nodes are ordered according to their colors, thanks to the use of inequalities $\mathcal{C}(j) \lessgtr \mathcal{C}(p)$ instead of $j \lessgtr p$. After all nodes perform step~\ref{SubAlg:GlobalProx}, the dual variables~$\gamma_p$ are updated simultaneously at all nodes, as described in step~\ref{SubAlg:GlobalDual}.

	\begin{figure}
		\centering
		\subfigure[Undirected graph]{\label{SubFig:SelfCoordinationUndirected}
			\psscalebox{0.9}{
			\begin{pspicture}(4.9,5)
				\def\nodesimp{
					\pscircle*[linecolor=black!65!white](0,0){0.3}
        }
        \def\nodeB{
            \pscircle*[linecolor=black!15!white](0,0){0.3}
        }
        \def\nodeC{
            \pscircle[fillstyle=vlines*,linecolor=black!50!white,hatchcolor=black!50!white](0,0){0.3}
        }

				\rput(0.5,4.1){\rnode{C1}{\nodesimp}}   \rput(0.5,4.1){\small \textcolor{white}{$1$}}
        \rput(1.0,2.6){\rnode{C2}{\nodeC}}      \rput(1.0,2.6){\small \textcolor{black}{$2$}}
        \rput(0.9,0.9){\rnode{C3}{\nodesimp}}   \rput(0.9,0.9){\small \textcolor{white}{$3$}}
        \rput(3.1,1.1){\rnode{C4}{\nodeB}}      \rput(3.1,1.1){\small \textcolor{black}{$4$}}
        \rput(4.1,2.6){\rnode{C5}{\nodesimp}}   \rput(4.1,2.6){\small \textcolor{white}{$5$}}
        \rput(2.3,3.3){\rnode{C6}{\nodeB}}      \rput(2.3,3.3){\small \textcolor{black}{$6$}}

        \psset{nodesep=0.33cm,linewidth=0.7pt}
        \ncline{-}{C1}{C2}
        \ncline{-}{C1}{C6}
        \ncline{-}{C2}{C3}
        \ncline{-}{C2}{C6}
        \ncline{-}{C3}{C4}
        \ncline{-}{C4}{C5}
        \ncline{-}{C5}{C6}

        \rput[rb](0.21715729,4.38284271){\small $1$}
        \rput[rt](0.71715729,2.31715729){\small $3$}
        \rput[rt](0.61715729,0.61715729){\small $1$}
        \rput[lt](3.38284271,0.81715729){\small $2$}
        \rput[lb](4.38284271,2.88284271){\small $1$}
        \rput[lb](2.58284271,3.58284271){\small $2$}

				%\psgrid
			\end{pspicture}
			}
		}
		\hspace{2cm}
		\subfigure[Directed graph]{\label{SubFig:SelfCoordinationDirected}
			\psscalebox{0.9}{
			\begin{pspicture}(4.9,5)
				\def\nodesimp{
					\pscircle*[linecolor=black!65!white](0,0){0.3}
        }
        \def\nodeB{
            \pscircle*[linecolor=black!15!white](0,0){0.3}
        }
        \def\nodeC{
            \pscircle[fillstyle=vlines*,linecolor=black!50!white,hatchcolor=black!50!white](0,0){0.3}
        }

				\rput(0.5,4.1){\rnode{C1}{\nodesimp}}   \rput(0.5,4.1){\small \textcolor{white}{$1$}}
        \rput(1.0,2.6){\rnode{C2}{\nodeC}}      \rput(1.0,2.6){\small \textcolor{black}{$2$}}
        \rput(0.9,0.9){\rnode{C3}{\nodesimp}}   \rput(0.9,0.9){\small \textcolor{white}{$3$}}
        \rput(3.1,1.1){\rnode{C4}{\nodeB}}      \rput(3.1,1.1){\small \textcolor{black}{$4$}}
        \rput(4.1,2.6){\rnode{C5}{\nodesimp}}   \rput(4.1,2.6){\small \textcolor{white}{$5$}}
        \rput(2.3,3.3){\rnode{C6}{\nodeB}}      \rput(2.3,3.3){\small \textcolor{black}{$6$}}

        \psset{nodesep=0.33cm,linewidth=0.7pt,arrowsize=7pt,arrowinset=0.05}
        \ncline{->}{C1}{C2}
        \ncline{->}{C1}{C6}
        \ncline{<-}{C2}{C3}
        \ncline{<-}{C2}{C6}
        \ncline{->}{C3}{C4}
        \ncline{<-}{C4}{C5}
        \ncline{->}{C5}{C6}

        \rput[rb](0.21715729,4.38284271){\small $1$}
        \rput[rt](0.71715729,2.31715729){\small $3$}
        \rput[rt](0.61715729,0.61715729){\small $1$}
        \rput[lt](3.38284271,0.81715729){\small $2$}
        \rput[lb](4.38284271,2.88284271){\small $1$}
        \rput[lb](2.58284271,3.58284271){\small $2$}

				%\psgrid
			\end{pspicture}
			}
		}
		\caption[Construction of a directed graph from the coloring scheme of an undirected graph.]{
			Construction of a directed graph (in \text{(b)}) from the coloring scheme of an undirected graph (in \text{(a)}). The coloring scheme is $\mathcal{C}_1 = \{1,3,5\}$, $\mathcal{C}_2 = \{4,6\}$, and~$\mathcal{C}_3 = \{2\}$. From \text{(a)} to \text{(b)}, each edge gets assigned a direction, from the node with the smallest color to the node with the largest color.
		}
		\label{Fig:SelfCoordination}
  \end{figure}
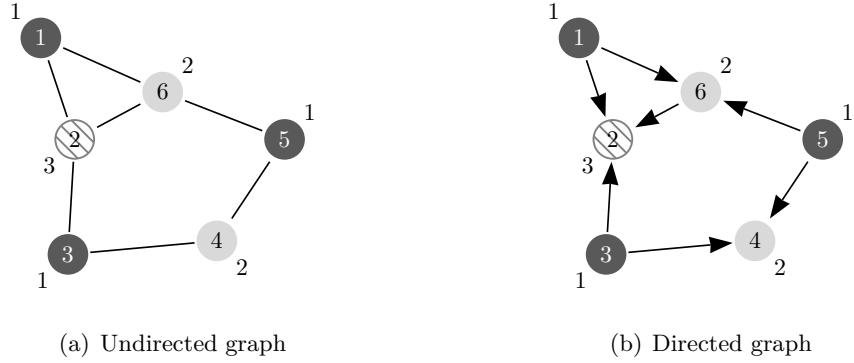

	Apparently, Algorithm~\ref{Alg:GlobalClass} needs some kind of central coordination to perform steps~\ref{SubAlg:GlobalAverage} and~\ref{SubAlg:GlobalProx}, because nodes with the same color, not being neighbors, should perform the same tasks in parallel. But, provided \assref{Ass:GPColoring} holds, i.e., that each node knows its own color and the colors of its neighbors, no central coordination is required. In that case, steps~\ref{SubAlg:GlobalAverage} and~\ref{SubAlg:GlobalProx} need not be performed exactly in parallel: as soon as node~$p$ has received the copies~$x_j^{k+1}$ from the neighbors with smaller colors, it can perform steps~\ref{SubAlg:GlobalAverage} and~\ref{SubAlg:GlobalProx} immediately. \fref{Fig:SelfCoordination} illustrates an alternative way to see this. \fref{SubFig:SelfCoordinationUndirected} shows a communication network and its coloring scheme: nodes~$1$, $3$, and~$5$ have color~$1$, nodes~$4$ and~$6$ have color~$2$, and node~$2$ has color~$3$. From these colors, we can assign directions to the edges of the network, as shown in \fref{SubFig:SelfCoordinationDirected}: the edge $(i,j) \in \mathcal{E}$ is assigned the direction $i \rightarrow j$ if the color of node~$i$ is smaller than the color of node~$j$, i.e., $\mathcal{C}(i) < \mathcal{C}(j)$, and the direction $i \leftarrow j$ otherwise. For example, node~$6$, with color~$2$ has incoming edges from nodes~$1$ and~$5$, both with color~$1$, and an outgoing edge to node~$2$, with color~$3$. Whenever node~$6$ receives, at each iteration, estimates from neighbors~$1$ and~$5$, it can immediately perform steps~\ref{SubAlg:GlobalAverage} and~\ref{SubAlg:GlobalProx} without ``talking'' at all with the nodes that have the same color; in this case, that is just node~$4$. This makes the algorithm distributed, since there is no central or coordinating node, the function~$f_p$ is only known at node~$p$, and there are no all-to-all communications. Furthermore, the algorithm is independent of the network. Regarding its convergence, we use Theorem~\ref{Teo:RelatedWorkConvergenceADMM} to prove:
  \begin{theorem}\label{Teo:GlobalConvergence}
		\hfill

		\medskip
		\noindent
    Let Assumptions~\ref{Ass:GPfunctions}-\ref{Ass:GPColoring} hold. Then, Algorithm~\ref{Alg:GlobalClass} produces a sequence~$(x_1^k,\ldots,x_P^k)$ convergent to~$(x^\star, \ldots,x^\star)$, where~$x^\star$ solves~\eqref{Eq:GlobalProblem}, when at least one of the following conditions is satisfied:
    \newcounter{TeoConvDADMM}
		\begin{list}{(\alph{TeoConvDADMM})}{\usecounter{TeoConvDADMM}}
      \item the coloring scheme uses two colors only (which implies that the network is bipartite);
      \item each function~$f_p$ is strongly convex with modulus~$\mu_p$ and
      \begin{equation}\label{Eq:ThmConditionRho}
				0 < \rho < \underset{c=1,\ldots,C}{\min}\,\,\, \frac{2\sum_{p \in \mathcal{C}_c}\mu_p}{3\,(C-1)\,\max_{p \in \mathcal{C}_c}D_p}\,.
      \end{equation}
    \end{list}
  \end{theorem}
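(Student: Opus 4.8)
The plan is to recognize that Algorithm~\ref{Alg:GlobalClass} is, by its very derivation, the multi-block ADMM \eqref{Eq:RelatedWorkADMMProbExtendedADMMIter1}--\eqref{Eq:RelatedWorkADMMProbExtendedADMMIter4} applied to the reformulation \eqref{Eq:GlobalEdgeBasedReformulationMatrix}, which has exactly the form of \eqref{Eq:RelatedWorkADMMProbExtended} with $C$ blocks: the $c$-th block function is $F_c(\bar{x}_c) := \sum_{p \in \mathcal{C}_c} f_p(x_p)$, the $c$-th constraint matrix is $A_c := B_c^\top \otimes I_n$, and the right-hand side is $b = 0$. Convergence of the iterates therefore reduces to checking that the hypotheses of Theorem~\ref{Teo:RelatedWorkConvergenceADMM} hold in each of the two cases, and then translating its conclusion back to \eqref{Eq:GlobalProblem}. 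This last step is immediate: Theorem~\ref{Teo:RelatedWorkConvergenceADMM} yields a limit $\bar{x}^\star = (x_1^\star, \ldots, x_P^\star)$ feasible for \eqref{Eq:GlobalEdgeBasedReformulation}, and since \assref{Ass:GPConnectedStatic} guarantees the network is connected, all copies coincide at a common value $x^\star$ that solves \eqref{Eq:GlobalProblem} by the equivalence of \eqref{Eq:GlobalProblem} and \eqref{Eq:GlobalEdgeBasedReformulation}.

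For case (a), the two-color assumption means $C = 2$, so I only need to verify that each $A_c = B_c^\top \otimes I_n$ has full column rank, i.e., that each $B_c$ has full row rank. Here the key structural fact is that proper coloring (\assref{Ass:GPColoring}) forbids edges between two same-color nodes, so every column of the incidence matrix $B$ --- which is supported on the two endpoints of an edge --- has at most one nonzero entry among the rows of $B_c$. Consequently the rows of $B_c$ have pairwise disjoint supports and each is nonzero (connectivity rules out isolated nodes, so every node has degree $D_p \geq 1$); disjointly supported nonzero vectors are linearly independent, giving $B_c$ full row rank. Thus case (a) of Theorem~\ref{Teo:RelatedWorkConvergenceADMM} applies verbatim.

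For case (b), I would first note that each block function $F_c$ inherits strong convexity from its separable summands, and that the same proper-coloring fact makes $B_c B_c^\top$ diagonal with the degrees $D_p$ on its diagonal, so $\sigma_{\max}^2(A_c) = \lambda_{\max}(B_c B_c^\top) = \max_{p \in \mathcal{C}_c} D_p$. Substituting these identifications into the condition \eqref{Eq:RelatedWorkExtendedADMMConditionRho} of Theorem~\ref{Teo:RelatedWorkConvergenceADMM} produces the network-level bound \eqref{Eq:ThmConditionRho}. The main obstacle --- and the step I would spend most effort on --- is getting the exact constant in \eqref{Eq:ThmConditionRho} right: reading off the crude monolithic modulus of $F_c$ gives only $\min_{p \in \mathcal{C}_c}\mu_p$ in the numerator, whereas \eqref{Eq:ThmConditionRho} carries the sum $\sum_{p \in \mathcal{C}_c}\mu_p$. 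Recovering this sharper bound requires not treating each color group as an opaque block but instead exploiting that both the strong-convexity inner product $\langle \nabla F_c(\bar{x}_c) - \nabla F_c(\bar{y}_c), \bar{x}_c - \bar{y}_c\rangle = \sum_{p} \langle \nabla f_p(x_p) - \nabla f_p(y_p), x_p - y_p\rangle$ and the quadratic term $\|A_c(\bar{x}_c - \bar{y}_c)\|^2 = \sum_{p} D_p\|x_p - y_p\|^2$ decouple across $p \in \mathcal{C}_c$; carrying this decoupling through the Han--Yuan inequality chain underlying Theorem~\ref{Teo:RelatedWorkConvergenceADMM} is what lets the per-group modulus aggregate into $\sum_{p \in \mathcal{C}_c}\mu_p$ while the singular value contributes $\max_{p \in \mathcal{C}_c}D_p$. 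I would verify this refinement with care, since it is the only place where the structured problem genuinely improves on the generic statement of Theorem~\ref{Teo:RelatedWorkConvergenceADMM}.
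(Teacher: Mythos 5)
Your overall reduction to Theorem~\ref{Teo:RelatedWorkConvergenceADMM} and your treatment of case (a) coincide with the paper's proof. The paper also verifies full column rank of each $B_c^\top \otimes I_n$ by noting that $B_cB_c^\top$ is diagonal with the (positive) degrees $D_p$ on its diagonal; that is the same structural fact as your disjoint-support argument for the rows of $B_c$, just packaged differently. The closedness/solvability bookkeeping and the passage from a feasible limit of \eqref{Eq:GlobalEdgeBasedReformulation} back to a solution of \eqref{Eq:GlobalProblem} via connectivity are likewise as in the paper.

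The gap is in case (b), precisely at the step you identify as the crux and then leave unexecuted. The paper disposes of it in one line: it asserts that $\sum_{p\in\mathcal{C}_c}f_p$ is strongly convex with modulus $\sum_{p\in\mathcal{C}_c}\mu_p$, citing \cite[Lem.~2.1.4]{Nesterov04-IntroductoryLecturesConvexOptimization}, computes $\sigma_{\max}^2(B_c^\top\otimes I_n)=\max_{p\in\mathcal{C}_c}D_p$, and substitutes into \eqref{Eq:RelatedWorkExtendedADMMConditionRho} to obtain \eqref{Eq:ThmConditionRho}. You are right to distrust that modulus: Nesterov's lemma adds moduli for functions of the \emph{same} variable, whereas $F_c(\bar{x}_c)=\sum_{p\in\mathcal{C}_c}f_p(x_p)$ is separable over disjoint blocks, so its modulus as a function of $\bar{x}_c$ is $\min_{p\in\mathcal{C}_c}\mu_p$. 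But your proposed repair does not deliver the stated constant. Carrying the per-node decoupling through the Han--Yuan estimate, the inequality you need has the form $\tfrac{3(C-1)\rho}{2}\sum_{p}D_p\|x_p-y_p\|^2 < \sum_p \mu_p\|x_p-y_p\|^2$ for all nonzero differences; testing it on a difference supported on a single node forces $\rho < \tfrac{2}{3(C-1)}\min_{p\in\mathcal{C}_c}(\mu_p/D_p)$. That is an improvement over the crude $\min_p\mu_p/\max_pD_p$, but it is in general strictly smaller than the right-hand side of \eqref{Eq:ThmConditionRho} (take two nodes with $\mu_1=1$, $D_1=1$, $\mu_2=100$, $D_2=10$: the decoupled bound is governed by $1$, while $\sum_p\mu_p/\max_pD_p=10.1$). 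So the decoupling does not aggregate the moduli into a sum, and the step on which your whole case (b) rests is not established. To produce a proof you can defend, either prove the refined inequality you are invoking inside the Han--Yuan argument, or state the conclusion with the weaker, verifiable threshold $\rho<\tfrac{2}{3(C-1)}\min_{c}\min_{p\in\mathcal{C}_c}(\mu_p/D_p)$.
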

  \begin{proof}

	We just need to show that~\eqref{Eq:GlobalEdgeBasedReformulationMatrix}, the problem to which we apply multi-block ADMM, satisfies the assumptions of Theorem~\ref{Teo:RelatedWorkConvergenceADMM}. First, note that Assumptions~\ref{Ass:GPfunctions} and~\ref{Ass:GPsolvable} and the equivalence between~\eqref{Eq:GlobalProblem} and~\eqref{Eq:GlobalEdgeBasedReformulationMatrix} imply that problem~\eqref{Eq:GlobalEdgeBasedReformulationMatrix} is solvable and that each function $\sum_{p \in \mathcal{C}_c} f_p(x_p)$ is closed and convex over~$(\mathbb{R}^n)^C$. Next, we show that condition \text{(a)} (resp.\ \text{(b)}) implies condition \text{(a)} (resp.\ \text{(b)}) of Theorem~\ref{Teo:RelatedWorkConvergenceADMM}.
		\newcounter{TeoConvDADMMProof}
		\begin{list}{(\alph{TeoConvDADMMProof})}{\usecounter{TeoConvDADMMProof}}
			\item
				We first see that Assumption~\ref{Ass:GPConnectedStatic} implies that each~$B_c^\top \otimes I_n$ has full column rank. Since the identity matrix~$I_n$ has always full rank, we just need to show that~$B_c^\top$ has full column-rank. If, on the other hand, we prove that~$B_cB_c^\top$ has full rank, then the result follows, because~$\text{rank}(B_cB_c^\top) = \text{rank}(B_c^\top)$. As mentioned before, $B_cB_c^\top$ is a diagonal matrix, where the diagonal contains the degrees of the nodes belonging to the subnetwork composed by the nodes in~$\mathcal{C}_c$. Since no node has degree~$0$ (cf. Assumption~\ref{Ass:GPConnectedStatic}), $B_cB_c^\top$ has full rank.	 We thus have shown that, independently of the coloring scheme, each matrix~$B_c^\top \otimes I_n$ has full column rank. Therefore, when the coloring scheme uses two colors, both requirements of point \text{(a)} in Theorem~\ref{Teo:RelatedWorkConvergenceADMM} are satisfied.

			\item
				When each function~$f_p$ is strongly convex with modulus~$\mu_p$ and~$\rho$ satisfies~\eqref{Eq:ThmConditionRho}, then each~$\sum_{p \in \mathcal{C}_c}f_p$ is strongly convex with modulus $\sum_{p \in \mathcal{C}_c} \mu_p$ \cite[Lem.\ 2.1.4]{Nesterov04-IntroductoryLecturesConvexOptimization} and conditions~\eqref{Eq:RelatedWorkExtendedADMMConditionRho} and~\eqref{Eq:ThmConditionRho} are equivalent. To see this last point, just note that
				\begin{align*}
					\sigma_{\max}(A_c)^2 = \lambda_{\max}(A_c^\top A_c) = \lambda_{\max}(B_c B_c^\top \otimes I_n)
					= \lambda_{\max}(B_c B_c^\top) = \max_{p \in \mathcal{C}_c}\, D_p\,,
				\end{align*}
				since, as we had seen before, each $B_c B_c^\top$ is a diagonal matrix with the degrees of the nodes with color~$c$ in the diagonal.
		\end{list}
  \end{proof}
  As stated before, it is believed that multi-block ADMM converges under condition \text{(a)} of Theorem~\ref{Teo:RelatedWorkConvergenceADMM} when~$C > 2$. This requires that each $B_c^\top \otimes I_n$ has full column rank, which we just proved in part \text{(a)} of the proof above. Translated to Algorithm~\ref{Alg:GlobalClass}, this belief means that algorithm converges for generic (non-bipartite) networks when~$f_p$ is not necessarily strongly convex, i.e., that Theorem~\ref{Teo:GlobalConvergence} holds even when neither condition~\text{(a)} nor condition~\text{(b)} are satisfied. Our simulations of Algorithm~\ref{Alg:GlobalClass} provide some experimental evidence strengthening that belief, as we will soon see.

  Note that the structure of Algorithms~\ref{Alg:Schizas} and \ref{Alg:Zhu}, which are based on the $2$-block ADMM, is similar to the structure of Algorithm~\ref{Alg:GlobalClass}: in all of them, a parameter~$\rho$ has to be chosen, and each node performs the same kind of computations, i.e., compute an average of the estimates of the neighbors and compute the prox of its private function. While in Algorithms~\ref{Alg:Schizas} and \ref{Alg:Zhu} all the nodes perform all the tasks in parallel, the nodes in Algorithm~\ref{Alg:GlobalClass} operate in a color-based way. Therefore, in environments where parallel communication is allowed, one iteration of Algorithm~\ref{Alg:GlobalClass} takes longer than one iteration of Algorithms~\ref{Alg:Schizas} and \ref{Alg:Zhu}. In environments where parallel communication is impossible, e.g., in wireless networks, Algorithms~\ref{Alg:Schizas} and~\ref{Alg:Zhu} have to implement a MAC protocol and, for example, operate in the same color-based way as Algorithm~\ref{Alg:GlobalClass}. In either case, simulation shows that Algorithm~\ref{Alg:GlobalClass} takes systematically less iterations to converge than Algorithms~\ref{Alg:Schizas} and \ref{Alg:Zhu}, for several different problems and several different networks. This means that it is more communication-efficient than the other algorithms, and hence more attractive in scenarios where the nodes are battery-operated.

	\section{Experimental results}
	\label{Sec:GC:ExperimentalResults}

	In this section, we provide some experimental results that compare the performance of the proposed algorithm with prior distributed optimization algorithms. The performance of all the algorithms will be measured in terms of communication steps, defined next.

	\mypar{Communication steps}
	We say that a communication step (CS) has occurred whenever all the nodes have
	transmitted to their neighbors a new solution estimate, usually computed by evaluating a prox operator, as in step~\ref{SubAlg:GlobalProx} of Algorithm~\ref{Alg:GlobalClass}. The number of CSs an algorithm uses to solve an optimization problem is intrinsic to the algorithm and does not take into account factors like MAC protocols, algorithm implementation, or computing platforms. Other performance measures, for example execution time, may give different results if we change any of these factors. Besides, the total number of communications can be easily obtained from the CSs by multiplying it by~$2E$, i.e., by twice the number of edges in the network. Note that Algorithm~\ref{Alg:Schizas} takes two CSs per iteration, while Algorithms~\ref{Alg:Zhu} and~\ref{Alg:GlobalClass} take only one.

	\begin{table}
    \centering
    \caption{
      Network models.
    }
    \label{Tab:NetworkModels}
    \renewcommand{\arraystretch}{1.3}
    \scriptsize
    %\resizebox{0.7\linewidth}{!}{
    \begin{tabular}{@{}llclp{10.5cm}@{}}
    \toprule[1pt]
    Name && Parameters && Description \\
    \midrule

    Erd\H os-R\'enyi~\cite{Erdos59-OnRandomGraphs} &&
    $p$  &&
    Every pair of nodes~$(i,j) \in \mathcal{E}$ is connected or not with probability~$p$ \\

    Watts-Strogatz~\cite{Watts98-CollectiveDynamicsSmallWorldNetworks} &&
    $(n,p)$ &&
    First, it creates a lattice where every node is connected to~$n$ nodes; then, it rewires every link with probability~$p$. Rewiring link $(i,j)$ means removing the link, and connecting node~$i$ or node~$j$ (chosen with equal probability) to another node in the network, chosen uniformly.\\

    Barabasi-Albert~\cite{Barabasi99-EmergenceOfScalingInRandomNetworks} &&
    ------ &&
    It starts with one node. At each step, one node is added to the network by connecting it to~$2$ existing nodes: the probability to connect it to node~$p$ is proportional to~$D_p$.\\

    Geometric~\cite{Penrose04-RandomGeometricGraphs} &&
    $d$ &&
    It drops~$P$ points, corresponding to the nodes of the network, randomly in a $[0,1]^2$ square; then, it connects nodes whose (Euclidean) distance is less than~$d$.\\

    Lattice &&
    ------ &&
    Creates a lattice of dimensions~$m\times n$; $m$ and~$n$ are chosen to make the lattice as square as possible.\\
    \bottomrule[1pt]
    \end{tabular}
    %}
    %\isdraft{\vspace{-0.6cm}}{}
   \end{table}

		\mypar{Networks}
		We generated several networks in our experiments, ranging from networks with~$10$ nodes to networks with~$2000$ nodes. The models we used to generate them are described in \tref{Tab:NetworkModels}. All models, except the lattice, are random, and yield networks with arbitrary topologies. Using these models, we created $40$ different networks, as shown in \tref{Tab:NetworkParametersADNC}. For each one of the models of \tref{Tab:NetworkModels}, we generated~$8$ networks with different numbers of nodes, from~$P=10$ nodes, to $P=2000$ nodes. All the networks were generated in Python~\cite{RossumPython} with the NetworkX library~\cite{Hagberg08-NetworkX}. The parameters we used to generate the Erd\H os-R\'enyi and the geometric networks are known to generate connected networks with high probability. To color the networks, we used a built-in function in Sage~\cite{Stein13-Sage}. The number of colors of each network and the average node degree are shown in \tref{Tab:NetworkParametersADNC}. For example, the network with the largest average degree was the geometric network with $2000$ nodes; the same network had the largest number of colors, $21$. Note that all the lattice networks were colored with two colors, indicating that they are, in fact, bipartite. Note also that these are the only networks for which Algorithm~\ref{Alg:GlobalClass} is proven to converge when the cost functions at each node are not strongly convex (cf.\ Theorem~\ref{Teo:GlobalConvergence}).

   \begin{table}
    \centering
    \caption{
      Network parameters, average degree, and number of colors.
    }
    \label{Tab:NetworkParametersADNC}
    \renewcommand{\arraystretch}{1.3}
    \scriptsize
    %\resizebox{0.7\linewidth}{!}{
    \newcommand{\vpc}{\vspace{-0.12cm}}
    \begin{tabular}{@{}clcrrrrrrrr@{}}
    \toprule[1pt]
    Number & Model & Parameters & \multicolumn{8}{c}{Average degree (top), Number of colors (bottom)}  \\
    \midrule
    & & & \multicolumn{8}{c}{Number of nodes $P$} \\
    \cmidrule(r){4-11}
          & & & 10\phantom{00} & 50\phantom{00} & 100\phantom{00} & 200\phantom{00} & 500\phantom{00} & 700\phantom{00} & 1000\phantom{00} & 2000\phantom{00} \\
    \midrule
     1 &
     Erd\H os-R\'enyi & $1.1\log(P)/P$ &
     \scriptsize{\begin{tabular}{rr} 3  \vpc\\ 3  \vpc\end{tabular}} &
     \scriptsize{\begin{tabular}{rr} 6  \vpc\\ 5  \vpc\end{tabular}} &
     \scriptsize{\begin{tabular}{rr} 5  \vpc\\ 5  \vpc\end{tabular}} &
     \scriptsize{\begin{tabular}{rr} 6  \vpc\\ 5  \vpc\end{tabular}} &
     \scriptsize{\begin{tabular}{rr} 12 \vpc\\ 7  \vpc\end{tabular}} &
     \scriptsize{\begin{tabular}{rr} 14 \vpc\\ 8  \vpc\end{tabular}} &
     \scriptsize{\begin{tabular}{rr} 18 \vpc\\ 9  \vpc\end{tabular}} &
     \scriptsize{\begin{tabular}{rr} 8  \vpc\\ 6  \vpc\end{tabular}} \\

    \midrule[0.3pt]

    2 &
    Watts-Strogatz & $(4,0.4)$ &
    \scriptsize{\begin{tabular}{rr} 4 \vpc\\ 3 \vpc\end{tabular}} &
    \scriptsize{\begin{tabular}{rr} 4 \vpc\\ 4 \vpc\end{tabular}} &
    \scriptsize{\begin{tabular}{rr} 4 \vpc\\ 4 \vpc\end{tabular}} &
    \scriptsize{\begin{tabular}{rr} 4 \vpc\\ 4 \vpc\end{tabular}} &
    \scriptsize{\begin{tabular}{rr} 4 \vpc\\ 5 \vpc\end{tabular}} &
    \scriptsize{\begin{tabular}{rr} 4 \vpc\\ 4 \vpc\end{tabular}} &
    \scriptsize{\begin{tabular}{rr} 4 \vpc\\ 4 \vpc\end{tabular}} &
    \scriptsize{\begin{tabular}{rr} 4 \vpc\\ 4 \vpc\end{tabular}} \\

   \midrule[0.3pt]

    3 &
    Barabasi-Albert & ------ &
    \scriptsize{\begin{tabular}{rr} 3 \vpc\\ 3 \vpc\end{tabular}} &
    \scriptsize{\begin{tabular}{rr} 4 \vpc\\ 3 \vpc\end{tabular}} &
    \scriptsize{\begin{tabular}{rr} 4 \vpc\\ 3 \vpc\end{tabular}} &
    \scriptsize{\begin{tabular}{rr} 4 \vpc\\ 4 \vpc\end{tabular}} &
    \scriptsize{\begin{tabular}{rr} 4 \vpc\\ 4 \vpc\end{tabular}} &
    \scriptsize{\begin{tabular}{rr} 4 \vpc\\ 4 \vpc\end{tabular}} &
    \scriptsize{\begin{tabular}{rr} 4 \vpc\\ 4 \vpc\end{tabular}} &
    \scriptsize{\begin{tabular}{rr} 4 \vpc\\ 4 \vpc\end{tabular}} \\

    \midrule[0.3pt]

    4 &
    Geometric & $\sqrt{\log(P)/P}$&
    \scriptsize{\begin{tabular}{rr} 4  \vpc\\ 5 \vpc\end{tabular}} &
    \scriptsize{\begin{tabular}{rr} 10 \vpc\\ 10 \vpc\end{tabular}} &
    \scriptsize{\begin{tabular}{rr} 12 \vpc\\ 11 \vpc\end{tabular}} &
    \scriptsize{\begin{tabular}{rr} 14 \vpc\\ 12 \vpc\end{tabular}} &
    \scriptsize{\begin{tabular}{rr} 18 \vpc\\ 18 \vpc\end{tabular}} &
    \scriptsize{\begin{tabular}{rr} 19 \vpc\\ 19 \vpc\end{tabular}} &
    \scriptsize{\begin{tabular}{rr} 20 \vpc\\ 17 \vpc\end{tabular}} &
    \scriptsize{\begin{tabular}{rr} 23 \vpc\\ 21 \vpc\end{tabular}} \\

    \midrule[0.3pt]

    5 &
    Lattice & ------&
    \scriptsize{\begin{tabular}{rr} 3 \vpc\\ 2 \vpc\end{tabular}} &
    \scriptsize{\begin{tabular}{rr} 3 \vpc\\ 2 \vpc\end{tabular}} &
    \scriptsize{\begin{tabular}{rr} 4 \vpc\\ 2 \vpc\end{tabular}} &
    \scriptsize{\begin{tabular}{rr} 4 \vpc\\ 2 \vpc\end{tabular}} &
    \scriptsize{\begin{tabular}{rr} 4 \vpc\\ 2 \vpc\end{tabular}} &
    \scriptsize{\begin{tabular}{rr} 4 \vpc\\ 2 \vpc\end{tabular}} &
    \scriptsize{\begin{tabular}{rr} 4 \vpc\\ 2 \vpc\end{tabular}} &
    \scriptsize{\begin{tabular}{rr} 4 \vpc\\ 2 \vpc\end{tabular}} \\

    \bottomrule[1pt]
    \end{tabular}
    %}
   \end{table}

   \mypar{Choosing \boldmath{$\rho$}}
   Almost all the algorithms we compare are based on augmented Lagrangian duality and, thus, are parametrized by a parameter~$\rho$. We are unaware of any method that selects a good~$\rho$ before executing the algorithm; as discussed in \cref{Ch:relatedWork}, the existing heuristics for adapting~$\rho$ during the execution of the algorithm cannot be implemented in a distributed setting. Therefore, for each algorithm that depends on~$\rho$, we execute the algorithm several times, one for a different value of~$\rho$, and select the one that leads to the best performance. In our experiments, we used two strategies for selecting~$\rho$. The simplest one just selects~$\rho$ out of a set of values, typically $\{10^{-4},10^{-3},10^{-2},10^{-1},1,10,10^2\}$. In the second strategy, for a given algorithm, we present the chosen value of~$\rho$ and give the precision value. We say that $\bar{\rho}$ was chosen with precision~$\xi >0$ for a given algorithm whenever both $\rho = \bar{\rho} - \xi$ and $\rho = \bar{\rho}+\xi$ lead to more CSs than $\rho = \bar{\rho}$. This definition is motivated by the fact that the number of CSs in augmented Lagrangian algorithms seems to vary with~$\rho$ in a convex way.

   Next, we present the results of our experiments for each of the applications of \sref{Sec:GC:Applications}. The simplest of these applications is average consensus and, for this reason, we study average consensus in more detail.

  \subsection{Average consensus}

  We designed two sets of experiments for the average consensus problem. In one of them, we fix the network and run several distributed algorithms, comparing how the error evolves along the iterations (or better, along the CSs). In the other set of experiments, we observe only the total number of CSs that each algorithm takes to achieve a predefined relative error. While the first set of experiments is run on a single network and for many algorithms, the second set of experiments is run for all the networks of \tref{Tab:NetworkParametersADNC} and only for the most competitive algorithms. Next, we describe the how the experiments were designed, then we state which algorithms we compare, and finally we describe the results for both sets of experiments.

  \mypar{Experimental setup}
  In consensus, each node~$p$ holds a scalar~$\theta_p$, and the goal is to compute the average of all the $\theta_p$'s. We generated each~$\theta_p$ independently from each other as a realization of a Gaussian distribution with mean~$10$ and standard deviation~$100$. Such a large standard deviation was chosen to ensure all the $\theta_p$'s differed significantly. We generated~$8$ sets of these numbers, each set for a network with a fixed number of nodes. This means that one set of $\theta_p$'s is used across networks with the same number of nodes, that is, the same set is used, for example, for a geometric network with~$200$ nodes and for a Barabasi-Albert network with~$200$ nodes.

	In all the algorithms we compare, each node requires an initialization of its solution estimate. In all our experiments, the estimate of node~$p$ is initialized with~$\theta_p$. We only do this special initialization for the average consensus problem; the reason is to make a fair comparison between algorithms that were designed specifically for consensus and that require this exact initialization, and between general-purpose algorithms, which do not require any special initialization. This contrasts with the results in \fref{Fig:IntroExperimentalResults}, in \cref{Ch:Introduction}, where some algorithms were initialized this way and others, including the algorithm we propose, were initialized with zeros. While those results are merely illustrative, they are not as fair as the ones we present next.

	\mypar{Algorithms for comparison}
	In our experiments, we compare the performance of Algorithm~\ref{Alg:GlobalClass} not only with other algorithms solving the problem class~\eqref{Eq:GlobalProblem}, but also with algorithms that were designed only for average consensus and that cannot solve any other problem in that class. Namely, the algorithms in~\cite{Olshevsky11-ConvergenceSpeedDistributedConsensusAveraging} and~\cite{Oreshkin10-OptimizationAnalysisDistrAveraging} are consensus algorithms and cannot be generalized (at least, straightforwardly) to solve other problems written as~\eqref{Eq:GlobalProblem}. The algorithm in~\cite{Oreshkin10-OptimizationAnalysisDistrAveraging} is actually considered the fastest consensus algorithm, among the synchronous and the asynchronous ones~\cite{Erseghe11-FastConsensusByADMM}. Since each iteration takes one CS, it is also the most communication-efficient algorithm for consensus. We will see next that the algorithm we propose, when applied to consensus, performs as well as~\cite{Oreshkin10-OptimizationAnalysisDistrAveraging}, and sometimes better. Note that our algorithm is general-purpose, in contrast with~\cite{Oreshkin10-OptimizationAnalysisDistrAveraging}, which is specific to consensus.

	Regarding general-purpose algorithms, we consider distributed algorithms based on the $2$-block ADMM, namely,
	\cite{Schizas08-ConsensusAdHocWSNsPartI} (written as Algorithm~\ref{Alg:Schizas}), \cite{Zhu09-DistributedInNetworkChannelCoding} (written as Algorithm~\ref{Alg:Zhu}), and~\cite{Ling12-MultiBlockAlternatingDirectionMethodParallelSplittingConsensusOptimization}. All these algorithms (and also ours) require computing the prox operator of the function~$f_p = (1/2)(x - \theta_p)^2$. This can be done in closed-form: $\text{prox}_{\tau f_p}(\eta) = (\tau\theta_p + \eta)/(1+\tau)$; see~\eqref{Eq:DefinitionProxOperator} for the definition of the prox operator. The algorithm in~\cite{Ling12-MultiBlockAlternatingDirectionMethodParallelSplittingConsensusOptimization} is slightly different from the other ADMM-based algorithms since, instead of just one tuning parameter, it has two: the augmented Lagrangian~$\rho$ and a stepsize~$\beta$. In our experiments, we set always $\beta = 0.9\mu$, just like the authors of~\cite{Ling12-MultiBlockAlternatingDirectionMethodParallelSplittingConsensusOptimization} did in their experiments. 	We also consider the (sub)gradient-based method~\cite{Nedic09-DistributedSubgradientMethodsMultiAgentOptimization}, which also solves the class~\eqref{Eq:GlobalProblem}. (Actually, the algorithm in~\cite{Nedic09-DistributedSubgradientMethodsMultiAgentOptimization} solves only unconstrained problems; to solve problems with constraints one has to consider the generalization in~\cite{Nedic10-ConstrainedConsensusOptimizationMultiAgentNetworks}.) We implemented the algorithm in~\cite{Nedic09-DistributedSubgradientMethodsMultiAgentOptimization} with uniform weights, i.e., each node averages equally the estimates of its neighbors, and with stepsize $1/(k + 1)$.

	\begin{figure}
     \centering
     \psscalebox{1.1}{
     \begin{pspicture}(8.0,5.4)
       \rput[bl](0.25,0.70){\includegraphics[width=7.5cm]{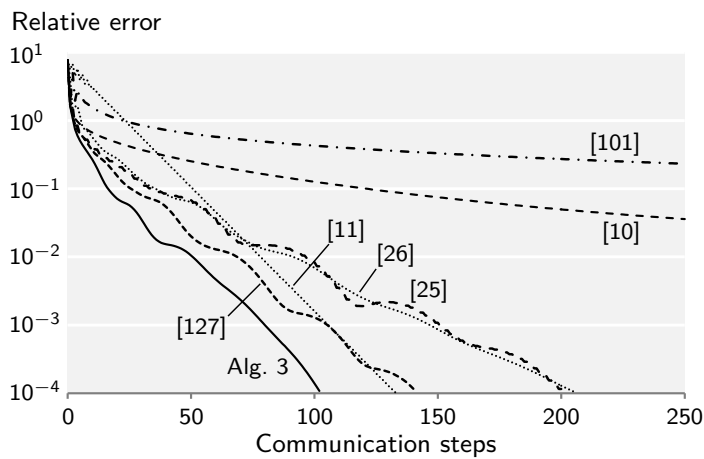}}
       \rput[b](4.00,0.001){\footnotesize \textbf{\sf Communication steps}}
       \rput[bl](-0.41,5.17){\mbox{\footnotesize \textbf{{\sf Relative error}}}}

       \rput[r](0.20,4.89){\scriptsize $\mathsf{10^{1\phantom{-}}}$}
       \rput[r](0.20,4.08){\scriptsize $\mathsf{10^{0\phantom{-}}}$}
       \rput[r](0.20,3.26){\scriptsize $\mathsf{10^{-1}}$}
       \rput[r](0.20,2.46){\scriptsize $\mathsf{10^{-2}}$}
       \rput[r](0.20,1.63){\scriptsize $\mathsf{10^{-3}}$}
       \rput[r](0.20,0.81){\scriptsize $\mathsf{10^{-4}}$}

       \rput[t](0.280,0.59){\scriptsize $\mathsf{0}$}
       \rput[t](1.769,0.59){\scriptsize $\mathsf{50}$}
       \rput[t](3.260,0.59){\scriptsize $\mathsf{100}$}
       \rput[t](4.748,0.59){\scriptsize $\mathsf{150}$}
       \rput[t](6.240,0.59){\scriptsize $\mathsf{200}$}
			 \rput[t](7.734,0.59){\scriptsize $\mathsf{250}$}

       \rput[rt](2.95,1.2){\scriptsize \textbf{\sf Alg.\ \ref{Alg:GlobalClass}}}
       \rput[rt](2.2,1.7){\scriptsize \textbf{\sf \cite{Ling12-MultiBlockAlternatingDirectionMethodParallelSplittingConsensusOptimization}}}
       \psline[linewidth=0.5pt]{-}(2.21,1.71)(2.6,2.1)
       \rput[lb](4.4,1.87){\scriptsize \textbf{\sf \cite{Schizas08-ConsensusAdHocWSNsPartI}}}
       \rput[lb](4.0,2.3){\scriptsize \textbf{\sf \cite{Zhu09-DistributedInNetworkChannelCoding}}}
       \psline[linewidth=0.5pt]{-}(3.8,2.0)(4.0,2.28)
       \rput[lb](3.4,2.6){\scriptsize \textbf{\sf \cite{Oreshkin10-OptimizationAnalysisDistrAveraging}}}
       \psline[linewidth=0.5pt]{-}(3.0,2.08)(3.43,2.58)
       \rput[rt](7.2,2.85){\scriptsize \textbf{\sf \cite{Olshevsky11-ConvergenceSpeedDistributedConsensusAveraging}}}
       \rput[rb](7.2,3.65){\scriptsize \textbf{\sf \cite{Nedic09-DistributedSubgradientMethodsMultiAgentOptimization}}}

       %\psgrid
     \end{pspicture}
     }
     \caption[Comparison of several algorithms for the average consensus problem in a geometric network with $P = 2000$ nodes.]{
			Comparison of several algorithms for the average consensus problem in a geometric network with $P = 2000$ nodes. The plot shows the relative error versus the number of CSs.
    }
     \label{Fig:GlobalExpConsensusAllAlgs}
  \end{figure}

	\begin{figure}
     \centering
     \subfigure[Network 1: Erd\H os-R\'enyi]{\label{SubFig:GlobalExpConsER}
     \begin{pspicture}(7.9,5.4)
       \rput[b](3.44,0.86){\includegraphics[scale=0.3]{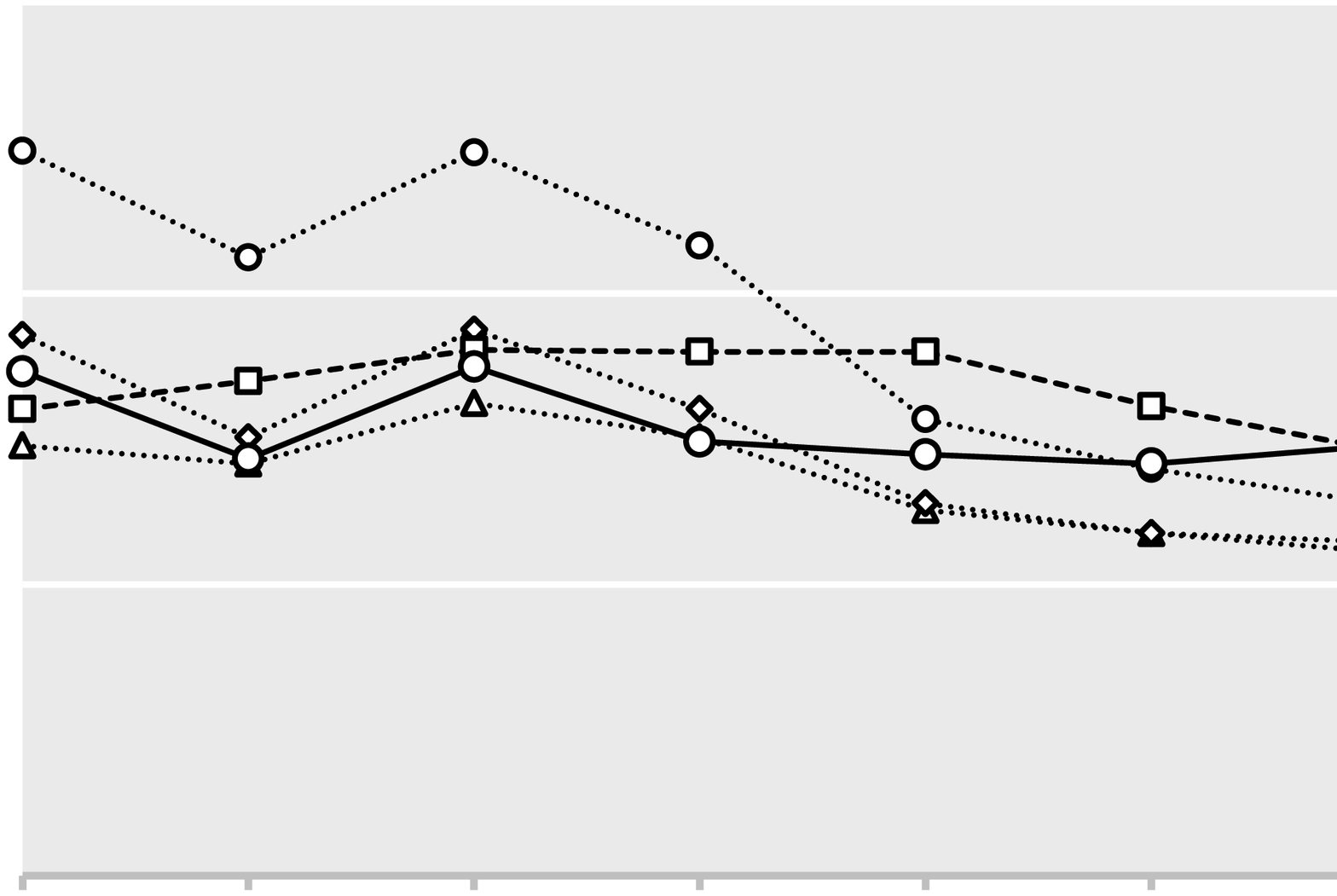}}
       \rput[b](3.95,0.1){\footnotesize \textbf{\sf Number of nodes}}
       \rput[bl](0.01,5.05){\mbox{\footnotesize \textbf{{\sf Communication steps}}}}
       \rput[r](0.44,0.93){\scriptsize $\mathsf{10^0}$}
       \rput[r](0.44,2.22){\scriptsize $\mathsf{10^1}$}
       \rput[r](0.44,3.48){\scriptsize $\mathsf{10^2}$}
       \rput[r](0.44,4.74){\scriptsize $\mathsf{10^3}$}

       \rput[t](0.610,0.72){\scriptsize $\mathsf{10}$}
       \rput[t](1.610,0.72){\scriptsize $\mathsf{50}$}
       \rput[t](2.592,0.72){\scriptsize $\mathsf{100}$}
       \rput[t](3.579,0.72){\scriptsize $\mathsf{200}$}
       \rput[t](4.560,0.72){\scriptsize $\mathsf{500}$}
       \rput[t](5.540,0.72){\scriptsize $\mathsf{700}$}
       \rput[t](6.520,0.72){\scriptsize $\mathsf{1000}$}
       \rput[t](7.505,0.72){\scriptsize $\mathsf{2000}$}

       \rput[lt](5.2,2.0){\scriptsize \textbf{\sf Alg.\ \ref{Alg:GlobalClass}}}
       \psline[linewidth=0.5pt]{-}(5,2.70)(5.19,1.98)
       \rput[lb](5.01,3.12){\scriptsize \textbf{\sf \cite{Zhu09-DistributedInNetworkChannelCoding}}}
       \rput[lb](0.83,3.2){\scriptsize \textbf{\sf \cite{Olshevsky11-ConvergenceSpeedDistributedConsensusAveraging}}}
       \rput[t](1.00,2.70){\scriptsize \textbf{\sf \cite{Oreshkin10-OptimizationAnalysisDistrAveraging}}}
       \rput[lb](2.7,4.1){\scriptsize \textbf{\sf \cite{Ling12-MultiBlockAlternatingDirectionMethodParallelSplittingConsensusOptimization}}}

       %\psgrid
     \end{pspicture}
     }
     \hfill
     \subfigure[Network 2: Watts-Strogatz]{\label{SubFig:GlobalExpConsWS}
     \begin{pspicture}(7.9,5.4)
       \rput[b](3.44,0.86){\includegraphics[scale=0.3]{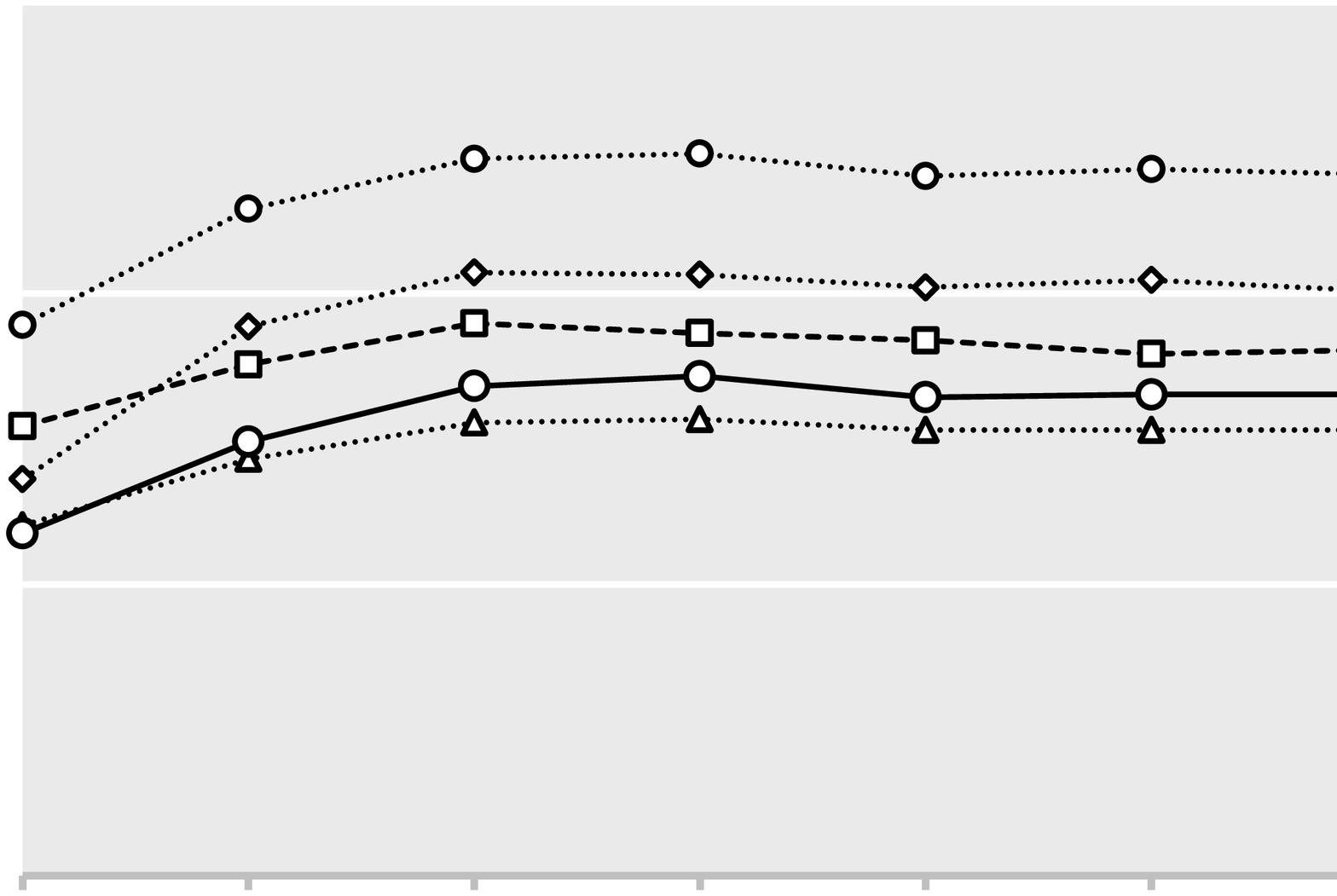}}
       \rput[b](3.95,0.1){\footnotesize \textbf{\sf Number of nodes}}
       \rput[bl](0.01,5.05){\mbox{\footnotesize \textbf{{\sf Communication steps}}}}
       \rput[r](0.44,0.93){\scriptsize $\mathsf{10^0}$}
       \rput[r](0.44,2.22){\scriptsize $\mathsf{10^1}$}
       \rput[r](0.44,3.48){\scriptsize $\mathsf{10^2}$}
       \rput[r](0.44,4.74){\scriptsize $\mathsf{10^3}$}

       \rput[t](0.610,0.72){\scriptsize $\mathsf{10}$}
       \rput[t](1.610,0.72){\scriptsize $\mathsf{50}$}
       \rput[t](2.592,0.72){\scriptsize $\mathsf{100}$}
       \rput[t](3.579,0.72){\scriptsize $\mathsf{200}$}
       \rput[t](4.560,0.72){\scriptsize $\mathsf{500}$}
       \rput[t](5.540,0.72){\scriptsize $\mathsf{700}$}
       \rput[t](6.520,0.72){\scriptsize $\mathsf{1000}$}
       \rput[t](7.505,0.72){\scriptsize $\mathsf{2000}$}

       \rput[rt](7.0,2.6){\scriptsize \textbf{\sf Alg.\ \ref{Alg:GlobalClass}}}
       \psline[linewidth=0.5pt]{-}(7.02,2.61)(7.3,3.03)
       \rput[l](7.60,3.26){\scriptsize \textbf{\sf \cite{Zhu09-DistributedInNetworkChannelCoding}}}
       \rput[l](7.60,3.64){\scriptsize \textbf{\sf \cite{Olshevsky11-ConvergenceSpeedDistributedConsensusAveraging}}}
       \rput[l](7.60,2.84){\scriptsize \textbf{\sf \cite{Oreshkin10-OptimizationAnalysisDistrAveraging}}}
       \rput[l](7.60,4.1){\scriptsize \textbf{\sf \cite{Ling12-MultiBlockAlternatingDirectionMethodParallelSplittingConsensusOptimization}}}

       %\psgrid
     \end{pspicture}
     }

     \bigskip

     \subfigure[Network 3: Barabasi-Albert]{\label{SubFig:GlobalExpConsBarabasi}
     \begin{pspicture}(7.9,5.4)
       \rput[b](3.44,0.86){\includegraphics[scale=0.3]{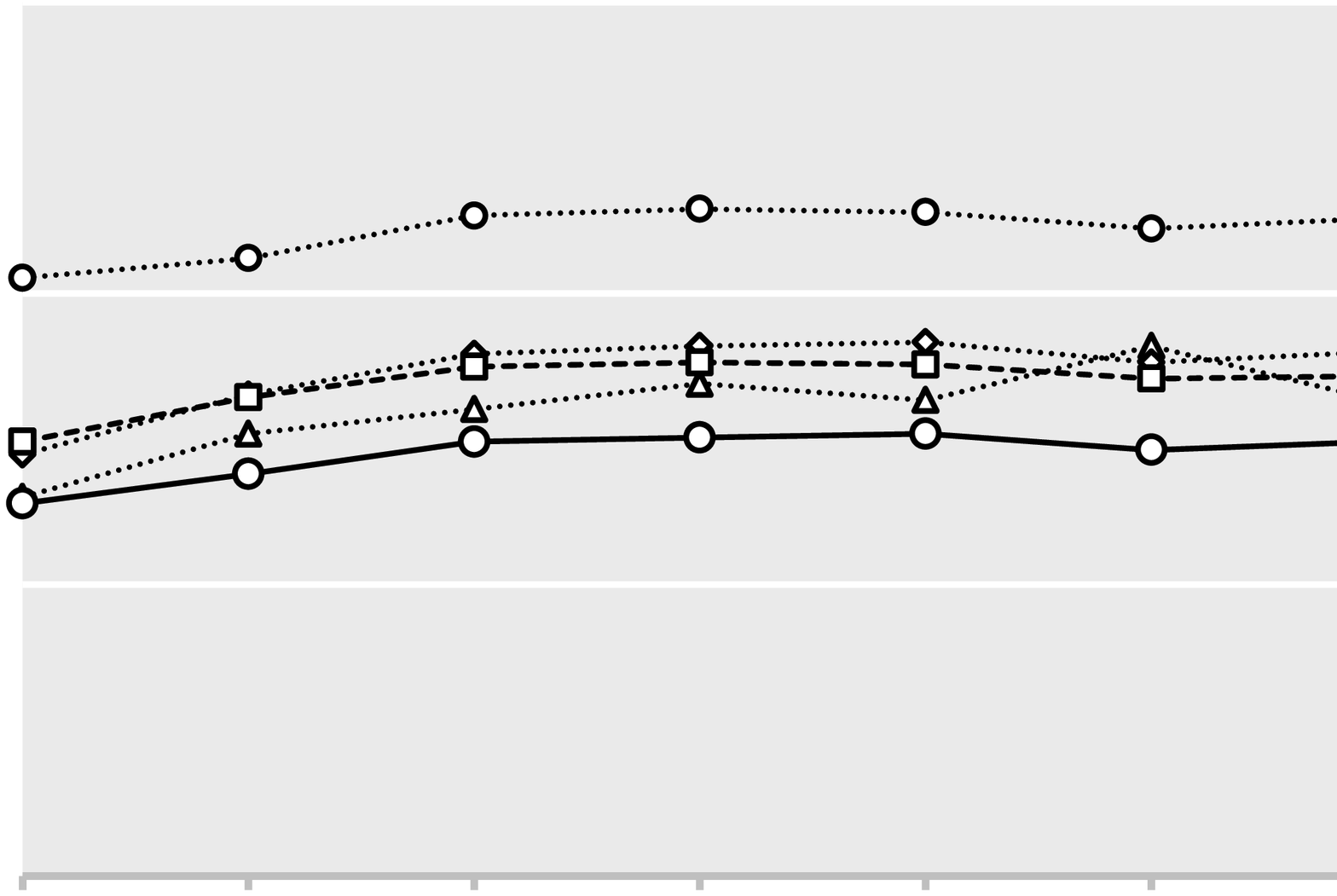}}
       \rput[b](3.95,0.1){\footnotesize \textbf{\sf Number of nodes}}
       \rput[bl](0.01,5.05){\mbox{\footnotesize \textbf{{\sf Communication steps}}}}
       \rput[r](0.44,0.93){\scriptsize $\mathsf{10^0}$}
       \rput[r](0.44,2.22){\scriptsize $\mathsf{10^1}$}
       \rput[r](0.44,3.48){\scriptsize $\mathsf{10^2}$}
       \rput[r](0.44,4.74){\scriptsize $\mathsf{10^3}$}

       \rput[t](0.610,0.72){\scriptsize $\mathsf{10}$}
       \rput[t](1.610,0.72){\scriptsize $\mathsf{50}$}
       \rput[t](2.592,0.72){\scriptsize $\mathsf{100}$}
       \rput[t](3.579,0.72){\scriptsize $\mathsf{200}$}
       \rput[t](4.560,0.72){\scriptsize $\mathsf{500}$}
       \rput[t](5.540,0.72){\scriptsize $\mathsf{700}$}
       \rput[t](6.520,0.72){\scriptsize $\mathsf{1000}$}
       \rput[t](7.505,0.72){\scriptsize $\mathsf{2000}$}

			 \rput[rt](7.4,2.73){\scriptsize \textbf{\sf Alg.\ \ref{Alg:GlobalClass}}}
			 \rput[bl](6.63,3.22){\scriptsize \textbf{\sf \cite{Olshevsky11-ConvergenceSpeedDistributedConsensusAveraging}}}
       \rput[bl](4.5,3.43){\scriptsize \textbf{\sf \cite{Zhu09-DistributedInNetworkChannelCoding}}}
       \psline[linewidth=0.5pt]{-}(4.3,3.17)(4.5,3.40)
			 \rput[bl](3.3,3.43){\scriptsize \textbf{\sf \cite{Oreshkin10-OptimizationAnalysisDistrAveraging}}}
			 \psline[linewidth=0.5pt]{-}(3.02,3.04)(3.3,3.40)
			 \rput[rb](2.3,3.8){\scriptsize \textbf{\sf \cite{Ling12-MultiBlockAlternatingDirectionMethodParallelSplittingConsensusOptimization}}}

       %\psgrid
     \end{pspicture}
     }
     \hfill
     \subfigure[Network 4: Geometric]{\label{SubFig:GlobalExpConsGeo}
     \begin{pspicture}(7.9,5.4)
       \rput[b](3.44,0.86){\includegraphics[scale=0.3]{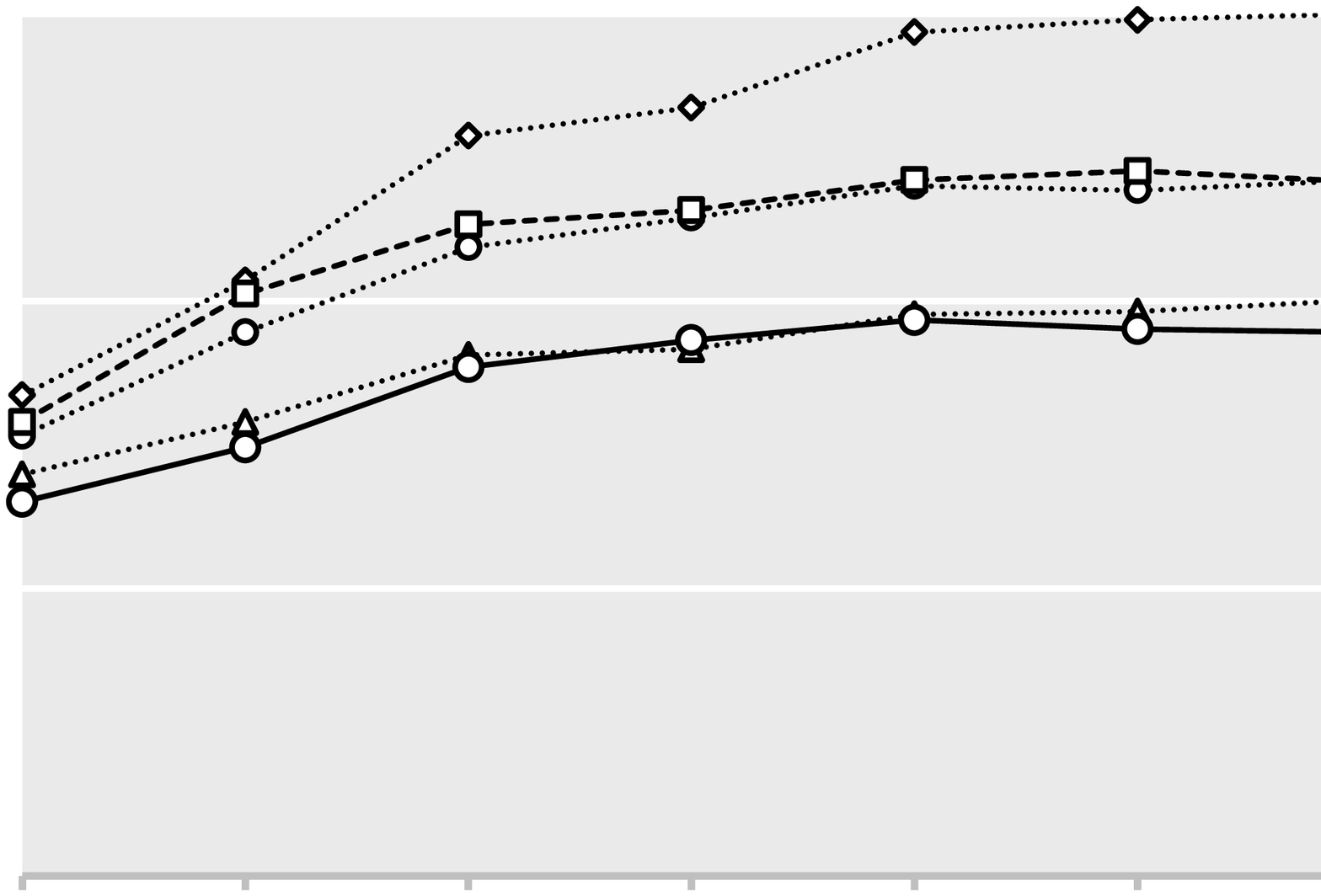}}
       \rput[b](3.95,0.1){\footnotesize \textbf{\sf Number of nodes}}
       \rput[bl](0.01,5.05){\mbox{\footnotesize \textbf{{\sf Communication steps}}}}
       \rput[r](0.44,0.93){\scriptsize $\mathsf{10^0}$}
       \rput[r](0.44,2.22){\scriptsize $\mathsf{10^1}$}
       \rput[r](0.44,3.48){\scriptsize $\mathsf{10^2}$}
       \rput[r](0.44,4.74){\scriptsize $\mathsf{10^3}$}

       \rput[t](0.610,0.72){\scriptsize $\mathsf{10}$}
       \rput[t](1.610,0.72){\scriptsize $\mathsf{50}$}
       \rput[t](2.592,0.72){\scriptsize $\mathsf{100}$}
       \rput[t](3.579,0.72){\scriptsize $\mathsf{200}$}
       \rput[t](4.560,0.72){\scriptsize $\mathsf{500}$}
       \rput[t](5.540,0.72){\scriptsize $\mathsf{700}$}
       \rput[t](6.520,0.72){\scriptsize $\mathsf{1000}$}
       \rput[t](7.505,0.72){\scriptsize $\mathsf{2000}$}

			 \rput[lt](6.6,3.33){\scriptsize \textbf{\sf Alg.\ \ref{Alg:GlobalClass}}}
			 \rput[rb](6.8,3.53){\scriptsize \textbf{\sf \cite{Oreshkin10-OptimizationAnalysisDistrAveraging}}}
       \rput[rb](1.70,4.10){\scriptsize \textbf{\sf \cite{Zhu09-DistributedInNetworkChannelCoding}}}
       \psline[linewidth=0.5pt]{-}(1.69,4.09)(2.1,3.70)
			 \rput[rb](3.20,4.3){\scriptsize \textbf{\sf \cite{Olshevsky11-ConvergenceSpeedDistributedConsensusAveraging}}}
			 \rput[lt](2.8,3.72){\scriptsize \textbf{\sf \cite{Ling12-MultiBlockAlternatingDirectionMethodParallelSplittingConsensusOptimization}}}

       %\psgrid
     \end{pspicture}
     }

     \bigskip

     \subfigure[Network 5: Lattice]{\label{SubFig:GlobalExpConsLattice}
     \begin{pspicture}(7.9,5.4)
       \rput[b](3.44,0.86){\includegraphics[scale=0.3]{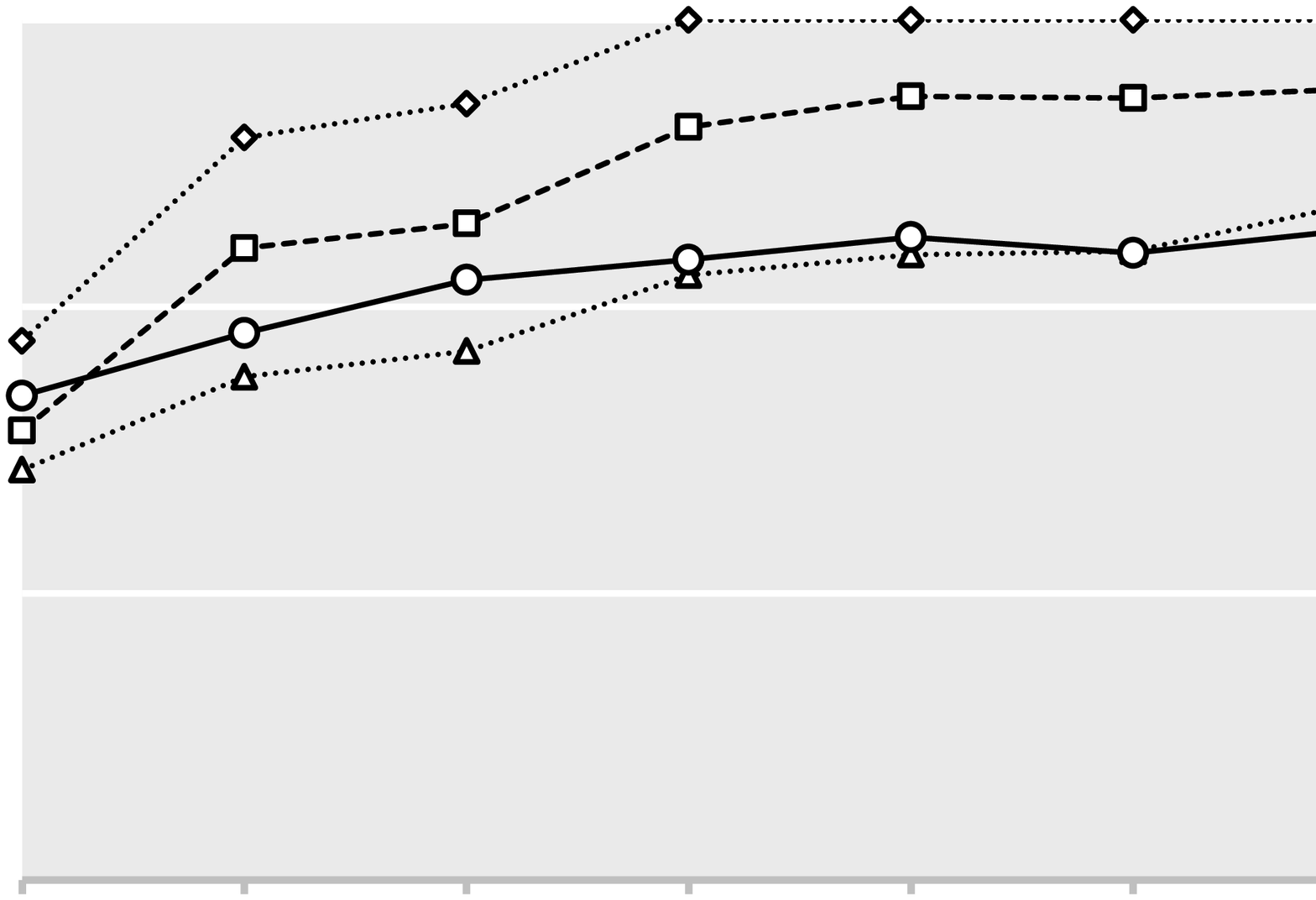}}
       \rput[b](3.95,0.1){\footnotesize \textbf{\sf Number of nodes}}
       \rput[bl](0.01,5.05){\mbox{\footnotesize \textbf{{\sf Communication steps}}}}
       \rput[r](0.44,0.93){\scriptsize $\mathsf{10^0}$}
       \rput[r](0.44,2.22){\scriptsize $\mathsf{10^1}$}
       \rput[r](0.44,3.48){\scriptsize $\mathsf{10^2}$}
       \rput[r](0.44,4.74){\scriptsize $\mathsf{10^3}$}

       \rput[t](0.610,0.72){\scriptsize $\mathsf{10}$}
       \rput[t](1.610,0.72){\scriptsize $\mathsf{50}$}
       \rput[t](2.592,0.72){\scriptsize $\mathsf{100}$}
       \rput[t](3.579,0.72){\scriptsize $\mathsf{200}$}
       \rput[t](4.560,0.72){\scriptsize $\mathsf{500}$}
       \rput[t](5.540,0.72){\scriptsize $\mathsf{700}$}
       \rput[t](6.520,0.72){\scriptsize $\mathsf{1000}$}
       \rput[t](7.505,0.72){\scriptsize $\mathsf{2000}$}

			 \rput[lt](6.66,3.73){\scriptsize \textbf{\sf Alg.\ \ref{Alg:GlobalClass}}}
			 \rput[rb](6.3,3.9){\scriptsize \textbf{\sf \cite{Oreshkin10-OptimizationAnalysisDistrAveraging}}}
       \rput[lt](3.86,4.21){\scriptsize \textbf{\sf \cite{Zhu09-DistributedInNetworkChannelCoding}}}
       \rput[rb](2.06,4.34){\scriptsize \textbf{\sf \cite{Olshevsky11-ConvergenceSpeedDistributedConsensusAveraging}}}
       %\psgrid
     \end{pspicture}
     }
     \caption[Results for the average consensus problem for all the networks of \tref{Tab:NetworkModels}.]{
			Results for the average consensus problem for all the networks of \tref{Tab:NetworkModels}. The plots,  organized by network type, compare Algorithm~\ref{Alg:GlobalClass} with algorithms~\cite{Zhu09-DistributedInNetworkChannelCoding} (see Algorithm~\ref{Alg:Zhu}), \cite{Ling12-MultiBlockAlternatingDirectionMethodParallelSplittingConsensusOptimization}, \cite{Olshevsky11-ConvergenceSpeedDistributedConsensusAveraging,Oreshkin10-OptimizationAnalysisDistrAveraging}. The algorithm~\cite{Ling12-MultiBlockAlternatingDirectionMethodParallelSplittingConsensusOptimization} does not appear in \text{(e)}, because it always achieved the maximum number of iterations, except for the first network. Note that~\cite{Olshevsky11-ConvergenceSpeedDistributedConsensusAveraging,Oreshkin10-OptimizationAnalysisDistrAveraging} were designed specifically for consensus and cannot solve any other problem in the class~\eqref{Eq:GlobalProblem}.
    }
     \label{Fig:GlobalExpConsensusAllNets}
  \end{figure}

  \mypar{Results}
  The performance of all the above algorithms is compared on the geometric network with $P=2000$ nodes, from \tref{Tab:NetworkParametersADNC}. This is shown in \fref{Fig:GlobalExpConsensusAllAlgs} and constitutes our first set of experiments. The plot in the figure shows the evolution of the relative error as a function of the CSs. The relative error is measured as $\|\bar{x}^k - \theta^\star 1_P\|/(\sqrt{P}|\theta^\star|)$, where $\bar{x}^k = (x_1^k,\ldots,x_P^k)$, $x_p^k$ is the solution estimate of node~$p$ at iteration~$k$, and $\theta^\star = (1/P)\sum_{p=1}^P \theta_p$ is the problem's solution. The augmented Lagrangian parameter~$\rho$ was~$1.1$ for Algorithm~\ref{Alg:GlobalClass}, $0.5$ for~\cite{Zhu09-DistributedInNetworkChannelCoding}, $0.6$ for~\cite{Schizas08-ConsensusAdHocWSNsPartI}, and~$0.4$ for~\cite{Ling12-MultiBlockAlternatingDirectionMethodParallelSplittingConsensusOptimization}, and was computed with precision~$0.1$ for all the algorithms. In \fref{Fig:GlobalExpConsensusAllAlgs}, Algorithm~\ref{Alg:GlobalClass} was the algorithm whose error decreased the fastest; in fact, it required uniformly less CSs than all the other algorithms to achieve any relative error between~$10^{-1}$ and~$10^{-4}$. The algorithms with the second and third best performances were, respectively, the consensus algorithm~\cite{Oreshkin10-OptimizationAnalysisDistrAveraging} and the ADMM-based algorithm~\cite{Ling12-MultiBlockAlternatingDirectionMethodParallelSplittingConsensusOptimization}. Next, the ADMM-based algorithms~\cite{Schizas08-ConsensusAdHocWSNsPartI} and~\cite{Zhu09-DistributedInNetworkChannelCoding} had a very similar performance, requiring about $200$ communication steps to achieve a relative error of $10^{-4}$. Both the consensus algorithm~\cite{Olshevsky11-ConvergenceSpeedDistributedConsensusAveraging} and the general-purpose algorithm~\cite{Nedic09-DistributedSubgradientMethodsMultiAgentOptimization} did not converge, i.e., achieve a $10^{-4}$ relative error in less than $250$ CSs.

  In our second set of experiments, shown in \fref{Fig:GlobalExpConsensusAllNets}, we discarded algorithms~\cite{Nedic09-DistributedSubgradientMethodsMultiAgentOptimization} and~\cite{Schizas08-ConsensusAdHocWSNsPartI}, since they exhibited performances inferior to the other algorithms. There are~$5$ plots in \fref{Fig:GlobalExpConsensusAllNets}, one per network type, i.e., row of \tref{Tab:NetworkParametersADNC}. In contrast with the plot of \fref{Fig:GlobalExpConsensusAllAlgs}, the plots of \fref{Fig:GlobalExpConsensusAllNets} show the number of CSs to achieve a relative error of $10^{-4}$ as a function of the network size. For example, in the Watts-Strogatz network with~$200$ nodes (\fref{SubFig:GlobalExpConsWS}), algorithm~\cite{Ling12-MultiBlockAlternatingDirectionMethodParallelSplittingConsensusOptimization} took~$302$ CSs to converge, while~\cite{Olshevsky11-ConvergenceSpeedDistributedConsensusAveraging} took~$116$, \cite{Zhu09-DistributedInNetworkChannelCoding} took~$73$, Algorithm~\ref{Alg:GlobalClass} took~$52$, and~\cite{Oreshkin10-OptimizationAnalysisDistrAveraging} took~$37$. The type of networks for which Algorithm~\ref{Alg:GlobalClass} performed worst was, in fact, Watts-Strogatz type (\fref{SubFig:GlobalExpConsWS}) and Erd\H os-R\'enyi type (\fref{SubFig:GlobalExpConsER}). For the remaining networks, Algorithm~\ref{Alg:GlobalClass} was always among the best. For example, in Barabasi-Albert network types (\fref{SubFig:GlobalExpConsBarabasi}), Algorithm~\ref{Alg:GlobalClass} was always the algorithm requiring the least amount of CSs to converge. From theses experiments, we can conclude that Algorithm~\ref{Alg:GlobalClass}, a general-purpose distributed algorithm, ranks among the most communication-efficient algorithms for solving the average consensus problem.

	\subsection{Row partition: BP and BPDN}

	We now discuss our experiments on other application problems. In this subsection, we consider compressed sensing problems with a row partition (see \fref{Fig:PartitionOfA}), namely, basis pursuit (BP) and basis pursuit denoising (BPDN). These problems, as well as their reformulation as~\eqref{Eq:GlobalProblem}, are discussed in \ssref{SubSec:GC:SparseSolutions}. Next, we mention the experimental setup and how we implemented the computation of the prox operators. Then, we discuss the experimental results.

	\mypar{Experimental setup}
	In our experiments, we used all the networks with $50$ nodes, i.e., all the networks in the second column of \tref{Tab:NetworkParametersADNC}. The exact solution of BP (resp.\ BPDN) was computed in a centralized way with the Matlab toolbox spgl1~\cite{Friedlander08-ProbingParetofrontierBasisPursuit-spgl1} (resp.\ GPSR~\cite{Figueiredo07-GradientProjectionSparseReconstruction}). Knowing the solutions of these problems, we were able to assess the relative error of each algorithm along its iterations. Let~$x^\star$ denote the solution of either BP or BPDN. The relative error is measured as $\|x^k - x^\star\| /\|x^\star\|$, where~$x^k$ is the estimate of an arbitrary node in the network. The algorithms stopped whenever they reached a relative error of $10^{-4}$, or a maximum number of CSs. The maximum number of CSs was~$1000$ for BP and~$2000$ for BPDN. All the algorithms we compare are based on ADMM and, thus, have a tuning parameter~$\rho$. In these experiments, $\rho$ was always chosen as the best value from the set $\{10^{-4},10^{-3},10^{-2},10^{-1},1,10,10^2\}$. Regarding the data, i.e., the matrix~$A \in \mathbb{R}^{m \times n}$ and the vector~$b \in \mathbb{R}^m$, we used two different types of data, one for BP and other for BPDN. For BP, $A$ had dimensions $500 \times 2000$ and each entry was generated randomly and independently from a Gaussian distribution with $0$ mean and standard deviation $1/\sqrt{500} \simeq 0.045$; since there were~$50$ nodes, each node stored a matrix of size~$10 \times 2000$. The vector~$b$ was generated from a sparse linear combination of the columns of~$A$. For BPDN, we used a matrix from problem $902$ of the Sparco toolbox~\cite{Friedlander07-Sparco}. That matrix has dimensions~$200 \times 1000$ and, thus, each node stored a matrix of size $4\times 1000$. The vector~$b$ was generated from a sparse linear combination of the columns of~$A$, to which we added Gaussian noise. The noise parameter~$\beta$ in BPDN (see~\eqref{Eq:BPDN}) was set to $0.3$.

	\mypar{Computation of the prox operator}
	In ADMM-based algorithms, at each iteration, each node has to compute the prox operator of its function. In the case of BP, the function at node~$p$ is given by $f_p(x) = (1/P)\|x\|_1 + \text{i}_{A_px=b_p}(x)$, as shown in~\eqref{Eq:GlobalRecastBP}. Computing the prox of~$f_p$, in this case, is equivalent to finding the minimizer of:
	\begin{equation}\label{Eq:GlobalProxBP}
		\begin{array}{ll}
			\underset{x}{\text{minimize}} & \|x\|_1 + v^\top x + c\|x\|^2 \\
			\text{subject to} & Ax = b\,,
		\end{array}
	\end{equation}
	for some vector~$v \in \mathbb{R}^n$ and some scalar~$c >0$. To simplify, we dropped the subscripts from the matrix~$A$ and the vector~$b$. Since the objective of~\eqref{Eq:GlobalProxBP} is strictly convex, we can find a primal solution by solving its dual problem:
	\begin{equation}\label{Eq:GlobalProxBPDual}
		\begin{array}{ll}
			\underset{\lambda}{\text{maximize}} & b^\top \lambda + \sum_{i=1}^n \inf_{x_i} \Bigl(|x_i| + u_i(\lambda) x_i + c\,x_i^2\Bigr)\,,
 		\end{array}
	\end{equation}
	where $u(\lambda) = v - A^\top \lambda$. We solve~\eqref{Eq:GlobalProxBPDual} with the algorithm in~\cite{Raydan97-BarzilaiBorweinUnconstrainedMinimization}, which is based on the Barzilai-Borwein method. In our implementation, we used warm-starts, that is, at each iteration and for a given node, the algorithm is initialized with the solution that the node found in the previous iteration.

	Regarding BPDN, the function at node~$p$ is given by $f_p(x) = (1/2)\|A_px -b_p\|^2 + (\beta/P) \|x\|_1$, as shown in~\eqref{Eq:GlobalRecastBPDN}. Computing the prox of function~$f_p$, in this case, is actually equivalent to finding a minimizer of a function with the same format as~$f_p$. An efficient method for doing that is GPSR~\cite{Figueiredo07-GradientProjectionSparseReconstruction}, namely GPSR-BB, which uses the Barzilai-Borwein stepsize.

	\begin{figure}
     \centering
     \subfigure[BP]{\label{SubFig:GlobalExpBPRP}
     \begin{pspicture}(3.75,5.1)
       \rput(2.11,2.547){\includegraphics[scale=0.23]{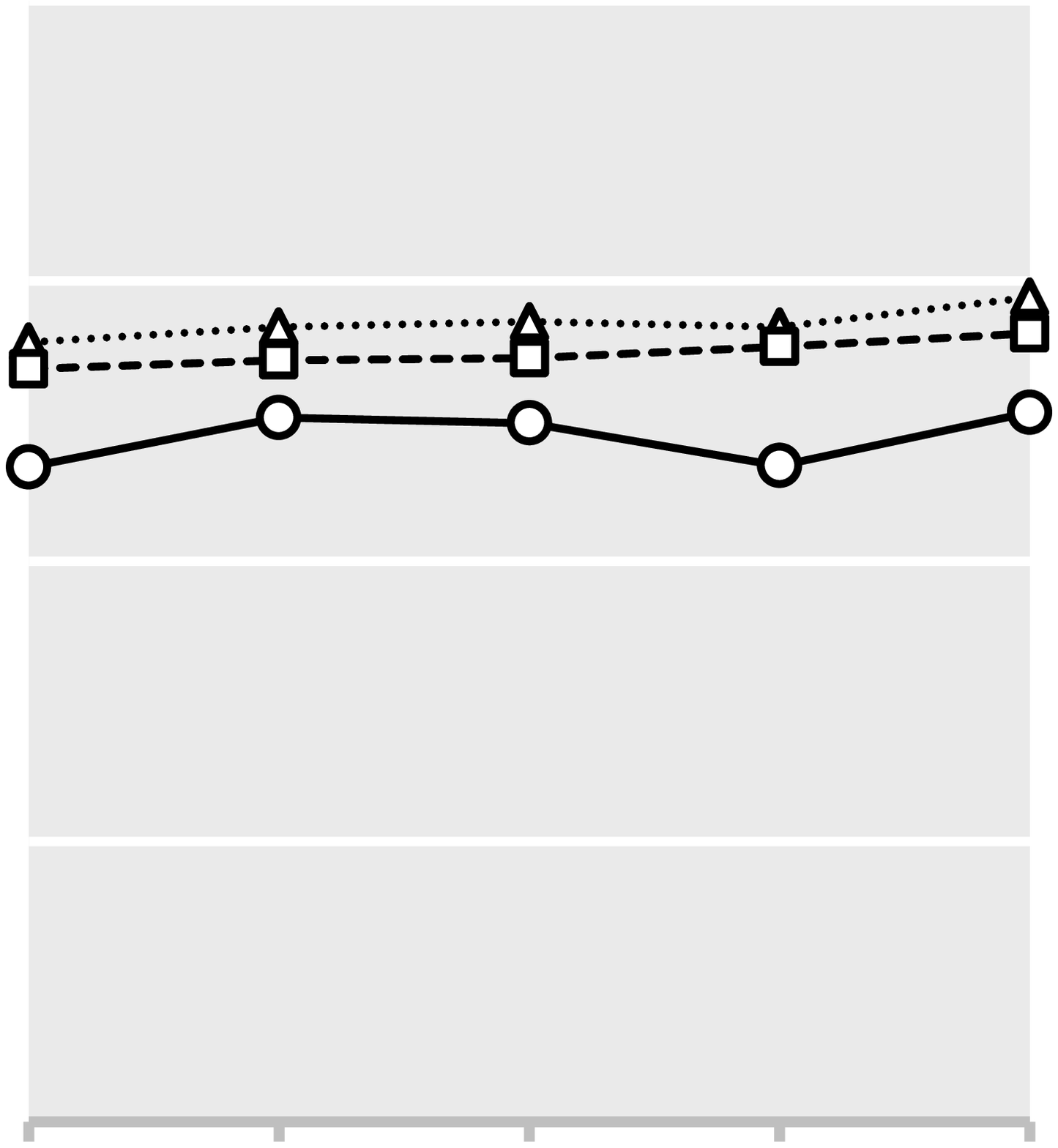}}
       \rput[b](2.1,0.08){\scriptsize \textbf{\sf Network number}}
       \rput[bl](-0.05,4.7){\mbox{\scriptsize \textbf{{\sf Communication steps}}}}

       \rput[r](0.38,4.41){\scriptsize $\mathsf{10^4}$}
       \rput[r](0.38,3.53){\scriptsize $\mathsf{10^3}$}
       \rput[r](0.38,2.60){\scriptsize $\mathsf{10^2}$}
       \rput[r](0.38,1.71){\scriptsize $\mathsf{10^1}$}
       \rput[r](0.38,0.82){\scriptsize $\mathsf{10^0}$}

       \rput[t](0.483,0.58){\scriptsize $\mathsf{1}$}
       \rput[t](1.305,0.58){\scriptsize $\mathsf{2}$}
       \rput[t](2.123,0.58){\scriptsize $\mathsf{3}$}
       \rput[t](2.928,0.58){\scriptsize $\mathsf{4}$}
       \rput[t](3.750,0.58){\scriptsize $\mathsf{5}$}

       \rput[rt](2.8,2.85){\scriptsize \textbf{\sf Alg.\ \ref{Alg:GlobalClass}}}
       \rput[lb](2.6,3.6){\scriptsize \textbf{\sf \cite{Zhu09-DistributedInNetworkChannelCoding}}}
       \psline[linewidth=0.5pt]{-}(2.3,3.29)(2.59,3.59)
       \rput[lb](1.7,3.7){\scriptsize \textbf{\sf \cite{Schizas08-ConsensusAdHocWSNsPartI}}}
       \psline[linewidth=0.5pt]{-}(1.5,3.38)(1.69,3.69)

       %\psgrid
     \end{pspicture}
     }
     \hfill
     \subfigure[BPDN]{\label{SubFig:GlobalExpBPDN}
     \begin{pspicture}(3.75,5.1)
       \rput(2.11,2.547){\includegraphics[scale=0.23]{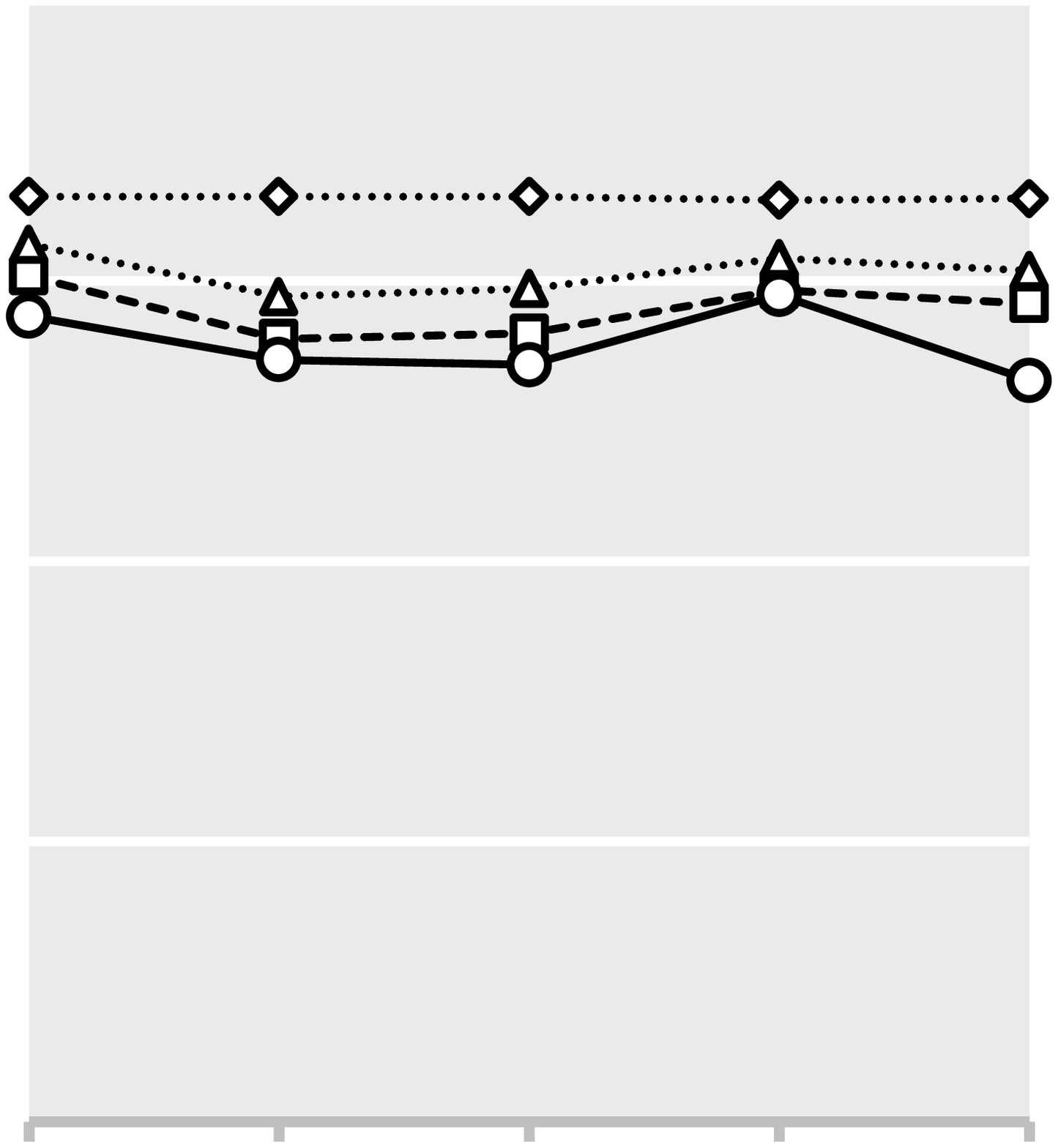}}
       \rput[b](2.1,0.08){\scriptsize \textbf{\sf Network number}}
       \rput[bl](-0.05,4.7){\mbox{\scriptsize \textbf{{\sf Communication steps}}}}

       \rput[r](0.38,4.41){\scriptsize $\mathsf{10^4}$}
       \rput[r](0.38,3.53){\scriptsize $\mathsf{10^3}$}
       \rput[r](0.38,2.60){\scriptsize $\mathsf{10^2}$}
       \rput[r](0.38,1.71){\scriptsize $\mathsf{10^1}$}
       \rput[r](0.38,0.82){\scriptsize $\mathsf{10^0}$}

       \rput[t](0.483,0.58){\scriptsize $\mathsf{1}$}
       \rput[t](1.305,0.58){\scriptsize $\mathsf{2}$}
       \rput[t](2.123,0.58){\scriptsize $\mathsf{3}$}
       \rput[t](2.928,0.58){\scriptsize $\mathsf{4}$}
       \rput[t](3.750,0.58){\scriptsize $\mathsf{5}$}

       \rput[rt](3.7,3.1){\scriptsize \textbf{\sf Alg.\ \ref{Alg:GlobalClass}}}
       \rput[lb](2.6,4.0){\scriptsize \textbf{\sf \cite{Schizas08-ConsensusAdHocWSNsPartI}}}
       \psline[linewidth=0.5pt](2.3,3.53)(2.59,3.99)
       \rput[rt](1.5,3.0){\scriptsize \textbf{\sf \cite{Zhu09-DistributedInNetworkChannelCoding}}}
       \psline[linewidth=0.5pt](1.51,3.01)(1.8,3.31)
			 \rput[lb](1.3,3.85){\scriptsize \textbf{\sf \cite{Mateos10-DistributedSparseLinearRegression}}}

       %\psgrid
     \end{pspicture}
     }
     \hfill
     \subfigure[Reversed lasso]{\label{SubFig:GlobalExpReversedLasso}
     \begin{pspicture}(3.75,5.1)
       \rput(2.11,2.547){\includegraphics[scale=0.23]{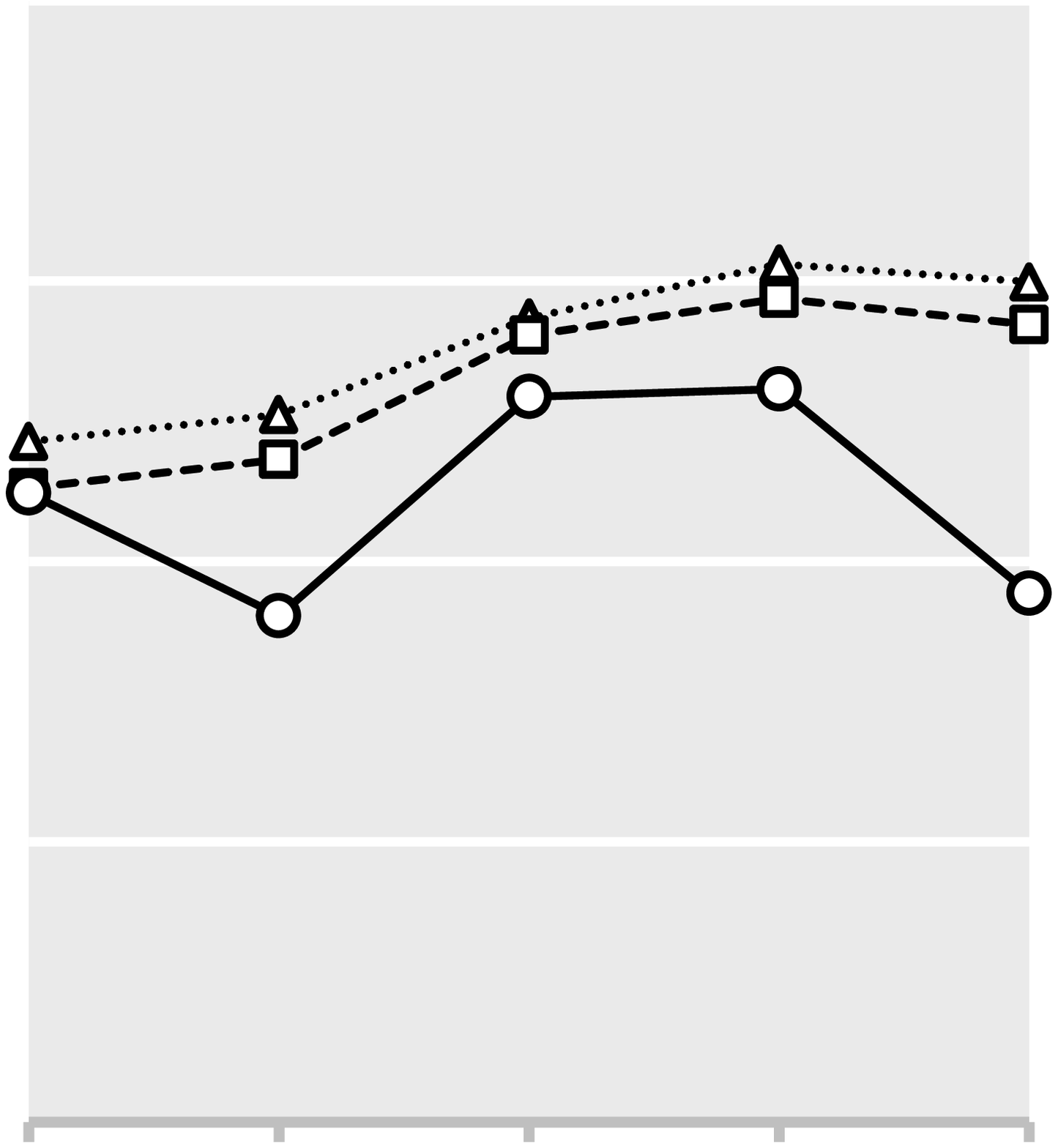}}
       \rput[b](2.1,0.08){\scriptsize \textbf{\sf Network number}}
       \rput[bl](-0.05,4.7){\mbox{\scriptsize \textbf{{\sf Communication steps}}}}

       \rput[r](0.38,4.41){\scriptsize $\mathsf{10^4}$}
       \rput[r](0.38,3.53){\scriptsize $\mathsf{10^3}$}
       \rput[r](0.38,2.60){\scriptsize $\mathsf{10^2}$}
       \rput[r](0.38,1.71){\scriptsize $\mathsf{10^1}$}
       \rput[r](0.38,0.82){\scriptsize $\mathsf{10^0}$}

       \rput[t](0.483,0.58){\scriptsize $\mathsf{1}$}
       \rput[t](1.305,0.58){\scriptsize $\mathsf{2}$}
       \rput[t](2.123,0.58){\scriptsize $\mathsf{3}$}
       \rput[t](2.928,0.58){\scriptsize $\mathsf{4}$}
       \rput[t](3.750,0.58){\scriptsize $\mathsf{5}$}

       \rput[lt](1.4,2.4){\scriptsize \textbf{\sf Alg.\ \ref{Alg:GlobalClass}}}
       \rput[rt](3.76,3.27){\scriptsize \textbf{\sf \cite{Zhu09-DistributedInNetworkChannelCoding}}}
       \rput[lb](3.2,3.6){\scriptsize \textbf{\sf \cite{Schizas08-ConsensusAdHocWSNsPartI}}}

       %\psgrid
     \end{pspicture}
     }
     \hfill
     \subfigure[SVM]{\label{SubFig:GlobalExpSVM}
     \begin{pspicture}(3.75,5.1)
       \rput(2.11,2.547){\includegraphics[scale=0.23]{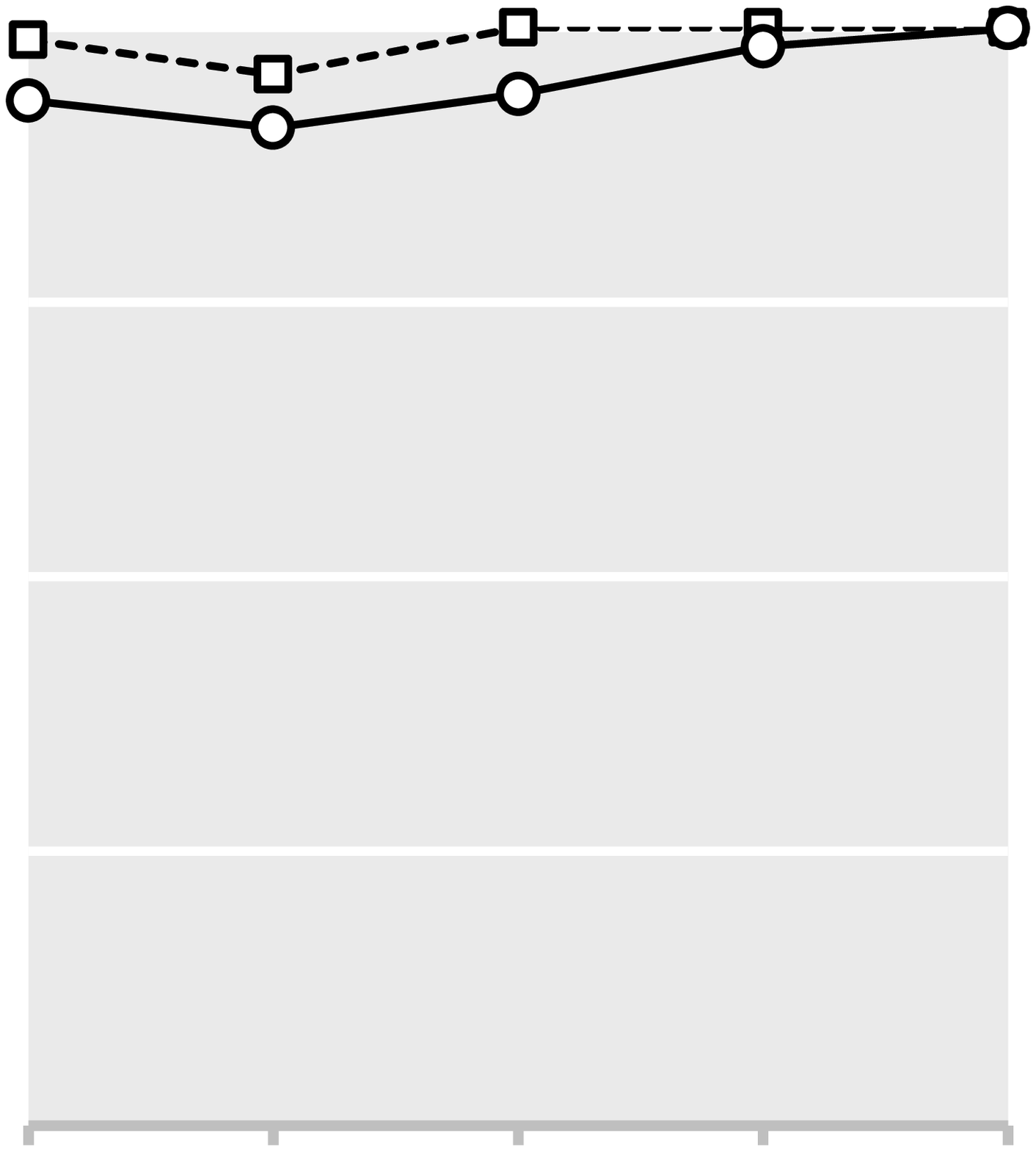}}
       \rput[b](2.1,0.08){\scriptsize \textbf{\sf Network number}}
       \rput[bl](-0.05,4.7){\mbox{\scriptsize \textbf{{\sf Communication steps}}}}

       \rput[r](0.38,4.41){\scriptsize $\mathsf{10^4}$}
       \rput[r](0.38,3.53){\scriptsize $\mathsf{10^3}$}
       \rput[r](0.38,2.60){\scriptsize $\mathsf{10^2}$}
       \rput[r](0.38,1.71){\scriptsize $\mathsf{10^1}$}
       \rput[r](0.38,0.82){\scriptsize $\mathsf{10^0}$}

       \rput[t](0.483,0.58){\scriptsize $\mathsf{1}$}
       \rput[t](1.305,0.58){\scriptsize $\mathsf{2}$}
       \rput[t](2.123,0.58){\scriptsize $\mathsf{3}$}
       \rput[t](2.928,0.58){\scriptsize $\mathsf{4}$}
       \rput[t](3.750,0.58){\scriptsize $\mathsf{5}$}

       \rput[lt](0.67,3.92){\scriptsize \textbf{\sf Alg.\ \ref{Alg:GlobalClass}}}
       \rput[lt](1.92,3.92){\scriptsize \textbf{\sf \cite{Zhu09-DistributedInNetworkChannelCoding}}}
       \psline[linewidth=0.5pt](1.92,3.97)(1.7,4.24)
       %\psgrid
     \end{pspicture}
     }
     \caption[Results of the simulations for BP, reversed lasso, BPDN, and SVM.]{Results of the simulations for $\text{(a)}$ BP, $\text{(b)}$ reversed lasso, $\text{(c)}$ BPDN, and $\text{(d)}$ SVM. The simulations were run on all the networks with $50$ nodes.}
     \label{Fig:ExpGlobalBPBPDNLASSOSVM}
	\end{figure}

	\mypar{Results}
	The results for BP and BPDN are shown in Figures~\ref{SubFig:GlobalExpBPRP} and~\ref{SubFig:GlobalExpBPDN}, respectively. In \fref{SubFig:GlobalExpBPRP}, we compare Algorithm~\ref{Alg:GlobalClass} against the ADMM-based methods~\cite{Schizas08-ConsensusAdHocWSNsPartI} and~\cite{Zhu09-DistributedInNetworkChannelCoding} (written, as Algorithms~\ref{Alg:Schizas} and~\ref{Alg:Zhu}, respectively). The behavior of the algorithms in this figure is very uniform: in all the networks, Algorithm~\ref{Alg:GlobalClass} was the one requiring the least amount of CSs to converge, i.e., to achieve a relative error of $10^{-4}$, the algorithm in~\cite{Schizas08-ConsensusAdHocWSNsPartI} was always the one requiring the largest amount of CSs, and the algorithm in~\cite{Zhu09-DistributedInNetworkChannelCoding} was always in between.

	The exact same behavior can be observed in \fref{SubFig:GlobalExpBPDN} for the BPDN, although the lines, and thus their performance, are closer together. The figure also shows the performance of~\cite[Alg.3]{Mateos10-DistributedSparseLinearRegression}, which is an ADMM-based method specifically designed to solve BPDN. That algorithm has the advantage of requiring simpler computations at each node but, as seen in the figure, at the cost of spending more CSs to converge. In fact, that algorithm achieved the maximum number of CSs, i.e., it failed to converge, in all but the last two networks.

	\subsection{Column partition: reversed lasso}

	In this subsection we give an example of a compressed sensing problem with a column partition. In particular, we consider the reversed lasso~\eqref{Eq:ReverseLasso}, which we showed how to recast as~\eqref{Eq:GlobalProblem} in \ssref{SubSec:GC:SparseSolutions}. As in the previous subsections, we first describe the experimental setup, then how we computed the prox operator at each node and, finally, we present the experimental results.

	\mypar{Experimental setup}
	The set of networks is the same as in the experiments for BP and BPDN, i.e., all the networks with~$50$ nodes. To compute the problem's solution~$x^\star$ beforehand, we used the Matlab toolbox spgl1~\cite{Friedlander08-ProbingParetofrontierBasisPursuit-spgl1}. We ran the algorithms either until they reached a maximum number of~$1000$ CSs or until they reached a relative error of $5\times 10^{-3}$. The relative error has the same expression as before, $\|x^k - x^\star\|/\|x^\star\|$, but now $x^k$ is the concatenation of all the nodes's estimates, i.e., $x^k = (x_1^k,x_2^k,\ldots,x_P^k)$; recall that in the column partition, the variable is partitioned into blocks and each block is estimated by a single node. Recall also that, in order to recast the reversed lasso as~\eqref{Eq:GlobalProblem}, we compute the dual of a regularized version of the problem; see~\eqref{Eq:RegularizarionReversedLasso3}. The regularization parameter~$\delta$ was set to~$10^{-2}$ and the noise tolerance~$\sigma$ to~$0.1$. Again, the parameter~$\rho$ was selected from the set~$\{10^{-4},10^{-3},10^{-2},10^{-1},1,10,10^2\}$. The problem data is the same as in BPDN, i.e., the matrix~$A$ was taken from problem $902$ of the Sparco toolbox~\cite{Friedlander07-Sparco}. This means that~$A$ had dimensions $200 \times 1000$ and, given the column partition, each node stored a matrix of size~$200 \times 20$.

	\mypar{Computation of the prox operator}
	In \ssref{SubSec:GC:SparseSolutions} we manipulated the reversed lasso in order to recast it as~\eqref{Eq:GlobalProblem}.
	More specifically, the dual problem of a regularized version of reversed lasso can be written as~\eqref{Eq:ReversedLassoDualization4}, where the function at node~$p$ is
	$$
		f_p(\lambda) = h_p^\star(A_p^\top \lambda) + \frac{\sigma}{P}\|\lambda\| - \frac{1}{P}b^\top \lambda\,,
	$$
	and~$h_p^\star$ is the convex conjugate of $h_p(x_p) = \|x_p\|_1 + (\delta/2)\|x_p\|^2$. It can be shown that computing the prox of~$f_p$ is equivalent to finding the minimizer of the optimization problem:
	\begin{equation}\label{Eq:GlobalProxReversedLasso}
		\begin{array}{ll}
			\underset{\lambda}{\text{minimize}} & h_p^\star(A_p^\top \lambda) + \frac{\sigma}{P}\|\lambda\| - \frac{1}{P}b^\top \lambda + v^\top \lambda + c\|\lambda\|^2\,,
		\end{array}
	\end{equation}
	for some vector $v \in \mathbb{R}^m$ and some scalar~$c \in \mathbb{R}$. Introducing an epigraph variable~$t$, \eqref{Eq:GlobalProxReversedLasso} becomes equivalent to
	\begin{equation}\label{Eq:GlobalProxReversedLasso2}
		\begin{array}{ll}
			\underset{\lambda,t}{\text{minimize}} & h_p^\star(A_p^\top \lambda) + \frac{\sigma}{P}t - \frac{1}{P}b^\top \lambda + v^\top \lambda + c\|\lambda\|^2 \\
			\text{subject to} & \|\lambda\| \leq t\,.
		\end{array}
	\end{equation}
	Since $h_p$ is strongly convex, its conjugate~$h_p^\star$ is differentiable and its gradient is Lipschitz-continuous. In fact, the entire objective function of~\eqref{Eq:GlobalProxReversedLasso2} is differentiable and its gradient is Lipschitz-continuous with constant~$\sigma_{\max}^2(A_p)/\delta + 2c$, where $\sigma_{\max}(A_p)$ is the largest singular value of~$A_p$. Moreover, given an arbitrary point~$(\lambda,t)$, its projection onto the Lorenz cone $\{(\lambda,t)\,:\, \|\lambda\|\leq t\}$ is given in closed-form by~\cite[A.2.7]{Beck02-ConvergenceRateAnalysisGradientBasedAlgorithms-thesis}
	$$
		\left\{
			\begin{array}{ll}
				(\lambda,t) &,\,\text{if}\,\,\, t\geq \|\lambda\| \\
				(0,0) &,\,\text{if}\,\,\, t \leq -\|\lambda\| \\
				\frac{t+\|\lambda\|}{2}(\frac{\lambda}{\|\lambda\|},1) &,\,\text{if}\,\,\, -\|x\| < t < \|x\|\,.
			\end{array}
		\right.
	$$
	Therefore, \eqref{Eq:GlobalProxReversedLasso} can be solved with projected gradient methods. We solve it with Nesterov's projected gradient method~\eqref{Eq:RelatedWorkNesterovAlg}, also known as FISTA~\cite{Beck09-FISTA}, whose convergence rate is $O(1/k^2)$.

	\mypar{Results}
	The results of the reversed lasso experiments are shown in \fref{SubFig:GlobalExpReversedLasso}. There, Algorithm~\ref{Alg:GlobalClass} is compared against the algorithms in~\cite{Schizas08-ConsensusAdHocWSNsPartI} and in~\cite{Zhu09-DistributedInNetworkChannelCoding}. They exhibit the same behavior we had observed in Figures~\ref{SubFig:GlobalExpBPRP} and~\ref{SubFig:GlobalExpBPDN}: Algorithm~\ref{Alg:GlobalClass} required uniformly less CSs to converge. Also, \cite{Zhu09-DistributedInNetworkChannelCoding} required uniformly less CSs than~\cite{Schizas08-ConsensusAdHocWSNsPartI} to converge.

	\subsection{SVM}

	Finally, we present our experimental results for training an SVM~\eqref{Eq:SVM}. Among all experiments that we performed, the ones for SVM required the largest number of CSs to converge, as can be seen by comparing all the plots in \fref{Fig:ExpGlobalBPBPDNLASSOSVM}. The results for the SVM experiments are shown in \fref{SubFig:GlobalExpSVM}. But before we analyze them, we describe the experimental setup and how we computed the respective prox operator.

	\mypar{Experimental setup}
	As in the other plots in the same figure, the experiments for the SVM problem~\eqref{Eq:SVM} were executed on the networks with~$50$ nodes. Since problem~\eqref{Eq:SVM} can be recast as a quadratic program, we obtained the problem's solution beforehand using the \verb#quadprog# function of the Matlab optimization toolbox~\cite{OptimizationToolboxMatlab}. The algorithms ran until they achieved a maximum number of~$10^4$ CSs, or a relative error of~$10^{-3}$. The relative error in this case was measured exactly as in the compressed sensing problems with a row partition: $\|x^k - x^\star\|/\|x^\star\|$, where~$x^k$ is the estimate at an arbitrary node. And the augmented Lagrangian parameter~$\rho$ was selected exactly as in the previous experiments. Regarding the problem data, i.e., the sets of datapoints~$(x_k, y_k)$ in~\eqref{Eq:SVM}, we used data from~\cite{UCIMachineLearningRepository}, namely two overlapping sets of datapoints from the Iris dataset. In total, there were~$m = 100$ points of size~$n = 4$, which means that each node stored~$2$ datapoints. The parameter~$\beta$ in~\eqref{Eq:SVM} was set to~$1$ in all the experiments.

	\mypar{Computation of the prox operator}
	We showed in~\ssref{SubSec:GC:SparseSolutions} that in the SVM problem the function at each node is given by~\eqref{Eq:GlobalFunctionNodeSVM}. It can be easily seen that computing the prox operator of~\eqref{Eq:GlobalFunctionNodeSVM} is equivalent to finding a minimizer of a quadratic program with inequality constraints. This problem has no closed-form solution, but it can be solved with standard quadratic program solvers, such as Matlab's \verb#quadprog# function. We used this function in our implementation.

	\mypar{Results}
	As mentioned, the results of the experiments for SVM are shown in \fref{SubFig:GlobalExpSVM}. In this case, the algorithm in~\cite{Schizas08-ConsensusAdHocWSNsPartI} achieved always the maximum number of CSs and, thus, is not represented in the plot. Both Algorithm~\ref{Alg:GlobalClass} and the algorithm in~\cite{Zhu09-DistributedInNetworkChannelCoding} required always more than $1000$ CSs to converge for all the networks. Again, Algorithm~\ref{Alg:GlobalClass} required the least number of CSs to converge, never achieving the maximum number of $10^4$ CSs. In contrast, the algorithm in~\cite{Zhu09-DistributedInNetworkChannelCoding} achieved the maximum number of CSs in all but the first two networks.

	\chapter{Connected and Non-Connected Classes}
\label{Ch:ConnectedNonConnected}

	In this chapter, we solve problem~\eqref{Eq:IntroProb} with a generic variable, following ideas similar to the ones presented in the previous chapter for the global class. We first address the case of a connected variable, which is simpler, and then we see how to handle a non-connected variable. This chapter is based on the publications~\cite{Mota13-DistributedOptimizationLocalDomains,Mota12-DistributedADMMForMPCAndCongestionControl-CDC,Mota13-UnifiedAlgorithmicApproachDistributedOptimization} and is organized as follows: in \sref{Sec:PartialProblemStatement}, we formally state the problem and outline our assumptions; then, in \sref{Sec:PartialApplications}, we describe some application problems that can be written as~\eqref{Eq:IntroProb} with a non-global variable. These include distributed model predictive control (D-MPC), network flow problems, and the reversed lasso with a row partition. In particular, we propose a new framework for D-MPC that considerably extends the modeling capability of the standard D-MPC; this, for example, will allow us to model scenarios where systems coupled through their dynamics do not necessarily communicate directly. Next, in \sref{Sec:PartialAlgorithmDerivation}, we derive our algorithm, first for a connected variable, and then for a non-connected variable. This will give us the most general algorithm in this thesis. Finally, in \sref{Sec:PartialExpResults}, we show how the performance of the proposed algorithm compares with prior algorithms for some of the problems introduced in \sref{Sec:PartialApplications}.

	\section{Problem statement}
	\label{Sec:PartialProblemStatement}

	As in the global class, here we also minimize the sum of~$P$ functions, where each function is known at one node only. However, each function here, rather than depending on all the components of the variable~$x \in \mathbb{R}^n$, depends only on the ones indexed by the set~$S_p \subseteq \{1,\ldots,n\}$. That is, we solve
	\begin{equation}\tag{P}
		\begin{array}{ll}
			\underset{x \in \mathbb{R}^n}{\text{minimize}} & f_1(x_{S_1}) + f_2(x_{S_2}) + \cdots + f_P(x_{S_P})\,.
		\end{array}
	\end{equation}
	We make the following assumptions:
	\begin{assumption}\label{Ass:PPfunctions}
		Each function~$f_p : \mathbb{R}^{n_p} \xrightarrow{} \mathbb{R} \cup \{+\infty\}$ is closed and convex over~$\mathbb{R}^{n_p}$ and not identically~$+\infty$.
	\end{assumption}
	\begin{assumption}\label{Ass:PPsolvable}
		Problem~\eqref{Eq:GlobalProblem} is solvable, i.e., it has at least one solution~$x^\star \in \mathbb{R}^n$.
	\end{assumption}
	Assumptions~\ref{Ass:PPfunctions} and~\eqref{Ass:PPsolvable} are essentially the same we made for the global class. The only difference is that now each function~$f_p$ is defined over~$\mathbb{R}^{n_p}$, where~$n_p = |S_p|$, and not over the entire domain of the variable~$x$, $\mathbb{R}^n$. Note that the sum of the dimensions of the domains of each function, i.e., $n_1 + \cdots + n_P$, is always less than or equal to the corresponding sum in the case of a global variable, which is~$nP$. In other words, $n_1 + \cdots + n_P \leq nP$. The following assumption makes the problem well-formulated by guaranteeing that, for each component~$x_l$, there is always one node~$p$ that depends on~$x_l$, i.e., $l \in S_p$:
	\begin{assumption}\label{Ass:PPWellForm}
		There holds $\cup_{p=1}^P S_p = \{1,2,\ldots,n\}$.
	\end{assumption}
	This assumption was not required for the global class, because all functions there depended on all the components of the variable. Regarding the network, we make exactly the same assumptions we made for the global class:
	\begin{assumption}\label{Ass:PPConnectedStatic}
		The network is connected and does not vary with time.
	\end{assumption}
	\begin{assumption}\label{Ass:PPColoring}
		A coloring scheme~$\mathcal{C}$ of the network is available; each node knows its own color and the color of its neighbors.
	\end{assumption}
	The comments we made in \sref{Sec:GC:ProblemStatement} about these assumptions also apply here. Next, we describe some application problems that can be written as~\eqref{Eq:IntroProb} with a non-global variable and under Assumptions~\ref{Ass:PPfunctions}-\ref{Ass:PPColoring}.

	\section{Applications}
	\label{Sec:PartialApplications}

	There are many problems in signal processing, control engineering, and machine learning that can be written as~\eqref{Eq:IntroProb}. In the previous chapter, we described some that require a global variable. In this section, we focus on problems that require a variable that is non-global, for example, a star-shaped or a mixed variable. We start with distributed model predictive control (D-MPC), which appears in the literature as an instance of~\eqref{Eq:IntroProb} with a star-shaped variable. One of the contributions of this thesis is a new framework for D-MPC that uses generic connected, and even non-connected, variables. This new framework allows modeling D-MPC scenarios where systems that are coupled through their dynamics need not to communicate directly. The second application we will see is the distributed compressed sensing problem \textit{reversed lasso with a row partition}, which we formulate as~\eqref{Eq:IntroProb} with a mixed variable. Then, we describe three applications that have been solved with distributed algorithms: network flow problems (star-shaped variable), network utility maximization (NUM) (star-shaped and mixed variable), and state estimation in power networks (star-shaped variable). The last application, state estimation in power networks, is described in~\cite{Kekatos12-DistributedRobustPowerStateEstimation}, which also proposes an ADMM-based algorithm to solve it. This is the only algorithm we found in the literature that can be easily generalized to solve~\eqref{Eq:IntroProb} for all types of variables. At the end of this section, we will describe the algorithm in~\cite{Kekatos12-DistributedRobustPowerStateEstimation} for a generic connected variable.

	\subsection{Distributed model predictive control}
	\label{SubSec:DMPC}

	This subsection describes model predictive control (MPC), first from a centralized perspective, and then from a distributed one.

	\mypar{Centralized MPC}
	As mentioned in \cref{Ch:relatedWork}, model predictive control (MPC) is a popular strategy for controlling discrete-time systems. In MPC, a system is described at each time instant~$t$ by its state-space vector~$x[t] \in \mathbb{R}^n$, whose value at time~$t+1$ is determined by the state and control input at time~$t$. Mathematically, $x[t+1] = \Theta^t(x[t],u[t])$, where~$u[t] \in \mathbb{R}^m$ denotes the control input applied to the system at time~$t$ and~$\Theta^t:\mathbb{R}^n\times \mathbb{R}^m \xrightarrow{} \mathbb{R}^n$ is an arbitrary, time-variant map modeling the system. Being a control strategy, the goal of MPC is to take the state vector of the system from an initial point $x[0]$ to some predefined ``goal state.'' To be more concrete, let $\Phi : \mathbb{R}^n \xrightarrow{} \mathbb{R}$ be a function that penalizes deviations from the goal state or, in other words, $\Phi(x)$ increases with the distance of~$x$ to the goal state. Almost always, there are several possible paths from~$x[0]$ to the goal state and, typically, these paths have different energy consumptions, for example, the energy spent on the input signals $u[0], u[1], \ldots$. We model energy consumption at time~$t$ with the function $\Psi^t(x[t],u[t])$. Therefore, we want to choose the path from~$x[0]$ to the goal state that uses the minimum amount of energy; this is actually the problem solved by MPC. However, in MPC, we make the key assumption that the system can measure its state at each time instant. This capability is used to mitigate model inaccuracies and disturbances to the system. It works as follows: instead of solving the problem at once, time is divided into slots of~$T$ units, where~$T$ is called the time-horizon. At each time-instant, the time variable~$t$ is set to zero and the state is measured, say, $x[0] = x^0$, where~$x^0$ is the known measurement. Then, the following optimization problem is solved for a time-horizon~$T$:
	\begin{equation}\label{Eq:MPC}
    \begin{array}{ll}
      \underset{\bar{x},\bar{u}}{\text{minimize}} & \Phi(x[T]) + \sum_{t=0}^{T-1} \Psi^t(x[t], u[t]) \\
      \text{subject to} & x[t+1] = \Theta^t(x[t], u[t])\,,\quad t=0,\ldots,T-1 \\
                        & x[0] = x^0\,,
    \end{array}
  \end{equation}
	where~$(\bar{x},\bar{u}):= (\{x[t]\}_{t=0}^T, \{u[t]\}_{t=0}^{T-1})$ is the optimization variable and  represents the set of states (resp.\ inputs) from time~$t=0$ to time~$T$ (resp.\ $T-1$). In the objective of~\eqref{Eq:MPC}, there is a tradeoff between achieving the goal state at time~$T$, expressed by the term~$\Phi(x[T])$, and minimizing the path energy, expressed by the term $\sum_{t=0}^{T-1} \Psi^t(x[t], u[t])$. While the first constraint in~\eqref{Eq:MPC} enforces the state to satisfy the system dynamics, the second constraint encodes the measurement~$x^0$. After solving problem~\eqref{Eq:MPC}, the first input~$u[0]$ is applied to the system, the time~$t$ is again set to zero, and the process is repeated. This means that, at each time instant, only the first input is used, even though a set of inputs and states are computed for the entire horizon from $t=0$ to~$t=T$. MPC thus provides a conservative strategy to deal with model inaccuracies and system disturbances, which perhaps explains its effectiveness and, consequently, its popularity.

	\begin{figure}
  \centering
  \subfigure[Connected star-shaped variable]{\label{SubFig:PPMPCStar}
    \psscalebox{1.00}{
      \begin{pspicture}(4.6,5.0)
        \def\nodesimp{
          \pscircle*[linecolor=black!65!white](0,0){0.3}
        }
        \def\nodeshigh{
          \pscircle*[linecolor=black!25!white](0,0){0.3}
        }

        \rput(0.4,4.1){\rnode{C1}{\nodesimp}}   \rput(0.4,4.1){\small \textcolor{white}{$1$}}
        \rput(0.9,2.6){\rnode{C2}{\nodesimp}}   \rput(0.9,2.6){\small \textcolor{white}{$2$}}
        \rput(0.8,0.9){\rnode{C3}{\nodesimp}}   \rput(0.8,0.9){\small \textcolor{white}{$3$}}
        \rput(3.0,1.1){\rnode{C4}{\nodesimp}}   \rput(3.0,1.1){\small \textcolor{white}{$4$}}
        \rput(4.0,2.6){\rnode{C5}{\nodesimp}}   \rput(4.0,2.6){\small \textcolor{white}{$5$}}
        \rput(2.2,3.3){\rnode{C6}{\nodesimp}}   \rput(2.2,3.3){\small \textcolor{white}{$6$}}

        \psset{nodesep=0.33cm,linewidth=1.1pt}
        \ncline{-}{C1}{C2}
        \ncline{-}{C1}{C6}
        \ncline{-}{C2}{C3}
        \ncline{-}{C2}{C6}
        \ncline{-}{C3}{C4}
        \ncline{-}{C4}{C5}
        \ncline{-}{C5}{C6}

        \psset{linestyle=dotted,dotsep=0.7pt,arrowsize=7pt,arrowinset=0.2,arrowlength=0.55,linewidth=1.1pt,arcangleA=22,arcangleB=22,ArrowInside=->,ArrowInsidePos=0.56}
        \ncarc{-}{C1}{C2}
        \ncarc{-}{C2}{C1}
        \ncarc{-}{C1}{C6}
        \ncarc{-}{C6}{C1}
        \ncarc{-}{C2}{C3}
        \ncarc{-}{C3}{C2}
        \ncarc{-}{C2}{C6}
        \ncarc{-}{C6}{C2}
        \ncarc{-}{C3}{C4}
        \ncarc{-}{C4}{C3}
        \ncarc{-}{C4}{C5}
        \ncarc{-}{C5}{C4}
        \ncarc{-}{C6}{C5}
        \ncarc{-}{C5}{C6}

				\psset{ncurv=3}
        \nccurve[angleA=65,angleB=125]{-}{C1}{C1}
        \nccurve[angleA=150,angleB=210]{-}{C2}{C2}
        \nccurve[angleA=210,angleB=270]{-}{C3}{C3}
        \nccurve[angleA=270,angleB=330]{-}{C4}{C4}
        \nccurve[angleA=20,angleB=80]{-}{C5}{C5}
        \nccurve[angleA=55,angleB=115]{-}{C6}{C6}

        %\psgrid
      \end{pspicture}
    }
  }
  \hspace{2cm}
  \subfigure[Non-connected variable]{\label{SubFig:PPMPCNonConnected}
    \psscalebox{1.00}{
      \begin{pspicture}(4.6,5.0)
        \def\nodesimp{
          \pscircle*[linecolor=black!65!white](0,0){0.3}
        }

        \rput(0.4,4.1){\rnode{C1}{\nodesimp}}   \rput(0.4,4.1){\small \textcolor{white}{$1$}}
        \rput(0.9,2.6){\rnode{C2}{\nodesimp}}   \rput(0.9,2.6){\small \textcolor{white}{$2$}}
        \rput(0.8,0.9){\rnode{C3}{\nodesimp}}   \rput(0.8,0.9){\small \textcolor{white}{$3$}}
        \rput(3.0,1.1){\rnode{C4}{\nodesimp}}   \rput(3.0,1.1){\small \textcolor{white}{$4$}}
        \rput(4.0,2.6){\rnode{C5}{\nodesimp}}   \rput(4.0,2.6){\small \textcolor{white}{$5$}}
        \rput(2.2,3.3){\rnode{C6}{\nodesimp}}   \rput(2.2,3.3){\small \textcolor{white}{$6$}}

        \psset{nodesep=0.33cm,linewidth=1.1pt}
        \ncline{-}{C1}{C2}
        \ncline{-}{C1}{C6}
        \ncline{-}{C2}{C3}
        \ncline{-}{C2}{C6}
        \ncline{-}{C3}{C4}
        \ncline{-}{C4}{C5}
        \ncline{-}{C5}{C6}

        \psset{linestyle=dotted,dotsep=0.7pt,arrowsize=7pt,arrowinset=0.2,arrowlength=0.55,linewidth=1.1pt,arcangleA=22,arcangleB=22,ArrowInside=->,ArrowInsidePos=0.56}
        \ncarc{-}{C1}{C2}
        \ncarc{-}{C2}{C1}
        \ncarc[arcangleA=38,arcangleB=38]{-}{C3}{C1}
        \ncarc{-}{C2}{C3}
        %\ncarc[arcangleA=0,arcangleB=0]{-}{C2}{C4}
        \ncarc[arcangleA=0,arcangleB=0,ArrowInsidePos=0.65]{-}{C2}{C5}
        \ncarc{-}{C3}{C2}
        \ncarc{-}{C3}{C4}
        \ncarc[arcangleA=0,arcangleB=0]{-}{C4}{C6}
        \ncarc{-}{C1}{C6}
        \ncarc{-}{C6}{C5}
        \ncarc{-}{C5}{C4}

        \psset{ncurv=3}
        \nccurve[angleA=65,angleB=125]{-}{C1}{C1}
        \nccurve[angleA=150,angleB=210]{-}{C2}{C2}
        \nccurve[angleA=210,angleB=270]{-}{C3}{C3}
        \nccurve[angleA=270,angleB=330]{-}{C4}{C4}
        \nccurve[angleA=20,angleB=80]{-}{C5}{C5}
        \nccurve[angleA=55,angleB=115]{-}{C6}{C6}

        %\psgrid
      \end{pspicture}
    }
  }

  \caption[Two D-MPC scenarios. Solid lines represent links in the communication network and dotted arrows represent system interactions.]{
		Two D-MPC scenarios. Solid lines represent links in the communication network and dotted arrows represent system interactions. The optimization variable is star-shaped (and thus connected) in \text{(a)} and is non-connected in \text{(b)}, because node~$2$ influences node~$5$ but not any neighbor of that node.
  }
  \label{Fig:D-MPC}
  \end{figure}
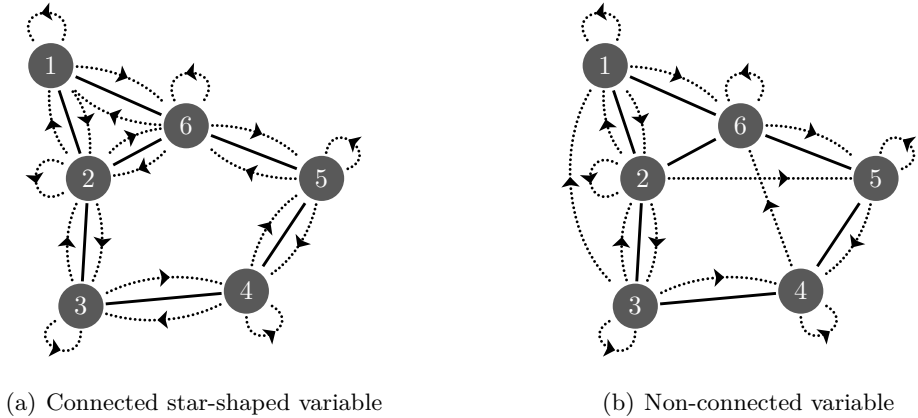

	\mypar{D-MPC}
	We now turn to distributed	 scenarios and focus on solving one instance of~\eqref{Eq:MPC}, i.e., for a fixed MPC iteration. Suppose that, instead of a single system, we now have a network of systems, where each system is described by its own state vector and has a local control input. Let $x_p[t] \in \mathbb{R}^{n_p}$ denote the state of system~$p$ at time~$t$, and~$u_p[t] \in \mathbb{R}^{m_p}$ denote its local input also at time~$t$; we have $n_1 + \cdots + n_P = n$ and $m_1 + \cdots + m_P = m$. Each system is viewed as a node of a communication network $\mathcal{G} = (\mathcal{V},\mathcal{E})$, whose edges determine which systems communicate directly. 	We assume that the state of system~$p$ evolves as
	\begin{equation}	\label{Eq:DMPCFunction}
		x_p[t+1] = \Theta_p^t\bigl(\{x_j[t],u_j[t]\}_{j \in \Omega_p}\bigr)\,,
	\end{equation}
	where $\Omega_p \subseteq \mathcal{V}$ is the set of nodes whose state and/or input influences~$x_p$ (we assume each node~$p$ influences itself, i.e., $\{p\}\subseteq \Omega_p$). In~\eqref{Eq:DMPCFunction}, we used the following notation: given a finite set~$\Omega = \{\omega_1, \omega_2, \ldots,\omega_L\}$ and a vector~$z_\omega$, indexed by a parameter~$\omega \in \Omega$, the symbol $\{z_\omega\}_{\omega \in \Omega}$ denotes the $L$-tuple $(z_{\omega_1}, z_{\omega_2}, \ldots, z_{\omega_L})$. Many times, when~$\Omega$ is represented as $\Omega = \{\omega\,:\, \text{$A(\omega)$ holds}\}$, we will represent~$\{z_\omega\}_{\omega \in \Omega}$ simply as~$\{z_\omega\}_{\text{$A(\omega)$ holds}}$.	In contrast with what is usually assumed, $\Omega_p$ in~\eqref{Eq:DMPCFunction} is not necessarily a subset of the neighbors of node~$p$. This means that two systems that influence each other through their dynamics may be unable to communicate directly. This is illustrated in \fref{SubFig:PPMPCNonConnected} where, for example, the state/input of node~$3$ influences the state of node~$1$ (dotted arrow), but there is no communication link (solid line) between them. Finally, we assume functions~$\Phi$ and~$\Psi^t$ in~\eqref{Eq:MPC} can be decomposed, respectively, as $	\Phi(x[T]) = \sum_{p=1}^P \Phi_p(\{x_j[T]\}_{j \in \Omega_p})$ and $	\Psi^t(x[t],u[t]) = \sum_{p=1}^P \Psi_p^t (\{x_j[t], u_j[t]\}_{j \in \Omega_p})$, where~$\Phi_p$ and~$\Psi_p^t$ are both associated to node~$p$. This means that non-communicating systems can have coupled goals or energy measures. Hence, in our distributed setting, the MPC problem~\eqref{Eq:MPC} becomes
	\begin{equation}\label{Eq:D-MPC}
	  \begin{array}{ll}
	  	\underset{\bar{x},\bar{u}}{\text{minimize}} &
				\sum_{p=1}^P\biggl[\Phi_p(\{x_j[T]\}_{j \in \Omega_p})
				+ \sum_{t=0}^{T-1} \Psi_p^t (\{x_j[t], u_j[t]\}_{j \in \Omega_p})\biggr] \\
			\text{subject to} & x_p[t+1] = \Theta_p^t\bigl(\{x_j[t],u_j[t]\}_{j \in \Omega_p}\bigr)\,,\quad t = 0,\ldots,T-1\,,\quad p = 1,\ldots,P \\
			            & x_p[0] = x_p^0\,, \quad  p = 1,\ldots,P\,,
	  \end{array}
	\end{equation}
	where~$x_p^0$ is the initial measurement at node~$p$. The optimization variable in this case is $(\bar{x},\bar{u}) := \bigl(\{\bar{x}_p\}_{p=1}^{P} , \{\bar{u}_p\}_{p=1}^{P}\bigr)$, where $\bar{x}_p := \{x_p[t]\}_{t=0}^T$ and $\bar{u}_p := \{u_p[t]\}_{t=0}^{T-1}$ represent, respectively, the collection of all the states and inputs of node~$p$ for the time-horizon~$T$. Problem~\eqref{Eq:D-MPC} can be written as~\eqref{Eq:IntroProb} by making
	\begin{multline*}
		f_p\Bigl(\{\bar{x}_j,\bar{u}_j\}_{j \in \Omega_p}\Bigr) = \Phi_p\Bigl(\{x_j[T]\}_{j \in \Omega_p}\Bigr) + \text{i}_{\{x_p[0] = x_p^0\}}(\bar{x}_p)
		\\
									 + \sum_{t=0}^{T-1} \biggl(
																				\Psi_p^t(\{x_j[t],u_j[t]\}_{j \in \Omega_p})
																				+
																				\text{i}_{\Gamma_p^t}(\{\bar{x}_j,\bar{u}_j\}_{j \in \Omega_p})
																			\biggr)\,,
	\end{multline*}
	where $\text{i}_{\Gamma_p^t}$ is the indicator function of the set $\Gamma_p^t := \Bigl\{\{\bar{x}_j,\bar{u}_j\}_{j \in \Omega_p} \,:\, x_p[t+1] = \Theta_p^t\bigl(\{x_j[t],u_j[t]\}_{j \in \Omega_p}\bigr)\Bigr\}$.

	\fref{SubFig:PPMPCStar} illustrates the standard D-MPC scenario, where each system~$p$ is influenced only by itself and by its neighbors, i.e., $\Omega_p \subseteq \mathcal{N}_p \cup \{p\}$. According to the terminology introduced in \cref{Ch:Introduction}, each component~$(\bar{x}_p,\bar{u}_p)$ of the variable is star-shaped and thus the entire variable $(\bar{x},\bar{u})$ is also star-shaped. Several instances of this particular case of~\eqref{Eq:D-MPC} have been addressed, for example, by~\cite{Jia01-DistributedModelPredictiveControl,Camponogara02-DistributedMPC,Keviczky06-DecentralizedRecedingHorizonControl,Venkat05-StabilityOptimalityDistributedMPC}, who propose heuristics that are not guaranteed to solve exactly~\eqref{Eq:D-MPC}, and by~\cite{Fawal98-OptimalControlComplexIrrigationSystemsAugmentedLagrangian,Wakasa08-DecentralizedMPCDualDecomp,Camponogara11-DistributedOptimizationMPCLinearDynamicNetworks,Conte12-ComputationalAspectsDistributedMPC,Summers12-DistributedModelPredictiveControlViaADMM}, who propose algorithms based on distributed optimization methods and thus, in principle, are guaranteed to solve~\eqref{Eq:D-MPC}.

	The model we propose here is significantly more general, since it can handle scenarios where interacting nodes do not necessarily need to communicate, or even scenarios with a non-connected variable. Both cases are shown in \fref{SubFig:PPMPCNonConnected}. For example, the subgraph induced by $(\bar{x}_3,\bar{u}_3)$ consists of the nodes~$\{1,2,3,4\}$ and is connected. (The reference for connectivity is always the communication network which, in the plots, is represented by solid lines.) Nodes~$1$ and~$3$, however, cannot communicate directly. This is an example of an induced subgraph that is not a star. On the other hand, the subgraph induced by~$(\bar{x}_2,\bar{u}_2)$ consists of the nodes~$\{1,2,3,5\}$. This subgraph is not connected, which implies that the optimization variable is non-connected. A connected variable with induced subgraphs that are not stars, or even a non-connected variable, can be useful to model scenarios where communications links are expensive, hard to establish, or simply do not exist. We describe two such applications below.

	\mypar{Applications of our D-MPC model}
	Although D-MPC has been applied to solve many applications, we present here two applications where the scenario of \fref{SubFig:PPMPCNonConnected} might arise naturally, i.e., the variable is either non-connected, or is connected but not star-shaped. The first application is flight formation and the other is temperature regulation of buildings.

	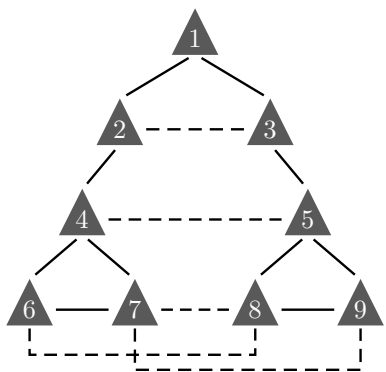
\begin{figure}[t]
  \centering
	\psscalebox{1.0}{
		\begin{pspicture}(5,5)
       \def\triang{
				\pspolygon*[linecolor=black!65!white](-0.31,-0.31)(0.31,-0.31)(0,0.31)
       }

	  	 \rput(2.5,4.5){\triang}  \pnode(2.5,4.2){C1b}
       \rput(1.5,3.3){\triang}  \pnode(1.5,3.6){C2t}  \pnode(1.5,3.0){C2b}  \pnode(1.5,3.2){C2r}
       \rput(3.5,3.3){\triang}  \pnode(3.5,3.6){C3t}  \pnode(3.5,3.0){C3b}  \pnode(3.5,3.2){C3l}
       \rput(1.0,2.1){\triang}  \pnode(1.0,2.4){C4t}  \pnode(1.0,1.8){C4b}  \pnode(1.0,2.0){C4r}
       \rput(4.0,2.1){\triang}  \pnode(4.0,2.4){C5t}  \pnode(4.0,1.8){C5b}  \pnode(4.0,2.0){C5l}
       \rput(0.3,0.9){\triang}  \pnode(0.3,1.2){C6t}  \pnode(0.3,0.8){C6r}
       \rput(1.7,0.9){\triang}  \pnode(1.7,1.2){C7t}  \pnode(1.7,0.8){C7l}
       \rput(3.3,0.9){\triang}  \pnode(3.3,1.2){C8t}  \pnode(3.3,0.8){C8r}
       \rput(4.7,0.9){\triang}  \pnode(4.7,1.2){C9t}  \pnode(4.7,0.8){C9l}

			 \rput(2.5,4.4){\small \textcolor{white}{$1$}}
			 \rput(1.5,3.2){\small \textcolor{white}{$2$}}
			 \rput(3.5,3.2){\small \textcolor{white}{$3$}}
			 \rput(1.0,2.0){\small \textcolor{white}{$4$}}
			 \rput(4.0,2.0){\small \textcolor{white}{$5$}}
			 \rput(0.3,0.8){\small \textcolor{white}{$6$}}
			 \rput(1.7,0.8){\small \textcolor{white}{$7$}}
			 \rput(3.3,0.8){\small \textcolor{white}{$8$}}
			 \rput(4.7,0.8){\small \textcolor{white}{$9$}}

			 \psset{nodesep=0.1cm,linewidth=0.9pt}
			 \ncline{-}{C1b}{C2t}
			 \ncline{-}{C1b}{C3t}
			 \ncline{-}{C2b}{C4t}
			 \ncline{-}{C3b}{C5t}
			 \ncline{-}{C4b}{C6t}
			 \ncline{-}{C4b}{C7t}
			 \ncline{-}{C5b}{C8t}
			 \ncline{-}{C5b}{C9t}

			 \psset{nodesep=0.35cm}
			 \ncline{-}{C6r}{C7l}
			 \ncline{-}{C8r}{C9l}

			 \psset{linestyle=dashed}
			 \ncline{-}{C2r}{C3l}
			 \ncline{-}{C4r}{C5l}
			 \ncline{-}{C7l}{C8r}

			 \psset{nodesep=0.25cm}
			 \ncangle[angleA=-90,angleB=-90,arm=0.35cm]{-}{C6r}{C8r}
			 \ncangle[angleA=-90,angleB=-90,arm=0.55cm]{-}{C7l}{C9l}

			 %\psgrid
		\end{pspicture}
		}
	\smallskip
  \caption{
		Example of a geometrical pattern used in formation for minimizing the effect of drag forces or for escorting a moving object. Solid lines indicate direct communication, while dashed lines indicate dynamic coupling, but not necessarily direct communication.
  }
  \label{Fig:FormationFlight}
  \end{figure}

	\fref{Fig:FormationFlight} shows the setup of flight formation: there is a group of autonomous agents, such as unmanned airplanes, submarines, or robots, whose goal is to form a geometrical pattern while performing some task. This task could be simply flying and, at the same time, trying to minimize the effect of drag forces to reduce fuel consumption; or, for example, to escort a moving object, which might block some communications between the agents. We assume there is a communication network through which the agents communicate. In \fref{Fig:FormationFlight}, the links of this communication network are represented by the solid lines. Also, some agents influence the behavior of other agents with which they do not communicate directly; this is represented by the dashed lines in \fref{Fig:FormationFlight}. Flight formation is a widely studied topic and we refer to~\cite{Boyd04-DistributedOptimizationFormationFlight} for references and related work. In this problem, we extend the optimization model used in~\cite{Boyd04-DistributedOptimizationFormationFlight} to the MPC framework~\eqref{Eq:MPC}. Namely, we write the dynamics of the $p$th agent as~\eqref{Eq:DMPCFunction}, where the relative position between two agents affects their dynamics due, for example, to drag forces. Regarding the objective, while~$\Psi_p^t$ models fuel consumption at time~$t$, $\Phi_p$ models the geometrical pattern to be formed. Note that, in principle, $\Theta_p^t$ and~$\Psi_p^t$ depend only on the state/input of agent~$p$ and of its closest agents (i.e., its neighbors in the communication network); in~$\Phi_p$ we can, in addition, include dependencies on agents that are not within communication reach. For example, suppose we specify agent~$6$ in \fref{Fig:FormationFlight} to have a relative distance of~$\delta_{46}$ and~$\delta_{67}$ from its closest neighbors, agents~$4$ and~$7$, and also a relative distance of~$\delta_{68}$ from agent~$8$, for symmetry reasons. Note that agents~$6$ and~$8$ do not communicate directly. In this case, $\Omega_6 = \{4,6,7,8\}$, and~$\Phi_6$ would have a format similar to~$\Phi_6(x_4,x_6,x_7,x_8) = \frac{1}{2}\|x_4 - x_6 - \delta_{46}\|^2 + \frac{1}{2}\|x_6 - x_7 - \delta_{67}\|^2 + \frac{1}{2}\|x_6 - x_8 - \delta_{68}\|^2$, where~$x_p$ represents the position of agent~$p$; note that we dropped the time index~$T$ for notational simplicity. A similar reasoning can be applied to the remaining agents of \fref{Fig:FormationFlight}, where relative distances are specified for each edge (represented either with continuous or dashed lines).

	We now describe another application of D-MPC where the variable can be connected, not necessarily star-shaped, or even non-connected. The application is temperature regulation of buildings and is described in the context of D-MPC in~\cite{Morosan10-BuildingTemperatureRegulationUsingDMPC}. The algorithm proposed in~\cite{Morosan10-BuildingTemperatureRegulationUsingDMPC}, however, is heuristic and, thus, not guaranteed to solve the original problem. The motivation for using MPC in the control of room temperature stems from its ability to integrate in its model the prediction of future events, in this case, room occupation profiles. This feature is necessary in the regulation of room temperature, because temperature varies very slowly. If it did not, a simple PID controller would be enough. The work in~\cite{Morosan10-BuildingTemperatureRegulationUsingDMPC} models room temperature, viewing it as a state~$x$, which varies linearly with the heating power applied to the room, $u$. According to our notation in~\eqref{Eq:MPC}, the function~$\Phi$ is identically zero, and $\Psi^t(x[t],u[t]) = \beta u[t] + \delta[t] \Bigl|x[t] - r[t]\Bigr|$, where~$r[t]$ is the reference temperature for the room at time~$t$, and~$\delta[t] =1$ if the room is predicted to be occupied at time~$t$ and $\delta[t] = 0$ otherwise. The parameter~$\beta > 0$ sets the tradeoff between energy consumption and comfort. The power~$u[t]$ is constrained to an interval: $0 \leq u[t] \leq u_{\max}$. This is the model for one room. However, \cite{Morosan10-BuildingTemperatureRegulationUsingDMPC} also models buildings, where the rooms are thermally coupled. The model it proposes can be written as~\eqref{Eq:D-MPC}, where the sets~$\Omega_p$'s model coupling between adjacent rooms. Yet, it is assumed that adjacent rooms can communicate or, in other words, that interactions through coupling coincide with interactions through communication. If the communication technology is wired, it might be expensive to connect all the adjacent rooms with cables; if it is wireless, large concrete walls might prevent adjacent rooms from communicating directly. This is clearly an example where our proposed D-MPC model~\eqref{Eq:D-MPC} could be useful, as the problem variable might have induced subgraphs that are not stars and might even be non-connected.

	\subsection{Reversed lasso with a row partition}

	We saw in \cref{Ch:GlobalVariable} how to recast several compressed sensing problems as~\eqref{Eq:GlobalProblem}, that is, as~\eqref{Eq:IntroProb} with a global variable. The only problem for which we were not able to do so was the reversed lasso~\eqref{Eq:ReverseLasso} with a row partition. The goal of this subsection is to complete this missing part of the puzzle, by recasting that problem as~\eqref{Eq:IntroProb} with a mixed variable.

	Recall that the reversed lasso~\eqref{Eq:ReverseLasso} is the problem
	\begin{equation}\label{Eq:ReversedLasso2}
		\begin{array}{ll}
			\underset{x}{\text{minimize}} & \|x\|_1 \\
			\text{subject to} & \|Ax - b\| \leq \sigma\,.
		\end{array}
	\end{equation}
	Since we will use duality, we want to make sure that the primal objective is strictly convex, so that we can recover a primal solution after having solved the dual problem. We will make the primal objective strictly convex by using the regularization~\eqref{Eq:RegularizarionReversedLasso3} (page~\pageref{Eq:RegularizarionReversedLasso3}), which we used for the same problem with a column partition. There, we showed that, for a small~$\delta > 0$, \eqref{Eq:ReversedLasso2} can be approximated by
	\begin{equation}\label{Eq:ReversedLassoRegularizedM}
		\begin{array}{ll}
			\underset{x}{\text{minimize}} & \|x\|_1 + \frac{\delta}{2}\|x\|^2  + \frac{\delta}{4}\|Ax - b\|^2\\
			  	\text{subject to} & \|Ax - b\|^2 \leq \sigma^2\,,
		\end{array}
	\end{equation}
	where we squared both sides of the constraint.	Consider now a row partition, as visualized in \fref{Fig:PartitionOfA}, and rewrite~\eqref{Eq:ReversedLassoRegularizedM} as
	\begin{equation}\label{Eq:ReversedLassoRegularizedM2}
		\begin{array}{ll}
			\underset{x}{\text{minimize}} & \sum_{p=1}^P \biggl(\frac{1}{P}\|x\|_1 + \frac{\delta}{2P}\|x\|^2  + \frac{\delta}{4}\|A_px - b_p\|^2\biggr)\\
			  	\text{subject to} & \sum_{p=1}^P\|A_px - b_p\|^2 \leq \sigma^2\,.
		\end{array}
	\end{equation}
	This problem has the following format
	\begin{equation}\label{Eq:ReversedLassoGeneral}
    \begin{array}{ll}
      \underset{x}{\text{minimize}} & f_1(x) + f_2(x) + \cdots + f_P(x) \\
      \text{subject to} & h_1(x) + h_2(x) + \cdots + h_P(x) \leq r\,,
    \end{array}
  \end{equation}
	with $f_p(x) = \frac{1}{P}\|x\|_1 + \frac{\delta}{2P}\|x\|^2  + \frac{\delta}{4}\|A_px - b_p\|^2$, $h_p(x) = \|A_px - b_p\|^2$, and~$r = \sigma^2$. We will now see how to solve~\eqref{Eq:ReversedLassoGeneral} in a network where each node~$p$ knows~$f_p$, $h_p$, and~$r$. We assume that each function~$f_p$ is strictly convex, as in~\eqref{Eq:ReversedLassoRegularizedM2}. We start by cloning the variable~$x$, rewriting~\eqref{Eq:ReversedLassoGeneral} as
	\begin{equation}\label{Eq:ReversedLassoGeneralManip}
    \begin{array}{ll}
      \underset{x_1,\ldots,x_P}{\text{minimize}} & f_1(x_1) + f_2(x_2) + \cdots + f_P(x_P) \\
      \text{subject to} & h_1(x_1) + h_2(x_2) + \cdots + h_P(x_P) \leq r \\
      & x_i = x_j\,,\quad (i,j) \in \mathcal{E}\,,
    \end{array}
  \end{equation}
  where~$x_p$ is the copy of~$x$ held at node~$p$. Let~$\mu$ be a dual variable associated to the first constraint of~\eqref{Eq:ReversedLassoGeneralManip} and~$\lambda_{ij}$ the dual variable associated to the constraint~$x_i = x_j$, for~$(i,j) \in \mathcal{E}$. The dual problem of~\eqref{Eq:ReversedLassoGeneralManip} is
  \begin{equation}\label{Eq:ReversedLassoGeneralManipFin}
  	\begin{array}{cl}
  		\underset{\mu, \{\lambda_{ij}\}_{(i,j) \in \mathcal{E}}}{\text{minimize}} & g_1(\mu, \{\lambda_{1j}\}_{j \in \mathcal{N}_1}) + g_2(\mu, \{\lambda_{2j}\}_{j \in \mathcal{N}_2}) + \cdots + g_P(\mu, \{\lambda_{Pj}\}_{j \in \mathcal{N}_P}) \\
  		\text{subject to} & \mu \geq 0\,,
  	\end{array}
  \end{equation}
  where, for each~$p$,
	\begin{equation}\label{Eq:ReversedLassoGeneralMConj}
		g_p(\mu,\{\lambda_{pj}\}_{j \in \mathcal{N}_p}) := \sup_{x_p} \,\,\, \Bigl(\sum_{j \in \mathcal{N}_p} \text{sign}(p-j)\lambda_{pj}\Bigr)^\top x_p - \Bigl(f_p(x_p) + \mu^\top (h_p(x_p) - \frac{1}{P}r)\Bigr) \,.
	\end{equation}
	We slightly abused notation in~\eqref{Eq:ReversedLassoGeneralManipFin}, since each~$\lambda_{ij}$ is only defined for~$i < j$. We extended that definition: $\lambda_{ij}  = -\lambda_{ji}$ when $i>j$. We thus can see that~\eqref{Eq:ReversedLassoGeneralManipFin} has the same format as~\eqref{Eq:IntroMixed} or, in other words, it has a mixed variable. The dual variable~$\mu$ is a global component, since it appears in the function of all the nodes, and all the components of~$\lambda = (\ldots,\lambda_{ij},\ldots)$ are non-global. Since we assume that each~$f_p$ is strictly convex, the $p$th block of the primal variable of~\eqref{Eq:ReversedLassoGeneral}, i.e., $x_p^\star$ will be available at the $p$th node as the solution of the optimization problem in~\eqref{Eq:ReversedLassoGeneralMConj}, for $\mu=\mu^\star$ and $\{\lambda_{pj}\}_{j \in \mathcal{N}_p} = \{\lambda_{pj}^\star\}_{j \in \mathcal{N}_p}$, where the starred vectors solve the dual problem~\eqref{Eq:ReversedLassoGeneralManipFin}.

	\begin{figure}
    \centering
    \subfigure[Original network]{\label{SubFig:TCPNetwork}
    \psscalebox{0.85}{
        \begin{pspicture}(9,3.8)
            \def\nodesimp{
                \pscircle*[linecolor=black!70!white](0,0){0.3}
            }
            \def\nodesquare{
              \psframe*[linecolor=black!20!white](-0.21,-0.21)(0.21,0.21)
            }

            \rput(1.0,0.2){\rnode{S1}{\nodesquare}}
            \rput(1.0,1.7){\rnode{S2}{\nodesquare}}
            \rput(1.0,3.2){\rnode{S3}{\nodesquare}}
            \rput(6.5,3.2){\rnode{R1}{\nodesquare}}
            \rput(8.0,1.7){\rnode{R2}{\nodesquare}}
            \rput(8.0,0.2){\rnode{R3}{\nodesquare}}
            \rput(3.00,2.45){\rnode{N1}{\nodesimp}}
            \rput(5.00,2.45){\rnode{N2}{\nodesimp}}
            \rput(6.50,0.95){\rnode{N3}{\nodesimp}}

            \rput(1.0,0.2){\textcolor{black}{$\textsf{s}_3$}} \rput[rb](0.7,3.5){$x_1$}
            \rput(1.0,1.7){\textcolor{black}{$\textsf{s}_2$}} \rput[rb](0.7,2.0){$x_2$}
            \rput(1.0,3.2){\textcolor{black}{$\textsf{s}_1$}} \rput[rb](0.7,0.5){$x_3$}
            \rput(6.5,3.2){\textcolor{black}{$\textsf{r}_1$}}
            \rput(8.0,1.7){\textcolor{black}{$\textsf{r}_2$}}
            \rput(8.0,0.2){\textcolor{black}{$\textsf{r}_3$}}
            \rput(3.00,2.45){\textcolor{white}{$\textsf{n}_1$}}
            \rput(5.00,2.45){\textcolor{white}{$\textsf{n}_2$}}
            \rput(6.50,0.95){\textcolor{white}{$\textsf{n}_3$}}

            \psset{nodesep=0.35cm}
            \ncline{-}{S1}{N2}%\Bput{$x_{12}$}
            \ncline{-}{S2}{N1}
            \ncline{-}{S3}{N1}
            \ncline{-}{N1}{N2}\Aput[0.06]{$c_1$}
            \ncline{-}{N2}{N3}\Aput[0.06]{$c_3$}
            \ncline{-}{N2}{R1}\Aput[0.06]{$c_2$}
            \ncline{-}{N3}{R2}\Bput[0.02]{$c_4$}
            \ncline{-}{N3}{R3}\Aput[0.02]{$c_5$}

            \psset{linestyle=dashed,arrowsize=8pt,arrowlength=1,linewidth=1.4pt,linecolor=black!80!white}
            \pscurve{->}(1.4,3.2)(3,2.9)(5,2.9)(6.6,1.7)(7.6,1.7)
            \pscurve{->}(1.4,1.7)(3,2.1)(5,2.1)(6.1,2.9)
            \pscurve{->}(1.4,0.2)(5,1.8)(6.5,0.6)(7.6,0.2)

            %\psgrid
           \end{pspicture}
      }
    }
    \subfigure[Bipartite graph obtained from $\text{(a)}$]{\label{SubFig:TCPBipartite}
      \psscalebox{0.85}{
        \begin{pspicture}(9,2)

            \def\nodesimp{
                \pscircle*[linecolor=black!70!white](0,0){0.3}
            }
            \def\nodesquare{
              \psframe*[linecolor=black!20!white](-0.21,-0.21)(0.21,0.21)
            }

            \rput(0.200,0.4){\rnode{N2}{\nodesimp}}
            \rput(1.633,0.4){\rnode{S2}{\nodesquare}}
            \rput(3.066,0.4){\rnode{N1}{\nodesimp}}
            \rput(4.499,0.4){\rnode{S1}{\nodesquare}}
            \rput(5.932,0.4){\rnode{N2p}{\nodesimp}}
            \rput(7.365,0.4){\rnode{S3}{\nodesquare}}
            \rput(8.798,0.4){\rnode{N3p}{\nodesimp}}
            \rput(4.499,1.833){\rnode{N3}{\nodesimp}}

            \rput(0.200,0.4){\textcolor{white}{$2$}}\rput[b](0.200,0.85){$\lambda_2$}
            \rput(1.633,0.4){\textcolor{black}{$\text{s}_2$}}
            \rput(3.066,0.4){\textcolor{white}{$1$}}\rput[b](3.066,0.85){$\lambda_1$}
            \rput(4.499,0.4){\textcolor{black}{$\text{s}_1$}}
            \rput(5.932,0.4){\textcolor{white}{$3$}}\rput[b](5.932,0.85){$\lambda_3$}
            \rput(7.365,0.4){\textcolor{black}{$\text{s}_3$}}
            \rput(8.798,0.4){\textcolor{white}{$5$}}\rput[b](8.798,0.85){$\lambda_5$}
            \rput(4.499,1.833){\textcolor{white}{$4$}}\rput[l](4.949,1.833){$\lambda_4$}

            \psset{nodesep=0.35cm}
            \ncline{-}{N2}{S2}
            \ncline{-}{S2}{N1}
            \ncline{-}{N1}{S1}
            \ncline{-}{S1}{N2p}
            \ncline{-}{N2p}{S3}
            \ncline{-}{S3}{N3p}
            \ncline{-}{S1}{N3}

            %\psgrid
            \end{pspicture}
      }
    }
      \caption{\text{(a)} Example network with~$3$ source nodes~$s_1$, $s_2$, and $s_3$, that use predetermined routes to send packets to three recipient nodes~$r_1$, $r_2$, and $r_3$; \text{(b)} Bipartite graph obtained from \text{(a)}: each link from~\text{(a)} with a capacity associated is represented as a circular node in~\text{(b)}.}
      \label{Fig:TCPIP}
  \end{figure}

  \subsection{Network utility maximization}

  Network utility maximization (NUM) is usually used for modeling congestion control in networks. The setup of congestion control is a network with some source nodes sending information, encoded in packets, to other nodes of the network, called recipient nodes. Independently of the network, its links have always finite capacity and, therefore, there is a limit on the rate of packets that can be injected into the network without congesting it. The goal of congestion control is to avoid congesting the network; this is done by implementing a protocol between the source nodes and the nodes through which they send their packets, called intermediate nodes. The communication between these nodes can occur implicitly or explicitly. The algorithm we propose for solving~\eqref{Eq:IntroProb} will require an explicit communication between the source and the intermediate nodes.

  \mypar{Star-shaped variable model}
  Given a network, let~$\mathcal{S}$, $\mathcal{R}$, and~$\mathcal{N}$ represent the source nodes, the recipient nodes, and the intermediate nodes, respectively. Figure~\ref{SubFig:TCPNetwork} shows an example of such network with~$3$ nodes of each kind, i.e., $|\mathcal{S}| = |\mathcal{R}| = |\mathcal{N}| = 3$. The source nodes are the squares on the left side, the recipient nodes are the squares on the right side, and the intermediate nodes are the circles. We assume each source sends packets only to one recipient node. Also, the routes through which each source sends its packets are predetermined (see the arrows in Figure~\ref{SubFig:TCPNetwork}). Each link~$l$ in the network has a finite capacity~$c_l > 0$, and the total number of links will be denoted with~$L$. In Figure~\ref{SubFig:TCPNetwork}, to simplify, we represent the capacity of only some links. Congestion control can be modeled with a problem called \textit{network utility maximization} (NUM):
  \begin{equation}\label{Eq:NUM}
    \begin{array}{ll}
      \underset{\{x_s\}_{s \in \mathcal{S}}}{\text{maximize}} & \sum_{s \in \mathcal{S}} U_s(x_s) \\
      \text{subject to} & \sum_{s \in \mathcal{S}(l)} x_s \leq c_l\,, \quad l=1,\ldots,L\,,
    \end{array}
  \end{equation}
  where~$x_s$ represents the sending rate of source~$s$ and~$U_s(x_s)$ its utility, or ``satisfaction.'' The constraints in~\eqref{Eq:NUM} are simply the link capacity constraints: $\mathcal{S}(l)$ represents the set of sources that use link~$l$ and thus~$\sum_{s \in \mathcal{S}(l)} x_s$ represents the rate of packets flowing in link~$l$, which has to be smaller than the link capacity~$c_l$. The goal in~\eqref{Eq:NUM} is to maximize the aggregate utilities of the sources, while satisfying the link capacity constraints. It is generally assumed that each utility is increasing and strictly concave. For example, TCP Vegas, FAST, and Scalable TCP have been modeled as~\eqref{Eq:NUM} with~$U_s(x_s) = w_s \log x_s$, for some~$w_s > 0$~\cite{Low02-UnderstandingVegas,Chiang07-LayeringAsOptimizationDecomposition}. This makes the objective of~\eqref{Eq:NUM} strictly concave, and hence its dual problem can be solved instead:
  \begin{equation}\label{Eq:DualNUM}
    \begin{array}{cl}
      \underset{\lambda =(\lambda_1,\ldots,\lambda_L)}{\text{minimize}} & \sum_{l \in \mathcal{L}} c_l\lambda_l  + \sum_{s \in \mathcal{S}} \bar{U}_s \Bigl(\sum_{l \in \mathcal{L}(s)} \lambda_l\Bigr) \\
        \text{subject to} & \lambda_l \geq 0\,, \quad l =1,\ldots, L\,,
    \end{array}
  \end{equation}
  where~$\mathcal{L}(s)$ is the set of links source~$s$ uses to route its packets and~$\bar{U}_s(t) := \sup_{x_s} \bigl(U_s(x_s) - t x_s\bigr)$ has always a unique solution~$x_s(t)$, due to the strict concavity of~$U_s$. In the case of TCP Vegas, FAST, and Scalable TCP, $\bar{U}_s(t) = w_s \log w_s - w_s \log t - w_s$, and~$x_s(t) = w_s/t$. After a solution~$\lambda^\star$ to~\eqref{Eq:DualNUM} has been found, the optimal value for the rate of source~$s$ can be found as~$x_s^\star = x_s(\sum_{l \in \mathcal{L}(s)} \lambda_l^\star)$, which is~$w_s / \sum_{l \in \mathcal{L}(s)} \lambda_l^\star$ in the case of TCP Vegas, FAST, and Scalable TCP.

  The ``physical communications'' occur in a network that has a format similar to the one represented in Figure~\ref{SubFig:TCPNetwork}. However, a congestion control protocol establishes direct communications between the source nodes and the intermediate nodes that manage the respective links. Therefore, the communication network it considers is actually the one represented in Figure~\ref{SubFig:TCPBipartite}. This network is constructed as follows: each link~$l$, which we assume is unidirectional for the sake of simplicity, has a node associated (in Figure~\ref{SubFig:TCPBipartite}, a circle node), and each source~$s$ has also a node associated (in Figure~\ref{SubFig:TCPBipartite}, a square node); the recipient nodes are not considered in this new network. If link~$l$ is used in the route assigned to source~$s$, the nodes representing link~$l$ and source~$s$ are connected to each other. Figure~\ref{SubFig:TCPBipartite} shows the network obtained from Figure~\ref{SubFig:TCPNetwork} by considering only the links marked with capacities. The way this network is constructed makes it automatically bipartite. In the model considered here, the intermediate node having link~$l$ as output manages~$\lambda_l$. For example, node~$n_2$ in Figure~\ref{Fig:TCPIP} manages both~$\lambda_2$ and~$\lambda_3$. Note that each communication occurring in the network of Figure~\ref{SubFig:TCPBipartite} corresponds to an arbitrary number of communications in the original network of Figure~\ref{SubFig:TCPNetwork}. Regarding problem~\eqref{Eq:DualNUM}, it can be written as~\eqref{Eq:IntroProb} with the function at node~$p$ given by
  $$
    f_p(\lambda_1,\ldots,\lambda_L)
    =
    \left\{
      \begin{array}{ll}
        c_p \lambda_p + \text{i}_{\mathbb{R}^+}(\lambda_p)\,, &\quad \text{if~$p$ is an intermediate node} \\
        \bar{U}_s\bigl(\sum_{l \in \mathcal{L}(p)} \lambda_l\bigr)\,, &\quad \text{if~$p$ is a source node}
      \end{array}\,,
    \right.
  $$
  where~$\text{i}_{\mathbb{R}^+}(\cdot)$ is the indicator of the set of the nonnegative real numbers, and the variable is~$\lambda = (\lambda_1,\ldots,\lambda_L)$. The variable in this case is star-shaped. The algorithm we propose for~\eqref{Eq:IntroProb} can then be used to inspire a new congestion control protocol. However, it has one disadvantage with respect to gradient-based algorithms: while gradient-based algorithms can work with implicit communication, due to their linearity, the algorithm we propose requires explicit communication between each each source and all the intermediate nodes along its route.

  \mypar{Mixed variable model}
  The NUM problem was introduced as~\eqref{Eq:NUM} to model congestion control in networks. In~\eqref{Eq:NUM}, the utility function~$U_p$ of source/node~$p$ depends only on its sending rate~$x_p$, i.e., $U_p(x_p)$. However, in cooperative or competitive scenarios it might be useful to consider coupled objectives, e.g., $U_p(\{x_l\}_{l \in S_p})$, where~$S_p$ is the set of nodes whose rates influence the utility of source~$p$. Such model was considered in~\cite{Tan06-DistributedOptimizationCoupledSystemsNUM} (see also~\cite{Palomar06-TutorialDecompositionMethods}). For example, in digital subscriber line (DSL) spectrum management, or in wireless power control, the signal-to-interference ratio at one user depends on the transmit powers of other users, making the scenario competitive. A cooperative scenario would be rate allocation in clusters: the higher the rate allocated to one cluster, the higher the rate allocated to each node inside that cluster. In particular, \cite{Tan06-DistributedOptimizationCoupledSystemsNUM} considered the following variation of~\eqref{Eq:NUM}:
  \begin{equation}\label{Eq:NUMCoupled}
    \begin{array}{ll}
      \underset{x_1,\ldots,x_P}{\text{maximize}} & \sum_{p=1}^P U_p(\{x_l\}_{l \in S_p}) \\
      \text{subject to} & \sum_{p=1}^P g_p(x_p) \leq c\,,
    \end{array}
  \end{equation}
	where each~$g_p$ is a convex function and~$c$ is a globally known vector. We slightly changed the notation with respect to~\eqref{Eq:NUM}: we now denote each source by~$p$ and the total number of sources is~$P$. In~\cite{Tan06-DistributedOptimizationCoupledSystemsNUM} it is also assumed that source~$p$ can communicate with all the sources that interfere with its utility, and vice-versa. The work in~\cite{Tan06-DistributedOptimizationCoupledSystemsNUM} proposes a gradient-based algorithm that solves a dual problem of~\eqref{Eq:NUMCoupled}. To arrive at that dual problem, we first perform a splitting (or cloning) of~$x_p$ among all the nodes whose utilities depend on~$x_p$:
	\begin{equation}\label{Eq:NUMCoupled2}
    \begin{array}{ll}
      \underset{\{\bar{x}_l\}_{l=1}^P}{\text{maximize}} & \sum_{p=1}^P U_p(\{x_l^{(p)}\}_{l \in S_p}) \\
      \text{subject to} & \sum_{p=1}^P g_p(x_p^{(p)}) \leq c \\
                        & x_l^{(i)} = x_l^{(j)}\,,\quad l \in S_i \cap S_j\,,\,\,\, (i,j) \in  \mathcal{E}\,,
    \end{array}
  \end{equation}
  where~$x_l^{(p)}$ is the copy of the variable~$x_l$ held by node~$p$, and~$\mathcal{E}$ is the set of edges in the communication network. The variable in~\eqref{Eq:NUMCoupled2} is~$\{\bar{x}_l\}_{l=1}^P$, where~$\bar{x}_l := \{x_l^{(p)}\}_{p \in \mathcal{V}_l}$, and~$\mathcal{V}_l$ is the set of nodes whose utilities depend on~$x_l$. Associating a dual variable~$\mu$ to the first constraint in~\eqref{Eq:NUMCoupled2} and~$\lambda_l^{ij}$ to each constraint of the second set of constraints, the dual problem of~\eqref{Eq:NUMCoupled2} is
  \begin{equation}\label{Eq:DualCoupledNUM}
  	\underset{\mu,\{\lambda_l^{ij}\}}{\text{minimize}} \,\,\,\, h_1(\mu,\{\bar{\lambda}^{1j}\}_{j\in \mathcal{N}_1}) + h_2(\mu,\{\bar{\lambda}^{2j}\}_{j\in \mathcal{N}_2}) + \cdots + h_P(\mu,\{\bar{\lambda}^{Pj}\}_{j\in \mathcal{N}_P})\,,
  \end{equation}
  where the function~$h_p$ is associated to source~$p$ and is given by
  \begin{multline*}
		h_p(\mu,\{\bar{\lambda}^{pj}\}_{j \in \mathcal{N}_p}) = \sup_{x^{(p)}} \,\,\, U_p(\{x_l^{(p)}\}_{l \in S_p}) + \mu^\top g_p(x_p^{(p)}) - \frac{1}{P}\mu^\top c \\+ \sum_{j \in \mathcal{N}_p} \sum_{l \in S_p \cap S_j} \text{sign}(j-p) \bigl(\lambda_l^{pj}\bigr)^\top x_l^{(p)} + \text{i}_{\mathbb{R}^+}(\mu)\,.
  \end{multline*}
	We used the notation~$\bar{\lambda}^{ij} = \{\lambda_l^{ij}\}_{l \in S_i \cap S_j}$. Note that to arrive at~\eqref{Eq:DualCoupledNUM} we used the identity
	$$
		\sum_{(i,j) \in \mathcal{E}} \sum_{l \in S_i \cap S_j} \bigl(\lambda_l^{ij}\bigr)^\top (x_l^{(i)} - x_l^{(j)}) = \sum_{p=1}^P \sum_{j \in \mathcal{N}_p} \sum_{l \in S_p \cap S_j} \text{sign}(j-p) \bigl(\lambda_l^{pj}\bigr)^\top x_l^{(p)}
	$$
	and, with it, we extended the notation~$\lambda_l^{ij}$ for~$i>j$ as $\lambda_l^{ij} := -\lambda_l^{ji}$. The variable in~\eqref{Eq:DualCoupledNUM} has the global components~$\mu$, appearing in all the functions~$h_p$, and non-global components~$\{\lambda_l^{ij}\}$. Thus, \eqref{Eq:DualCoupledNUM} is a particular instance of~\eqref{Eq:IntroProb} with a mixed variable.

	\begin{figure}
  \centering
  \psscalebox{1.0}{
		\begin{pspicture}(7,4.3)
			\def\nodesimp{
          \pscircle*[linecolor=black!65!white](0,0){0.3}
      }

      \rput(1,3.7){\rnode{N1}{\nodesimp}}
      \rput(0.3,2.3){\rnode{N2}{\nodesimp}}
      \rput(0.3,0.3){\rnode{N3}{\nodesimp}}
      \rput(2.7,1.6){\rnode{N4}{\nodesimp}}
      \rput(5,1.3){\rnode{N5}{\nodesimp}}
      \rput(4.0,3.3){\rnode{N6}{\nodesimp}}
      \rput(6.7,2.4){\rnode{N7}{\nodesimp}}

      \rput(1,3.7){\small \textcolor{white}{$1$}}    %\rput[lb](1.282843,3.982843){$f_1$}
      \rput(0.3,2.3){\small \textcolor{white}{$2$}}    %\rput[rb](0.017157,2.282843){$f_2$}
      \rput(0.3,0.3){\small \textcolor{white}{$3$}}  %\rput[rt](0.017157,0.017157){$f_3$}
      \rput(2.7,1.6){\small \textcolor{white}{$4$}}  %\rput[lt](2.782843,1.417157){$f_4$}
      \rput(5,1.3){\small \textcolor{white}{$5$}}      %\rput[lt](5.282843,1.717157){$f_5$}
      \rput(4.0,3.3){\small \textcolor{white}{$6$}}    %\rput[lb](4.082843,3.282843){$f_6$}
      \rput(6.7,2.4){\small \textcolor{white}{$7$}}  %\rput[lb](6.982843,3.482843){$f_7$}

      \psset{nodesep=0.35cm,linestyle=solid,arrowsize=5pt,arrowinset=0.1,labelsep=0.07}
      \ncline[]{->}{N1}{N2}\nbput{$\phi_{12}(x_{12})$}
      \ncline[]{->}{N2}{N3}\nbput{$\phi_{23}(x_{23})$}
      \ncline[]{->}{N2}{N4}\naput{$\phi_{24}(x_{24})$}
      \ncline[]{->}{N4}{N5}\nbput{$\phi_{45}(x_{45})$}
      \ncline[]{->}{N4}{N6}\nbput{$\phi_{46}(x_{46})$}
      %\ncline[nodesep=0.35cm]{-}{N5}{N6}\Bput{$\phi_{56}(x_{56})$}
      \ncline[]{->}{N5}{N7}\nbput{$\phi_{57}(x_{57})$}
      \ncline[]{<-}{N3}{N4}\nbput{$\phi_{43}(x_{43})$}
      \ncline[]{->}{N1}{N6}\naput{$\phi_{16}(x_{16})$}
      \ncline[]{->}{N6}{N7}\naput{$\phi_{67}(x_{67})$}

      %\psgrid
    \end{pspicture}
  }
  \caption{
		Example of a network flow problem. Each edge has associated both a variable~$x_{ij}$ and function of that variable, $\phi_{ij}(x_{ij})$. The goal is to minimize the sum of all the functions, while satisfying conservation of flow constraints.
	}
  \label{Fig:NetworkFlow}
  \end{figure}
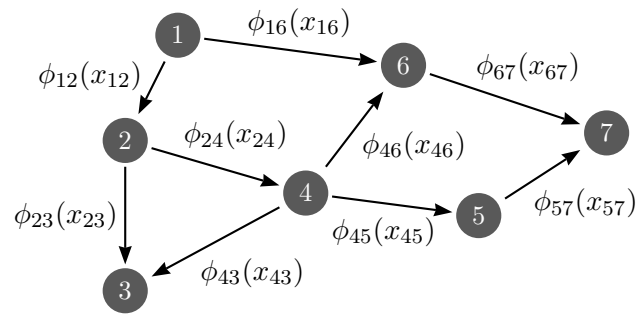

	\subsection{Network flow problems}

	A network flow problem is formulated on a network with arcs~$\mathcal{A}$ (or directed edges), where an arc from node~$i$ to node~$j$, i.e., $(i,j) \in \mathcal{A}$, indicates a flow in that direction. \fref{Fig:NetworkFlow} shows an example which allows, for example, a flow from node~$1$ to node~$6$, but not from node~$6$ to node~$1$. To quantify the flow in an arc~$(i,j) \in \mathcal{A}$, we use a non-negative variable~$x_{ij}$. Also, each arc~$(i,j) \in \mathcal{A}$ has associated a cost function~$\phi_{ij}(x_{ij})$, depending only on~$x_{ij}$, that typically increases with~$x_{ij}$. The goal in network flow problems is to minimize the sum of all these cost functions, while constraining the flows to satisfy conservation laws; namely, the inflows at a given node have to equal the outflows. These inflows/outflows are either caused by neighboring nodes, or are injected/extracted externally at the node itself. A node to which flow is injected (resp.\ extracted) is called source (resp.\ sink). For example, in \fref{Fig:NetworkFlow}, if either~$x_{12}$ or~$x_{16}$ is positive, node~$1$ can only be a source, since all of its edges point outwards. Nodes~$3$ and~$7$, in contrast, can only be sinks, if the flow in their incident arcs is nonzero. Other nodes in that network, for example node~$4$, can be sources, sinks, or neither. A way to represent a network with flows is via the node-arc incidence matrix~$B$, where the column associated to an arc from node~$i$ to node~$j$ has a~$-1$ in the $i$th entry, a $1$ in the $j$th entry, and zeros elsewhere. We assume the components of the variable~$x$ and the columns of~$B$ are in lexicographic order. For example, $x = (x_{12},x_{16},x_{23},x_{24},x_{43},x_{45},x_{46},x_{57},x_{67})$ would be the variable in \fref{Fig:NetworkFlow}. The laws of conservation of flow are expressed as $B x = d$, where~$d \in \mathbb{R}^P$ is the vector of external inputs/outputs. The entries of~$d$ sum up to zero and~$d_p < 0$ (resp. $d_p > 0$) if node~$p$ is a source (resp.\ sink). When node~$p$ is neither a source nor a sink, $d_p = 0$. The problem we solve is
	\begin{equation}\label{Eq:NetworkFlow}
		\begin{array}{ll}
			\underset{x}{\text{minimize}} & \sum_{(i,j) \in \mathcal{A}} \phi_{ij}(x_{ij})\\
			\text{subject to} & Bx = d \\
			                  & x\geq 0\,,
		\end{array}
	\end{equation}
	which can be written as~\eqref{Eq:IntroProb} by setting
	\begin{multline*}
		f_p\biggl(\{x_{pj}\}_{(p,j) \in \mathcal{A}},\{x_{jp}\}_{(j,p) \in \mathcal{A}}\biggr) = \frac{1}{2}\sum_{(p,j) \in \mathcal{A}} \phi_{pj}(x_{pj}) + \frac{1}{2}\sum_{(j,p) \in \mathcal{A}} \phi_{jp}(x_{jp}) \\+ \text{i}_{\{b_p^\top x = d_p\}}\biggl(\{x_{pj}\}_{(p,j) \in \mathcal{A}},\{x_{jp}\}_{(j,p) \in \mathcal{A}}\biggr)\,,
	\end{multline*}
	where~$b_p^\top$ is the $p$th row of~$B$. In words, $f_p$ consists of the sum of the functions associated to all arcs involving node~$p$, plus the indicator function of the set~$\{x\,:\,b_p^\top x = d_p\}$, which enforces the conservation of flow at node~$p$ and only involves the variables~$\{x_{pj}\}_{(p,j) \in \mathcal{A}}$ and $\{x_{jp}\}_{(j,p) \in \mathcal{A}}$.

	Regarding the communication network~$\mathcal{G} = (\mathcal{V},\mathcal{E})$, we assume it consists of the underlying undirected network. This means that nodes~$i$ and~$j$ can exchange messages directly, i.e., $(i,j) \in \mathcal{E}$ for~$i<j$, if there is an arc between these nodes, i.e., $(i,j) \in \mathcal{A}$ or $(j,i) \in \mathcal{A}$. Therefore, in contrast with the flows, messages do not necessarily need to be exchanged satisfying the direction of the arcs. In fact, messages and flows might represent different physical quantities: think, for example, in a network of water pipes controlled by actuators at each pipe junction; while the pipes might enforce a direction in the flow of water (by using valves, for example), there is no reason to impose the same constraint on the electrical signals exchanged by the actuators. In problem~\eqref{Eq:NetworkFlow}, the subgraph induced by~$x_{ij}$, $(i,j) \in \mathcal{A}$, consists only of nodes~$i$ and~$j$ and an edge connecting them. This makes the variable in~\eqref{Eq:NetworkFlow} connected and star-shaped.

	\begin{figure}
  \centering
  \psscalebox{1.25}{
		\begin{pspicture}(5,5.6)
				\def\nodesimp{
            \pscircle*[linecolor=black!50!white](0,0){0.08}
        }
        \def\nodesimpSPEC{
            \pscircle*[linecolor=black!50!white](0,0){0.08}
            \pscircle[linecolor=black!98!white,linewidth=1.0pt](0,0){0.085}
        }

        \def\square{
            \psframe*[linecolor=black!98!white](-0.1,0)(0,0.1)
        }

        %\rput{30}(3.8,4.0){\psellipse[linestyle=none,fillcolor=black!50!white,fillstyle=shape](0,0)(1.7,1.5)}
        \rput{20}(3.06,4.0){\psellipse[linestyle=none,fillcolor=black!18!white,fillstyle=shape](0,0)(0.92,1.3)}
        \rput{52}(4.75,4.45){\psellipse[linestyle=none,fillcolor=black!18!white,fillstyle=shape](0,0)(0.8,0.5)}
        \rput{-20}(1.0,2.95){\psellipse[linestyle=none,fillcolor=black!18!white,fillstyle=shape](0,0)(1.4,0.4)}
        \rput{-45}(0.71,0.85){\psellipse[linestyle=none,fillcolor=black!18!white,fillstyle=shape](0,0)(1.01,1.01)}
        \rput{0}(4.37,1.1){\psellipse[linestyle=none,fillcolor=black!18!white,fillstyle=shape](0,0)(0.75,1.15)}

        \rput(3.315200,3.290900){\rnode{N0}{\nodesimpSPEC}}
        \rput(4.288700,4.029300){\rnode{N1}{\nodesimpSPEC}}
        \rput(2.074500,2.517300){\rnode{N2}{\nodesimpSPEC}}
        \rput(4.000100,1.920500){\rnode{N3}{\nodesimpSPEC}}
        \rput(1.259200,1.418300){\rnode{N4}{\nodesimp}}
        \rput(4.242100,0.762400){\rnode{N5}{\nodesimp}}
        \rput(1.776900,2.468500){\rnode{N6}{\nodesimp}}
        \rput(4.695200,4.614800){\rnode{N7}{\nodesimp}}
        \rput(3.418600,4.171300){\rnode{N8}{\nodesimpSPEC}}
        \rput(0.694600,1.269700){\rnode{N9}{\nodesimp}}
        \rput(3.751900,3.187800){\rnode{N10}{\nodesimp}}
        \rput(0.259800,1.013500){\rnode{N11}{\nodesimp}}
        \rput(3.025500,4.353500){\rnode{N12}{\nodesimpSPEC}}
        \rput(4.557100,1.591800){\rnode{N13}{\nodesimpSPEC}}
        \rput(4.346000,0.165800){\rnode{N14}{\nodesimp}}
        \rput(3.123000,2.914400){\rnode{N15}{\nodesimp}}
        \rput(1.189800,2.841500){\rnode{N16}{\nodesimp}}
        \rput(4.799200,3.973500){\rnode{N17}{\nodesimp}}
        \rput(2.730700,3.936100){\rnode{N18}{\nodesimp}}
        \rput(4.418400,4.524500){\rnode{N19}{\nodesimp}}
        \rput(0.885900,3.074200){\rnode{N20}{\nodesimp}}
        \rput(1.552000,1.099500){\rnode{N21}{\nodesimp}}
        \rput(3.734700,3.439100){\rnode{N22}{\nodesimp}}
        \rput(4.823500,4.316200){\rnode{N23}{\nodesimp}}
        \rput(0.894800,1.006600){\rnode{N24}{\nodesimp}}
        \rput(1.050941,0.881621){\rnode{N25}{\nodesimp}}
        \rput(0.613100,1.538700){\rnode{N26}{\nodesimpSPEC}}
        \rput(2.332400,4.367800){\rnode{N27}{\nodesimp}}
        \rput(5.000000,5.000000){\rnode{N28}{\nodesimp}}
        \rput(3.453400,2.882500){\rnode{N29}{\nodesimp}}
        \rput(1.413155,0.955549){\rnode{N30}{\nodesimp}}
        \rput(0.884293,1.206324){\rnode{N31}{\nodesimp}}
        \rput(1.147400,0.579500){\rnode{N32}{\nodesimpSPEC}}
        \rput(3.063800,3.531600){\rnode{N33}{\nodesimp}}
        \rput(4.112100,0.202900){\rnode{N34}{\nodesimp}}
        \rput(0.480300,3.080900){\rnode{N35}{\nodesimp}}
        \rput(2.792000,3.353000){\rnode{N36}{\nodesimp}}
        \rput(3.383400,4.731400){\rnode{N37}{\nodesimp}}
        \rput(1.002600,0.000000){\rnode{N38}{\nodesimp}}
        \rput(2.997400,4.973000){\rnode{N39}{\nodesimp}}
        \rput(4.509800,0.290000){\rnode{N40}{\nodesimp}}
        \rput(2.409100,3.477100){\rnode{N41}{\nodesimp}}
        \rput(4.654700,0.482200){\rnode{N42}{\nodesimpSPEC}}
        \rput(0.084900,3.421200){\rnode{N43}{\nodesimpSPEC}}
        \rput(4.418600,2.037400){\rnode{N44}{\nodesimp}}
        \rput(3.610300,4.688900){\rnode{N45}{\nodesimp}}
        \rput(4.382500,1.779000){\rnode{N46}{\nodesimp}}
        \rput(0.110200,1.529800){\rnode{N47}{\nodesimp}}
        \rput(0.586200,3.365800){\rnode{N48}{\nodesimp}}
        \rput(4.660903,4.199744){\rnode{N49}{\nodesimp}}
        \rput(1.216640,0.992721){\rnode{N50}{\nodesimp}}
        \rput(4.983900,1.386100){\rnode{N51}{\nodesimp}}
        \rput(4.195100,1.481500){\rnode{N52}{\nodesimp}}
        \rput(2.795900,4.914800){\rnode{N53}{\nodesimp}}
        \rput(0.996900,1.644300){\rnode{N54}{\nodesimp}}
        \rput(3.581900,3.669700){\rnode{N55}{\nodesimp}}
        \rput(0.000000,3.145700){\rnode{N56}{\nodesimp}}
        \rput(3.630492,2.975443){\rnode{N57}{\nodesimp}}
        \rput(3.352200,3.636700){\rnode{N58}{\nodesimp}}
        \rput(3.899800,1.508400){\rnode{N59}{\nodesimp}}

        \psset{nodesep=0.080000cm,linewidth=0.5pt,linecolor=black!50!white}
        \ncline{-}{N0}{N1}
        \lput{:U}{\rput[r](0,-0.04){\square}}
        %\lput{:U}{\rput[r](0.45,-0.04){\square}}
        \ncline{-}{N0}{N2}
        \ncline{-}{N0}{N3}
        \lput{:U}{\rput[r](-0.1,-0.04){\square}}
        %\lput{:U}{\rput[r](0.6,-0.04){\square}}
        \ncline{-}{N0}{N8}
        \lput{:U}{\rput[r](0.2,-0.04){\square}}
        \ncline{-}{N0}{N10}
        \ncline{-}{N0}{N12}
        \ncline{-}{N0}{N15}
        \ncline{-}{N0}{N18}
        \ncline{-}{N0}{N22}
        \lput{:U}{\rput[r](0,-0.04){\square}}
        \ncline{-}{N0}{N29}
        \ncline{-}{N0}{N33}
        \ncline{-}{N0}{N36}
        \lput{:U}{\rput[r](0,-0.04){\square}}
        \ncline{-}{N0}{N55}
        \ncline{-}{N0}{N57}
        \ncline{-}{N0}{N58}
        \ncline{-}{N1}{N7}
        \lput{:U}{\rput[r](0.08,-0.04){\square}}
        \ncline{-}{N1}{N17}
        \ncline{-}{N1}{N19}
        \ncline{-}{N1}{N23}
        \ncline{-}{N1}{N49}
        \ncline{-}{N2}{N3}
        \lput{:U}{\rput[r](-0.6,-0.04){\square}}
        %\lput{:U}{\rput[r](0.81,-0.04){\square}}
        \ncline{-}{N2}{N4}
        \ncline{-}{N2}{N6}
        \ncline{-}{N2}{N16}
        \lput{:U}{\rput[r](0.09,-0.04){\square}}
        \ncline{-}{N3}{N5}
        \lput{:U}{\rput[r](0.2,-0.04){\square}}
        \ncline{-}{N3}{N13}
        \ncline{-}{N3}{N44}
        \ncline{-}{N3}{N46}
        \ncline{-}{N3}{N52}
        \ncline{-}{N3}{N59}
        \lput{:U}{\rput[r](0.06,-0.04){\square}}
        \ncline{-}{N4}{N9}
        \ncline{-}{N4}{N21}
        \ncline{-}{N4}{N24}
        \lput{:U}{\rput[r](0.05,-0.04){\square}}
        \ncline{-}{N4}{N25}
        \ncline{-}{N4}{N26}
        \ncline{-}{N4}{N30}
        \lput{:U}{\rput[r](0.05,-0.04){\square}}
        \ncline{-}{N4}{N31}
        \ncline{-}{N4}{N32}
        \ncline{-}{N4}{N50}
        \ncline{-}{N4}{N54}
        \ncline{-}{N5}{N14}
        \ncline{-}{N5}{N34}
        \ncline{-}{N5}{N40}
        \ncline{-}{N5}{N42}
        \ncline{-}{N7}{N28}
        \lput{:U}{\rput[r](0.05,-0.04){\square}}
        \ncline{-}{N8}{N37}
        \ncline{-}{N8}{N45}
        \ncline{-}{N9}{N11}
        \ncline{-}{N12}{N39}
        \ncline{-}{N12}{N53}
        \ncline{-}{N13}{N51}
        \ncline{-}{N16}{N20}
        \ncline{-}{N16}{N35}
        \lput{:U}{\rput[r](0.2,-0.04){\square}}
        \ncline{-}{N16}{N4}
        %\lput{:U}{\rput[r](-0.42,-0.04){\square}}
        \lput{:U}{\rput[r](0.38,-0.04){\square}}
        \ncline{-}{N18}{N27}
        \ncline{-}{N20}{N48}
        \ncline{-}{N26}{N47}
        \lput{:U}{\rput[r](0.05,-0.04){\square}}
        \ncline{-}{N32}{N38}
        \lput{:U}{\rput[r](0.05,-0.04){\square}}
        \ncline{-}{N35}{N43}
        \lput{:U}{\rput[r](0,-0.04){\square}}
        \ncline{-}{N35}{N56}
        \ncline{-}{N36}{N41}
        \ncline{-}{N32}{N5}
        %\lput{:U}{\rput[r](-1.2,-0.04){\square}}
        \lput{:U}{\rput[r](1.22,-0.04){\square}}

        \rput(3.315200,3.290900){\rnode{N0}{\nodesimpSPEC}}
        \rput(4.288700,4.029300){\rnode{N1}{\nodesimpSPEC}}
        \rput(2.074500,2.517300){\rnode{N2}{\nodesimpSPEC}}
        \rput(4.000100,1.920500){\rnode{N3}{\nodesimpSPEC}}
        \rput(1.259200,1.418300){\rnode{N4}{\nodesimp}}
        \rput(4.242100,0.762400){\rnode{N5}{\nodesimp}}
        \rput(1.776900,2.468500){\rnode{N6}{\nodesimp}}
        \rput(4.695200,4.614800){\rnode{N7}{\nodesimp}}
        \rput(3.418600,4.171300){\rnode{N8}{\nodesimpSPEC}}
        \rput(0.694600,1.269700){\rnode{N9}{\nodesimp}}
        \rput(3.751900,3.187800){\rnode{N10}{\nodesimp}}
        \rput(0.259800,1.013500){\rnode{N11}{\nodesimp}}
        \rput(3.025500,4.353500){\rnode{N12}{\nodesimpSPEC}}
        \rput(4.557100,1.591800){\rnode{N13}{\nodesimpSPEC}}
        \rput(4.346000,0.165800){\rnode{N14}{\nodesimp}}
        \rput(3.123000,2.914400){\rnode{N15}{\nodesimp}}
        \rput(1.189800,2.841500){\rnode{N16}{\nodesimp}}
        \rput(4.799200,3.973500){\rnode{N17}{\nodesimp}}
        \rput(2.730700,3.936100){\rnode{N18}{\nodesimp}}
        \rput(4.418400,4.524500){\rnode{N19}{\nodesimp}}
        \rput(0.885900,3.074200){\rnode{N20}{\nodesimp}}
        \rput(1.552000,1.099500){\rnode{N21}{\nodesimp}}
        \rput(3.734700,3.439100){\rnode{N22}{\nodesimp}}
        \rput(4.823500,4.316200){\rnode{N23}{\nodesimp}}
        \rput(0.894800,1.006600){\rnode{N24}{\nodesimp}}
        \rput(1.050941,0.881621){\rnode{N25}{\nodesimp}}
        \rput(0.613100,1.538700){\rnode{N26}{\nodesimpSPEC}}
        \rput(2.332400,4.367800){\rnode{N27}{\nodesimp}}
        \rput(5.000000,5.000000){\rnode{N28}{\nodesimp}}
        \rput(3.453400,2.882500){\rnode{N29}{\nodesimp}}
        \rput(1.413155,0.955549){\rnode{N30}{\nodesimp}}
        \rput(0.884293,1.206324){\rnode{N31}{\nodesimp}}
        \rput(1.147400,0.579500){\rnode{N32}{\nodesimpSPEC}}
        \rput(3.063800,3.531600){\rnode{N33}{\nodesimp}}
        \rput(4.112100,0.202900){\rnode{N34}{\nodesimp}}
        \rput(0.480300,3.080900){\rnode{N35}{\nodesimp}}
        \rput(2.792000,3.353000){\rnode{N36}{\nodesimp}}
        \rput(3.383400,4.731400){\rnode{N37}{\nodesimp}}
        \rput(1.002600,0.000000){\rnode{N38}{\nodesimp}}
        \rput(2.997400,4.973000){\rnode{N39}{\nodesimp}}
        \rput(4.509800,0.290000){\rnode{N40}{\nodesimp}}
        \rput(2.409100,3.477100){\rnode{N41}{\nodesimp}}
        \rput(4.654700,0.482200){\rnode{N42}{\nodesimpSPEC}}
        \rput(0.084900,3.421200){\rnode{N43}{\nodesimpSPEC}}
        \rput(4.418600,2.037400){\rnode{N44}{\nodesimp}}
        \rput(3.610300,4.688900){\rnode{N45}{\nodesimp}}
        \rput(4.382500,1.779000){\rnode{N46}{\nodesimp}}
        \rput(0.110200,1.529800){\rnode{N47}{\nodesimp}}
        \rput(0.586200,3.365800){\rnode{N48}{\nodesimp}}
        \rput(4.660903,4.199744){\rnode{N49}{\nodesimp}}
        \rput(1.216640,0.992721){\rnode{N50}{\nodesimp}}
        \rput(4.983900,1.386100){\rnode{N51}{\nodesimp}}
        \rput(4.195100,1.481500){\rnode{N52}{\nodesimp}}
        \rput(2.795900,4.914800){\rnode{N53}{\nodesimp}}
        \rput(0.996900,1.644300){\rnode{N54}{\nodesimp}}
        \rput(3.581900,3.669700){\rnode{N55}{\nodesimp}}
        \rput(0.000000,3.145700){\rnode{N56}{\nodesimp}}
        \rput(3.630492,2.975443){\rnode{N57}{\nodesimp}}
        \rput(3.352200,3.636700){\rnode{N58}{\nodesimp}}
        \rput(3.899800,1.508400){\rnode{N59}{\nodesimp}}

        \rput[b](0.4,3.68){\scriptsize Area 1}
        \rput[b](0.4,1.95){\scriptsize Area 2}
        \rput[r](3.6,1.7){\scriptsize Area 3}
				\rput[r](2.4,5.3){\scriptsize Area 4}
				\rput[r](5.1,5.3){\scriptsize Area 5}

        %\psgrid
   \end{pspicture}
  }
  \bigskip
  \caption[Illustration of five connected areas in a power network.]{
		Illustration of five connected areas in a power network. Nodes represent buses, i.e, generators or loads, and they are connected through transmission lines. Circled nodes indicate voltage measurements and squares in the transmission lines indicate current measurements.
	}
  \label{Fig:PowerNetwork}
  \end{figure}
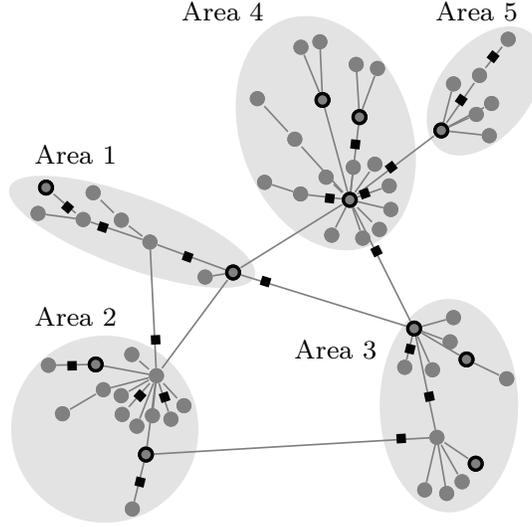

	\subsection{State estimation in the power grid}

	The power grid is the network that connects energy producers to energy consumers. Both its large-scale dimensions and its large number of parameters make it an appropriate application of distributed optimization. For example, state estimation in the power grid~\cite{Abur04-PowerSystemStateEstimation} can be posed as a particular instance of~\eqref{Eq:IntroProb} with a star-shaped variable, as was done by Kekatos and Giannakis in~\cite{Kekatos12-DistributedRobustPowerStateEstimation}. In this subsection, we briefly describe this problem from their point of view and derive the algorithm they propose, but adapted to solve~\eqref{Eq:IntroProb} for a generic connected variable. As we had mentioned in \cref{Ch:relatedWork}, the algorithm proposed in~\cite{Kekatos12-DistributedRobustPowerStateEstimation} is actually the only algorithm we found that can solve~\eqref{Eq:IntroProb} for connected variables that are neither global nor stars. Later, in subsection~\ref{SubSec:DerivationNonConnected}, we will generalize it to solve~\eqref{Eq:IntroProb} with a non-connected variable. For other problems and applications of distributed optimization in the power grid, see, for example, \cite{Giannakis13-MonitoringOptimizationPowerGrids,Boyd12-DynamicNetworkEnergyManagementProximalMessagePassing,Baker12-OptimalIntegrationIntermittent,Hug08-CoordinatedPowerFlowControlSecurityPowerSystems-thesis}.

	\mypar{State estimation}
	To explain state estimation in the power grid, consider the network of \fref{Fig:PowerNetwork}, whose nodes are divided into~$5$ disjoint areas. The nodes represent either generators or loads or, in the terminology of power systems, buses. Buses are connected through transmission lines, represented as the edges of the network, and through which current flows. Each area in the network, although controlled by its own operator, is also connected to other areas for robustness and reliability. It is essential for the proper functioning of the power network to know or, at least, to estimate the state of the system; this includes knowing power flows, voltage and current magnitudes at the buses, and generator outputs. \fref{Fig:PowerNetwork} illustrates a typical scenario where circled nodes indicate buses at which voltage measurements are taken, and edges with squares indicate lines where current measurements are taken. Based on these measurements and on a model for the system, the goal of state estimation is to determine what are the voltages and currents at the other buses and lines of the network. Let us denote the state of the entire network, i.e., the set of all the voltages and all the currents, with~$x \in \mathbb{R}^n$. The state of a given area~$p$ is a set~$S_p \in \{1,\ldots,n\}$ of $n_p = |S_p|$ components of~$x$, i.e., $x_{S_p}$ denotes the state of area~$p$. Let us represent the set of measurements taken at this area with~$y_p \in \mathbb{R}^{m_p}$. These measurements are related with~$x_{S_p}$ through~$y_p = h_p(x_{S_p}) + w_p$, where $h_p:\mathbb{R}^{n_p} \xrightarrow{} \mathbb{R}^{m_p}$ models area~$p$ and is typically a nonlinear function, and~$w_p \in \mathbb{R}^{m_p}$ models measurement noise and model inaccuracies. The areas that are connected with transmission lines will share some state variables, i.e., $S_i \cap S_j$. The problem of state estimation in power systems can then be formulated as
	\begin{equation}\label{Eq:PowerEstimationNolinear}
		\underset{x \in \mathbb{R}^n}{\text{minimize}} \,\,\, \frac{1}{2}\sum_{p=1}^P \bigl\|y_p - h_p(x_{S_p}) \bigr\|^2\,.
	\end{equation}
	Since each function~$h_p$ is nonlinear, problem~\eqref{Eq:PowerEstimationNolinear} is nonconvex. Yet, as mentioned in~\cite{Kekatos12-DistributedRobustPowerStateEstimation}, either using Gauss-Newton methods to solve~\eqref{Eq:PowerEstimationNolinear} directly or using a DC-approximation model for the system, one usually ends up with a linearized version of the system, i.e., $y_p = H_p x_{S_p} + w_p$, where~$H_p \in \mathbb{R}^{m_p \times n_p}$ is the Jacobian of~$h_p$ at some nominal operating point. So, instead of solving the nonconvex problem~\eqref{Eq:PowerEstimationNolinear}, we can solve
	\begin{equation}\label{Eq:PowerEstimationConvex}
		\underset{x \in \mathbb{R}^n}{\text{minimize}} \,\,\, \frac{1}{2}\sum_{p=1}^P \bigl\|y_p - H_p x_{S_p} \bigr\|^2\,,
	\end{equation}
	which is convex. If we view each area in \fref{Fig:PowerNetwork} as a node of a network, then~\eqref{Eq:PowerEstimationConvex} (and also~\eqref{Eq:PowerEstimationNolinear}) have the format of~\eqref{Eq:IntroProb}, where each~$f_p$ is given by $f_p(x_{S_p}) = (1/2)\|y_p - H_p x_{S_p}\|^2$. In this case, the variable is star-shaped because each set~$S_p$ indexes components of the state of area~$p$ and, possibly, of areas adjacent (i.e., neighbors) of area~$p$.

	\mypar{The algorithm in~\cite{Kekatos12-DistributedRobustPowerStateEstimation}}
	The algorithm proposed in~\cite{Kekatos12-DistributedRobustPowerStateEstimation} solves~\eqref{Eq:PowerEstimationConvex} in a distributed way and assumes that neighboring areas communicate, i.e., that they are able to exchange estimates of their common state variables. However, that algorithm can be easily generalized to solve~\eqref{Eq:IntroProb} under a generic connected variable, i.e., all induced subgraphs are connected. The algorithm is presented as Algorithm~\ref{Alg:Kekatos} and is derived in \aref{App:ADMMAlgsPartialClass}.

	\begin{algorithm}
		\small
    \caption{\cite{Kekatos12-DistributedRobustPowerStateEstimation}}
    \label{Alg:Kekatos}
    \algrenewcommand\algorithmicrequire{\textbf{Initialization:}}
    \begin{algorithmic}[1]
    \Require Choose $\rho \in \mathbb{R}$; for all $p \in \mathcal{V}$ and $l \in S_p$, set $\gamma_{l}^{(p),0} = x_l^{(p),0} = 0$; set $k=0$
    \Repeat

      \ForAll{$p \in \mathcal{V}$ [in parallel]}

        \Statex

        \State Compute \,
					 $
						 v_l^{(p),k} = \gamma_l^{(p),k} - \frac{\rho}{2} \Bigl(D_{p,l}\, x_l^{(p),k} + \sum_{j \in \mathcal{N}_p \cap \mathcal{V}_l} x_l^{(j),k}\Bigr)
           $,\,
           for all $ l \in S_p$
           \label{SubAlg:Kekatosv}

        \Statex

        \State Compute \,
           $
             x_{S_p}^{(p),k+1} =
             \underset{x_{S_p}^{(p)}=\{x_l^{(p)}\}_{l \in S_p}}{\arg\min}
             \,\,
					 	 f_p(x_{S_p}^{(p)}) + \sum_{l \in S_p} {v_l^{(p),k}}^\top x_l^{(p)}  + \frac{\rho}{2}\sum_{l \in S_p} D_{p,l}\bigl(x_l^{(p)}\bigr)^2
           $
           \label{SubAlg:KekatosProx}

        \Statex

        \State For each component~$l \in S_p$, exchange $x_l^{(p),k+1}$ with neighbors $\mathcal{N}_p \cap \mathcal{V}_l$
					 \label{SubAlg:KekatosComm}

				\Statex

				\State Update the dual variables \,
					 $
						\gamma_l^{(p),k+1} = \gamma_l^{(p),k} + \frac{\rho}{2} \sum_{j \in \mathcal{N}_p \cap \mathcal{V}_l} (x_l^{(p),k+1} -  x_l^{(j),k+1})
					 $,\,
					 for all $l \in S_p$
					 \label{SubAlg:KekatosDual}

				\vspace{0.2cm}
				\State $k \gets k+1$

			\EndFor
    \Until{some stopping criterion is met}
    \end{algorithmic}
  \end{algorithm}

  Algorithm~\ref{Alg:Kekatos} solves~\eqref{Eq:IntroProb} by first reformulating it as
  \begin{equation}\label{Eq:PartialReformulationEdge}
  	\begin{array}{ll}
  		\underset{\{\bar{x}_l\}_{l=1}^n}{\text{minimize}} & f_1(x_{S_1}^{(1)}) + f_1(x_{S_2}^{(2)}) + \cdots + f_1(x_{S_P}^{(P)}) \\
  		\text{subject to} & x_l^{(p)} = x_l^{(j)}\,, \quad l \in S_p \cap S_j\,, \quad j \in \mathcal{N}_p\,, \quad p = 1,\ldots,P\,,
  	\end{array}
  \end{equation}
  where we created a copy of the component~$x_l$ in all the nodes whose functions depend on~$x_l$, i.e., on all $p \in \mathcal{V}_l$, where $\mathcal{G}_l = (\mathcal{V}_l,\mathcal{E}_l)$ is the subgraph induced by~$x_l$. The copy at node~$p$ is~$x_l^{(p)}$. All the copies held by node~$p$ are denoted with~$x_{S_p}^{(p)}:= \{x_l^{(p)}\}_{l \in S_p}$. The constraints in~\eqref{Eq:PartialReformulationEdge} enforce copies of the component~$x_l$ to be equal for neighboring nodes that depend on it. That is, if nodes~$i$ and~$j$ are neighbors, $(i,j) \in \mathcal{E}$, and both depend on~$x_l$, $l \in S_i$ and~$l \in S_j$, then $x_l^{(i)} = x_l^{(j)}$ will be on the constraints of~\eqref{Eq:PartialReformulationEdge}. Since we assume a connected variable, problems~\eqref{Eq:IntroProb} and~\eqref{Eq:PartialReformulationEdge} are equivalent. We also denote by~$\bar{x}_l$ the set of all copies of the component~$x_l$, i.e., $\bar{x}_l = \{x_l^{(p)}\}_{p \in \mathcal{V}_l}$. Similarly to what was done for Algorithm~\ref{Alg:Zhu}, \cite{Kekatos12-DistributedRobustPowerStateEstimation} introduces a variable per network edge and writes~\eqref{Eq:PartialReformulationEdge} equivalently as
  \begin{equation}\label{Eq:PartialReformulationKekatos}
		\begin{array}{cl}
			\underset{\{\bar{x}_l\}_{l=1}^n, \{\bar{z}_l\}_{l=1}^n}{\text{minimize}} & f_1(x_{S_1}^{(1)}) + f_1(x_{S_2}^{(2)}) + \cdots + f_1(x_{S_P}^{(P)}) \\
  		\text{subject to} & x_l^{(p)} = z_l^{\{p,j\}}\,, \quad l \in S_p \cap S_j\,, \quad j \in \mathcal{N}_p\,, \quad p = 1,\ldots,P\,,
  	\end{array}
  \end{equation}
  where $z_l^{\{i,j\}} = z_l^{\{j,i\}}$ is associated to the common component~$x_l$ between nodes~$i$ and~$j$, for~$(i,j) \in \mathcal{E}_l$. We used~$\bar{z}_l$ to denote the set of variables~$z_{l}^{\{i,j\}}$ associated to the component~$x_l$, i.e., $\bar{z}_l = \{z_l^{\{i,j\}}\}_{(i,j) \in \mathcal{E}_l}$. Problem~\eqref{Eq:PartialReformulationKekatos} has two sets of variables, $\{\bar{x}_l\}_{l=1}^n$ and~$\{\bar{z}_l\}_{l=1}^n$, and linear constraints. Therefore, the $2$-block ADMM~\eqref{Eq:RelatedWorkADMMIter1}-\eqref{Eq:RelatedWorkADMMIter3} can be applied and yields Algorithm~\ref{Alg:Kekatos}, as shown in \aref{App:ADMMAlgsPartialClass}. Algorithm~\ref{Alg:Kekatos} has a structure very similar to Algorithm~\ref{Alg:Zhu}; indeed, it is derived using the same principles, but adapted to the problem~\eqref{Eq:IntroProb}. In particular, all nodes perform the same tasks in parallel. These tasks consist of solving an optimization problem in step~\ref{SubAlg:KekatosProx} and sending components of the respective solution to the neighbors that have common components, in step~\ref{SubAlg:KekatosComm}. We used~$D_{p,l}$ to denote the degree of node~$p$ in the subgraph induced by component~$x_l$, $\mathcal{G}_l$. Steps~\ref{SubAlg:Kekatosv}, \ref{SubAlg:KekatosProx}, and~\ref{SubAlg:KekatosComm} in Algorithm~\ref{Alg:Kekatos} correspond to step~\ref{SubAlg:RelatedWorkZhuUpdateX} of Algorithm~\ref{Alg:Zhu}: now, however, the prox notation is not as convenient as it was for the global class algorithms. Also, if in Algorithm~\ref{Alg:Zhu} each node broadcasts all the components of its new update, in Algorithm~\ref{Alg:Kekatos} each node~$p$ needs only to transmit to its neighbor~$j \in \mathcal{N}_p$ their common components $x_{S_p \cap S_j}$. After these exchanges occur, node~$p$ can update its set of dual variables~$\gamma_l$, $l \in \mathcal{S}_p$, as in step~\ref{SubAlg:KekatosDual}.

	The algorithm we propose for~\eqref{Eq:IntroProb} relates to the algorithm we proposed for the global class~\eqref{Eq:GlobalProblem} in the same way that Algorithm~\ref{Alg:Kekatos} relates to Algorithm~\ref{Alg:Zhu}. We will derive it in the next section, first for a connected variable, and then for a general variable, connected or not. Similarly to the algorithms for the global class, our algorithm outperforms Algorithm~\ref{Alg:Kekatos} in terms of the number of communications, as will be observed in \sref{Sec:PartialExpResults}.

	\section{Algorithm derivation}
	\label{Sec:PartialAlgorithmDerivation}

	In this section, we derive our algorithm for~\eqref{Eq:IntroProb}. First, we consider a connected variable, i.e., every induced subgraph is connected, and then we propose a way to address a non-connected variable, i.e., when there is at least one induced subgraph that is non-connected.

	\subsection{Connected variable}

	The idea we use to derive an algorithm for~\eqref{Eq:IntroProb} when the variable is connected is the same we used before to derive Algorithm~\ref{Alg:GlobalClass} for the global class: we manipulate~\eqref{Eq:IntroProb} to make the multi-block ADMM~\eqref{Eq:RelatedWorkADMMProbExtendedADMMIter1}-\eqref{Eq:RelatedWorkADMMProbExtendedADMMIter4} applicable. The difference is in the way we manipulate the problem, more specifically, in how we create copies of the variables. Recall our notation on the coloring scheme: $\mathcal{C}_c \subset \mathcal{V}$ denotes the nodes that have color~$c$ and~$\mathcal{C}(p)$ denotes the color of node~$p$; also, $C_c = |\mathcal{C}_c|$ is the number of nodes with color~$c$. As in the derivation of the global class algorithm, we will assume, without loss of generality, that the nodes are numbered according to their colors: the first~$C_1$ nodes have color~$1$, $\mathcal{C}_1 = \{1,\ldots,C_1\}$, the next~$C_2$ nodes have color~$2$, $\mathcal{C}_2 = \{C_1 +1,\ldots,C_1+C_2\}$, and so on.

	\mypar{Problem manipulation}
	Recall that~$\mathcal{G}_l = (\mathcal{V}_l,\mathcal{E}_l)$ denotes the subgraph induced by component~$x_l$. In this subsection, we assume each~$\mathcal{G}_l$ is connected. Similarly to~\cite{Kekatos12-DistributedRobustPowerStateEstimation}, we create a copy of the component~$x_l$ only in the nodes that are interested in it, which are precisely the nodes in~$\mathcal{G}_l$; let~$x_l^{(p)}$ be the copy at node~$p$. Since a given node~$p$ depends on the components~$x_{S_p}$ of the variable~$x$, it will have~$|S_p|$ different  (scalar) copies; let~$x_{S_p}^{(p)} := \{x_l^{(p)}\}_{l \in S_p}$ be the set of all these copies, at node~$p$. We now rewrite~\eqref{Eq:IntroProb} in a way slightly different than~\eqref{Eq:PartialReformulationEdge}:
	\begin{equation}\label{Eq:PartialManip1}
		\begin{array}{ll}
			\underset{\{\bar{x}_l\}_{l=1}^n}{\text{minimize}} & f_1(x_{S_1}^{(1)}) + f_2(x_{S_2}^{(2)}) + \cdots + f_P(x_{S_P}^{(P)}) \\
			\text{subject to} & x_l^{(i)} = x_l^{(j)}\,, \quad (i,j) \in \mathcal{E}_l\,,\,\,\, l = 1,\ldots,n\,,
		\end{array}
	\end{equation}
	where the optimization variable is~$\{\bar{x}_l\}_{l=1}^L$ and it represents the set of all copies. We used~$\bar{x}_l$ to denote all copies of the component~$x_l$, which are located only in the nodes of~$\mathcal{G}_l$: $\bar{x}_l := \{x_l^{(p)}\}_{p \in \mathcal{V}_l}$. Although problems~\eqref{Eq:PartialReformulationEdge} and~\eqref{Eq:PartialManip1} are both equivalent to~\eqref{Eq:IntroProb}, \eqref{Eq:PartialReformulationEdge} has twice the constraints of~\eqref{Eq:PartialManip1}. While in~\eqref{Eq:PartialReformulationEdge} each constraint appears twice (to make the introduction of the~$z$'s in~\eqref{Eq:PartialReformulationKekatos} possible), our reformulation~\eqref{Eq:PartialManip1} uses less constraints. Recall that our notation~$(i,j) \in \mathcal{E}_l$ (or~$\mathcal{E}$) implies that~$i < j$, and therefore there are no repeated equations in~\eqref{Eq:PartialManip1}. Finally, note that our assumption that the variable is connected is what makes problems~\eqref{Eq:IntroProb} and~\eqref{Eq:PartialManip1} equivalent, since each induced subgraph is connected. When the variable is non-connected, this equivalence no longer holds.

	Let~$A_l$ denote the transpose of the node-arc incidence matrix of the subgraph~$\mathcal{G}_l$. Then, the constraint~$x_l^{(i)} = x_l^{(j)}$, $(i,j) \in \mathcal{E}_l$ can be written as~$A_l \bar{x}_l = 0$. We now use the coloring scheme (cf.\ Assumption~\ref{Ass:PPColoring}) to partition each variable~$\bar{x}_l$ as~$\bar{x}_l = (\bar{x}_l^1,\ldots,\bar{x}_l^C)$, where
	$$
		\bar{x}_l^c =
		\left\{
			\begin{array}{ll}
				\{x_l^{(p)}\}_{p \in \mathcal{V}_l \cap \mathcal{C}_c}\,, &\quad \text{if $\mathcal{V}_l \cap \mathcal{C}_c \neq \emptyset$} \\
				\emptyset\,, &\quad \text{if $\mathcal{V}_l \cap \mathcal{C}_c = \emptyset$}
			\end{array}\,.
		\right.
	$$
	Recall that~$\mathcal{C}_c$ is the set of nodes that have color~$c$. In words, $\bar{x}_l^c$ represents the set of copies of~$x_l$ held by the nodes that have color~$c$. If no node with color~$c$ depends on~$x_l$, then $\bar{x}_l^c$ is empty. Using a similar notation for the columns of the matrix~$A_l$, we write $A_l \bar{x}_l$ as $\bar{A}_l^1 \bar{x}_l^1 + \cdots + \bar{A}_l^C \bar{x}_l^C$, for all~$l$. Therefore, \eqref{Eq:PartialManip1} is equivalent to
  \begin{equation}\label{Eq:PartialManip2}
  	\begin{array}{ll}
			\underset{\bar{x}^1,\ldots,\bar{x}^C}{\text{minimize}} & \sum_{p \in \mathcal{C}_1} f_p(x_{S_p}^{(p)}) +  \cdots + \sum_{p \in \mathcal{C}_C} f_p(x_{S_p}^{(p)})\\
			\text{subject to} & \bar{A}^1 \bar{x}^1 + \cdots + \bar{A}^C \bar{x}^C = 0\,,
		\end{array}
  \end{equation}
  where~$\bar{x}^c = \{\bar{x}_l^c\}_{l=1}^n$, and~$\bar{A}^c$ is the diagonal concatenation of the matrices~$\bar{A}_1^c$, $\bar{A}_2^c$, \ldots, $\bar{A}_n^c$, i.e., $\bar{A}^c= \text{diag}(\bar{A}_1^c,\bar{A}_2^c,\ldots,\bar{A}_n^c)$. For better visualization, we wrote the constraint in~\eqref{Eq:PartialManip2} as
  \begin{equation}\label{Eq:PartialStructureMatrices}
    \underbrace{
		\begin{bmatrix}
			\bar{A}_1^1 &              &        &             \\
			            & \bar{A}_2^1  &        &             \\
			            &              & \ddots &             \\
			            &              &        & \bar{A}_n^1
		\end{bmatrix}
		}_{\bar{A}^1}
		\underbrace{
		\begin{bmatrix}
			\bar{x}_1^1 \\
			\bar{x}_2^1 \\
			\vdots \\
			\bar{x}_n^1
		\end{bmatrix}
		}_{\bar{x}^1}
		+
		\underbrace{
		\begin{bmatrix}
			\bar{A}_1^2 &              &        &             \\
			            & \bar{A}_2^2  &        &             \\
			            &              & \ddots &             \\
			            &              &        & \bar{A}_n^2
		\end{bmatrix}
		}_{\bar{A}^2}
		\underbrace{
		\begin{bmatrix}
			\bar{x}_1^2 \\
			\bar{x}_2^2 \\
			\vdots \\
			\bar{x}_n^2
		\end{bmatrix}
		}_{\bar{x}^2}
		+
		\cdots
		+
		\underbrace{
		\begin{bmatrix}
			\bar{A}_1^C &              &        &             \\
			            & \bar{A}_2^C  &        &             \\
			            &              & \ddots &             \\
			            &              &        & \bar{A}_n^C
		\end{bmatrix}
		}_{\bar{A}^C}
		\underbrace{
		\begin{bmatrix}
			\bar{x}_1^C \\
			\bar{x}_2^C \\
			\vdots \\
			\bar{x}_n^C
		\end{bmatrix}
		}_{\bar{x}^C}
		=
		0\,.
  \end{equation}
  Note that the~$c$th term in the objective of~\eqref{Eq:PartialManip2} depends only on~$\bar{x}^c$, the set of copies associated with nodes with color~$c$. Thus, \eqref{Eq:PartialManip2} has the format of~\eqref{Eq:RelatedWorkADMMProbExtended}, the problem solved by the multi-block ADMM, and thus the iterations~\eqref{Eq:RelatedWorkADMMProbExtendedADMMIter1}-\eqref{Eq:RelatedWorkADMMProbExtendedADMMIter4} can be applied.

  \mypar{Applying multi-block ADMM}
  To apply the multi-block ADMM iterations~\eqref{Eq:RelatedWorkADMMProbExtendedADMMIter1}-\eqref{Eq:RelatedWorkADMMProbExtendedADMMIter4} to~\eqref{Eq:PartialManip2}, we first need to write the augmented Lagrangian. Let~$\lambda_l^{ij}$ be the dual variable associated to the constraint~$x_l^{(i)} = x_l^{(j)}$, for some~$l \in \{1,\ldots,n\}$ and~$(i,j) \in \mathcal{E}_l$ (cf.\ \eqref{Eq:PartialManip1}). The augmented Lagrangian of~\eqref{Eq:PartialManip2} is then
  \begin{equation}\label{Eq:PartialAugmentedLagrangian}
		L_\rho(\bar{x}^1,\ldots,\bar{x}^C;\lambda) = \sum_{c=1}^C \sum_{p \in \mathcal{C}_c} f_p(x_{S_p}^{(p)}) + \sum_{c = 1}^C \lambda^\top \bar{A}^c \bar{x}^c + \frac{\rho}{2}\Bigl\|\sum_{c = 1}^C \bar{A}^c \bar{x}^c\Bigr\|^2\,,
  \end{equation}
  where~$\lambda = (\lambda_1,\ldots,\lambda_n)$ is the dual variable, whose $l$th block is~$\lambda_l := \{\lambda_l^{ij}\}_{(i,j) \in \mathcal{E}_l}$. The multi-block ADMM consists of a sequence of subproblems, obtained by minimizing~$L_\rho$ with respect to each block~$\bar{x}^c$, and then updating each dual variable with
  \begin{equation}\label{Eq:PartialDualVariableUpdate}
				\lambda_l^{ij,k+1} = \lambda_l^{ij,k} + \rho \bigl(x_l^{(i),k+1} - x_l^{(j),k+1}\bigr)\,,
  \end{equation}
  for every~$(i,j) \in \mathcal{E}_l$ and every~$l=1,\ldots,n$. In~\eqref{Eq:PartialDualVariableUpdate}, $k$ denotes the iteration number and~$x_l^{(p),k+1}$ is the estimate of the component~$x_l$, by node~$p$, after iteration~$k$. We now analyze the subproblem each node solves to find those estimates. In particular, we will see that minimizing~\eqref{Eq:PartialAugmentedLagrangian} with respect to~$\bar{x}^c$ yields~$|\mathcal{C}_c|$ problems that can be solved in parallel, i.e., all nodes with color~$c$ ``work'' in parallel. For example, the copies of the nodes with color~$1$ are updated according to~\eqref{Eq:RelatedWorkADMMProbExtendedADMMIter1}:
  \begin{align}
		  \bar{x}^{1,k+1}
		&=
		  \underset{\bar{x}^1}{\arg\min} \,
		  \sum_{p \in \mathcal{C}_1} f_p(x_{S_p}^{(p)}) + {\lambda^k}^\top \bar{A}^1 \bar{x}^1
		  + \frac{\rho}{2}\biggl\|\bar{A}^1 \bar{x}^{1} + \sum_{c = 2}^C \bar{A}^c \bar{x}^{c,k}\biggr\|^2
		\label{Eq:PartialC1_1}
		\\
		&=
		  \underset{\bar{x}^1}{\arg\min} \,
				\sum_{p \in \mathcal{C}_1} \biggl( f_p(x_{S_p}^{(p)})
				+
				\sum_{l \in S_p}
				\sum_{j \in \mathcal{N}_p \cap \mathcal{V}_l}
				\Bigl(
					\lambda_l^{pj,k} - \rho\, x_l^{(j),k}
				\Bigr)^\top x_l^{(p)}
				+
				\frac{\rho}{2}\sum_{l \in S_p} D_{p,l}\Bigl(x_l^{(p)}\Bigr)^2 \biggr)\,,
		\label{Eq:PartialC1_2}
	\end{align}
	whose equivalence is established in Lemma~\ref{Lem:EquivalenceNodesC1} below. 	As in Algorithm~\ref{Alg:Kekatos}, $D_{p,l}$ is the degree of node~$p$ in the subgraph~$\mathcal{G}_l$, i.e., the number of neighbors of node~$p$ that also depend on~$x_l$. Of course, $D_{p,l}$ is only defined when~$l \in S_p$. Note that all the dual variables~$\lambda_l^{pj}$ are well-defined because of our assumption that the nodes are numbered according to their colors; namely, any neighbor~$j$ of a node~$ p \in\mathcal{C}_1$ will have a color larger than~$1$, and hence~$p < j$, making~$\lambda_l^{pj}$ well-defined. Recall our convention that~$(i,j) \in \mathcal{E}$ implies $i<j$.	Before we establish the equivalence between~\eqref{Eq:PartialC1_1} and~\eqref{Eq:PartialC1_2}, note that~\eqref{Eq:PartialC1_2} actually consists of~$|\mathcal{C}_1|$ problems that can be solved in parallel. This is because nodes with the same color are not neighbors and, thus, none of the components of the optimization variable~$\bar{x}^1$, which corresponds to all the copies of the nodes with color~$1$, appears as~$x_l^{(j),k}$ in the second term of~\eqref{Eq:PartialC1_2}. This means that all nodes~$p \in \mathcal{C}_1$ solve, in parallel,
	\begin{multline}\label{Eq:PartialProbSolvedAtEachNodeC1}
		x_{S_p}^{(p),k+1}
		=
		\underset{x_{S_p}^{(p)}=\{x_l^{(p)}\}_{l \in S_p}}{\arg\min} \, f_p(x_{S_p}^{(p)})
		+
		\sum_{l \in S_p} \sum_{j \in \mathcal{N}_p \cap \mathcal{V}_l}
				\bigl(\lambda_l^{pj,k} - \rho\, x_l^{(j),k}\bigr)^\top x_l^{(p)}
		+
		\frac{\rho}{2}\sum_{l \in S_p} D_{p,l}\Bigl(x_l^{(p)}\Bigr)^2\,.
	\end{multline}
	Node~$p$ can only solve~\eqref{Eq:PartialProbSolvedAtEachNodeC1} if it knows~$x_l^{(j),k}$ and~$\lambda_l^{pj,k}$, for $j \in \mathcal{N}_p \cap \mathcal{V}_l$ and $l \in S_p$. This is possible if, in the previous iteration, it received the respective copies of~$x_l$ from its neighbors. This is also enough for knowing~$\lambda_l^{pj,k}$, although we will see later that no node needs	to know each~$\lambda_l^{pj,k}$ individually.	We finally show how to obtain~\eqref{Eq:PartialC1_2} from~\eqref{Eq:PartialC1_1}.
	\begin{lemma}\label{Lem:EquivalenceNodesC1}
		\eqref{Eq:PartialC1_1} and \eqref{Eq:PartialC1_2} are equivalent.
	\end{lemma}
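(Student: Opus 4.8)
The plan is to establish the equivalence between~\eqref{Eq:PartialC1_1} and~\eqref{Eq:PartialC1_2} by directly expanding the quadratic and linear terms of~\eqref{Eq:PartialC1_1}, discarding terms that do not depend on the optimization variable~$\bar{x}^1$, and showing that what remains coincides with the objective in~\eqref{Eq:PartialC1_2}. Since both problems minimize over the same variable~$\bar{x}^1$, it suffices to show that their objectives differ by a constant (i.e., a quantity independent of~$\bar{x}^1$). The key structural fact I will exploit throughout is the one already used in the derivation of Algorithm~\ref{Alg:GlobalClass}: because nodes sharing a color are never neighbors, the ``diagonal'' block $\bar{A}_l^1 {\bar{A}_l^1}^\top$ of each induced-subgraph Laplacian is diagonal, with the degree~$D_{p,l}$ of node~$p$ in~$\mathcal{G}_l$ on its entries.

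First I would treat the quadratic term. Writing the squared norm in~\eqref{Eq:PartialC1_1} as
$$
\bigl\|\bar{A}^1 \bar{x}^1 + \textstyle\sum_{c=2}^C \bar{A}^c \bar{x}^{c,k}\bigr\|^2
= {\bar{x}^1}^\top (\bar{A}^1)^\top \bar{A}^1 \bar{x}^1
+ 2\,{\bar{x}^1}^\top (\bar{A}^1)^\top \Bigl(\textstyle\sum_{c=2}^C \bar{A}^c \bar{x}^{c,k}\Bigr)
+ \bigl\|\textstyle\sum_{c=2}^C \bar{A}^c \bar{x}^{c,k}\bigr\|^2,
$$
I would analyze each piece. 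The last term is constant in~$\bar{x}^1$ and is dropped. Because $\bar{A}^1 = \text{diag}(\bar{A}_1^1,\ldots,\bar{A}_n^1)$, the first term decouples across components~$l$ as $\sum_l {\bar{x}_l^1}^\top (\bar{A}_l^1)^\top \bar{A}_l^1 \bar{x}_l^1$; invoking the diagonal-Laplacian fact, $(\bar{A}_l^1)^\top \bar{A}_l^1$ restricted to color-$1$ copies is $\text{diag}(D_{p,l})$, so this term equals $\sum_{p\in\mathcal{C}_1}\sum_{l\in S_p} D_{p,l}(x_l^{(p)})^2$, matching the quadratic term of~\eqref{Eq:PartialC1_2} after accounting for the factor $\rho/2$. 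The cross term requires identifying the entries of the off-diagonal Laplacian blocks $\bar{A}_l^1 (\bar{A}_l^c)^\top$, which encode the edges of~$\mathcal{G}_l$ between color-$1$ nodes and color-$c$ nodes; exactly as in the global-class computation around~\eqref{Eq:GlobalDerivation2}, this reproduces the $-\rho\,x_l^{(j),k}$ summand appearing in~\eqref{Eq:PartialC1_2}, summed over $j \in \mathcal{N}_p \cap \mathcal{V}_l$.

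Next I would handle the linear dual term ${\lambda^k}^\top \bar{A}^1 \bar{x}^1$, rewriting it as $\bigl((\bar{A}^1)^\top \lambda^k\bigr)^\top \bar{x}^1$ and decomposing along the component index~$l$ and node index~$p$. Using the convention $(i,j)\in\mathcal{E}_l \Rightarrow i<j$ and the fact that any neighbor~$j$ of a color-$1$ node~$p$ has larger color (hence $p<j$), the $p$-th entry of $(\bar{A}_l^1)^\top \lambda_l^k$ is $\sum_{j\in\mathcal{N}_p\cap\mathcal{V}_l}\lambda_l^{pj,k}$, which yields precisely the $\lambda_l^{pj,k}$ coefficient in the linear term of~\eqref{Eq:PartialC1_2}. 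Combining the surviving quadratic, cross, and linear contributions and restoring the private functions~$f_p(x_{S_p}^{(p)})$ gives exactly~\eqref{Eq:PartialC1_2}.

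The main obstacle I anticipate is purely bookkeeping: correctly tracking the incidence-matrix sign conventions and the index ranges so that the degrees~$D_{p,l}$ and the neighbor sums $\mathcal{N}_p\cap\mathcal{V}_l$ come out with the right signs, and verifying that the restriction of $\bar{A}_l^1$ to the color-$1$ columns genuinely produces a diagonal block for \emph{every} induced subgraph~$\mathcal{G}_l$ (not just the full network). This follows because $\mathcal{C}(i)\neq\mathcal{C}(j)$ for all edges $(i,j)\in\mathcal{E}$ implies the same for all edges in any subgraph $\mathcal{E}_l\subseteq\mathcal{E}$, so two color-$1$ nodes are never adjacent in~$\mathcal{G}_l$ either. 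Once this is confirmed, the component-wise decoupling of the block-diagonal matrix~$\bar{A}^1$ makes the remaining manipulation a direct generalization of the global-class derivation, and no genuinely new analytic difficulty arises.
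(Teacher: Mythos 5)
Your proposal is correct and follows essentially the same route as the paper's proof: expand the linear dual term via the block structure of $\bar{A}^1$, split the squared norm into a diagonal part (yielding $\sum_{p\in\mathcal{C}_1}\sum_{l\in S_p}D_{p,l}(x_l^{(p)})^2$ via the diagonality of $(\bar{A}_l^1)^\top\bar{A}_l^1$), a cross term (yielding the $-\rho\,x_l^{(j),k}$ contributions from the off-diagonal Laplacian blocks), and a constant that is dropped. The bookkeeping points you flag (sign conventions, $p<j$ for color-$1$ nodes, and diagonality holding for every induced subgraph since $\mathcal{E}_l\subseteq\mathcal{E}$) are exactly the ones the paper resolves, so no gap remains.
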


	\begin{proof}
	\hfill

	\noindent
		To go from~\eqref{Eq:PartialC1_1} to~\eqref{Eq:PartialC1_2}, we first develop the last two terms of~\eqref{Eq:PartialC1_1}, respectively,
		\begin{equation}\label{App:Eq_1}
			{\lambda^k}^\top \bar{A}^1 \bar{x}^1
		\end{equation}
		and
		\begin{equation}\label{App:Eq_2}
			\frac{\rho}{2}\Bigl\|\bar{A}^1 \bar{x}^{1} + \sum_{c = 2}^C \bar{A}^c \bar{x}^{c,k}\Bigr\|^2\,.
		\end{equation}
		We first address~\eqref{App:Eq_1}. 	Given the structure of~$\bar{A}^1$, as seen in \eqref{Eq:PartialStructureMatrices}, we can write~\eqref{App:Eq_1} as $\sum_{l=1}^n ((\bar{A}_l^1)^\top \lambda_l^k)^\top \bar{x}_l^1$.	Recall that~$(\bar{A}_l^1)^\top$, if it exists (i.e., if there is a node with color~$1$ that depends on component~$x_l$), consists of the block of rows of the node-arc incidence matrix of~$\mathcal{G}_l$ corresponding to the nodes with color~$1$. Therefore, if there exists $p \in \mathcal{C}_1 \cap \mathcal{V}_l$, the vector~$(\bar{A}_l^1)^\top \lambda_l^k$ will have an entry $\sum_{j \in \mathcal{N}_p \cap \mathcal{V}_l} \text{sign}(j - p)\lambda_l^{pj,k}$. The sign function appears here because the column of the node-arc incidence matrix corresponding to $x_l^{(i)} - x_l^{(j)} = 0$, for a pair~$(i,j) \in \mathcal{E}_l$, contains $1$ in the $i$th entry and~$-1$ in the $j$th entry, where~$i<j$. In the previous expression, we used an extension of the definition of~$\lambda_l^{ij}$, which was only defined for~$i<j$ (due to our convention that for any edge~$(i,j) \in \mathcal{E}$ we have always~$i<j$). Assume~$\lambda_l^{ij}$ is initialized with zero; switching~$i$ and~$j$ in~\eqref{Eq:PartialDualVariableUpdate}, we obtain~$\lambda_l^{ji,k} = -\lambda_l^{ij,k}$, which holds for all iterations~$k$. To be consistent with the previous equation, we define~$\lambda_l^{ij}$ as $\lambda_l^{ij} := -\lambda_l^{ji}$ whenever~$i>j$. Therefore, \eqref{App:Eq_1} develops as
	\begin{align}
	    {\lambda^k}^\top \bar{A}^1 \bar{x}^1
	  &=
		  \sum_{l=1}^n ((\bar{A}_l^1)^\top \lambda_l^k)^\top \bar{x}_l^1
		  \notag
		\\
		&=
		  \sum_{l=1}^n \sum_{p \in \mathcal{C}_1} \sum_{j \in \mathcal{N}_p \cap \mathcal{V}_l} \!\!\! \text{sign}(j-p) \Bigl(\lambda_l^{pj,k}\Bigr)^\top x_l^{(p)}
		\notag
		\\
		&=
		  \sum_{p \in \mathcal{C}_1} \sum_{l=1}^n \sum_{j \in \mathcal{N}_p \cap \mathcal{V}_l} \!\!\! \text{sign}(j-p) \Bigl(\lambda_l^{pj,k}\Bigr)^\top x_l^{(p)}\,.
		\label{Eq:AppFirstTerm_1}
	\end{align}

	Regarding~\eqref{App:Eq_2}, it can be written as
	\begin{align}
	      &\frac{\rho}{2}\Bigl\|\bar{A}^1 \bar{x}^1 + \sum_{c = 2}^C \bar{A}^c \bar{x}^{c,k}\Bigr\|^2
		  =
		    \frac{\rho}{2}\Bigl\|\bar{A}^1 \bar{x}^1\Bigr\|^2 + \rho (\bar{A}^1\bar{x}^1)^\top \sum_{c=2}^C \bar{A}^c \bar{x}^{c,k}
		    + \frac{\rho}{2}\Bigl\|\sum_{c=2}^C \bar{A}^c \bar{x}^{c,k}\Bigr\|^2\,.
		  \label{Eq:AppSecondTerm_1}
	\end{align}
	Since the last term does not depend on~$\bar{x}^1$, it can be dropped from the optimization problem. We now use the structure of~$\bar{A}^1$ to rewrite the first term of~\eqref{Eq:AppSecondTerm_1}:
	\begin{align}
		  \frac{\rho}{2}\Bigl\|\bar{A}^1 \bar{x}^1\Bigr\|^2
		&=
		  \frac{\rho}{2} \sum_{l=1}^n (\bar{x}^1_l)^\top (\bar{A}_l^1)^\top \bar{A}_l^1 \bar{x}^1_l
		\label{Eq:AppSecondTerm_2}
		\\
		&=
		  \frac{\rho}{2} \sum_{l=1}^n \sum_{p \in \mathcal{C}_1} D_{p,l} \Bigl( x_l^{(p)}\Bigr)^2
		\label{Eq:AppSecondTerm_3}
		\\
		&=
		  \frac{\rho}{2} \sum_{p \in \mathcal{C}_1} \sum_{l \in S_p} D_{p,l} \Bigl( x_l^{(p)}\Bigr)^2\,.
		\label{Eq:AppSecondTerm_4}
	\end{align}
	From~\eqref{Eq:AppSecondTerm_2} to~\eqref{Eq:AppSecondTerm_3} we used the structure of~$\bar{A}_l^1$. Namely, if it exists, $(\bar{A}_l^1)^\top \bar{A}_l^1$ is a diagonal matrix, where each diagonal entry is extracted from the diagonal of~$A_l^\top A_l$, the Laplacian matrix for~$\mathcal{G}_l$. Since each entry in the diagonal of a Laplacian matrix contains the degrees of the respective nodes, the diagonal of $(\bar{A}_l^1)^\top \bar{A}_l^1$ contains $D_{p,l}$ for all~$p \in \mathcal{C}_1$. The reason why~$(\bar{A}_l^1)^\top \bar{A}_l^1$ is diagonal is because nodes with the same color are never neighbors. As in~\eqref{Eq:AppSecondTerm_1}, we exchanged the order of the summations from~\eqref{Eq:AppSecondTerm_3} to~\eqref{Eq:AppSecondTerm_4}.

	Finally, we develop the second term of~\eqref{Eq:AppSecondTerm_1}:
	\begin{align}
		  \rho (\bar{A}^1\bar{x}^1)^\top \sum_{c=2}^C \bar{A}^c \bar{x}^{c,k}
		&=
		  \rho \sum_{c=2}^C \sum_{l=1}^n (\bar{x}^1_l)^\top (\bar{A}^1_l)^\top (\bar{A}^c_l) \,\bar{x}_l^{c,k}
		\label{Eq:AppSecondTerm_5}
		\\
		&=
		  -\rho \sum_{c=2}^C \sum_{l=1}^n \sum_{p \in \mathcal{C}_1} \sum_{j \in \mathcal{N}_p \cap \mathcal{C}_c \cap \mathcal{V}_l} {x_l^{(p)}}^\top x_l^{(j),k}
		\label{Eq:AppSecondTerm_6}
		\\
		&=
		  -\rho \sum_{p \in \mathcal{C}_1} \sum_{l \in S_p} {x_l^{(p)}}^\top \sum_{c=2}^C \sum_{j \in \mathcal{N}_p \cap \mathcal{C}_c \cap \mathcal{V}_l} x_l^{(j),k}
		\label{Eq:AppSecondTerm_7}
		\\
		&=
		  -\rho \sum_{p \in \mathcal{C}_1} \sum_{l \in S_p}  \sum_{j \in \mathcal{N}_p \cap \mathcal{V}_l}  {x_l^{(p)}}^\top x_l^{(j),k}\,.
		\label{Eq:AppSecondTerm_8}
	\end{align}
	In~\eqref{Eq:AppSecondTerm_5} we just used the structure of~$\bar{A}^1$ and~$\bar{A}^c$, as visualized in~\eqref{Eq:PartialStructureMatrices}. From~\eqref{Eq:AppSecondTerm_5} to~\eqref{Eq:AppSecondTerm_6} we used the fact that $(\bar{A}^1_l)^\top \bar{A}^c_l$ is a submatrix of~$A_l^\top A_l$, the Laplacian of~$\mathcal{G}_l$, containing some of its off-diagonal elements. More concretely, $(\bar{A}^1_l)^\top \bar{A}^c_l$ contains the entries of~$A_l^\top A_l$ corresponding to all the nodes~$i \in \mathcal{C}_1 \cap \mathcal{V}_l$ and~$j \in \mathcal{C}_c \cap \mathcal{V}_l$. And, for such nodes, the corresponding entry in~$A_l^\top A_l$ is~$-1$ if~$i$ and~$j$ are neighbors, and~$0$ otherwise. From~\eqref{Eq:AppSecondTerm_7} to~\eqref{Eq:AppSecondTerm_8} we just used the fact that the set $\{\mathcal{C}_c\}_{c=2}^C$ is nothing but a partition of the set of neighbors of any node with color~$1$.
	Using~\eqref{Eq:AppFirstTerm_1}, \eqref{Eq:AppSecondTerm_1}, \eqref{Eq:AppSecondTerm_4}, and~\eqref{Eq:AppSecondTerm_8} in~\eqref{Eq:PartialC1_1}, we get~\eqref{Eq:PartialC1_2}.
	\end{proof}

	The optimization problem~\eqref{Eq:PartialC1_2} decomposes into~$|\mathcal{C}_1|$ decoupled optimization problems, each one solved by a node with color~$1$. For node~$p$, the problem is~\eqref{Eq:PartialProbSolvedAtEachNodeC1}. For the other colors, the same reasoning and equations apply, just with one small difference: in the second term of~\eqref{Eq:PartialProbSolvedAtEachNodeC1} we have~$x_l^{(j),k+1}$ from the neighbors with a smaller color and~$x_l^{(j),k}$ from the nodes with a larger color.

	\begin{algorithm}
    \caption{Algorithm for a connected variable}
    \algrenewcommand\algorithmicrequire{\textbf{Initialization:}}
    \label{Alg:Conn}
    \begin{algorithmic}[1]
    \small
    \Require Choose $\rho \in \mathbb{R}$; for all~$p \in \mathcal{V}$ and $l \in S_p$, set $\gamma_{l}^{(p),0} = x_l^{(p),0} = 0$; set $k=1$
    \Repeat
    \For{$c =1,\ldots,C$}  \label{SubAlg:Conn_ForColors}
        \ForAll{$p \in \mathcal{C}_c$ [in parallel]}

				\Statex

        \State Compute \,
            $
                v_l^{(p),k} = \gamma_l^{(p),k}-
                \rho \sum_{\begin{subarray}{c}
                             j \in \mathcal{N}_p \cap \mathcal{V}_l \\
                             C(j) < c
                           \end{subarray}
                }x_l^{(j),k+1} - \rho \sum_{\begin{subarray}{c}
                             j \in \mathcal{N}_p \cap \mathcal{V}_l \\
                             C(j) > c
                           \end{subarray}
                }x_l^{(j),k}
            $,\, for all $l \in S_p$
            \label{SubAlg:Conn_Vec}

        \Statex

        \State Compute \,
					$
            x_{S_p}^{(p),k+1} =
            \underset{x_{S_p}^{(p)}=\{x_l^{(p)}\}_{l \in S_p}}{\arg\min}
            \,
						f_p(x_{S_p}^{(p)}) + \sum_{l \in S_p} {v_l^{(p),k}}^\top x_l^{(p)}  + \frac{\rho}{2}\sum_{l \in S_p} D_{p,l}\bigl(x_l^{(p)}\bigr)^2
            $
           \label{SubAlg:Conn_Prob}

        \State For each component~$l \in S_p$, exchange~$x_l^{(p),k+1}$ with neighbors $\mathcal{N}_p \cap \mathcal{V}_l$
        \label{SubAlg:Conn_Comm}

    \EndFor
    \EndFor \label{SubAlg:Conn_EndForColors}

    \ForAll{$p \in \mathcal{V}$ and $l \in S_p$ [in parallel]} \vspace{0.15cm}
    \hfill

        $
            \gamma_l^{(p),k+1} = \gamma_l^{(p),k} + \rho \sum_{j \in \mathcal{N}_p \cap \mathcal{V}_l} (x_l^{(p),k+1} -  x_l^{(j),k+1})
        $\label{SubAlg:Conn_DualVar}  \vspace{0.15cm}

    \EndFor
    \State $k \gets k+1$
    \Until{some stopping criterion is met}
    \end{algorithmic}
  \end{algorithm}

  Algorithm~\ref{Alg:Conn} shows the resulting algorithm. As in the global class algorithm (Algorithm~\ref{Alg:GlobalClass}), the coloring scheme functions as a schedule: the nodes with color~$1$ work first, the nodes with color~$2$ work next, and so on. Each ``work'' consists of computing $v_l^{(p),k}$ for all $l \in S_p$, as in step~\ref{SubAlg:Conn_Vec}, solving the optimization problem in step~\ref{SubAlg:Conn_Prob}, and then sending the new component estimates to the neighbors that also depend on those components, as in step~\ref{SubAlg:Conn_Comm}. After a given node~$p$ has received the new estimates from all its neighbors, it can update each dual variable~$\gamma_l^{(p)}$ as in step~\ref{SubAlg:Conn_DualVar}. Note that the edge-wise dual variables~$\lambda_l^{ij}$ were replaced by the node-wise dual variables~$\gamma_l^{(p)}$. The reason is because the optimization solved by node~$p$ (see~\eqref{Eq:PartialProbSolvedAtEachNodeC1}) depends on $\gamma_l^{(p),k} := \sum_{j \in \mathcal{N}_p \cap \mathcal{V}_l} \text{sign}(j-p) \lambda_l^{pj,k}$ and not on the individual~$\lambda_l^{ij}$'s. The update of step~\ref{SubAlg:Conn_DualVar} is obtained by replacing
  \begin{equation}\label{Eq:PartialNewDualUpdate}
  	\lambda_l^{ij,k+1} = \lambda_l^{ij,k} + \rho \,\,\text{sign}(j-i)\bigl(x_l^{(i),k+1} - x_l^{(j),k+1}\bigr)
  \end{equation}
  in the definition of~$\gamma_l^{(p),k}$. Note that~\eqref{Eq:PartialNewDualUpdate} differs from~\eqref{Eq:PartialDualVariableUpdate} in the extra ``sign.'' This is because we extended the definition of the dual variable~$\lambda_l^{ij}$ for~$i>j$ (see the proof of Lemma~\ref{Lem:EquivalenceNodesC1}).

  Note that if we make the variable global, i.e., if $S_p = \{1,\ldots,n\}$ for all~$p$, Algorithm~\ref{Alg:Conn} becomes the global class algorithm, that is, Algorithm~\ref{Alg:GlobalClass}. This means that Algorithm~\ref{Alg:Conn} is a generalization of Algorithm~\ref{Alg:GlobalClass} since, in fact, it cannot be obtained from it. The comments about the coordination of the nodes we made for Algorithm~\ref{Alg:GlobalClass} also apply to its generalization, Algorithm~\ref{Alg:Conn}. Namely, if each node knows its own color and the color of its neighbors, as specified in Assumption~\ref{Ass:PPColoring}, then the algorithm becomes automatically distributed, because each node can work immediately after it has received estimates from its neighbors with smaller colors. See \fref{Fig:SelfCoordination} from \cref{Ch:GlobalVariable} for an illustration. Regarding the convergence of Algorithm~\ref{Alg:Conn}, we have:
  \begin{theorem}\label{Teo:ConnectedConvergence}
		\hfill

		\medskip
		\noindent
    Let Assumptions~\ref{Ass:PPfunctions}-\ref{Ass:PPColoring} hold and let the variable of
    \begin{equation}\label{Eq:IntroProb}\tag{P}
			\begin{array}{ll}
				\underset{x \in\mathbb{R}^n}{\text{minimize}} & f_1(x_{S_1}) + f_2(x_{S_2}) + \cdots + f_P(x_{S_P})\,,
			\end{array}
		\end{equation}
		be connected. Then, Algorithm~\ref{Alg:Conn} produces a sequence~$(x_{S_1}^k,\ldots,x_{S_P}^k)$ convergent to~$(x_{S_1}^\star, \ldots,x_{S_P}^\star)$, where~$x^\star$ solves~\eqref{Eq:IntroProb}, when at least one of the following conditions is satisfied:
    \newcounter{TeoConvDADMM2}
		\begin{list}{(\alph{TeoConvDADMM2})}{\usecounter{TeoConvDADMM2}}
      \item the coloring scheme uses two colors only (which implies that the network is bipartite);
      \item each function~$f_p$ is strongly convex with modulus~$\mu_p$ and
      \begin{equation}\label{Eq:ThmConditionRho2}
				0 < \rho < \underset{c=1,\ldots,C}{\min}\,\,\, \frac{2\sum_{p \in \mathcal{C}_c}\mu_p}{3\,(C-1)\,\max_{p \in \mathcal{C}_c,\, l \in S_p}D_{p,l}}\,.
      \end{equation}
    \end{list}
  \end{theorem}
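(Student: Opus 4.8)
The plan is to mirror the proof of Theorem~\ref{Teo:GlobalConvergence} for the global class, reducing everything to the multi-block ADMM convergence result of Theorem~\ref{Teo:RelatedWorkConvergenceADMM}. The algorithm was derived by applying the iterations~\eqref{Eq:RelatedWorkADMMProbExtendedADMMIter1}-\eqref{Eq:RelatedWorkADMMProbExtendedADMMIter4} to the reformulation~\eqref{Eq:PartialManip2}, so it suffices to verify that~\eqref{Eq:PartialManip2} satisfies the hypotheses of Theorem~\ref{Teo:RelatedWorkConvergenceADMM} under each of the conditions (a) and (b). First I would record that, because the variable is connected, each induced subgraph $\mathcal{G}_l$ is connected and hence problems~\eqref{Eq:IntroProb} and~\eqref{Eq:PartialManip2} are equivalent; together with Assumptions~\ref{Ass:PPfunctions} and~\ref{Ass:PPsolvable} this makes~\eqref{Eq:PartialManip2} solvable and each color-class objective $\sum_{p \in \mathcal{C}_c} f_p(x_{S_p}^{(p)})$ closed and convex. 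The crux is then to understand the constraint matrices $\bar{A}^c$.

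The key structural fact, already isolated in the proof of Lemma~\ref{Lem:EquivalenceNodesC1}, is that $\bar{A}^c = \text{diag}(\bar{A}_1^c, \ldots, \bar{A}_n^c)$ is block diagonal, so $(\bar{A}^c)^\top \bar{A}^c = \text{diag}((\bar{A}_1^c)^\top\bar{A}_1^c, \ldots, (\bar{A}_n^c)^\top\bar{A}_n^c)$, and each block $(\bar{A}_l^c)^\top \bar{A}_l^c$ is itself a diagonal matrix whose entries are the degrees $D_{p,l}$, for $p \in \mathcal{C}_c \cap \mathcal{V}_l$ (nodes of the same color are never neighbors, so the off-diagonal Laplacian entries vanish). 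This yields two consequences simultaneously. For case (a), full column rank of $\bar{A}^c$ is equivalent to positive-definiteness of $(\bar{A}^c)^\top \bar{A}^c$, i.e.\ to $D_{p,l} > 0$ for every relevant $p, l$; connectivity of each $\mathcal{G}_l$ guarantees there are no isolated nodes, so $D_{p,l} \geq 1$, and with two colors ($C = 2$) both requirements of point (a) of Theorem~\ref{Teo:RelatedWorkConvergenceADMM} hold. For case (b), the same diagonalization gives $\sigma_{\max}^2(\bar{A}^c) = \lambda_{\max}((\bar{A}^c)^\top \bar{A}^c) = \max_{p \in \mathcal{C}_c,\, l \in S_p} D_{p,l}$; since each $\sum_{p \in \mathcal{C}_c} f_p$ is strongly convex with modulus $\sum_{p \in \mathcal{C}_c}\mu_p$ (exactly as in the proof of Theorem~\ref{Teo:GlobalConvergence}), substituting these two quantities into condition~\eqref{Eq:RelatedWorkExtendedADMMConditionRho} shows that it coincides with~\eqref{Eq:ThmConditionRho2}.

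Having verified the hypotheses, Theorem~\ref{Teo:RelatedWorkConvergenceADMM} yields that the ADMM sequence $(\bar{x}^{1,k}, \ldots, \bar{x}^{C,k}, \lambda^k)$ converges to a primal-dual optimal pair of~\eqref{Eq:PartialManip2}. To close, I would translate this back into the claimed statement: at the limit the equality constraints are satisfied and each $\mathcal{G}_l$ is connected, so the copies $x_l^{(p),\star}$ of a given component all coincide, their common value being a solution $x^\star$ of~\eqref{Eq:IntroProb}; hence $x_{S_p}^{(p),k} \to x_{S_p}^\star$ for every $p$, which is precisely the convergence asserted.

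The main obstacle I anticipate is the rank argument for case (a) rather than the singular-value computation for case (b): the clean equivalence ``full column rank $\Leftrightarrow D_{p,l} > 0$'' breaks for any component $x_l$ owned by a single node, where $\mathcal{G}_l$ is a connected singleton with $D_{p,l} = 0$ and the corresponding column of $\bar{A}^c$ vanishes. I would handle this by observing that such a component is unconstrained and determined purely locally, so it can be absorbed into $f_p$ and removed from the reformulation without affecting the coupled block structure; after this reduction every remaining $\mathcal{G}_l$ has at least two nodes and the minimum-degree bound $D_{p,l} \geq 1$ holds. The remaining bookkeeping---checking that the extended dual-variable convention $\lambda_l^{ij} = -\lambda_l^{ji}$ used in Algorithm~\ref{Alg:Conn} is consistent with the ADMM dual update---is routine and already dispatched in the derivation preceding the theorem.
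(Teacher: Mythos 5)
Your proposal is correct and follows essentially the same route as the paper's proof: reduce to Theorem~\ref{Teo:RelatedWorkConvergenceADMM} via the equivalence of~\eqref{Eq:IntroProb} and~\eqref{Eq:PartialManip2}, use the block-diagonal structure of $(\bar{A}^c)^\top\bar{A}^c$ with diagonal entries $D_{p,l}$ to get full column rank for case (a) and $\sigma_{\max}^2(\bar{A}^c)=\max_{p\in\mathcal{C}_c,\,l\in S_p}D_{p,l}$ for case (b). Even the singleton-component caveat you raise for case (a) is handled identically in the paper, via a footnote that absorbs such a component into $f_p$.
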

  \begin{proof}
  	\hfill

  	\medskip
  	\noindent
  	As in the proof of Theorem~\ref{Teo:GlobalConvergence}, we have to show that~\eqref{Eq:PartialManip2}, which is the problem to which we apply the multi-block ADMM, satisfies the conditions of Theorem~\ref{Teo:RelatedWorkConvergenceADMM}. In fact, Assumptions~\ref{Ass:PPfunctions}, \ref{Ass:PPsolvable}, and~\ref{Ass:PPWellForm}, together with the equivalence between~\eqref{Eq:IntroProb} and~\eqref{Eq:PartialManip2} (for a connected variable), imply that each function $\sum_{p \in \mathcal{C}_c} f_p(x_{S_p}^{(p)})$ in~\eqref{Eq:PartialManip2} is closed and convex over the full space. Next we see that condition \text{(a)} (resp.\ \text{(b)}) implies condition \text{(a)} (resp.\ \text{(b)}) of Theorem~\ref{Teo:RelatedWorkConvergenceADMM}.
  	\newcounter{TeoConvDADMMProof2}
		\begin{list}{(\alph{TeoConvDADMMProof2})}{\usecounter{TeoConvDADMMProof2}}
			\item We first see that Assumption~\ref{Ass:PPConnectedStatic} together with the fact that the variable is connected implies that each~$\bar{A}^c$ has full column rank. Let~$c$ be any color in~$\{1,2,\ldots,C\}$. By definition, $\bar{A}^c = \text{diag}(\bar{A}_1^c,\bar{A}_2^c,\ldots,\bar{A}_n^c)$; therefore, we have to prove that each~$\bar{A}_l^c$ has full column rank, for~$l=1,2,\ldots,n$. Let then~$c$ and~$l$ be fixed. We are going to prove that $(\bar{A}_l^c)^\top \bar{A}_l^c$, a square matrix, has full rank, and therefore $\bar{A}_l^c$ has full column rank. Since~$\bar{A}_l = \begin{bmatrix}\bar{A}_1^c & \bar{A}_2^c & \cdots & \bar{A}_n^c\end{bmatrix}$, $(\bar{A}_l^c)^\top \bar{A}_l^c$ corresponds to the $l$th block in the diagonal of the matrix~$A_l^\top A_l$, the Laplacian matrix of the induced subgraph~$\mathcal{G}_l$. 	Recall that each induced subgraph~$\mathcal{G}_l$ is connected, because the variable is connected. Consequently, each node in~$\mathcal{G}_l$ has at least one neighbor also in~$\mathcal{G}_l$ and hence each entry in the diagonal of~$A_l^\top A_l$ is greater than zero.\footnote{Implicitly, we are assuming that there is no component~$x_l$ that appears in only one node, say node~$p$; this would lead to a Laplacian matrix~$A_l^\top A_l$ equal to~$0$. This can be easily addressed by redefining~$f_p$, the function at node~$p$, to~$\tilde{f}_p(\cdot) = \inf_{x_l} f_p(\ldots,x_l,\ldots)$.} The same happens to the entries in the diagonal of~$(\bar{A}_l^c)^\top \bar{A}_l^c$. In fact, these are the only nonzero entries of~$(\bar{A}_l^c)^\top \bar{A}_l^c$, since this matrix is diagonal. The reason is because~$(\bar{A}_l^c)^\top \bar{A}_l^c$ corresponds to the Laplacian entries of nodes that have the same color, which are never neighbors. Therefore, $(\bar{A}_l^c)^\top \bar{A}_l^c$ has full rank. This shows that, independently of the coloring scheme, each matrix~$\bar{A}_c$ has full column rank. As a consequence, when the network is bipartite and the coloring scheme has two colors, point \text{(a)} of Theorem~\ref{Teo:RelatedWorkConvergenceADMM} holds.

			\item When each function~$f_p$ is strongly convex with modulus~$\mu_p$ and~$\rho$ satisfies~\eqref{Eq:ThmConditionRho2}, then~$\sum_{p \in \mathcal{C}_c}f_p$ is strongly convex with modulus~$\sum_{p \in \mathcal{C}_c} \mu_p$ \cite[Lem.\ 2.1.4]{Nesterov04-IntroductoryLecturesConvexOptimization} and conditions~\eqref{Eq:RelatedWorkExtendedADMMConditionRho} and~\eqref{Eq:ThmConditionRho2} are equivalent. To see this, note that
				\begin{align*}
					\sigma_{\max}(\bar{A}^c)^2
					=
					\lambda_{\max}((\bar{A}^c)^\top \bar{A}^c)
					=
					\max_{l=1,\ldots,n}\, \lambda_{\max}((\bar{A}_l^c)^\top \bar{A}_l^c)
					=
					\max_{l=1,\ldots,n, \, p \in \mathcal{C}_c}\, D_{p,l}
					=
					\max_{p \in \mathcal{C}_c,\, l \in S_p}D_{p,l}\,,
				\end{align*}
				since each $(\bar{A}^c_l)^\top \bar{A}^c_l$ is a diagonal matrix whose entries are the degrees of the nodes with color~$c$ that depend on component~$x_l$.
		\end{list}
  \end{proof}

	\subsection{Non-connected variable}
	\label{SubSec:DerivationNonConnected}

	In this subsection we drop the assumption that the variable is connected. This means that there exists at least one component~$x_l$ for which the induced subgraph~$\mathcal{G}_l$ is non-connected. In this case, problem~\eqref{Eq:PartialManip1} is no longer equivalent to problem~\eqref{Eq:IntroProb}, because its constraints fail to enforce equality between all the copies of~$x_l$. We propose a trick to make these problems equivalent, based on the following assumption:
	\begin{assumption}\label{Ass:Nonconnected}
		When the variable is non-connected, the communication network and all the sets~$S_p$ are known before the execution of the algorithm.
	\end{assumption}
	The reason we require both the communication network and the sets~$S_p$ to be known beforehand is to allow some preprocessing: first, we identify the non-connected components of the variable, and then, we select which nodes should retransmit them. Note that this assumption only requires knowing beforehand the components each node depends on, but not the functions~$f_p$. In other words, this preprocessing can be done before any data arrives.

	Let~$x_l$ be a non-connected component, i.e.,  the induced subgraph~$\mathcal{G}_l = (\mathcal{V}_l,\mathcal{E}_l)$ is non-connected. As we have seen, the constraint~$x_l^{(i)} = x_l^{(j)}$, $(i,j) \in \mathcal{E}_l$, in~\eqref{Eq:PartialManip1} is not enough to enforce equality of all the copies of~$x_l$. We propose enlarging the subgraph~$\mathcal{G}_l$ by selecting other nodes in the network that will retransmit estimates of~$x_l$. In other words, we will add to~$\mathcal{G}_l$ some nodes (and edges) so that that the induced subgraph becomes connected. Since our goal is to minimize the overall number of communications, we should add the least number of edges to this subgraph. It turns out that this is exactly the problem of finding an optimal \textit{Steiner tree} in the communication network.

	\begin{figure}
  \centering
  \psscalebox{1.1}{
    \begin{pspicture}(6.0,5.3)
        \def\nodesimp{
          \pscircle*[linecolor=black!25!white](0,0){0.15}
        }
        \def\nodeterm{
          \pscircle*[linecolor=black!90!white](0,0){0.15}
        }
         \def\nodestein{
           \pscircle[fillstyle=hlines*,linecolor=black!80!white,hatchcolor=black!80!white,hatchsep=2pt,hatchangle=160](0,0){0.15}
         }

        \psrotate(2.5,3.0){45}{
       	\rput(1.247200,2.922000){\rnode{N0}{\nodesimp}}	  %\rput(1.247200,2.922000){0}
				\rput(2.019781,3.556917){\rnode{N1}{\nodestein}}	  %\rput(2.019781,3.556917){1}
				\rput(0.929200,4.236700){\rnode{N2}{\nodesimp}}	  %\rput(0.929200,4.236700){2}
				\rput(2.615100,1.917500){\rnode{N3}{\nodestein}}	  %\rput(2.615100,1.917500){3}
				\rput(2.205218,1.005361){\rnode{N4}{\nodesimp}}	  %\rput(2.205218,1.005361){4}
				\rput(2.879054,2.882035){\rnode{N5}{\nodesimp}}	  %\rput(2.879054,2.882035){5}
				\rput(3.867600,1.535600){\rnode{N6}{\nodesimp}}	  %\rput(3.867600,1.535600){6}
				\rput(1.882690,4.547475){\rnode{N7}{\nodestein}}	  %\rput(1.882690,4.547475){7}
				\rput(0.658078,5.199245){\rnode{N8}{\nodesimp}}	  %\rput(0.658078,5.199245){8}
				\rput(4.602102,2.214207){\rnode{N9}{\nodeterm}}	  %\rput(4.602102,2.214207){9}
				\rput(4.619707,0.876559){\rnode{N10}{\nodesimp}}	  %\rput(4.619707,0.876559){10}
				\rput(3.159143,0.705314){\rnode{N11}{\nodesimp}}  %\rput(3.159143,0.705314){11}
				\rput(-0.165185,4.631585){\rnode{N12}{\nodesimp}}	%\rput(-0.165185,4.631585){12}
				\rput(1.755478,5.539351){\rnode{N13}{\nodeterm}}	  %\rput(1.755478,5.539351){13}
				\rput(2.923960,4.312936){\rnode{N14}{\nodesimp}}	  %\rput(2.923960,4.312936){14}
				\rput(5.358214,1.559764){\rnode{N15}{\nodesimp}}	  %\rput(5.358214,1.559764){15}
				\rput(0.268552,3.127543){\rnode{N16}{\nodestein}}	%\rput(0.268552,3.127543){16}
				\rput(3.968349,0.117789){\rnode{N17}{\nodesimp}}	  %\rput(3.968349,0.117789){17}
				\rput(2.717151,5.813546){\rnode{N18}{\nodeterm}}	  %\rput(2.717151,5.813546){18}
				\rput(1.753251,0.113327){\rnode{N19}{\nodesimp}}	  %\rput(1.753251,0.113327){19}
				\rput(-0.468233,2.451415){\rnode{N20}{\nodeterm}}	%\rput(-0.468233,2.451415){20}
				\rput(2.707175,-0.186721){\rnode{N21}{\nodesimp}}	%\rput(2.707175,-0.186721){21}
				\rput(3.878797,2.904735){\rnode{N22}{\nodeterm}}	  %\rput(3.878797,2.904735){22}
				\rput(1.482241,1.696233){\rnode{N23}{\nodesimp}}	  %\rput(1.482241,1.696233){23}
				\rput(5.252491,0.102231){\rnode{N24}{\nodesimp}}	  %\rput(5.252491,0.102231){24}
				}

				\psset{nodesep=0.140000cm}
				\ncline{-}{N16}{N2}
				\ncline{-}{N4}{N21}
				\ncline[linewidth=1.8pt,nodesep=0.17cm]{-}{N9}{N22}
				\ncline{-}{N0}{N1}
				\ncline{-}{N1}{N2}
				\ncline[linewidth=1.8pt,nodesep=0.17cm]{-}{N1}{N3}
				\ncline[linewidth=1.8pt,nodesep=0.17cm]{-}{N1}{N7}
				\ncline[linewidth=1.8pt,nodesep=0.17cm]{-}{N7}{N13}
				\ncline{-}{N1}{N14}
				\ncline[linewidth=1.8pt,nodesep=0.17cm]{-}{N1}{N16}
				\ncline{-}{N2}{N8}
				\ncline{-}{N2}{N12}
				\ncline{-}{N3}{N4}
				\ncline{-}{N3}{N5}
				\ncline{-}{N3}{N6}
				\ncline{-}{N3}{N11}
				\ncline[linewidth=1.8pt,nodesep=0.17cm]{-}{N3}{N22}
				\ncline{-}{N3}{N23}
				\ncline{-}{N4}{N19}
				\ncline{-}{N6}{N9}
				\ncline{-}{N6}{N10}
				\ncline{-}{N6}{N15}
				\ncline{-}{N6}{N17}
				\ncline{-}{N10}{N24}
				\ncline{-}{N11}{N21}
				\ncline[linewidth=1.8pt,nodesep=0.17cm]{-}{N13}{N18}
				\ncline[linewidth=1.8pt,nodesep=0.17cm]{-}{N16}{N20}

        %\psgrid
    \end{pspicture}
  }
  \vspace{-0.2cm}
  \caption[Example of an optimal Steiner tree.]{
			Example of an optimal Steiner tree. The required nodes~$\mathcal{R}$ are black, and the Steiner nodes~$\mathcal{S}$ are striped. The Steiner tree edges are represented with thicker lines.
  }
  \label{Fig:SteinerTree}
  \end{figure}
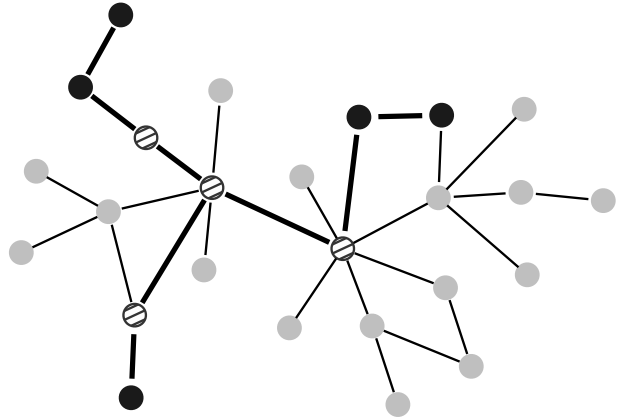

\mypar{Steiner tree problem}
	To describe the Steiner tree problem, consider an undirected graph~$\mathcal{G} = (\mathcal{V},\mathcal{E})$, in our case the communication network, and let~$\mathcal{R} \subset \mathcal{V}$ be a set of \textit{required nodes}, in our case, the nodes~$\mathcal{V}_l$ of a non-connected induced subgraph~$\mathcal{G}_l$. \fref{Fig:SteinerTree} shows an example where~$\mathcal{G}$ is the entire network, and $\mathcal{R}$ are the black nodes. A Steiner tree in~$\mathcal{G}$ is any tree in that contains the required nodes~$\mathcal{R}$; in other words, it is an acyclic connected subgraph~$(\mathcal{T},\mathcal{F}) \subseteq \mathcal{G}$ such that $\mathcal{R} \subseteq \mathcal{T}$ and $\mathcal{F} \subseteq \mathcal{E}$. The \textit{Steiner nodes}, which will be represented with~$\mathcal{S}$, are the nodes in that tree that are not required, i.e., $\mathcal{S}:= \mathcal{T} \backslash \mathcal{R}$. For example, in \fref{Fig:SteinerTree}, the Steiner nodes~$\mathcal{S}$ are striped and the Steiner tree edges~$\mathcal{F}$ are thicker. Note that the set of black and striped nodes and the thicker edges form a subgraph that is a tree.

	Now we can state the Steiner tree problem: \textit{given an undirected graph~$\mathcal{G} = (\mathcal{V},\mathcal{E})$, a set of required nodes~$\mathcal{R} \subset \mathcal{V}$, and a set of costs~$c_{ij}$ for each edge of the network $(i,j) \in \mathcal{E}$, find a Steiner tree whose edges have a minimal cost}. In our case, since we want to minimize the total number of communications, all edges are viewed equal, that is, they all have the same cost, for example, $c_{ij} = 1$. The set of required nodes in our case are the nodes in the subgraph induced by a non-connected component~$x_l$, i.e., $\mathcal{R} = \mathcal{V}_l$. Of course, we have to solve a Steiner tree problem for each non-connected component. Unfortunately, solving Steiner tree problems is NP-hard~\cite{Garey77-ComplexityComputingSteinerMinimalTrees}. However, many approximation algorithms are available, some of which have approximation guarantees. For example, the Steiner tree problem can be formulated as the following optimization problem~\cite{Williamson02-PrimalDualMethodApproximation}:
	\begin{equation}\label{Eq:SteinerProb}
		\begin{array}{ll}
			\underset{\{z_{ij}\}_{(i,j) \in \mathcal{E}}}{\text{minimize}} & \underset{(i,j)\in \mathcal{E}}{\sum} c_{ij} z_{ij} \vspace{0.1cm}\\
			\text{subject to} & \underset{\begin{subarray}{c}i \in \mathcal{U} \\ j \not\in \mathcal{U} \end{subarray}}{\sum} z_{ij} \geq 1\,,\quad \forall_\mathcal{U} \,:\, 0<|\mathcal{U} \cap \mathcal{R}| <|\mathcal{R}| \\
			& z_{ij} \in \{0,1\}\,,\quad (i,j) \in \mathcal{E}\,.
		\end{array}
	\end{equation}
	In the first constraint of~\eqref{Eq:SteinerProb}, $\mathcal{U}$ represents any subset of nodes that separates at least two required nodes, i.e., $\mathcal{U}$ contains at least one node in~$\mathcal{R}$, but not all of them. The optimization variable of problem~\eqref{Eq:SteinerProb} is~$z \in \mathbb{R}^E$ and each~$z_{ij}$ is associated to edge~$(i,j) \in \mathcal{E}$. If the optimal value is $z_{ij}^\star = 1$, then edge~$(i,j)$ is in the selected Steiner tree. Note that the last constraint of~\eqref{Eq:SteinerProb} imposes each component of~$z$ to be either~$0$ or~$1$. Let us denote the objective of problem~\eqref{Eq:SteinerProb} by~$h(z):= \sum_{(i,j)\in \mathcal{E}} c_{ij} z_{ij}$. We say that an algorithm for~\eqref{Eq:SteinerProb} has an \textit{approximation ratio of}~$\alpha$ if it produces a feasible point~$\bar{z}$ such that $h(\bar{z}) \leq \alpha h(z^\star)$, for any problem instance. The primal-dual algorithm for combinatorial problems~\cite{Williamson02-PrimalDualMethodApproximation,Goemans97-PrimalDualMethod}, for example, has an approximation ratio of~$2$. To the best of our knowledge, \cite{Robins00-ImprovedSteinerTreeApproximation} proposed the algorithm for computing Steiner trees that has the smallest approximation ratio, namely~$1+\text{ln}\,	3/2\simeq 1.55$.

	\mypar{Application to our problem}
	Based on Assumption~\ref{Ass:Nonconnected} and on the concept of Steiner tree problem, we now propose a modification to Algorithm~\ref{Alg:Conn} to make it applicable to a non-connected variable. This modification applies to Algorithm~\ref{Alg:Kekatos} exactly the same way. According to Assumption~\ref{Ass:Nonconnected}, both the communication network and the sets~$S_p$ are known before the execution of the algorithm. This allows solving a Steiner tree problem for each non-connected component, as a preprocessing step, which can be done in a distributed or in a centralized way (for distributed algorithms computing Steiner trees see for example \cite{Drummond09-DistributedDualAscentAlgorithmSteinerProblems,Sadeh08-DistributedPrimalDualApproximationAlgorithmsNetworkDesign-thesis}). More concretely, for every non-connected component~$x_l$ with induced subgraph~$\mathcal{G}_l=(\mathcal{V}_l,\mathcal{E}_l)$, we can compute a Steiner tree~$(\mathcal{T}_l,\mathcal{F}_l) \subseteq \mathcal{G}$ using~$\mathcal{V}_l$ as the set of required nodes. Let~$\mathcal{S}_l := \mathcal{T}_l \backslash \mathcal{V}_l$ denote the Steiner nodes in that tree. The functions associated to these Steiner nodes do not depend on~$x_l$, i.e., $l \not\in S_p$ for all~$p \in \mathcal{S}_l$. But we artificially force them to depend on it by defining a new induced graph as $\mathcal{G}_l'=(\mathcal{V}_l',\mathcal{E}_l')$, with $\mathcal{V}_l' := \mathcal{T}_l$ and~$\mathcal{E}_l' := \mathcal{E}_l\cup\mathcal{F}_l$. Then, we can create copies of~$x_l$ in all nodes in~$\mathcal{V}_l'$, and write~\eqref{Eq:IntroProb} equivalently as
	\begin{equation}\label{Eq:NonConnectedManip}
	  \begin{array}{ll}
		  \underset{\{\bar{x}_l\}_{l=1}^n}{\text{minimize}} & f_1(x_{S_1}^{(1)}) + f_2(x_{S_2}^{(2)}) + \cdots + f_P(x_{S_P}^{(P)}) \\
		  \text{subject to} & x_l^{(i)} = x_l^{(j)},\quad (i,j) \in \mathcal{E}_l'\,,\,\,l=1,\ldots,n\,,
		\end{array}
	\end{equation}
	where~$\{\bar{x}_l\}_{l=1}^L$ is the optimization variable, and~$\bar{x}_l := \{x_l^{(p)}\}_{p \in \mathcal{V}_l'}$ denotes the set of all copies of~$x_l$. If node~$p$ is a Steiner node for any component of the variable, it will hold ``extra'' copies, but its function~$f_p$ remains unchanged. In particular, it has the copies $x_{S_p \cup S_p'}^{(p)}$, where $S_p'$ is the set of components of which node~$p$ is a Steiner node, but its function~$f_p$ depends only on~$x_{S_p}^{(p)} := \{x_l^{(p)}\}_{l \in S_p}$. Of course, if a component~$x_l$ is connected, we set $\mathcal{G}_l' = \mathcal{G}_l$, and if node~$p$ is not Steiner for any component, we set~$S_p' = \emptyset$. If we replace problem~\eqref{Eq:PartialManip1} by the modified problem~\eqref{Eq:NonConnectedManip} and repeat the derivation that followed problem~\eqref{Eq:PartialManip1}, we get Algorithm~\ref{Alg:NonConn}.

  \begin{algorithm}
    \caption{Algorithm for a generic variable, connected or non-connected}
    \label{Alg:NonConn}
    \begin{algorithmic}[1]
    \small
    \algrenewcommand\algorithmicrequire{\textbf{Preprocessing:}}
    \Require
    \State Set $S_p' = \emptyset$ for all~$p \in \mathcal{V}$, and $\mathcal{V}_l'=\mathcal{V}_l$ for all~$l = \{1,\ldots,n\}$
    \ForAll{non-connected components $x_l$, $l \in \{1,\ldots,n\}$}
			\State Compute a Steiner tree $(\mathcal{T}_l, \mathcal{F}_l)$, setting $\mathcal{V}_l$ as the set of required nodes
			\State Set $\mathcal{V}_l'= \mathcal{T}_l$ and $\mathcal{S}_l := \mathcal{T}_l \backslash \mathcal{V}_l$ (Steiner nodes)
			\State For all $p \in \mathcal{S}_l$, $S_p' = S_p' \cup \{x_l\}$
    \EndFor
    \Statex
    \algrenewcommand\algorithmicrequire{\textbf{Main algorithm:}}
    \Require
    \algrenewcommand\algorithmicrequire{\textbf{Initialization:}}
    \Require Choose~$\rho \in \mathbb{R}$; for all $p \in \mathcal{V}$ and $l \in S_p \cup S_p'$, set $\gamma_{l}^{(p),0}\! = x_l^{(p),0}\! = 0$; set $k=0$
    \Repeat
		\label{SubAlg:NonConn_FirstStep}
    \For{$c =1,\ldots,C$}  \label{SubAlg:NonConn_ForColors}
        \ForAll{$p \in \mathcal{C}_c$ [in parallel]}

					\Statex

					\State Compute \,
						$
							v_l^{(p),k} = \gamma_l^{(p),k}-
                \rho \sum_{\begin{subarray}{c}
                             j \in \mathcal{N}_p \cap \mathcal{V}_l' \\
                             C(j) < c
                           \end{subarray}
                }x_l^{(j),k+1} - \rho \sum_{\begin{subarray}{c}
                             j \in \mathcal{N}_p \cap \mathcal{V}_l' \\
                             C(j) > c
                           \end{subarray}
                }x_l^{(j),k}
						$
						,\, for all $l \in S_p \cup S_p'$
            \label{SubAlg:NonConn_Vec}

					\Statex

					\State Compute \,
						$
							x_{S_p \cup S_p'}^{(p),k+1}
							=
							\underset{x_{S_p\cup S_p'}^{(p)}}{\arg\min}
            \,\,
						f_p(x_{S_p}^{(p)}) + \sum_{l \in S_p\cup S_p'} \Bigl( {v_l^{(p),k}}^\top x_l^{(p)}  + \frac{\rho}{2} D_{p,l}'\Bigl(x_l^{(p)}\Bigr)^2 \Bigr)
						$
					  \label{SubAlg:NonConn_Prob}

        \State For each component~$l \in S_p\cup S_p'$, exchange~$x_l^{(p),k+1}$ to neighbors $\mathcal{N}_p \cap \mathcal{V}_l'$
        \label{SubAlg:NonConn_Comm}

    \EndFor
    \EndFor \label{SubAlg:NonConn_EndForColors}

    \ForAll{$p \in \mathcal{V}$ and $l \in S_p \cup S_p'$ [in parallel]} \vspace{0.15cm}
    \hfill

        $
            \gamma_l^{(p),k+1} = \gamma_l^{(p),k} + \rho \sum_{j \in \mathcal{N}_p \cap \mathcal{V}_l'} (x_l^{(p),k+1} -  x_l^{(j),k+1})
        $\label{SubAlg:NonConn_DualVar}  \vspace{0.15cm}

    \EndFor
    \State $k \gets k+1$
    \Until{some stopping criterion is met}
    \label{SubAlg:NonConn_FinalStep}
    \end{algorithmic}
  \end{algorithm}

  Algorithm~\ref{Alg:NonConn} is essentially an adapted version of Algorithm~\ref{Alg:Conn}, with a preprocessing step, which can be computed in a centralized or in a distributed way. The preprocessing step relies on Assumption~\ref{Ass:Nonconnected} by assuming that both the communication network and the dependency sets~$S_p$ are known. Note that the specific functions~$f_p$ are not required for this preprocessing step. Regarding the main algorithm, it is similar to Algorithm~\ref{Alg:Conn} except that each node, in addition to estimating the components its function originally depends on, it also estimates the components for which it is a Steiner node. The computation for these additional components can, however, be found in closed-form: if node~$p$ is a Steiner node for component~$x_l$, it updates it as~$x_l^{(p),k+1} = -(1/(\rho \,D_{p,l}))v_l^{(p),k}$ in step~\ref{SubAlg:NonConn_Prob}. In Algorithm~\ref{Alg:NonConn}, $D_{p,l}'$ is defined as the degree of node~$p$ in the subgraph~$\mathcal{G}_l'$. The steps we took to generalize Algorithm~\ref{Alg:Conn} to a non-connected variable can be easily applied the same way to Algorithm~\ref{Alg:Kekatos}, the algorithm proposed by~\cite{Kekatos12-DistributedRobustPowerStateEstimation}.

  \section{Experimental results}
	\label{Sec:PartialExpResults}

	In this section, we assess experimentally the performance of the proposed algorithms, namely Algorithm~\ref{Alg:Conn} and Algorithm~\ref{Alg:NonConn}, with respect to prior distributed algorithms. We focus on two applications: networks flow problems and D-MPC. While network flow problems are formulated as~\eqref{Eq:IntroProb} with a star-shaped variable, D-MPC has more flexibility, since it can be formulated with any type of variable (see \ssref{SubSec:DMPC}). As mentioned before, most of the prior distributed optimization algorithms solve~\eqref{Eq:IntroProb} only when the variable is global or star-shaped. The only exception is the algorithm proposed by~\cite{Kekatos12-DistributedRobustPowerStateEstimation}, which we presented as Algorithm~\ref{Alg:Kekatos}. Indeed, that algorithm can solve~\eqref{Eq:IntroProb} with any connected variable and, if using the adaptation we proposed in the previous section, it can also solve it with a non-connected variable.

	\mypar{Communication steps}
	The performance metric we use in our experiments is the number of communication steps (CSs). The concept is the same we introduced in \cref{Ch:GlobalVariable} for the global class: after all nodes have updated their estimates of the components they depend on and broadcast them to their neighbors, we say that a CS has occurred. The only difference with respect to the CS concept in \cref{Ch:GlobalVariable} is in the size of the messages exchanged between nodes: here, two neighbors~$(i,j) \in \mathcal{E}$ only exchange the common components their functions depend on, i.e., $x_{S_i \cap S_j}$, rather than the entire vector~$x$. This applies to all the algorithms we compare in this chapter. The only exception is Algorithm~\ref{Alg:GlobalClass}, the algorithm we proposed for the global class, which we show here for comparison purposes. In fact, we will see that, even ignoring the difference in the size of the exchanged messages, Algorithm~\ref{Alg:GlobalClass} takes more CSs to converge than any of the algorithms solving~\eqref{Eq:IntroProb} with a non-global variable. This effectively illustrates how important it is to explore the structure of the problem in order to design communication-efficient algorithms.

	\subsection{Network flow problems}

	We start with the experiments on network flow problems. First, we describe the model we used in our experiments, then the experimental setup and the algorithms we compare, and finally we present our results.

	\mypar{Model}
	Recall that a network flow problem has the format of~\eqref{Eq:NetworkFlow}. Its objective consists of the sum of the costs~$\phi_{ij}(x_{ij})$ associated to all the arcs of the directed network. The constraint~$Bx = d$ enforces the laws of conservation of flow, whereas the constraint~$x\geq 0$ forbids negative flows on each arc. We consider two scenarios for problem~\eqref{Eq:NetworkFlow}:
	\begin{align*}
		\text{Scenario 1:}& \qquad\phi_{ij}(x_{ij}) = \frac{1}{2}(x_{ij}-a_{ij})^2\,,\,\,\, \text{and the constraint~$x \geq 0$ is dropped,} \\
		\text{Scenario 2:}& \qquad\phi_{ij}(x_{ij}) = \frac{x_{ij}}{c_{ij} - x_{ij}} + \text{i}_{x_{ij}\leq c_{ij}}(x_{ij})\,.
	\end{align*}
	In scenario~1, the cost function associated to each arc~$(i,j) \in \mathcal{A}$ is quadratic, $\phi_{ij}(x_{ij}) = \frac{1}{2}(x_{ij}-a_{ij})^2$, where~$a_{ij}$ is positive. Also, we drop the nonnegativity constraint~$x\geq 0$ in order to make the algorithm in~\cite{Zargham12-AcceleratedDualDescentForNetworkFlowOptimization} applicable. Scenario~1 is thus very simple: it solves
	\begin{equation}\label{Eq:NetworkFlowScen1}
		\begin{array}{cl}
			\underset{x = \{x_{ij}\}_{(i,j) \in \mathcal{A}}}{\text{minimize}} & \sum_{(i,j) \in \mathcal{A}} \frac{1}{2}(x_{ij} - a_{ij})^2 \\
			\text{subject to} & Bx = d\,.
		\end{array}
	\end{equation}
	Regarding scenario~2, besides the cost function being more complicated, $\phi_{ij}(x_{ij}) = x_{ij}/(c_{ij} - x_{ij})$, where~$c_{ij}>0$, is the maximum capacity of arc~$(i,j)$, it also has the constraints~$0\leq x_{ij} \leq c_{ij}$, for each arc. That is, scenario~2 solves
	\begin{equation}\label{Eq:NetworkFlowScen2}
		\begin{array}{cl}
			\underset{x = \{x_{ij}\}_{(i,j) \in \mathcal{A}}}{\text{minimize}} & \sum_{(i,j) \in \mathcal{A}} \frac{x_{ij}}{c_{ij} - x_{ij}} \\
			\text{subject to} & Bx = d \\
			                  & 0 \leq x_{ij} \leq c_{ij}\,,
		\end{array}
	\end{equation}
	which can be used to model aggregate system delays in multicommodity flow problems~\cite[Ch.4]{Ahuja93-NetworkFlows}.

	The problem each node has to solve at each iteration, for example, at step~\ref{SubAlg:Conn_Prob} of Algorithm~\ref{Alg:Conn}, has a closed-form solution in scenario~1, but not in scenario~2. In scenario~2, node~$p$ has to solve a problem with the following format:
	\begin{equation}\label{Eq:NetFlowEachNode}
			\begin{array}{cl}
				\underset{y= (y_1,\ldots,y_{D_p})}{\text{minimize}} & \sum_{i=1}^{D_p} (\frac{y_i}{c_i - y_i} + v_i y_i + a_i y_i^2)
				\\
				\text{subject to} & b_p^\top y = d_p \\
				                  & 0 \leq y \leq c\,,
			\end{array}
	\end{equation}
	where each~$y_i$ corresponds to~$x_{pj}$ if $(p,j) \in \mathcal{A}$, or to~$x_{jp}$ if~$(j,p) \in \mathcal{A}$. Since projecting a point onto the set of constraints of~\eqref{Eq:NetFlowEachNode} can be done in closed-form~\cite{Vandenberghe11-TheProximalMapping-lecs}, any projected gradient method is easy to apply. In our implementation, we chose~\cite{Birgin00-NonmonotoneSpectralGradient}, a gradient projection method with a Barzilai-Borwein step.

	\mypar{Experimental setup}
	In both instances of the network flow problem we solve, we use a network with~$P = 2000$ nodes and $E = 3996$ edges, generated randomly in Network X~\cite{Hagberg08-NetworkX} according to the Barabasi-Albert model~\cite{Barabasi99-EmergenceOfScalingInRandomNetworks}; see \tref{Tab:NetworkModels} of \cref{Ch:GlobalVariable} for a brief description. As in the network flow problem illustrated in \fref{Fig:NetworkFlow}, we consider that there is at most one arc between any pair of nodes. As a consequence, the size of the problem variable, $x$, is equal to the number of edges~$E$, in this case~$3996$. The diameter of the generated network was~$8$, it had an average node degree of~$3.996$, and it was colored with~$3$ colors in Sage~\cite{Stein13-Sage}. We then assigned a direction to each edge of this network: for each edge~$(i,j)$, we assigned the directions~$i\xrightarrow{} j$ and~$i\xleftarrow{} j$ with equal probability, thus creating a set of arcs~$\mathcal{A}$ from the set of edges~$\mathcal{E}$. To each edge, we also assigned a number drawn randomly from the set~$\{10, 20, 30, 40, 50, 100\}$. The probabilities were $0.2$ for the first four elements, and~$0.1$ for~$50$ and~$100$. These numbers played the role of the $a_{ij}$'s in scenario~1 and the role of the capacities~$c_{ij}$ in scenario~2. To generate the vector~$d$ or, in other words, to determine which nodes are sources or sinks, we proceeded as follows. For each~$k = 1,\ldots,100$, we picked a source~$s_k$ randomly (uniformly) out of the set of~$2000$ nodes and then picked a sink~$r_k$ randomly (uniformly) out of the set of reachable nodes of~$s_k$. For example, if we were considering the network of \fref{Fig:NetworkFlow} and picked~$s_k = 4$ as a source node, the set of its reachable nodes would be~$\{3, 5, 6, 7\}$. Then, we added to the entries~$s_k$ and~$r_k$ of~$d$ the values $-f_k/100$ and~$f_k/100$, respectively, where~$f_k$ is a number drawn randomly exactly as~$c_{ij}$ (or~$a_{ij}$). This corresponds to injecting a flow of quantity~$f_k/100$ at node~$s_k$ and extracting the same quantity at node~$r_k$. After repeating this process~$100$ times, for~$k = 1,\ldots,K$, we obtained vector~$d$.

	Before executing the distributed algorithms and to assess their error, we computed the solutions of~\eqref{Eq:NetworkFlowScen1}, from scenario~1, and~\eqref{Eq:NetworkFlowScen2}, from scenario~2, in a centralized way. In scenario~1, the solution can be computed in closed-form, because the problem is quadratic with linear constraints. In scenario~2, we used CVXOPT~\cite{CVXOPT} to obtain a solution of~\eqref{Eq:NetworkFlowScen2}.

	\mypar{Algorithms for comparison}
	The network flow problems~\eqref{Eq:NetworkFlowScen1} and~\eqref{Eq:NetworkFlowScen2} are formulated as~\eqref{Eq:IntroProb} with a star-shaped variable (see also~\eqref{Eq:NetworkFlow}). As discussed before, in this case, the ADMM-based algorithm~\cite[\S7.2]{Boyd11-ADMM} becomes distributed. In fact, for network flow problems it becomes exactly algorithm~\cite{Kekatos12-DistributedRobustPowerStateEstimation} (Algorithm~\ref{Alg:Kekatos}); this is not surprising, since both are based on the same underlying algorithm, the $2$-block ADMM. Also, a star-shaped variable makes gradient methods directly applicable. We then also consider Nesterov's fast gradient method~\cite{Nesterov04-IntroductoryLecturesConvexOptimization}, more precisely, the algorithm~\eqref{Eq:RelatedWorkNesterovAlg}. Finally, we consider the distributed Newton method~\cite{Zargham12-AcceleratedDualDescentForNetworkFlowOptimization}, which was designed specifically for network flow problems. All these methods, including ours, have tuning parameters: $\rho$ for the ADMM-based algorithms, a Lipschitz constant~$L$ for Nesterov's algorithm, and a stepsize~$\alpha$ for the distributed Newton algorithm. Note that Nesterov's algorithm requires the objective function to be differentiable and have a Lipschitz-continuous gradient. While this is true for~\eqref{Eq:NetworkFlowScen1}, in scenario~1, it is not true for~\eqref{Eq:NetworkFlowScen2}, in scenario~2. Namely, the gradient of the objective of~\eqref{Eq:NetworkFlowScen2} is not Lipschitz-continuous in all the domain, although it is near the solution. Therefore, in scenario~2, we have to estimate a Lipschitz constant the same way we estimate the parameters of the other algorithms. To do that, we use the concept of \textit{precision}, defined in \cref{Ch:GlobalVariable}: for example, $\bar{\rho}$ \textit{has precision}~$\gamma$ for an ADMM-based algorithm if both~$\bar{\rho} - \gamma$ and~$\bar{\rho} + \gamma$ lead to worse results, i.e., to more CSs. Regarding the number of CSs each of these algorithms takes per iteration, all the ADMM-based ones (Algorithms~\ref{Alg:Kekatos}~\cite{Kekatos12-DistributedRobustPowerStateEstimation}, \ref{Alg:Conn}, and~\cite[\S7.2]{Boyd11-ADMM}) and Nesterov's algorithm~\cite{Nesterov04-IntroductoryLecturesConvexOptimization} take one CS per iteration. Our implementation of the distributed Newton method~\cite{Zargham11-AcceleratedDualDescentNetworkOptimization}, in turn, takes~$3$ CSs per iteration, since we used a fixed stepsize~$\alpha$ and set the parameter~$N$, the order of the approximation of Newton's direction, to~$2$. We will also show the performance of Algorithm~\ref{Alg:GlobalClass}, our proposed algorithm for the global class, in scenario~1. That algorithm makes all the nodes compute the full solution~$x^\star$, which has dimensions~$3996$ in this case. Hence, each message exchanged in one CS of Algorithm~\ref{Alg:GlobalClass} is~$3996$ times larger than the messages exchanged by the other algorithms.

	\begin{figure}
     \centering
     \subfigure[Scenario 1]{\label{SubFig:NF_Scenario1}
     \begin{pspicture}(7.9,5.2)
       \rput[bl](0.59,0.70){\includegraphics[width=7.1cm]{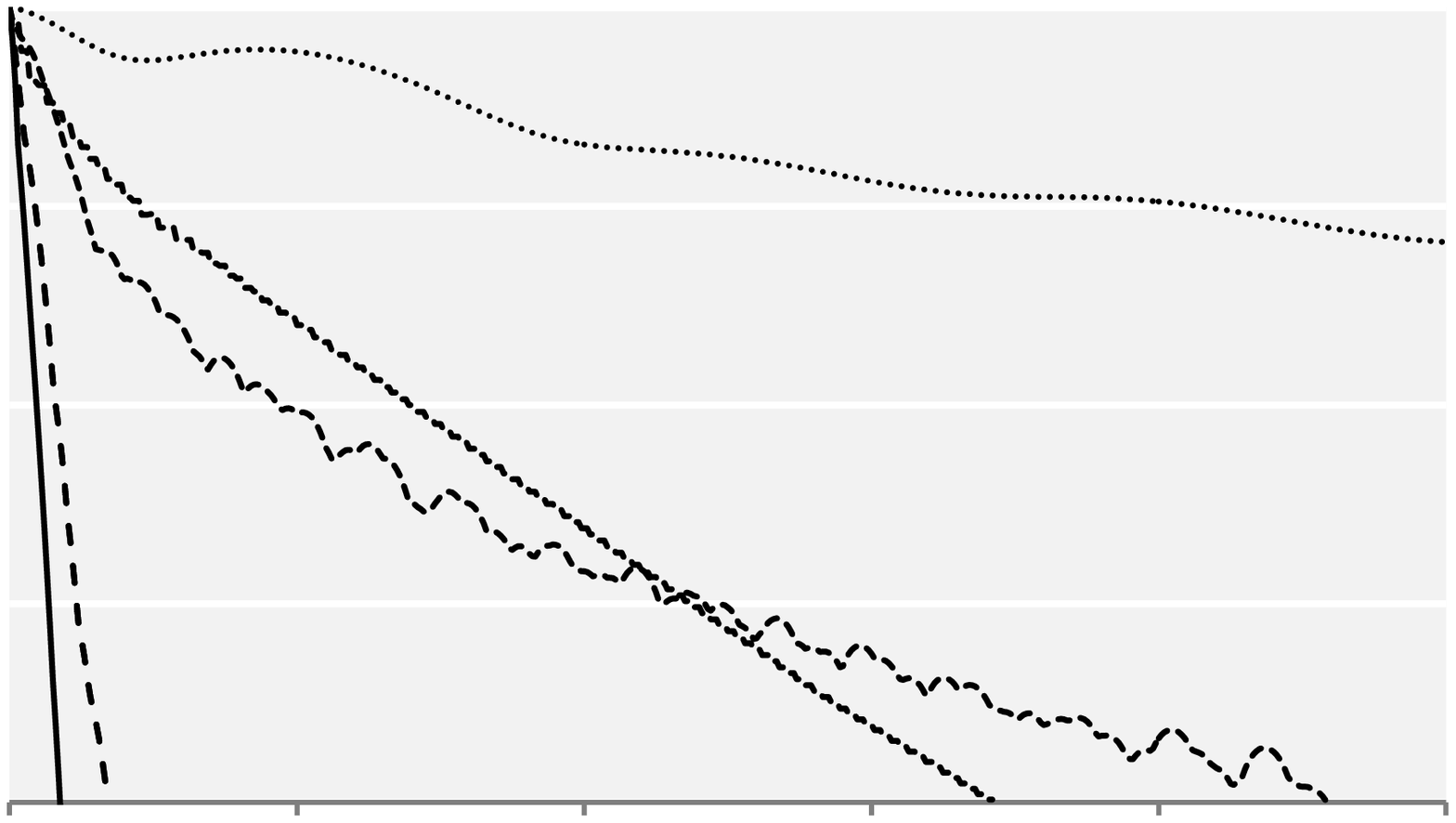}}
       \rput[b](4.00,-0.001){\footnotesize \textbf{\sf Communication steps}}
       \rput[bl](-0.05,5.0){\mbox{\footnotesize \textbf{{\sf Relative error}}}}

       \rput[r](0.56,4.68){\scriptsize $\mathsf{10^{0\phantom{-}}}$}
       \rput[r](0.56,3.74){\scriptsize $\mathsf{10^{-1}}$}
       \rput[r](0.56,2.77){\scriptsize $\mathsf{10^{-2}}$}
       \rput[r](0.56,1.79){\scriptsize $\mathsf{10^{-3}}$}
       \rput[r](0.56,0.83){\scriptsize $\mathsf{10^{-4}}$}

       \rput[t](0.614,0.59){\scriptsize $\mathsf{0}$}
       \rput[t](2.030,0.59){\scriptsize $\mathsf{100}$}
       \rput[t](3.436,0.59){\scriptsize $\mathsf{200}$}
       \rput[t](4.852,0.59){\scriptsize $\mathsf{300}$}
       \rput[t](6.265,0.59){\scriptsize $\mathsf{400}$}
       \rput[t](7.672,0.59){\scriptsize $\mathsf{500}$}

       \rput[lb](1.45,1.14){\scriptsize \textbf{\sf Alg.\ref{Alg:Conn}}}
       \psline[linewidth=0.5pt](1.40,1.22)(0.88,1.1)
       \rput[lb](0.98,1.90){\scriptsize \textbf{\sf \cite{Kekatos12-DistributedRobustPowerStateEstimation,Boyd11-ADMM}}}
       \rput[lb](2.1,3.1){\scriptsize \textbf{\sf \cite{Zargham12-AcceleratedDualDescentForNetworkFlowOptimization}}}
			 \rput[lb](5.45,1.30){\scriptsize \textbf{\sf \cite{Nesterov04-IntroductoryLecturesConvexOptimization}}}
			 \rput[lb](4.0,4.0){\scriptsize \textbf{\sf Alg.\ref{Alg:GlobalClass} (global class)}}

       %\psgrid
     \end{pspicture}
     }
     \hfill
     \subfigure[Scenario 2]{\label{SubFig:NF_Scenario2}
     \begin{pspicture}(7.9,5.2)
       \rput[bl](0.59,0.70){\includegraphics[width=7.1cm]{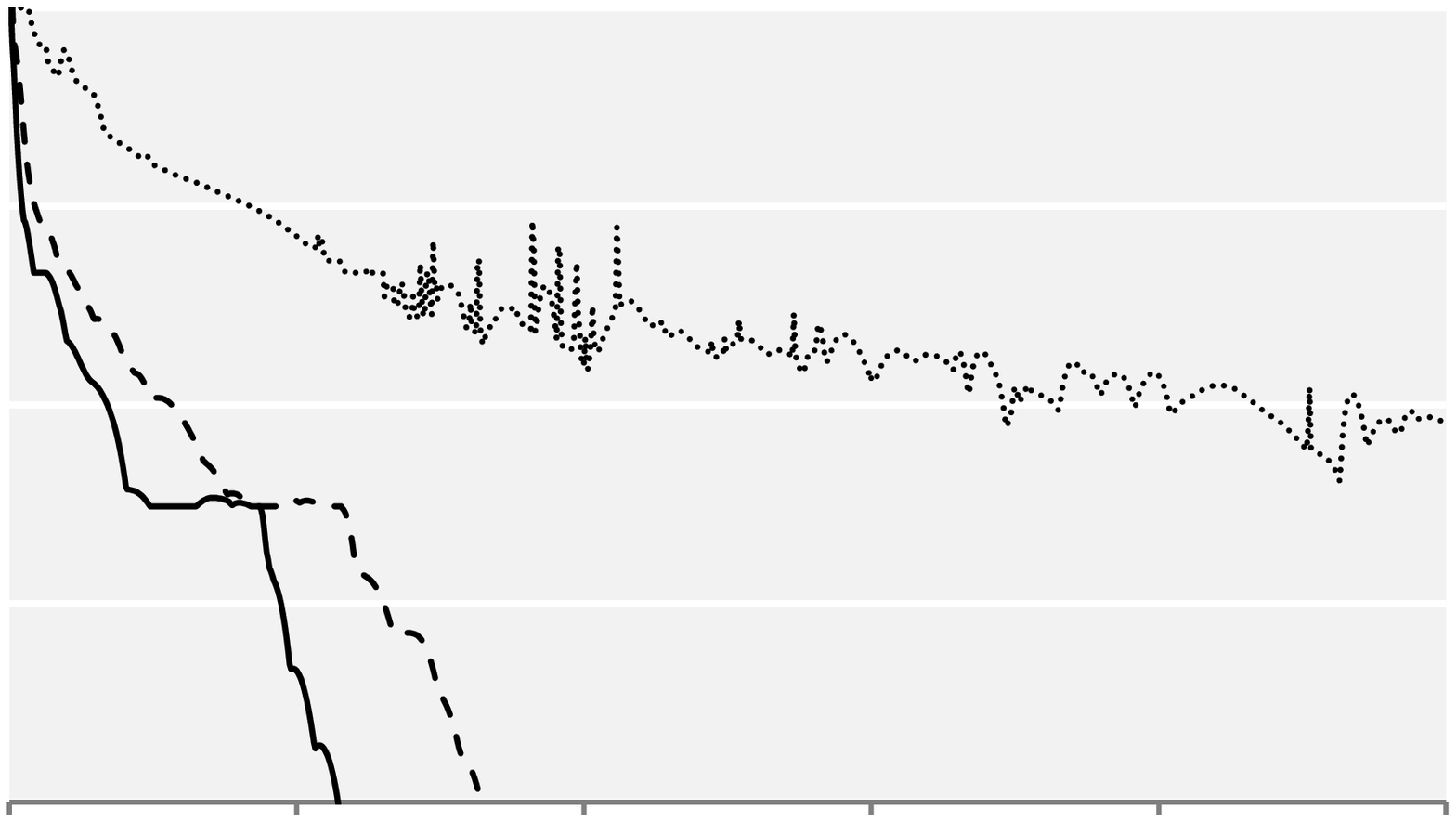}}
       \rput[b](4.00,-0.001){\footnotesize \textbf{\sf Communication steps}}
       \rput[bl](-0.05,5.0){\mbox{\footnotesize \textbf{{\sf Relative error}}}}

       \rput[r](0.56,4.68){\scriptsize $\mathsf{10^{0\phantom{-}}}$}
       \rput[r](0.56,3.74){\scriptsize $\mathsf{10^{-1}}$}
       \rput[r](0.56,2.77){\scriptsize $\mathsf{10^{-2}}$}
       \rput[r](0.56,1.79){\scriptsize $\mathsf{10^{-3}}$}
       \rput[r](0.56,0.83){\scriptsize $\mathsf{10^{-4}}$}

       \rput[t](0.614,0.59){\scriptsize $\mathsf{0}$}
       \rput[t](2.030,0.59){\scriptsize $\mathsf{200}$}
       \rput[t](3.436,0.59){\scriptsize $\mathsf{400}$}
       \rput[t](4.852,0.59){\scriptsize $\mathsf{600}$}
       \rput[t](6.265,0.59){\scriptsize $\mathsf{800}$}
       \rput[t](7.672,0.59){\scriptsize $\mathsf{1000}$}

       \rput[rt](1.9,1.5){\scriptsize \textbf{\sf Alg.\ref{Alg:Conn}}}
       \rput[lb](2.30,2.20){\scriptsize \textbf{\sf \cite{Kekatos12-DistributedRobustPowerStateEstimation,Boyd11-ADMM}}}
       \rput[lb](4.40,3.22){\scriptsize \textbf{\sf \cite{Nesterov04-IntroductoryLecturesConvexOptimization}}}

       %\psgrid
     \end{pspicture}
     }
     \caption[Results of our experiments for the network flow problems.]{
			Results of our experiments for the network flow problems. The problem solved in~\text{(a)} is~\eqref{Eq:NetworkFlowScen1}, a simple quadratic program. The problem solved in~\text{(b)} is~\eqref{Eq:NetworkFlowScen2}, which models aggregate delays in multicommodity flow problems. In both cases, the network has~$P =2000$ nodes and~$E = 3996$ edges, and was generated randomly according to the Barabasi-Albert model.
		}
     \label{Fig:Exp_NF}
	\end{figure}

	\mypar{Results}
	The results of our experiments for scenarios~1 and~2 are shown, respectively, in Figures~\ref{SubFig:NF_Scenario1} and~\ref{SubFig:NF_Scenario2}. These show the relative error on the primal variable $\|x^k - x^\star\|_{\infty}/\|x^\star\|_{\infty}$, where~$x^k$ is the concatenation of the estimates of all nodes, versus the number of CSs. It can be seen in~\fref{SubFig:NF_Scenario1} that Algorithm~\ref{Alg:Conn} in scenario~1 was the one requiring the least amount of CSs to achieve any relative error between~$1$ and~$10^{-4}$. It was closely followed by the ADMM-based algorithms~\cite{Kekatos12-DistributedRobustPowerStateEstimation} and~\cite[\S7.2]{Boyd11-ADMM}, whose lines coincide because they become the same algorithm when applied to network flows. Nesterov's method~\cite{Nesterov04-IntroductoryLecturesConvexOptimization} and the Newton-based method~\cite{Zargham12-AcceleratedDualDescentForNetworkFlowOptimization} had a performance very similar to each other, but worse than the ADMM-based algorithms. In the same plot we can also see that Algorithm~\ref{Alg:GlobalClass}, which solves the global class, had the worst performance; furthermore, each message exchange by that algorithm is~$3996$ times larger than a message exchanged by the other algorithms. This clearly shows that if we want to derive communication-efficient algorithms, we have to explore the structure of~\eqref{Eq:IntroProb}. Regarding the parameters for each algorithm in these experiments, we used~$\rho =2$ for all the ADMM-based algorithms (precision~$1$), a Lipschitz constant~$L = 70$ for~\cite{Nesterov04-IntroductoryLecturesConvexOptimization} (precision~$5$), and a stepsize~$\alpha=0.4$ for~\cite{Zargham12-AcceleratedDualDescentForNetworkFlowOptimization} (precision~$0.1$).

	The results for scenario~2, i.e., for problem~\eqref{Eq:NetworkFlowScen2}, are shown in \fref{SubFig:NF_Scenario2}. We were not able to make the algorithm in~\cite{Zargham12-AcceleratedDualDescentForNetworkFlowOptimization} converge for this scenario (actually, that algorithm is not guaranteed to converge for problem~\eqref{Eq:NetworkFlowScen2}). Overall, scenario~2 looks more challenging to solve, since all algorithms took more CSs to achieve the same relative error. Again, Algorithm~\ref{Alg:Conn} was the algorithm with the best performance. This time we could not find any choice for~$L$ that made Nesterov's algorithm~\cite{Nesterov04-IntroductoryLecturesConvexOptimization} achieve the relative error of~$10^{-4}$ in less than~$1000$ CSs. The best result, obtained for~$L = 15000$, is shown in \fref{SubFig:NF_Scenario2}. The augmented Lagrangian parameter~$\rho$ was~$0.08$ for Algorithm~\ref{Alg:Conn} and~$0.12$ for algorithms~\cite{Kekatos12-DistributedRobustPowerStateEstimation,Boyd11-ADMM}, both computed with precision~$0.02$.

	\subsection{D-MPC}

	We now describe our experiments for distributed model predictive control (D-MPC). Recall that D-MPC can have a variable of any type, either connected or non-connected. We start by describing the particular MPC model we used, and then the experimental setup.

	\mypar{Model}
	For convenience, we reproduce here our D-MPC model~\eqref{Eq:D-MPC}, which was proposed earlier in \ssref{SubSec:DMPC}:
  \begin{equation}\label{Eq:DMPCReprod}
	  \begin{array}{ll}
	  	\underset{\bar{x},\bar{u}}{\text{minimize}} &
				\sum_{p=1}^P\biggl[\Phi_p(\{x_j[T]\}_{j \in \Omega_p})
				+ \sum_{t=0}^{T-1} \Psi_p^t (\{x_j[t], u_j[t]\}_{j \in \Omega_p})\biggr] \\
			\text{subject to} & x_p[t+1] = \Theta_p^t\bigl(\{x_j[t],u_j[t]\}_{j \in \Omega_p}\bigr)\,,\quad t = 0,\ldots,T-1\,,\quad p = 1,\ldots,P \\
			            & x_p[0] = x_p^0\,, \quad  p = 1,\ldots,P\,.
	  \end{array}
	\end{equation}
	Problem~\eqref{Eq:DMPCReprod} is associated to a network with~$P$ dynamic systems where each dynamic system is viewed as a node of that network. The $p$th system is described at each time instant~$t$ by the state vector~$x_p[t] \in \mathbb{R}^{n_p}$ and has a control input~$u_p[t] \in \mathbb{R}^{m_p}$. The D-MPC model~\eqref{Eq:DMPCReprod} generalizes prior D-MPC models in the sense that it allows the state of any system be influenced by the state or input of any other system in the network, and not only by its neighbors; see also \fref{Fig:D-MPC} for a visual comparison between these two scenarios. Therefore, the optimization variable in~\eqref{Eq:DMPCReprod} is arbitrary and not necessarily star-shaped. In our experiments, we consider a simple instance of~\eqref{Eq:DMPCReprod} that preserves this feature. Namely, we assume linear coupling through the inputs, i.e., $x_p[t+1] = A_p x_p[t] + \sum_{j \in \Omega_p} B_{pj} u_j[t]$, where~$A_p \in \mathbb{R}^{n_p \times n_p}$ and each~$B_{pj} \in \mathbb{R}^{n_p \times m_j}$ are arbitrary matrices (in fact, randomly generated), known only at node~$p$. The set~$\Omega_p \subseteq \mathcal{V}$ is the set of nodes whose control input influences the state of node~$p$, $x_p$. We assume that the control input at node~$p$ influences always its own state, i.e., $\{p\} \subset \Omega_p$, for all~$p \in \mathcal{V}$. We also assume there is no coupling through the objective functions. In particular, we consider~$\Phi_p(\{x_j[T]\}_{j \in \Omega_p}) = x_p[T]^\top \bar{Q}_p^f x_p[T]$ and~$\Psi_p^t(\{x_j[t]\}_{j \in \Omega_p}) = x_p[t]^\top \bar{Q}_p x_p[t] + u_p[t]^\top \bar{R}_p$, where~$\bar{Q}_p$ and~$\bar{Q}_p^f$ are positive semidefinite matrices, and~$\bar{R}_p$ is positive definite. With this choice, problem~\eqref{Eq:DMPCReprod} becomes
	\begin{equation}\label{Eq:DistributedMPC_SimpleModel}
  	\begin{array}{cl}
  		\underset{\begin{subarray}{c}x_1,\ldots,x_P\\u_1,\ldots,u_P\end{subarray}}{\text{minimize}} &
  		\sum_{p=1}^P u_p^\top R_p u_p + x_p^\top Q_p x_p \\
  		\text{subject to} & x_p = C_p \{u_j\}_{j \in \mathcal{S}_p} + D_p^0 \,,\,\, p = 1,\ldots,P\,,
  	\end{array}
  \end{equation}
	where, $x_p = (x_p[0],\ldots,x_p[T])$, $u_p = (u_p[0],\ldots,u_p[T-1])$, for each~$p$, and
	\begin{align*}
	  Q_p &= \begin{bmatrix}
		      	I_{T} \otimes \bar{Q}_p & 0\\
		      	0 & \bar{Q}_p^f
		      \end{bmatrix}\,,
		&
		R_p &= I_{T} \otimes \bar{R}_p\,,
		\\
		C_p &=
		\begin{bmatrix}
			0                  & 0                 & \cdots & 0     \\
      B_{p}              & 0                 & \cdots & 0     \\
      A_{pp}B_{p}        & B_{p}             & \cdots & 0     \\
       \vdots            & \vdots            & \ddots & \vdots\\
      A_{pp}^{T-1}B_{p}  & A_{pp}^{T-2}B_{p} & \cdots & B_{p} \\
		\end{bmatrix}\,,
		&D_p^0
		&=
		\begin{bmatrix}
      I           \\
      A_{pp}      \\
      A_{pp}^2    \\
      \vdots      \\
      A_{pp}^{T}  \\
    \end{bmatrix}
    x_p^0\,.
  \end{align*}
  We defined the matrix~$B_p$ (in the entries of~$C_p$) as the horizontal concatenation of the matrices $B_{pj}$, for all~$j \in \Omega_p$. Note that the variables~$x_p$ and~$u_p$ in~\eqref{Eq:DistributedMPC_SimpleModel} now contain the states and inputs for the entire horizon. For this reason, we changed from the notation~$j \in \Omega_p$ to the notation~$j\in S_p$; while~$\Omega_p$ is a subset of the set of nodes~$\mathcal{V}$, $S_p$ is a subset of components of the optimization variable, i.e., $S_p \in \{1,\ldots,(T+1)\sum_{p=1}^P n_p + T\sum_{p=1}^Pm_p\}$. One reason we chose this simple linear model is that all the state variables~$x_p$ in~\eqref{Eq:DistributedMPC_SimpleModel} can be eliminated; indeed, \eqref{Eq:DistributedMPC_SimpleModel} can be written equivalently as
	\begin{equation}\label{Eq:DistributedMPC_SimpleModelFinal}
  	\underset{u_1,\ldots,u_P}{\text{minimize}} \,\,\, \sum_{p=1}^P \{u_j\}_{j \in S_p}^\top E_p \{u_j\}_{j \in S_p} + w_p^\top \{u_j\}_{j \in S_p}\,,
  \end{equation}
  where~$w_p = 2C_p^\top Q_p D_p^0$ and each~$E_p$ is obtained by summing~$R_p$ with $C_p^\top Q_p C_p$ in the correct entries. Note that~\eqref{Eq:DistributedMPC_SimpleModelFinal} is an unconstrained quadratic program. Therefore, in a centralized scenario, where all matrices~$E_p$ and all vectors~$w_p$ are known at the same location, the solution of~\eqref{Eq:DistributedMPC_SimpleModelFinal} is simply the solution of a linear system. For the same reason, the solution of the problem each node has to solve at each iteration, for example in step~\ref{SubAlg:Conn_Prob} of Algorithm~\ref{Alg:Conn}, can be found by solving a linear system.

	\begin{table}[h]
    \centering
    \caption{
             Networks used in the D-MPC experiments.
            }
    \label{Tab:NetworksDMPC}
    \smallskip
    \footnotesize
        %\resizebox{0.7\linewidth}{!}{
        \begin{tabular}{@{}cccccccp{4.5cm}@{}}
          \toprule[1pt]
          Name & Source & \# Nodes & \# Edges & Diam. & \# Colors & Av. Deg. & Description\\
          \midrule
           A & \cite{Barabasi99-EmergenceOfScalingInRandomNetworks}         & $\phantom{4}100$  & $\phantom{6}196$  & $\phantom{4}6$  & $3$ & $3.92$ & Barabasi-Albert (parameter~$2$) \vspace{0.2cm}\\
           B & \cite{Watts98-CollectiveDynamicsSmallWorldNetworks}  & $4941$ & $6594$ & $46$ & $6$ & $2.67$ & US Western states power grid\\
          \bottomrule[1pt]
        \end{tabular}
        %}
  \end{table}

	\mypar{Experimental setup}
	We solved problem~\eqref{Eq:DistributedMPC_SimpleModelFinal} in the two networks of \tref{Tab:NetworksDMPC}. Network~A has~$100$ nodes, $196$ edges, and was generated randomly according to the Barabasi-Albert model~\cite{Barabasi99-EmergenceOfScalingInRandomNetworks}, as briefly described in \tref{Tab:NetworkModels} of \cref{Ch:GlobalVariable}. A parameter of~$2$ means that every time a node is added to the network it connects to other~$2$ nodes. Network~B is considerably larger, having~$4941$ nodes and~$6594$ edges, and it represents the topology of the power grid of the US Western states~\cite{Watts98-CollectiveDynamicsSmallWorldNetworks}. \tref{Tab:NetworksDMPC} also shows the diameter of each network, the average degree of each node, and the number of colors they are colored with. To color these networks, we used a built-in function in Sage~\cite{Stein13-Sage}.

	In all our experiments we considered a time horizon~$T$ of dimension~$5$, the state~$x_p$ of each node~$p$ always had dimensions~$n_p = 3$, and the control input~$u_p$ was always scalar, $m_p = 1$, for all~$p$. Since the size of the variable in~\eqref{Eq:DistributedMPC_SimpleModelFinal} is~$m_p T P$, network~A implied a variable of size~$500$ and network~B implied a variable of size~$24705$. While each dynamical system in network~A could be unstable, each dynamical system in network~B was always guaranteed stable. More specifically, for both networks, we generated the entries of the dynamics matrix~$A_p$ of each system~$p$ from the normal distribution (independently); however, for network~B, after generating each~$A_p$, we always ``shrunk'' its eigenvalues to the interval $[-1,1]$, making the corresponding system stable. Regarding the input-state matrices~$B_{pj}$, each of its entries were also drawn from the normal distribution.

	We now describe how we generated the system couplings, i.e., the sets~$\Omega_p \in \mathcal{V}$; see also the dotted arrows in \fref{Fig:D-MPC}. We generated three types of couplings, and thus of variables. We generated star-shaped variables, where the state of system~$p$ is influenced by the inputs of all its neighbors, that is, $\Omega_p = \mathcal{N}_p$, for all~$p$. This case is illustrated in \fref{SubFig:PPMPCStar} and was considered so that we could compare Algorithms~\ref{Alg:Kekatos} and~\ref{Alg:Conn} with other prior D-MPC algorithms. We also generated instances of the system couplings to make the variable connected (not necessarily star-shaped), and non-connected. To generate a connected variable we proceeded as follows: given a node~$p$, we make it depend on~$u_p$ (recall our assumption that $\{p\} \subset \Omega_p$). Then, we initialize a set~$\mathcal{F}_p$, which we will call the ``fringe,'' with the neighbors of node~$p$, i.e., $\mathcal{F}_p = \mathcal{N}_p$. Next, we select randomly (uniformly) a node~$q$ from the fringe, $q \in \mathcal{F}_p$, and make its state depend on~$u_p$, i.e., $p \in \Omega_q$. Then, we add its set of neighbors to the fringe and remove node~$q$ from it, since it already depends on~$u_p$: $\mathcal{F}_p = \Bigl(\mathcal{F}_p \backslash \{q\}\Bigr) \cup \mathcal{N}_q$. This process is repeated~$3$ times for each node~$p$, and is done for all the nodes in the network. To generate a non-connected variable, the process is exactly the same, including the concept of fringe. The difference is that, at each iteration, any node in the entire network can be selected, not just the nodes in the fringe; however, the nodes in the fringe have twice the probability of being selected with respect to the remaining nodes in the network. We generated a non-connected variable only for network~A, running the described algorithm for each one of its~$500$ components (the size of the variable for this network is $m_p T P = 1\times 5 \times 100 = 500$). As a result, we obtained~$400$ components for which the respective induced subgraphs were non-connected. According to the preprocessing step of Algorithm~\ref{Alg:NonConn}, we have to compute a Steiner tree for each of these~$400$ components. To do that, we used a built-in function in Sage~\cite{Stein13-Sage}. We ended up with~$44$ nodes in the network (out of~$100$) that were Steiner nodes for at least one component.

	\begin{figure}[t]
     \centering
     \subfigure[Network A with a star-shaped variable]{\label{SubFig:MPC_Barab100_Unstable}
     \begin{pspicture}(7.9,5.2)
       \rput[bl](0.59,0.70){\includegraphics[width=7.1cm]{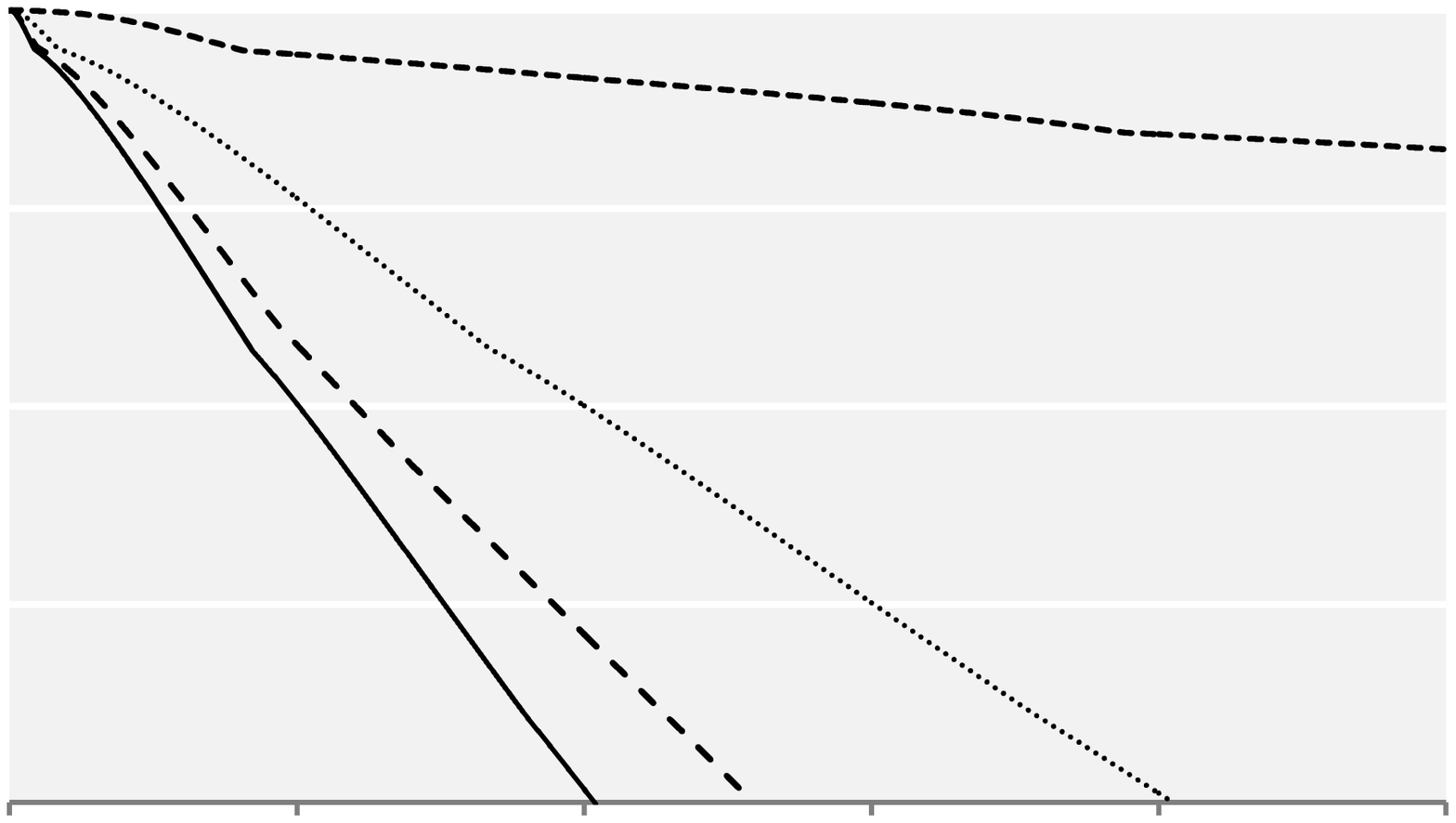}}
       \rput[b](4.00,-0.001){\footnotesize \textbf{\sf Communication steps}}
       \rput[bl](-0.05,5.0){\mbox{\footnotesize \textbf{{\sf Relative error}}}}

       \rput[r](0.56,4.68){\scriptsize $\mathsf{10^{0\phantom{-}}}$}
       \rput[r](0.56,3.74){\scriptsize $\mathsf{10^{-1}}$}
       \rput[r](0.56,2.77){\scriptsize $\mathsf{10^{-2}}$}
       \rput[r](0.56,1.79){\scriptsize $\mathsf{10^{-3}}$}
       \rput[r](0.56,0.83){\scriptsize $\mathsf{10^{-4}}$}

       \rput[t](0.614,0.59){\scriptsize $\mathsf{0}$}
       \rput[t](2.030,0.59){\scriptsize $\mathsf{200}$}
       \rput[t](3.436,0.59){\scriptsize $\mathsf{400}$}
       \rput[t](4.852,0.59){\scriptsize $\mathsf{600}$}
       \rput[t](6.265,0.59){\scriptsize $\mathsf{800}$}
       \rput[t](7.672,0.59){\scriptsize $\mathsf{1000}$}

			 \rput[rt](2.92,1.40){\scriptsize \textbf{\sf Alg.\ref{Alg:Conn}}}
       \rput[bl](3.21,1.88){\scriptsize \textbf{\sf \cite{Boyd11-ADMM}}}
       \rput[bl](4.20,2.25){\scriptsize \textbf{\sf \cite{Kekatos12-DistributedRobustPowerStateEstimation}}}
       \rput[lb](6.20,4.14){\scriptsize \textbf{\sf \cite{Nesterov04-IntroductoryLecturesConvexOptimization}}}

       %\psgrid
     \end{pspicture}
     }
     \hfill
     \subfigure[Network B with a star-shaped variable]{\label{SubFig:MPC_PowerGridStable}
     \begin{pspicture}(7.9,5.2)
       \rput[bl](0.59,0.70){\includegraphics[width=7.1cm]{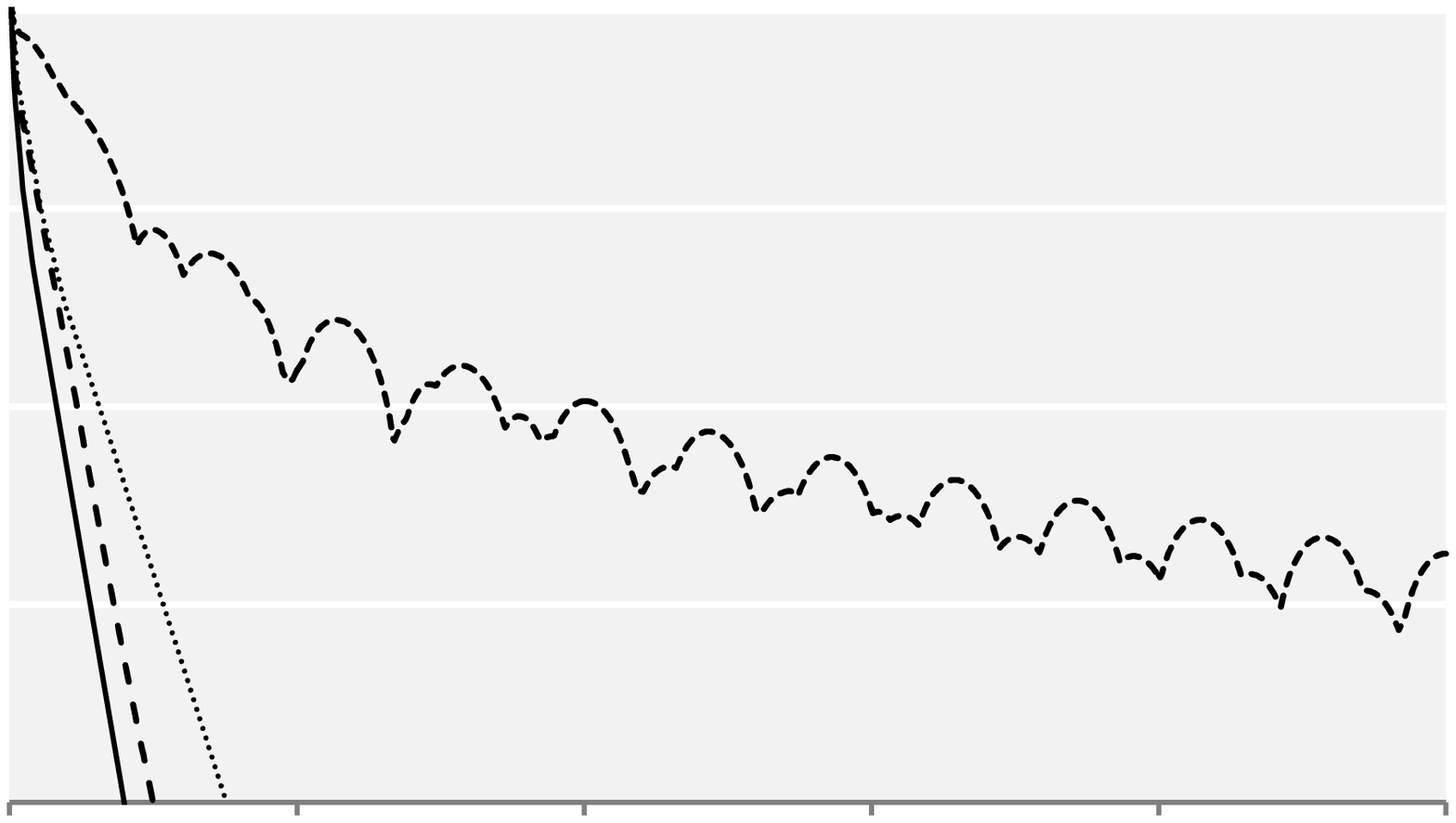}}
       \rput[b](4.00,-0.001){\footnotesize \textbf{\sf Communication steps}}
       \rput[bl](-0.05,5.0){\mbox{\footnotesize \textbf{{\sf Relative error}}}}

       \rput[r](0.56,4.68){\scriptsize $\mathsf{10^{0\phantom{-}}}$}
       \rput[r](0.56,3.74){\scriptsize $\mathsf{10^{-1}}$}
       \rput[r](0.56,2.77){\scriptsize $\mathsf{10^{-2}}$}
       \rput[r](0.56,1.79){\scriptsize $\mathsf{10^{-3}}$}
       \rput[r](0.56,0.83){\scriptsize $\mathsf{10^{-4}}$}

       \rput[t](0.614,0.59){\scriptsize $\mathsf{0}$}
       \rput[t](2.030,0.59){\scriptsize $\mathsf{200}$}
       \rput[t](3.436,0.59){\scriptsize $\mathsf{400}$}
       \rput[t](4.852,0.59){\scriptsize $\mathsf{600}$}
       \rput[t](6.265,0.59){\scriptsize $\mathsf{800}$}
       \rput[t](7.672,0.59){\scriptsize $\mathsf{1000}$}

       \rput[bl](1.74,1.32){\scriptsize \textbf{\sf Alg.\ref{Alg:Conn}}}
       \psline[linewidth=0.5pt](1.70,1.40)(1.13,1.20)
       \rput[bl](1.74,1.92){\scriptsize \textbf{\sf \cite{Boyd11-ADMM}}}
       \psline[linewidth=0.5pt](1.70,2.0)(1.15,1.8)
       \rput[lb](1.21,2.38){\scriptsize \textbf{\sf \cite{Kekatos12-DistributedRobustPowerStateEstimation}}}
       \rput[lb](2.0,3.19){\scriptsize \textbf{\sf \cite{Nesterov04-IntroductoryLecturesConvexOptimization}}}

       %\psgrid
     \end{pspicture}
     }

     \hfill
     \subfigure[Network A with a generic connected variable]{\label{SubFig:MPC_Barab100NonStarUnstable}
     \begin{pspicture}(7.9,5.2)
       \rput[bl](0.59,0.70){\includegraphics[width=7.1cm]{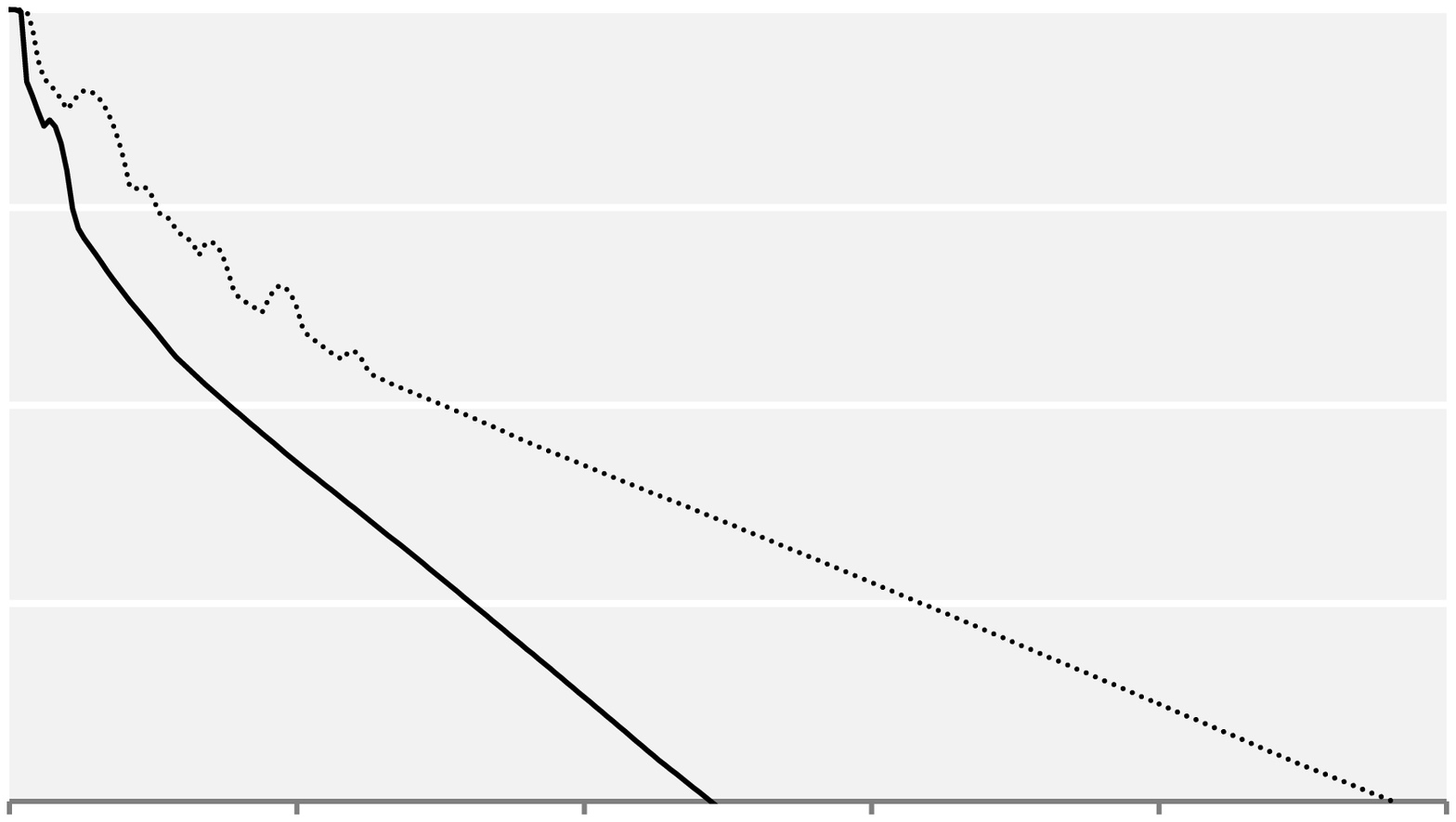}}
       \rput[b](4.00,-0.001){\footnotesize \textbf{\sf Communication steps}}
       \rput[bl](-0.05,5.0){\mbox{\footnotesize \textbf{{\sf Relative error}}}}

       \rput[r](0.56,4.68){\scriptsize $\mathsf{10^{0\phantom{-}}}$}
       \rput[r](0.56,3.74){\scriptsize $\mathsf{10^{-1}}$}
       \rput[r](0.56,2.77){\scriptsize $\mathsf{10^{-2}}$}
       \rput[r](0.56,1.79){\scriptsize $\mathsf{10^{-3}}$}
       \rput[r](0.56,0.83){\scriptsize $\mathsf{10^{-4}}$}

       \rput[t](0.614,0.59){\scriptsize $\mathsf{0}$}
       \rput[t](2.030,0.59){\scriptsize $\mathsf{50}$}
       \rput[t](3.436,0.59){\scriptsize $\mathsf{100}$}
       \rput[t](4.852,0.59){\scriptsize $\mathsf{150}$}
       \rput[t](6.265,0.59){\scriptsize $\mathsf{200}$}
	     \rput[t](7.672,0.59){\scriptsize $\mathsf{250}$}

			 \rput[rt](2.21,2.1){\scriptsize \textbf{\sf Alg.\ref{Alg:Conn}}}
       \rput[bl](4.1,2.2){\scriptsize \textbf{\sf \cite{Kekatos12-DistributedRobustPowerStateEstimation}}}

       %\psgrid
     \end{pspicture}
     }
     \hfill
     \subfigure[Network B with a generic connected variable]{\label{SubFig:MPC_PowerGridStableNonStar}
     \begin{pspicture}(7.9,5.2)
       \rput[bl](0.59,0.70){\includegraphics[width=7.1cm]{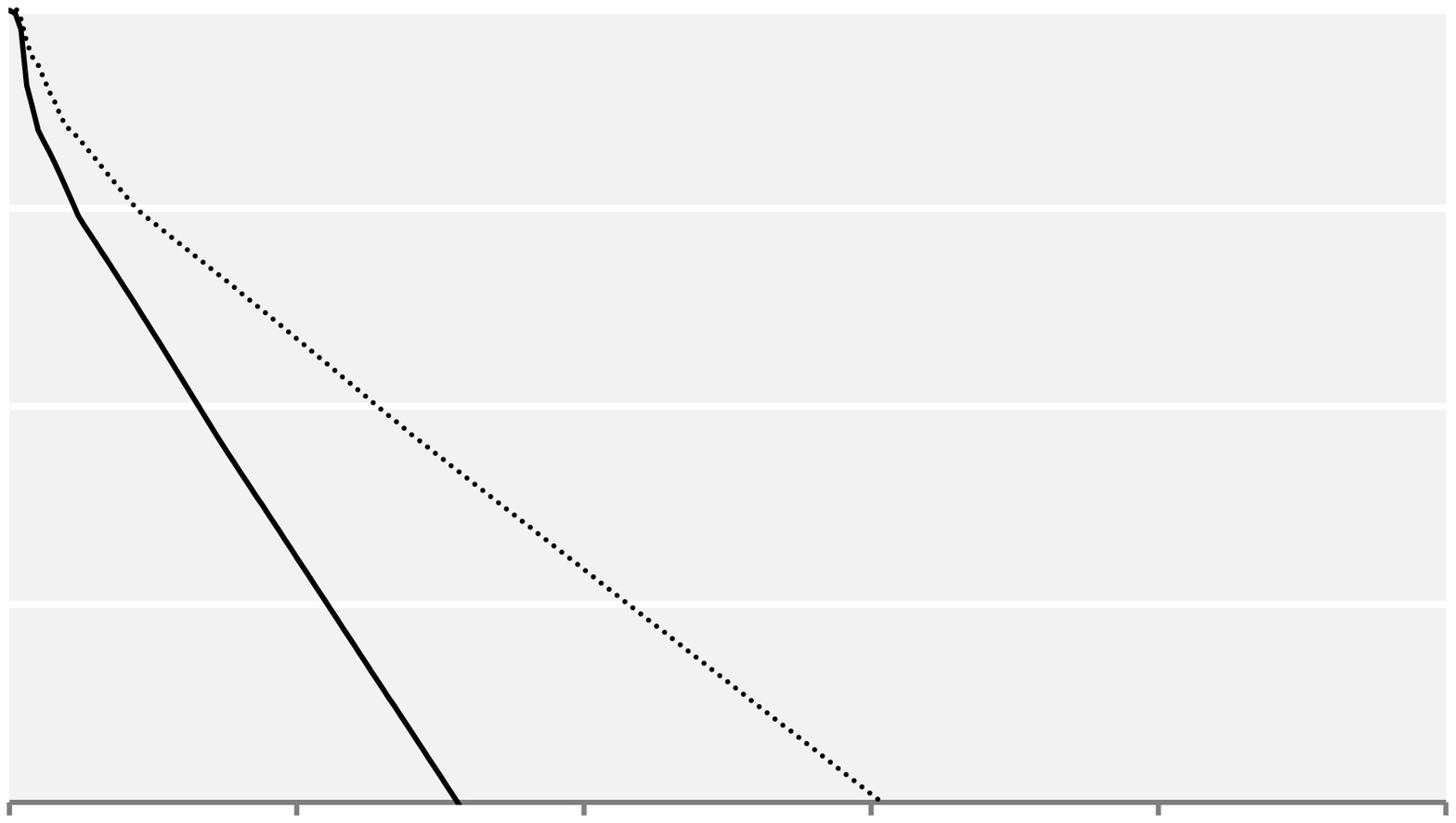}}
       \rput[b](4.00,-0.001){\footnotesize \textbf{\sf Communication steps}}
       \rput[bl](-0.05,5.0){\mbox{\footnotesize \textbf{{\sf Relative error}}}}

       \rput[r](0.56,4.68){\scriptsize $\mathsf{10^{0\phantom{-}}}$}
       \rput[r](0.56,3.74){\scriptsize $\mathsf{10^{-1}}$}
       \rput[r](0.56,2.77){\scriptsize $\mathsf{10^{-2}}$}
       \rput[r](0.56,1.79){\scriptsize $\mathsf{10^{-3}}$}
       \rput[r](0.56,0.83){\scriptsize $\mathsf{10^{-4}}$}

       \rput[t](0.614,0.59){\scriptsize $\mathsf{0}$}
       \rput[t](2.030,0.59){\scriptsize $\mathsf{50}$}
       \rput[t](3.436,0.59){\scriptsize $\mathsf{100}$}
       \rput[t](4.852,0.59){\scriptsize $\mathsf{150}$}
       \rput[t](6.265,0.59){\scriptsize $\mathsf{200}$}
			 \rput[t](7.672,0.59){\scriptsize $\mathsf{250}$}

			 \rput[rt](1.78,2.20){\scriptsize \textbf{\sf Alg.\ref{Alg:Conn}}}
       \rput[bl](3.2,2.20){\scriptsize \textbf{\sf \cite{Kekatos12-DistributedRobustPowerStateEstimation}}}

       %\psgrid
     \end{pspicture}
     }

     \caption[Results for D-MPC with a connected variable.]{
				Results for D-MPC with a connected variable. On the left, \text{(a)} and~\text{(c)} show the results for network~A, and, on the right, \text{(b)} and~\text{(d)} show the results for network~B. The optimization variable is star-shaped on the top plots, \text{(a)} and~\text{(b)}, and is non-star-shaped (and non-global) on the bottom plots, \text{(c)} and \text{(d)}.
     }
     \label{Fig:Exp_MPC}
	\end{figure}

	\mypar{Results}
	The results of our experiments are shown in \fref{Fig:Exp_MPC} for connected variables, and in \fref{Fig:Exp_MPC_NonConnected} for a non-connected variable. Each plot shows how the relative error as a function of the number of CSs. The relative error is measured the same way as in the network flow experiments: $\|x^k - x^\star\|_{\infty}/\|x^\star\|_{\infty}$, where~$x^k$ is the concatenation of all the nodes' control input estimates. The results for networks~A and~B, both with a star-shaped variable, are shown in Figures~\ref{SubFig:MPC_Barab100_Unstable} and~\ref{SubFig:MPC_PowerGridStable}, respectively. The relative behavior of all the compared algorithms is the same: the proposed Algorithm~\ref{Alg:Conn} required uniformly less CSs to achive any relative error between~$1$ and~$10^{-4}$; it was followed by the ADMM-based algorithms~\cite[\S7.2]{Boyd11-ADMM} and~\cite{Kekatos12-DistributedRobustPowerStateEstimation} (shown as Algorithm~\ref{Alg:Kekatos}), with~\cite[\S7.2]{Boyd11-ADMM} being more efficient than~\cite{Kekatos12-DistributedRobustPowerStateEstimation}. Finally, Nesterov's algorithm~\cite{Nesterov04-IntroductoryLecturesConvexOptimization} failed to converge in both cases. A curious fact is that all algorithms required more CSs to converge in the network of \fref{SubFig:MPC_Barab100_Unstable}, which has~$100$ nodes, than in the network of \fref{SubFig:MPC_PowerGridStable}, which is considerably larger, with nearly~$5000$ nodes. In fact, what influenced the performance of all the algorithms was the stability of the systems: while each system in \fref{SubFig:MPC_PowerGridStable} was guaranteed to be stable, no system in \fref{SubFig:MPC_Barab100_Unstable} was guaranteed to be stable. The difficulty of each problem instance can be measured by the magnitude of the Lipschitz constant of the gradient of the objective function of~\eqref{Eq:DistributedMPC_SimpleModelFinal}: $1.63\times 10^6$ for \fref{SubFig:MPC_Barab100_Unstable} and~$3395$ for \fref{SubFig:MPC_PowerGridStable}. Note that this Lipschitz constant can be computed in closed-form. Regarding the augmented Lagrangian parameter~$\rho$, its was computed, with precision~$5$, for \fref{SubFig:MPC_Barab100_Unstable} as~$120$ for~\cite[\S7.2]{Boyd11-ADMM} and as~$135$ for the other algorithms. For \fref{SubFig:MPC_PowerGridStable}, it was computed as~$25$ for Algorithm~\ref{Alg:Conn} and~\cite[\S7.2]{Boyd11-ADMM} and as~$30$ for~\cite{Kekatos12-DistributedRobustPowerStateEstimation}, also with precision~$5$.

	Figures~\ref{SubFig:MPC_Barab100NonStarUnstable} and~\ref{SubFig:MPC_PowerGridStableNonStar} show the results for generic, non-star-shaped variables for networks~A and~B, respectively. Since the ADMM-based algorithm~\cite[\S7.2]{Boyd11-ADMM} and Nesterov's algorithm~\cite{Nesterov04-IntroductoryLecturesConvexOptimization} are distributed only for star-shaped variables, they do not appear in these plots. Only the proposed Algorithm~\ref{Alg:Conn} and the algorithm in~\cite{Kekatos12-DistributedRobustPowerStateEstimation} (see Algorithm~\ref{Alg:Kekatos}) can handle generic connected variables. In both plots, Algorithm~\ref{Alg:Conn} required uniformly less CSs than~\cite{Kekatos12-DistributedRobustPowerStateEstimation} to achieve any relative error between~$1$ and~$10^{-4}$. Again, both algorithms required more CSs to converge in the smaller network~A than in the larger network~B. The reason, as we saw for the other plots, is because each system in network~A can be unstable, while all systems in network~B are stable. The value of~$\rho$ was the same for both algorithms: $40$ for network~A in \fref{SubFig:MPC_Barab100NonStarUnstable} (precision~$5$), and~$23$ for network~B (precision~$1$).

	\begin{figure}
		\centering
		\begin{pspicture}(7.9,5.2)
       \rput[bl](0.59,0.70){\includegraphics[width=7.1cm]{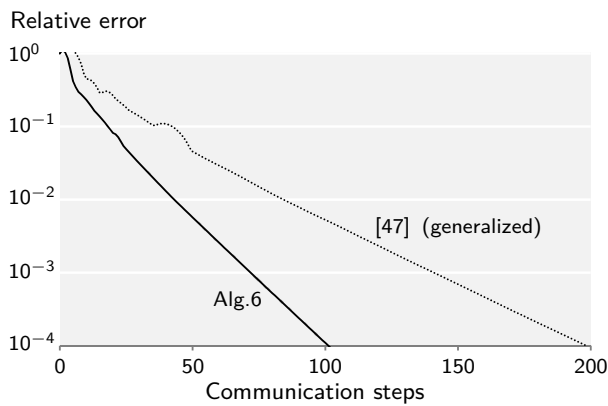}}
       \rput[b](4.00,-0.001){\footnotesize \textbf{\sf Communication steps}}
       \rput[bl](-0.05,5.0){\mbox{\footnotesize \textbf{{\sf Relative error}}}}

       \rput[r](0.56,4.68){\scriptsize $\mathsf{10^{0\phantom{-}}}$}
       \rput[r](0.56,3.74){\scriptsize $\mathsf{10^{-1}}$}
       \rput[r](0.56,2.77){\scriptsize $\mathsf{10^{-2}}$}
       \rput[r](0.56,1.79){\scriptsize $\mathsf{10^{-3}}$}
       \rput[r](0.56,0.83){\scriptsize $\mathsf{10^{-4}}$}

       \rput[t](0.614,0.59){\scriptsize $\mathsf{0}$}
       \rput[t](2.382,0.59){\scriptsize $\mathsf{50}$}
       \rput[t](4.140,0.59){\scriptsize $\mathsf{100}$}
       \rput[t](5.910,0.59){\scriptsize $\mathsf{150}$}
       \rput[t](7.675,0.59){\scriptsize $\mathsf{200}$}

       \rput[rt](3.3,1.5){\scriptsize \textbf{\sf Alg.\ref{Alg:NonConn}}}
       \rput[lb](4.8,2.2){\scriptsize \textbf{\sf \cite{Kekatos12-DistributedRobustPowerStateEstimation} \,(generalized)}}

       %\psgrid
     \end{pspicture}

     \caption[Results for D-MPC with a non-connected variable.]{
				Results for D-MPC with a non-connected variable. All the dynamic systems were designed stable in this case, and the network was~A.}
     \label{Fig:Exp_MPC_NonConnected}
	\end{figure}

	Finally, we present the results for a non-connected variable in \fref{Fig:Exp_MPC_NonConnected}. Neither Algorithm~\ref{Alg:Conn} nor the algorithm in~\cite{Kekatos12-DistributedRobustPowerStateEstimation} are applicable in this case. However, they can be adapted to non-connected variables, as described in \ssref{SubSec:DerivationNonConnected}. The generalization of Algorithm~\ref{Alg:Conn} yields Algorithm~\ref{Alg:NonConn}, and the exact same generalization can be applied to the algorithm in~\cite{Kekatos12-DistributedRobustPowerStateEstimation}. \fref{Fig:Exp_MPC_NonConnected} shows that the behavior we had seen for the non-generalized versions of the algorithms in the previous experiments translates into the generalized versions: Algorithm~\ref{Alg:NonConn} requires uniformly less CSs than the generalized version of~\cite{Kekatos12-DistributedRobustPowerStateEstimation} to achieve any relative error between~$1$ and~$10^{-4}$. Note that, although we used network~A in these experiments, we guaranteed that all the systems were stable.

	\chapter{Conclusions and Future Work}
\label{Ch:conclusions}

	We restate our main problem
	\begin{equation}\label{Eq:IntroProb}\tag{P}
		\begin{array}{ll}
			\underset{x \in\mathbb{R}^n}{\text{minimize}} & f_1(x_{S_1}) + f_2(x_{S_2}) + \cdots + f_P(x_{S_P})\,.
		\end{array}
	\end{equation}
	and recall the main goals of this thesis, as presented before in \cref{Ch:Introduction}:
	\begin{center}
		\begin{minipage}{0.92\textwidth}
			\singlespace
			\begin{shaded}
				\medskip
				We aim to design, analyze, and implement algorithms that solve optimization problems of the form~\eqref{Eq:IntroProb} on networks. The algorithms should be
				\begin{list}{}{\setlength\itemindent{-0.15in}\setlength\itemsep{0.1in}}
          \item \textbf{Distributed:} no node has complete knowledge about the problem data and no central node is allowed; also, each node communicates only with its neighbors;
          \item \textbf{Communication-efficient:} the number of communications they use is minimized;
					\item \textbf{Network-independent:} the algorithms run on networks with arbitrary topology and their output is independent of the network.
				\end{list}
			\smallskip
			\end{shaded}
		\end{minipage}
	\end{center}

	First, we summarize our contributions to achieve this goal and discuss current limitations; then, we describe potential future work.

	\section{Major contributions}

	We group the contributions of the thesis into the following categories:
	\begin{itemize}
		\item \textbf{Classification scheme.} The optimization problem~\eqref{Eq:IntroProb} is quite generic because each function may depend on an arbitrary subset of components of the optimization variable. This makes the design of a distributed algorithm a challenging task. We solve this problem with a classification scheme that allowed us to first identify particular instances of~\eqref{Eq:IntroProb} that are easier to solve in a distributed way. After that, we generalized the algorithms to solve larger classes and eventually all problems of the form~\eqref{Eq:IntroProb}. Besides helping us develop our algorithms, our classification scheme is also useful to categorize applications and to organize prior work on distributed optimization.

		\item \textbf{Algorithms.} Based on the proposed classification scheme, we developed a set of algorithms that solve subclasses of distributed optimization problems of the form~\eqref{Eq:IntroProb}. Each algorithm was built from a previous one, by modifying it to increase generality. Our most general algorithm solves~\eqref{Eq:IntroProb} in full generality.	Our algorithms satisfy all the requirements we had set forth: they are distributed, network-independent and,  most significantly, they are communication-efficient. Under certain conditions, they are proven to converge to the same solution as a centralized algorithm and, as shown through several experiments, they usually outperform prior distributed optimization algorithms; namely, they use systematically less communications to achieve a prescribed solution accuracy. A surprising fact is that, despite their generality, they sometimes even outperform distributed algorithms that were designed for specific applications.

		\item \textbf{Applications.} We applied our algorithms to several known distributed problems, and also proposed new applications for them, such as several instances of compressed sensing (or sparse approximation) problems. Namely, we solve the three most important optimization problems in compressed sensing in both the cases where the sensing matrix is partitioned vertically (by rows) and horizontally (by columns). We also propose a new, more general framework for distributed model predictive control (D-MPC). This framework models scenarios where, for example, two dynamical systems that are coupled through their dynamics do not communicate directly. Thus, it is useful in scenarios where establishing communications between systems is expensive.

		\item \textbf{Implementation and benchmarking.} Since there are no tight lower bounds on how many communications are needed to solve~\eqref{Eq:IntroProb} in a distributed setting, the performance assessment of our algorithms had to be done by comparing them to other prior distributed algorithms. This involved implementing both our algorithms and the algorithms for which no implementation was publicly available. We performed several experiments on different types of networks and for different applications where all the algorithms were compared. The size of both the data and the networks varied considerably. For example, the smallest network had only~$10$ nodes, while the largest one had around~$5000$ nodes. As mentioned before, these experiments enabled us to confirm the communication-efficiency of our algorithms.
	\end{itemize}

	\section{Current limitations}

	Despite the excellent communication-efficiency of our algorithms, they still have several limitations:

	\begin{itemize}
		\item \textbf{Selection of \boldmath{$\rho$}.} The algorithms we proposed are based on an augmented Lagrangian method called multi-block alternating direction method of multipliers (ADMM). Augmented Lagrangian methods are generally parametrized by a scalar parameter, which we denote with~$\rho$, and their performance is strongly dependent on that parameter. Currently, there is no known method for selecting~$\rho$ before the execution of the algorithm. And, although there are some heuristics to adapt~$\rho$ while the algorithm is running, implementing those heuristics in distributed algorithms destroys their distributivity, since it requires aggregating information that is spread over the entire network. Therefore, the performance of the algorithms we proposed are conditionally dependent on a good choice for the parameter~$\rho$. While in some situations it is possible to select beforehand a good~$\rho$ using training data, this is still a current limitation.

		\item \textbf{Convergence results.} As mentioned, our algorithms are based on the centralized multi-block ADMM algorithm. There is a proof of the convergence of this algorithm only in the case where all the cost functions are strongly convex. Yet, it has been observed experimentally, including in this thesis, that the multi-block ADMM converges for generic closed convex functions. Proving its convergence for this case is, however, still a well-known open problem. The lack of theoretical results for the multi-block ADMM transfers directly to our algorithms. In particular, we could only prove their convergence for generic closed convex functions when the network is bipartite. When it is not, our algorithms are only (theoretically) guaranteed to converge when the functions associated to each node are strongly convex.

		\item \textbf{Coloring scheme.} All our algorithms use the concept of network coloring and require a coloring scheme to be available before their execution. This coloring scheme is used by our algorithms to synchronize the order of operation of the nodes. In many platforms, most notably, in wireless networks, the nodes already have to operate with such a synchronization scheme in order to avoid packet collisions. In those cases, our algorithms integrate naturally with these low-level protocols. There are, however, some platforms that use other types of protocols or that even all fully parallel communication. In those cases, the coloring scheme required by our algorithms is clearly a limitation.
	\end{itemize}

	\section{Future work}

	We see three main future research directions, as described next:

	\begin{itemize}
		\item \textbf{Algorithm analysis.} We mentioned as a limitation of our algorithms the lack of convergence results. This is closely related to the lack of convergence of the multi-block ADMM, a currently well-known open problem. Therefore, results on this direction would have a significant impact on the distributed algorithms we proposed. Also in this category is the task of developing an heuristic to adapt the augmented Lagrangian parameter~$\rho$ during the execution of the algorithm, and in a distributed way.

		\item \textbf{New distributed algorithms.} Another possible research direction is the development of new distributed optimization algorithms. The current most efficient algorithms are based on ADMM, which can be viewed as an application of a monotone operator splitting method to an optimization problem. Therefore, exploring monotone operator theory and devising new splitting methods may yield new and more efficient distributed optimization algorithms. A topic that became more relevant with the advent of the ``big data'' is privacy. In our view, it would be interesting to study privacy guarantees offered by distributed algorithms in the processing of distributed data.

		\item \textbf{New applications.} Although there are many applications for distributed optimization, including the ones presented in this thesis, the majority of them involve convex problems. Yet, many optimization problems formulated on networks are inherently nonconvex, for example, network coloring or the computation of Steiner trees. An interesting area to explore is the design of distributed approximation schemes for these types of nonconvex problems.

	\end{itemize}

	% =============================================================================

	% =============================================================================
	% Appendices

	\appendix
	
\chapter{ADMM-based Algorithms For The Global Class: Derivation}
\label{App:ADMMAlgsGlobalClass}

  In this appendix, we derive Algorithms~\ref{Alg:Schizas} and~\ref{Alg:Zhu}, from \cref{Ch:relatedWork}. Although these algorithms were proposed in~\cite{Schizas08-ConsensusAdHocWSNsPartI} and~\cite{Zhu09-DistributedInNetworkChannelCoding}, respectively, they were derived there for particular instances of the global class~\eqref{Eq:GlobalProblem}. Here, we generalize them to solve the entire class. Before their derivation, we need some identities for quantities defined on the edges of a network.

\section{Network identities}

	Recall that we adopted the convention in~\sref{SubSec:CommunicationNetwork} that if~$(i,j) \in \mathcal{E}$, then $i<j$. The following lemma will be useful for exchanging between ``edge notation'' and ``node notation.''

	\begin{lemma}
	\label{lem:AppNetworkIdentities}
	\hfill

	\medskip
	\noindent
		\newcounter{NetworkLemmaApp}
		\begin{list}{(\alph{NetworkLemmaApp})}{\usecounter{NetworkLemmaApp}}
			\item
				Let~$a_{ij}$ be any quantity associated with the edge~$(i,j) \in \mathcal{E}$. Then,
			\begin{align}
				  \sum_{(i,j) \in \mathcal{E}} a_{ij}
				&=
					\sum_{p=1}^P
					\biggl(
						\sum_{
							\begin{subarray}{c}
								j \in \mathcal{N}_p \\
                p < j
              \end{subarray}
            } a_{pj}
						+
						\sum_{
							\begin{subarray}{c}
								j \in \mathcal{N}_p \\
                j < p
              \end{subarray}
              } a_{jp}
					\biggr)\,.
					\label{Eq:AppNetworkIdentity1}
				\\
				\intertext{Furthermore, if $a_{ij} = a_{ji}$ for all $(i,j) \in \mathcal{E}$, \eqref{Eq:AppNetworkIdentity1} becomes}
				  \sum_{(i,j) \in \mathcal{E}} a_{ij}
				&=
					\sum_{p=1}^P \sum_{j \in \mathcal{N}_p} a_{pj}\,.
					\label{Eq:AppNetworkIdentity2}
			\end{align}

			\item Let~$a_{ij}$ and~$a_{ji}$ be associated with edge~$(i,j) \in \mathcal{E}$. Then,
			\begin{equation}\label{Eq:AppNetworkIdentity3}
				\sum_{p=1}^P \sum_{j \in \mathcal{N}_p} a_{pj} = \sum_{p = 1}^P \sum_{j \in \mathcal{N}_p} a_{jp}\,.
			\end{equation}
    \end{list}
  \end{lemma}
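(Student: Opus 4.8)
The plan is to prove all three identities by a single elementary device: rewriting a sum indexed by the edges of the network as a sum indexed by ordered incidences, i.e.\ by (node, neighbour) pairs, while keeping careful track of the orientation convention that $(i,j)\in\mathcal{E}$ forces $i<j$. No machinery beyond this reindexing is needed, since each claim is purely combinatorial; in particular I would assume only that the communication network is simple and loop-free, as is implicit in the graph model of \ssref{SubSec:CommunicationNetwork}.

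First I would treat part (a). The key observation is to read the right-hand side of~\eqref{Eq:AppNetworkIdentity1} as a sum over all ordered incidences $(p,j)$ with $j\in\mathcal{N}_p$: for each such pair exactly one of $p<j$ or $j<p$ holds, so the two inner sums partition the neighbour set $\mathcal{N}_p$, and in either case the summand is the value attached to the edge joining $p$ and $j$, written in the admissible smaller-index-first form ($a_{pj}$ when $p<j$, and $a_{jp}$ when $j<p$). Re-grouping these incidences by their underlying edge then reduces~\eqref{Eq:AppNetworkIdentity1} to a counting statement: for each edge $\{i,j\}$ with $i<j$ one must determine how many incidences it generates and under which label, using the convention $i<j$ to keep the orientation straight. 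Once~\eqref{Eq:AppNetworkIdentity1} is in hand, the symmetric specialisation~\eqref{Eq:AppNetworkIdentity2} drops out immediately: substituting $a_{jp}=a_{pj}$ turns the second restricted inner sum into $\sum_{j\in\mathcal{N}_p,\,j<p}a_{pj}$, which merges with the first restricted sum to form the single unrestricted sum $\sum_{j\in\mathcal{N}_p}a_{pj}$, since every neighbour of $p$ is either larger or smaller than $p$.

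For part (b) I would argue purely by relabelling the dummy indices. Both sides of~\eqref{Eq:AppNetworkIdentity3} sum a quantity over the same index set, namely the ordered pairs $\{(p,j):j\in\mathcal{N}_p\}$, and this set is invariant under the swap $(p,j)\mapsto(j,p)$ because $j\in\mathcal{N}_p$ is equivalent to $p\in\mathcal{N}_j$ in an undirected network. Exchanging the names of the summation variables $p$ and $j$ therefore carries $\sum_{p}\sum_{j\in\mathcal{N}_p}a_{pj}$ into $\sum_{p}\sum_{j\in\mathcal{N}_p}a_{jp}$, which is exactly~\eqref{Eq:AppNetworkIdentity3}; this part requires no orientation bookkeeping at all and is the easiest of the three.

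The step I expect to be the main obstacle is the multiplicity count in part (a). The whole argument hinges on pinning down precisely how many times each edge value is reproduced on the node-indexed side and with which label, and this is exactly where the smaller-index-first convention must be enforced explicitly rather than left implicit, since a careless grouping of the endpoint contributions is the natural place for a spurious constant to enter. I would therefore make the bijection between admissible edges and their endpoint-incidences fully explicit, and verify the count on a small example (say a triangle) before committing to the general statement.
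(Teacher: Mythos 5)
Your reindexing plan is the right one, and your instinct that the multiplicity count in part (a) is the crux --- and the natural home of ``a spurious constant'' --- is exactly on target; but when you actually carry out that count (the triangle check you propose exposes it immediately) the identity \eqref{Eq:AppNetworkIdentity1} does not come out as stated. Each edge $(i,j)\in\mathcal{E}$ with $i<j$ generates \emph{two} ordered incidences, one per endpoint, and under the smaller-index-first labeling both contribute the same value $a_{ij}$ to the right-hand side: once through the first inner sum at $p=i$ (where $j\in\mathcal{N}_i$ and $i<j$) and once through the second inner sum at $p=j$ (where $i\in\mathcal{N}_j$ and $i<j=p$). The right-hand side of \eqref{Eq:AppNetworkIdentity1} therefore equals $2\sum_{(i,j)\in\mathcal{E}}a_{ij}$, not $\sum_{(i,j)\in\mathcal{E}}a_{ij}$; on the triangle it evaluates to $2(a_{12}+a_{13}+a_{23})$. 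The same factor of two is inherited by \eqref{Eq:AppNetworkIdentity2}, which with $a_{ij}\equiv 1$ would assert $E=\sum_{p}D_p$ instead of the handshake identity $2E=\sum_{p}D_p$. So the regrouping step of your plan, done honestly, proves a corrected statement carrying a factor of $2$ on the left (or $\tfrac{1}{2}$ on the right) rather than the one displayed; the proof of \eqref{Eq:AppNetworkIdentity1} as written cannot be completed.

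For comparison, the paper's own proof peels the edge sum node by node --- extracting the edges incident to $p$ and recursing on the subgraph with $p$ deleted --- and its slip occurs precisely at ``repeating iteratively for all $P$ nodes'': once the edges incident to $p$ have been removed, they cannot be extracted again at a later node, so for each $p$ only the $p<j$ inner sum survives the recursion; retaining both inner sums for every $p$, as the final display does, re-counts every edge at its second endpoint. Your part (b) is fine and complete as you describe it: the index set $\{(p,j):j\in\mathcal{N}_p\}$ is invariant under the swap $(p,j)\mapsto(j,p)$ because the network is undirected, and this relabeling argument is cleaner than the paper's element-by-element peeling. The passage from \eqref{Eq:AppNetworkIdentity1} to \eqref{Eq:AppNetworkIdentity2} under the symmetry $a_{ij}=a_{ji}$ is likewise correct; the defect is confined to \eqref{Eq:AppNetworkIdentity1} itself.
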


  \begin{proof}

    \vspace{-0.2cm}
    \noindent
    \newcounter{NetworkLemmaAppProof}
		\begin{list}{(\alph{NetworkLemmaAppProof})}{\usecounter{NetworkLemmaAppProof}}
    	\item

    	We have
				\begin{align*}
							\sum_{(i,j) \in \mathcal{E}} a_{ij}
            &=
							\sum_{(p,j) \in \mathcal{E}} a_{pj}
							+
							\sum_{(i,p) \in \mathcal{E}} a_{ip}
							+
							\sum_{
                \begin{subarray}{c}
                    (i,j) \in \mathcal{E} \\
                    i,j \neq p
                \end{subarray}
              } a_{ij}
            \\
            &=
              \sum_{
                \begin{subarray}{c}
                    j \in \mathcal{N}_{p} \\
                    p < j
                \end{subarray}
                   }a_{pj}
              +
              \sum_{
                \begin{subarray}{c}
                    j \in \mathcal{N}_{p} \\
                    j < p
                \end{subarray}
                   }a_{jp}
              +
              \sum_{
                \begin{subarray}{c}
                    (i,j) \in \mathcal{E} \\
                    i,j \neq p
                \end{subarray}
              } a_{ij}\,,
						\intertext{and repeating iteratively for all $P$ nodes,}
						&=
						  \sum_{p=1}^P \biggl(
														\sum_{
															\begin{subarray}{c}
																j \in \mathcal{N}_{p} \\
																p < j
															\end{subarray}
														}a_{pj}
														+
														\sum_{
															\begin{subarray}{c}
																j \in \mathcal{N}_{p} \\
																j < p
															\end{subarray}
														}a_{jp}
													\biggr)\,.
					\end{align*}
					When~$a_{pj} = a_{jp}$, then
					$$
						\sum_{(i,j) \in \mathcal{E}} a_{ij}
						 =
						 \sum_{p=1}^P \sum_{j \in \mathcal{N}_p} a_{pj}\,.
					$$

			\item

			\noindent
			There holds
				\begin{align*}
					  \sum_{p=1}^P \sum_{j \in \mathcal{N}_p} a_{pj}
					&=
					  \sum_{i \in \mathcal{N}_1} a_{i1}
					  +
					  \sum_{p=1}^P \sum_{\begin{subarray}{c} j \in \mathcal{N}_p \\ j \neq 1 \end{subarray}} a_{pj}
					\\
					&=
					  \sum_{i \in \mathcal{N}_1} a_{i1}
					  +
					  \sum_{i \in \mathcal{N}_2} a_{i2}
					  +
					  \sum_{p=1}^P \sum_{\begin{subarray}{c} j \in \mathcal{N}_p \\ j \neq 1,2 \end{subarray}} a_{pj}\,,
					\\
					\intertext{and repeating for all nodes,}
					&=
					  \sum_{p=1}^P \sum_{j \in \mathcal{N}_p} a_{jp}\,.
				\end{align*}
    \end{list}
    \end{proof}

\section{Derivation of Algorithm~\ref{Alg:Schizas}}

	We reproduce here problem~\eqref{Eq:RelatedWorkSchizasReformulation}, which was obtained as a reformulation of~\eqref{Eq:GlobalProblem}:
	$$
		\begin{array}{ll}
			\underset{\bar{x},\bar{z}}{\text{minimize}} & f_1(x_1) + f_2(x_2) + \cdots + f_P(x_P) \\
			\text{subject to} & x_p = z_j\,, \quad j \in \mathcal{N}_p^+\,,\quad p = 1,\ldots,P\,.
		\end{array}
	$$
	Recall that each node~$p$ has two copies of the original problem variable~$x \in \mathbb{R}^n$: $x_p \in \mathbb{R}^n$ and~$z_p \in \mathbb{R}^n$. The collection of the~$x_p$'s and of the~$z_p$'s are $\bar{x} = (x_1,\ldots,x_P)$ and~$\bar{z} = (z_1,\ldots,z_P)$, respectively. We can apply the $2$-block ADMM~\eqref{Eq:RelatedWorkADMMIter1}-\eqref{Eq:RelatedWorkADMMIter3} to this problem, seeing~$\bar{x}$ and~$\bar{z}$ as the two block variables. The augmented Lagrangian is
	\begin{equation}\label{Eq:AppSchizasAugmentedLagrangian}
		L_\rho(z,x;\lambda) = \sum_{p=1}^P f_p(x_p) + \sum_{p=1}^P \sum_{j \in \mathcal{N}_p^+} \lambda_{pj}^\top (x_p - z_j) + \frac{\rho}{2}\sum_{p=1}^P \sum_{j \in \mathcal{N}_p^+} \|x_p - z_j\|^2\,,
	\end{equation}
	where~$\lambda_{pj}$ is the dual variable associated to the constraint~$x_p - z_j = 0$, and $\lambda = (\ldots,\lambda_{ij},\ldots)$ is the collection of dual variables. We consider~$\bar{z}$ as the first block variable, and~$\bar{x}$ as the second block variable.

	\mypar{Minimization in \boldmath{$\bar{z}$}} Fixing~$\bar{x}$ and~$\lambda$ at~$\bar{x}^k$ and~$\lambda^k$, respectively, $\bar{z}$ is updated as
	\begin{equation}\label{Eq:AppSchizasZInterm}
		\bar{z}^{k+1} = \underset{\bar{z}}{\arg\min}\,\,\, \sum_{p=1}^P \sum_{j \in \mathcal{N}_p^+} {\lambda_{jp}^k}^\top (x_j^k - z_p) + \frac{\rho}{2}\sum_{p=1}^P \sum_{j \in \mathcal{N}_p^+} \bigl\|x_j^k - z_p\bigr\|^2\,,
	\end{equation}
	where we used the identity~\eqref{Eq:AppNetworkIdentity3}. Note that we also dropped the first term in~\eqref{Eq:AppSchizasAugmentedLagrangian}, since it does not depend on~$\bar{z}$. Now, \eqref{Eq:AppSchizasZInterm} decouples into~$P$ problems that can be solved in parallel. The problem associated to node~$p$ is
	\begin{align*}
		  z_p^{k+1}
		&=
		  \underset{z_p}{\arg\min} \,\, \sum_{j \in \mathcal{N}_p^+} {\lambda_{jp}^k}^\top \bigl(x_j^k - z_p\bigr) + \frac{\rho}{2} \sum_{j \in \mathcal{N}_p^+} \bigl\|z_p - x_j^k\bigr\|^2
		\\
		&=
		  \underset{z_p}{\arg\min} \,\, -\biggl(\sum_{j \in \mathcal{N}_p^+} \lambda_{jp}^k +  \rho \sum_{j \in \mathcal{N}_p^+} x_j^k\biggr)^\top z_p + \frac{\rho(D_p+1)}{2} \|z_p\|^2\,,
	\end{align*}
	which has the closed-form solution
	\begin{align}
		  z_p^{k+1}
		&=
		  \frac{1}{\rho(D_p+1)}\Bigl(\sum_{j \in \mathcal{N}_p^+} \lambda_{jp}^k +  \rho \sum_{j \in \mathcal{N}_p^+} x_j^k\Bigr)
		\notag
		\\
		&=
		  \tau_p \sum_{j \in \mathcal{N}_p^+} \lambda_{jp}^k + \frac{1}{D_p +1} \sum_{j \in \mathcal{N}_p^+} x_j^k	\,,
		\label{Eq:AppSchizasZ}
	\end{align}
	where~$\tau_p = 1/(\rho(D_p + 1))$.

	\mypar{Minimization in \boldmath{$\bar{x}$}}
	Fixing~$\bar{z}$ and~$\lambda$ at~$\bar{z}^{k+1}$ and~$\lambda^k$, respectively, $\bar{x}$ is updated as
	$$
		\bar{x}^{k+1}
		=
		\underset{\bar{x}}{\arg\min}\,\,
		\sum_{p=1}^P f_p(x_p) + \sum_{p=1}^P\sum_{j \in \mathcal{N}_p^+} {\lambda_{pj}^k}^\top \bigl(x_p - z_j^{k+1}\bigr)
		+
		\frac{\rho}{2}\sum_{p=1}^P \sum_{j \in \mathcal{N}_p^+} \bigl\|x_p - z_j^{k+1}\bigr\|^2\,,
	$$
	which decouples into~$P$ optimization problems that can be solved in parallel. The problem associated to node~$p$ is
	\begin{align}
		  x_p^{k+1}
		&=
		  \underset{x_p}{\arg\min}\,\, f_p(x_p) + \sum_{j \in \mathcal{N}_p^+} {\lambda_{pj}^k}^\top \bigl(x_p - z_j^{k+1}\bigr) + \frac{\rho}{2}\sum_{j \in \mathcal{N}_p^+}\bigl\|x_p - z_j^{k+1}\bigr\|^2
		\notag
		\\
		&=
		  \underset{x_p}{\arg\min}\,\, f_p(x_p) + \biggl(\sum_{j \in \mathcal{N}_p^+} \lambda_{pj}^k - \rho \sum_{j \in \mathcal{N}_p^+ }z_j^{k+1}\biggr)^\top x_p + \frac{\rho (D_p+1)}{2}\|x_p\|^2\,,
		\notag
		\intertext{and after completing the square,}
		&=
		  \underset{x_p}{\arg\min}\,\, f_p(x_p) + \frac{1}{2\tau_p}\biggl\|x_p + \tau_p \biggl(\sum_{j \in \mathcal{N}_p^+} \lambda_{pj}^k - \rho \sum_{j \in \mathcal{N}_p^+}z_j^{k+1}\biggr)\biggr\|^2
		\notag
		\\
		&=
		  \text{prox}_{\tau_p f_p}\biggl(\frac{1}{D_p + 1}\sum_{j \in \mathcal{N}_p^+ }z_j^{k+1} - \tau_p \sum_{j \in \mathcal{N}_p^+} \lambda_{pj}^k \biggr)\,,
		\label{Eq:AppSchizasX}
	\end{align}
	where the operator~$\text{prox}$ is defined in~\eqref{Eq:DefinitionProxOperator}.

	\mypar{Update of the dual variables}
	According to ADMM (cf.\ \eqref{Eq:RelatedWorkADMMIter3}), each dual variable~$\lambda_{pj}$, for $j \in \mathcal{N}_p^+$ and~$p=1,\ldots,P$, is updated as $\lambda_{pj}^{k+1} = \lambda_{pj}^{k} + \rho (x_p^{k+1} - z_j^{k+1})$. Node~$p$, however, does not need to know each individual~$\lambda_{ij}$. In fact, \eqref{Eq:AppSchizasZ} and~\eqref{Eq:AppSchizasX} only depend on the sums $\mu_p^k := \sum_{j \in \mathcal{N}_p^+} \lambda_{jp}^k$ and~$\eta_p^k := \sum_{j \in \mathcal{N}_p^+}\lambda_{pj}^k$, respectively. And these sums (or better, these new dual variables~$\mu_p$ and~$\eta_p$) can be updated as
	\begin{align*}
			\mu_p^{k+1}
		&=
			\sum_{j \in \mathcal{N}_p^+} \lambda_{jp}^{k+1}
			=
		  \underbrace{\sum_{j \in \mathcal{N}_p^+} \lambda_{jp}^{k}}_{\mu_p^k} + \rho \sum_{j \in \mathcal{N}_p^+} (x_j^{k+1} - z_p^{k+1})
		  =
		  \mu_p^k + \frac{1}{\tau_p}\Bigl(\frac{1}{D_p +1}\sum_{j \in \mathcal{N}_p^+} x_j^{k+1} - z_p^{k+1}\Bigr)
		\\
		  \eta_p^{k+1}
		&=
		  \sum_{j \in \mathcal{N}_p^+} \lambda_{pj}^{k+1}
			=
			\underbrace{
		  \sum_{j \in \mathcal{N}_p^+} \lambda_{pj}^{k}
			}_{\eta_p^k}
		  +
		  \rho \sum_{j \in \mathcal{N}_p^+} (x_p^{k+1} - z_j^{k+1})
		  =
		  \eta_p^k + \frac{1}{\tau_p}\Bigl(x_p^{k+1} - \frac{1}{D_p+1}\sum_{j \in \mathcal{N}_p^+} z_j^{k+1} \Bigr)\,.
	\end{align*}
	These updates constitute step~\ref{SubAlg:RelatedWorkSchizasUpdateDual} of Algorithm~\ref{Alg:Schizas}. If we replace $\sum_{j \in \mathcal{N}_p^+}\lambda_{jp}^k$ in~\eqref{Eq:AppSchizasZ} and~$\sum_{j \in \mathcal{N}_p^+}\lambda_{pj}^k$ in~\eqref{Eq:AppSchizasX} by~$\mu_p^k$ and~$\eta_p^k$, respectively, we get steps~\ref{SubAlg:RelatedWorkSchizasUpdateZ} and~\ref{SubAlg:RelatedWorkSchizasUpdateX}.

\section{Derivation of Algorithm~\ref{Alg:Zhu}}
\label{SubApp:DerivationZhu}

	The reformulation~\cite{Zhu09-DistributedInNetworkChannelCoding} makes of~\eqref{Eq:GlobalProblem} is~\eqref{Eq:RelatedWorkZhuReformulation}, which we reproduce here:
	$$
		\begin{array}{ll}
    		\underset{\bar{x},\bar{z}}{\text{minimize}} & f_1(x_1) + f_2(x_2) + \cdots + f_P(x_P) \\
    		\text{subject to} & x_i = z_{ij}\,, \quad (i,j) \in \mathcal{E} \\
    		                  & x_j = z_{ij}\,, \quad (i,j) \in \mathcal{E}\,.
    \end{array}
	$$
	Associating the dual variables~$\lambda_{ij}$ to the first set of constraints and~$\eta_{ij}$ to the second one, the augmented Lagrangian is
	\begin{equation}\label{Eq:AppZhuAugmentedLagrangian}
		L_\rho(\bar{x},\bar{z};\lambda,\eta) =
			\sum_{p=1}^P f_p(x_p) +
							\sum_{(i,j) \in \mathcal{E}}
								\left(
										\lambda_{ij}^\top (x_i - z_{ij}) + \eta_{ij}^\top (x_j - z_{ij})
										+ \frac{\rho}{2} \|x_i - z_{ij}\|^2+ \frac{\rho}{2}\|x_j - z_{ij}\|^2
								\right)\,,
	\end{equation}
	where $\lambda$ (resp.\ $\eta$) is the collection of the dual variables~$\lambda_{ij}$ (resp.\ $\eta_{ij}$). The $2$-block ADMM~\eqref{Eq:RelatedWorkADMMIter1}-\eqref{Eq:RelatedWorkADMMIter3} applied to this problem translates into
  \begin{align}
			\bar{x}^{k+1} &= \underset{\bar{x}}{\arg\min}\,\, L_\rho(\bar{x},\bar{z}^k;\lambda^k,\eta^k)
			\label{Eq:AppZhuMinX}
			\\
      \bar{z}^{k+1} &= \underset{\bar{z}}{\arg\min}\,\, L_\rho(\bar{x}^{k+1},\bar{z};\lambda^k,\eta^k)
      \label{Eq:AppZhuMinZ}
      \\
      \lambda_{ij}^{k+1} &= \lambda_{ij}^{k} + \rho(x_i^{k+1} - z_{ij}^{k+1})\,,\quad (i,j) \in \mathcal{E}
      \label{Eq:AppZhuUpdateLambda}
      \\
      \eta_{ij}^{k+1} &= \eta_{ij}^{k} + \rho(x_j^{k+1} - z_{ij}^{k+1})\,,\quad (i,j) \in \mathcal{E}\,.
      \label{Eq:AppZhuUpdateEta}
  \end{align}
	We first analyze the minimization with respect to~$\bar{z}$, \eqref{Eq:AppZhuMinZ}; then, we analyze the minimization with respect to~$\bar{x}$, \eqref{Eq:AppZhuMinX}; and, finally, we will see how to simplify the updates of the dual variables~\eqref{Eq:AppZhuUpdateLambda} and~\eqref{Eq:AppZhuUpdateEta}.

  \mypar{Minimization in \boldmath{$\bar{z}$}}
  Since the augmented Lagrangian is quadratic in~$\bar{z}$, problem~\eqref{Eq:AppZhuMinZ} has a closed form solution. To compute it component-wise, just select~$(i,j) \in \mathcal{E}$, and
  \begin{align}
		  \frac{\partial}{\partial z_{ij}} L_\rho(\bar{x}^{k+1},\bar{z};\lambda^k,\eta^k)\biggr|_{z_{ij} = z_{ij}^{k+1}} = 0
    &\quad\Longleftrightarrow\quad
      -(\lambda_{ij}^k + \eta_{ij}^k) - \rho (x_i^{k+1} - z_{ij}^{k+1}) - \rho (x_j^{k+1} - z_{ij}^{k+1}) = 0
      \notag
    \\
    &\quad\Longleftrightarrow\quad
      z_{ij}^{k+1} = \frac{\lambda_{ij}^k + \eta_{ij}^k}{2\rho} + \frac{x_i^{k+1} + x_j^{k+1}}{2}\,.
      \label{Eq:AppZhuMinZ1}
  \end{align}
    Replacing~\eqref{Eq:AppZhuMinZ1} in~\eqref{Eq:AppZhuUpdateLambda} and~\eqref{Eq:AppZhuUpdateEta}, we get, respectively,
    \begin{align}
          \lambda_{ij}^{k+1}
        &=
          \lambda_{ij}^{k} + \rho\,\biggl(x_i^{k+1} - \frac{\lambda_{ij}^k + \eta_{ij}^k}{2\rho} - \frac{x_i^{k+1} + x_j^{k+1}}{2}\biggr)
          =
					\frac{\lambda_{ij}^k - \eta_{ij}^k}{2} + \rho\, \frac{x_i^{k+1} - x_j^{k+1}}{2}
					\label{Eq:AppZhuUpdateLambda1}
			  \\
          \eta_{ij}^{k+1}
        &=
          \eta_{ij}^{k} + \rho\,\biggl(x_j^{k+1} - \frac{\lambda_{ij}^k + \eta_{ij}^k}{2\rho} - \frac{x_i^{k+1} + x_j^{k+1}}{2}\biggr)
          =
					\frac{\eta_{ij}^k - \lambda_{ij}^k}{2} + \rho \,\frac{x_j^{k+1} - x_i^{k+1}}{2}\,.
					\label{Eq:AppZhuUpdateEta1}
    \end{align}
    Note that if we sum up~\eqref{Eq:AppZhuUpdateLambda1} and~\eqref{Eq:AppZhuUpdateEta1}, we get
		\begin{equation}\label{Eq:AppZhuSymmetricDualVars}
			\lambda_{ij}^{k+1} + \eta_{ij}^{k+1} = 0\,,
		\end{equation}
		which holds for all~$k\geq 1$. Let us assume that it also holds for~$k=0$, i.e., $\lambda_{ij}^0$ and~$\eta_{ij}^0$ are initialized with symmetric values. Then, the first term in~\eqref{Eq:AppZhuMinZ1} is zero, and updating~$z_{ij}$ simplifies to
		\begin{equation}\label{Eq:AppZhuMinZ2}
			z_{ij}^{k+1} =  \frac{x_i^{k+1} + x_j^{k+1}}{2}\,.
		\end{equation}
		Similarly, the updates of the dual variables, \eqref{Eq:AppZhuUpdateLambda1} and~\eqref{Eq:AppZhuUpdateEta1} simplify, respectively, to
		\begin{align}
			\lambda_{ij}^{k+1} &= \lambda_{ij}^{k} + \rho\,\frac{x_i^{k+1} - x_j^{k+1}}{2}
			\label{Eq:AppZhuUpdateLambda2}
			\\
			\eta_{ij}^{k+1} &= \eta_{ij}^{k} + \rho\,\frac{x_j^{k+1} - x_i^{k+1}}{2}\,,
			\label{Eq:AppZhuUpdateEta2}
		\end{align}
		Since we assume that writing~$(i,j) \in \mathcal{E}$ means that $i<j$, the sets of dual variables~$\lambda_{ij}$ and~$\eta_{ij}$ are only defined for~$i<j$. Let us extend their definition in a meaningful way, i.e., such that~\eqref{Eq:AppZhuUpdateLambda2} and~\eqref{Eq:AppZhuUpdateEta2} make sense. Then, for~$i>j$, we define~$\lambda_{ij}^k$ and~$\eta_{ij}^k$, respectively, as
		\begin{align}
			\lambda_{ij}^k &= \eta_{ji}^k\,,\qquad i>j
			\label{Eq:AppZhuDefinitionLambdaSwitched}
			\\
			\eta_{ij}^k &= \lambda_{ji}^k\,,\qquad i>j\,.
			\label{Eq:AppZhuDefinitionEtaSwitched}
		\end{align}
		Next, we use the identity~\eqref{Eq:AppZhuSymmetricDualVars}, which holds for all~$k$, and the simplified updates~\eqref{Eq:AppZhuMinZ2}, \eqref{Eq:AppZhuUpdateLambda2}, and~\eqref{Eq:AppZhuUpdateEta2} to find a simple expression for the minimization in~$\bar{x}$, \eqref{Eq:AppZhuMinX}.

		\mypar{Minimization in \boldmath{$\bar{x}$}}
		If we set~$z_{ij} = z_{ij}^k$, $\lambda_{ij} = \lambda_{ij}^k$, and~$\eta_{ij} = \eta_{ij}^k$ in the augmented Lagrangian~\eqref{Eq:AppZhuAugmentedLagrangian}, the second term becomes
		\begin{align}
			&
			  \sum_{(i,j) \in \mathcal{E}}
				\left(
            {\lambda_{ij}^k}^\top (x_i - z_{ij}^k) + {\eta_{ij}^k}^\top (x_j - z_{ij}^k)
            + \frac{\rho}{2} \|x_i - z_{ij}^k\|^2+ \frac{\rho}{2}\|x_j - z_{ij}^k\|^2
        \right)
      \label{Eq:AppZhuAux1}
      \\
      =&
			  \sum_{(i,j) \in \mathcal{E}}
          \biggl(
            {\lambda_{ij}^k}^\top x_i + {\eta_{ij}^k}^\top x_j - (\underbrace{\lambda_{ij}^k + \eta_{ij}^k}_{=0})^\top z_{ij}^k
            +
            \frac{\rho}{2}\|x_i - z_{ij}^k\|^2
            + \frac{\rho}{2}\|x_j - z_{ij}^k\|^2
          \biggr)\,,
      \label{Eq:AppZhuAux2}
      \\
      =&
        \sum_{(i,j) \in \mathcal{E}}
        \Bigl(
					\underbrace{
					{\lambda_{ij}^k}^\top x_i + {\lambda_{ji}^k}^\top x_j
					}_{:=a_{ij}}
        \Bigr)
        +
        \frac{\rho}{2}
        \sum_{(i,j) \in \mathcal{E}}
        \Bigl(
					\underbrace{
					\|x_i - z_{ij}^k\|^2 + \|x_j - z_{ij}^k\|^2
          }_{:=b_{ij}}
        \Bigr)\,.
      \label{Eq:AppZhuMinX2ndTerm}
		\end{align}
		From~\eqref{Eq:AppZhuAux1} to~\eqref{Eq:AppZhuAux2}, we just rearranged the first two terms in the sum and used identity~\eqref{Eq:AppZhuSymmetricDualVars}. From~\eqref{Eq:AppZhuAux2} to~\eqref{Eq:AppZhuMinX2ndTerm}, we used definition~\eqref{Eq:AppZhuDefinitionLambdaSwitched}. Now note that~$a_{ij} = a_{ji}$ and also that~$b_{ij} = b_{ji}$ (since, by~\eqref{Eq:AppZhuMinZ2}, $z_{ij}^k = z_{ji}^k$). By identity~\eqref{Eq:AppNetworkIdentity2} in Lemma~\ref{lem:AppNetworkIdentities}, we can write~\eqref{Eq:AppZhuMinX2ndTerm} as
		\begin{align}
		  &
				\sum_{p=1}^P\sum_{j \in \mathcal{N}_p}
				\Bigl(
					{\lambda_{pj}^k}^\top x_p + {\lambda_{jp}^k}^\top x_j
				\Bigr)
        +
				\frac{\rho}{2}
				\sum_{p=1}^P\sum_{j \in \mathcal{N}_p}
        \Bigl(
					\|x_p - z_{pj}^k\|^2 + \|x_j - z_{pj}^k\|^2
        \Bigr)
			\notag
      \\
      =&
				\sum_{p=1}^P\sum_{j \in \mathcal{N}_p} {\lambda_{pj}^k}^\top x_p
				+
				\sum_{p=1}^P\sum_{j \in \mathcal{N}_p} {\lambda_{jp}^k}^\top x_j
				+
				\frac{\rho}{2}	\sum_{p=1}^P\sum_{j \in \mathcal{N}_p} \|x_p - z_{pj}^k\|^2
				+
				\frac{\rho}{2}	\sum_{p=1}^P\sum_{j \in \mathcal{N}_p} \|x_j - z_{pj}^k\|^2
			\label{Eq:AppZhuAux3}
			\\
			=&
			  2\sum_{p=1}^P\sum_{j \in \mathcal{N}_p} {\lambda_{pj}^k}^\top x_p
			  +
			  \rho	\sum_{p=1}^P\sum_{j \in \mathcal{N}_p} \|x_p - z_{pj}^k\|^2\,.
			\label{Eq:AppZhuMinX2ndTerm2}
		\end{align}
		From~\eqref{Eq:AppZhuAux3} to~\eqref{Eq:AppZhuMinX2ndTerm2}, we used identity~\eqref{Eq:AppNetworkIdentity3} from Lemma~\ref{lem:AppNetworkIdentities} in the second and fourth terms, and also that $z_{ij}^k = z_{ji}^k$. We can now write~\eqref{Eq:AppZhuMinX2ndTerm2} as
		\begin{align}
			  \sum_{p=1}^P\sum_{j \in \mathcal{N}_p}
				\biggl(
					2\,{\lambda_{pj}^k}^\top x_p + \rho \bigl\|x_p - z_{pj}^k\bigr\|^2
			  \biggr)
			&=
			  \sum_{p=1}^P\sum_{j \in \mathcal{N}_p}
				\biggl(
				  \Bigl(2\,{\lambda_{pj}^k} - 2\rho \,z_{pj}^k\Bigr)^\top x_p + \rho \|x_p\|^2 + \rho \bigl\|z_{pj}^k\bigr\|^2
				\biggr)
			\notag
			\\
			&=
				\sum_{p=1}^P
				\biggl(
					\Bigl(
						\underbrace{2\sum_{j \in \mathcal{N}_p}\lambda_{pj}^k}_{:=\mu_p^k}
						-2\rho \sum_{j \in \mathcal{N}_p}z_{pj}^k
					\Bigr)^\top x_p
					+
					\rho D_p\Bigl(\|x_p\|^2 + \|z_{pj}^k\|^2\Bigr)
				\biggr)
			\label{Eq:AppZhuDefinitionMu}
			\\
			&=
			  \sum_{p=1}^P
				\biggl(
					\Bigl(
						\mu_p^k
						-2\rho \sum_{j \in \mathcal{N}_p}z_{pj}^k
					\Bigr)^\top x_p
					+
					\rho D_p\Bigl(\|x_p\|^2 + \|z_{pj}^k\|^2\Bigr)
				\biggr)\,.
			\label{Eq:AppZhuMinX2ndTerm3}
		\end{align}
		Therefore, updating~$\bar{x}$ as in~\eqref{Eq:AppZhuMinX} amounts to
		\begin{equation}\label{Eq:AppZhuProbX1}
			\bar{x}^{k+1}
			=
			\underset{\bar{x}}{\arg\min}\,\, \sum_{p=1}^P
			\biggl(
				f_p(x_p) + \Bigl(\mu_p^k - 2\rho \sum_{j \in \mathcal{N}_p}z_{pj}^k \Bigr)^\top x_p
				+	\rho D_p\|x_p\|^2
			\biggr)\,,
		\end{equation}
		where we dropped $\|z_{pj}^k\|^2$ in the last term in~\eqref{Eq:AppZhuMinX2ndTerm3}, since it is independent of the problem variable~$\bar{x}$. Problem~\eqref{Eq:AppZhuProbX1} yields~$P$ independent optimization problems, each depending only on an~$x_p$, which can be executed in parallel. The problem associated to node~$p$ is
		\begin{align}
			  x_p^{k+1}
			&=
			  \underset{x_p}{\arg\min}\,\,\,
				  f_p(x_p) + \Bigl(\mu_p^k - 2\rho \sum_{j \in \mathcal{N}_p}z_{pj}^k \Bigr)^\top x_p
				  +	\rho D_p\|x_p\|^2
			\notag
			\\
			&=
			  \underset{x_p}{\arg\min}\,\,\,
			    f_p(x_p) + \rho D_P \biggl\| x_p - \Bigl(\frac{1}{D_p}\sum_{j \in \mathcal{N}_p}z_{pj}^k - \frac{1}{2\rho D_p}\mu_p^k\Bigr)\biggr\|^2
			\label{Eq:AppZhuProbX2}
			\\
			&=
			  \text{prox}_{\tau_p f_p}\biggl(\frac{1}{D_p}\sum_{j \in \mathcal{N}_p}z_{pj}^k - \tau_p\mu_p^k\biggr)
			\label{Eq:AppZhuProbX3}
			\\
			&=
			  \text{prox}_{\tau_p f_p}\biggl(\frac{1}{2D_p}\sum_{j \in \mathcal{N}_p}(x_{p}^k + x_j^k) - \tau_p\mu_p^k\biggr)\,.
			\label{Eq:AppZhuProbX4}
		\end{align}
		From~\eqref{Eq:AppZhuProbX2} to~\eqref{Eq:AppZhuProbX3}, we used the definition of the prox operator~\eqref{Eq:DefinitionProxOperator} and $\tau_p = 1/(2\rho D_p)$. From~\eqref{Eq:AppZhuProbX3} to~\eqref{Eq:AppZhuProbX4}, we replaced~$z_{pj}^k$ as in~\eqref{Eq:AppZhuMinZ2}.

    \mypar{Update of the dual variables}
    Each node~$p$ does not need to know each individual~$\lambda_{ij}$ associated to its incident edges. In fact, as shown in~\eqref{Eq:AppZhuDefinitionMu}, it only need to know~$\mu^k_p = \sum_{j \in \mathcal{N}_p} \lambda_{pj}^k$. According to~\eqref{Eq:AppZhuUpdateLambda2}, this variable is updated as
		\begin{equation}\label{Eq:AppZhuMuUpdate}
			  \mu_p^{k+1}
			  =
			  2\sum_{j \in \mathcal{N}_p} \lambda_{pj}^{k+1}
			  =
			  \underbrace{2\sum_{j \in \mathcal{N}_p} \lambda_{pj}^{k}}_{\mu_p^k} + \rho \sum_{j \in \mathcal{N}_p} (x_{p}^{k+1} - x_j^{k+1})
			  =
			  \mu_p^k + \frac{1}{2\tau_p} \Bigl( x_p^{k+1} - \frac{1}{D_p}\sum_{j \in \mathcal{N}_p} x_j^{k+1}\Bigr)\,.
		\end{equation}
		We thus see that~\eqref{Eq:AppZhuMuUpdate} corresponds to step~\ref{SubAlg:RelatedWorkZhuUpdateDual} in Algorithm~\ref{Alg:Zhu}, while~\eqref{Eq:AppZhuProbX4} corresponds to step~\ref{SubAlg:RelatedWorkZhuUpdateX}.

\chapter{Some Conjugate Functions}
\label{App:ConjugateFunctions}

	In this appendix, we compute some conjugate functions that appear throughout the thesis, especially in compressed sensing problems.

	\mypar{\boldmath{$\ell_1$}-norm plus quadratic regularization}
	In \ssref{SubSec:GC:SparseSolutions}, we reformulate BP~\eqref{Eq:BasisPursuit} as a problem in the global class~\eqref{Eq:GlobalProblem}. That reformulation uses duality and, in~\eqref{Eq:BPDual4}, we use the convex conjugate of the function $h(x) = \|x\|_1 + (c/2)\|x\|^2$ where the term~$\|x\|^2$ plays the role of a regularization function. We now show that the convex conjugate of~$h$ has a closed-form expression. Suppose~$x 	\in \mathbb{R}^n$. We have
	\begin{align}
			h^\star(\lambda) &= \underset{x}{\sup}\,\,\, \lambda^\top x - \|x\|_1 - \frac{c}{2}\|x\|^2
		\notag
		\\
		&=
			-\underset{x}{\inf}\,\,\, \|x\|_1 + \frac{c}{2}\|x\|^2 - \lambda^\top x
		\notag
		\\
		&=
			-\sum_{i=1}^n \underset{x_i}{\inf}\,\,\, |x_i| + \frac{c}{2}x_i^2 - \lambda_i x_i\,.
 	\label{Eq:AppCVXConjugateSoft}
	\end{align}
	Applying the optimality condition for convex problems to the problem in the $i$th component,
	\begin{equation}\label{Eq:AppCVXConjugateSoft2}
		0 \in \partial |x_i| + c\,x_i - \lambda_i\,.
	\end{equation}
	When~$x_i > 0$, $\partial |x_i| = \{1\}$, and~\eqref{Eq:AppCVXConjugateSoft2} becomes $x_i = (\lambda_i - 1)/c$. This happens when $\lambda_i > 1$, otherwise the expression would give a negative~$x_i$. Similarly, when $x_i < 0$, $\partial |x_i| = \{-1\}$, and~\eqref{Eq:AppCVXConjugateSoft2} becomes $x_i = (\lambda_i + 1)/c$. This expression is negative when $\lambda_i < -1$. Finally, when~$x_i=0$, $\partial |x_i| = [-1,1]$, and~\eqref{Eq:AppCVXConjugateSoft2} becomes the condition under which~$x_i=0$\,: $|\lambda_i| \leq 1$. This explains expression~\eqref{Eq:BPSoftSolution}.

	\mypar{\boldmath{$\ell_2$}-norm plus quadratic regularization}
	Here, we derive~\eqref{Eq:ConvexConjugateNormPlusNormSquared}, which is a closed-form expression for the conjugate of the function~$h(x) = \|x\| + (c/2)\|x\|^2$, where $\|\cdot\|$ is the $\ell_2$-norm. The convex conjugate of~$h$ is
	\begin{align}
		  h^\star(\lambda)
		&=
		  \underset{x}{\sup}\,\,\, \lambda^\top x - \|x\| - \frac{c}{2}\|x\|^2
		\\
		&=
		  -\underset{x}{\inf}\,\,\, \|x\| + \frac{c}{2}\|x\|^2 - \lambda^\top x\,.
		\label{Eq:AppCVXConjugateNorm2Regularized}
	\end{align}
	The subgradient of the norm function is
	$$
		\partial \|x\|
		=
		\left\{
			\begin{array}{ll}
				B(0,1) &,\, x=0 \\
				\frac{x}{\|x\|} &,\, x\neq 0\,,
			\end{array}
		\right.
	$$
	where~$B(0,1) = \{x\,:\, \|x\|\leq 1\}$ is the ball with radius~$1$, centered at the origin. The optimality conditions for~\eqref{Eq:AppCVXConjugateNorm2Regularized} then tell us that $x = 0$ if $\|\lambda\| \leq 1$ and that, for~$x\neq 0$,
	\begin{equation}\label{Eq:AppCVXConjugateNorm2Regularized2}
		0 = \frac{x}{\|x\|} + c\,x - \lambda\,.
	\end{equation}
	From~\eqref{Eq:AppCVXConjugateNorm2Regularized2}, we first find the norm of~$x$ and then compute an expression for~$x$. To find the norm of~$x$, first rewrite~\eqref{Eq:AppCVXConjugateNorm2Regularized2} as $\lambda = (1/\|x\| + c)x$, and compute the squared norm of both sides of the equation. This yields
	\begin{align*}
		\biggl(\frac{1}{\|x\|} + c\biggr)^2 \|x\|^2 = \|\lambda\|^2
		\qquad
		\Longleftrightarrow
		\qquad
		1 + 2c\|x\| + c^2 \|x\|^2 = \|\lambda\|^2\,,
	\end{align*}
	which is a quadratic expression on~$\|x\|^2$. Solving the quadratic equation, gives us $\|x\| = (\|\lambda\| - 1)/c$, which is positive because $\|\lambda\| > 1$. Replacing in~\eqref{Eq:AppCVXConjugateNorm2Regularized2} gives
	$$
		x = \frac{1}{c}\Bigl(1 - \frac{1}{\|\lambda\|}\Bigr)\lambda\,.
	$$
	To compute the value~\eqref{Eq:AppCVXConjugateNorm2Regularized}, just take the inner product of~\eqref{Eq:AppCVXConjugateNorm2Regularized2} with~$x$ and subtract~$(c/2)\|x\|^2$ to both sides of the equation. This gives
	\begin{align*}
		-\frac{c}{2}\|x\|^2 = \|x\| + \frac{c}{2}\|x\|^2 - \lambda^\top x\,.
	\end{align*}
	Using the expression for the norm of~$x$, we get
	$$
		h^\star(\lambda) = \frac{1}{2c}\Bigl(\|\lambda\|^2 - 2\|\lambda\| + 1\Bigr)\,,
	$$
	for $\|\lambda\| > 1$. This explains~\eqref{Eq:ConvexConjugateNormPlusNormSquared}.

\chapter{ADMM-based Algorithm For The Connected Class: Derivation}
\label{App:ADMMAlgsPartialClass}

  In this appendix, we derive Algorithm~\ref{Alg:Kekatos}, an ADMM-based algorithm presented in \cref{Ch:ConnectedNonConnected} that was proposed in~\cite{Kekatos12-DistributedRobustPowerStateEstimation} to solve~\eqref{Eq:IntroProb} with a star-shaped variable. That algorithm can be easily generalized to a generic connected variable, as we do next. To do that, we apply the $2$-block ADMM to the reformulation~\eqref{Eq:PartialReformulationKekatos}, reproduced here for convenience:
  \begin{equation}\label{Eq:AppPartialReformulationKekatos}
		\begin{array}{cl}
			\underset{\{\bar{x}_l\}_{l=1}^n, \{\bar{z}_l\}_{l=1}^n}{\text{minimize}} & f_1(x_{S_1}^{(1)}) + f_1(x_{S_2}^{(2)}) + \cdots + f_1(x_{S_P}^{(P)}) \\
  		\text{subject to} & x_l^{(p)} = z_l^{\{p,j\}}\,, \quad l \in S_p \cap S_j\,, \quad j \in \mathcal{N}_p\,, \quad p = 1,\ldots,P\,,
  	\end{array}
  \end{equation}
	whose variable consists of~$\bar{x} := \{\bar{x}_l\}_{l=1}^n$, and~$\bar{z} := \{\bar{z}_l\}_{l=1}^n$. The augmented Lagrangian of~\eqref{Eq:AppPartialReformulationKekatos} is
	\begin{equation}\label{Eq:AppDerivationKekatosAugmentedLagrangian}
		L_\rho(\bar{x},\bar{z};\bar{\lambda}) = \sum_{p=1}^P f_p\Bigl(x_{S_p}^{(p)}\Bigr) + \sum_{p=1}^P\sum_{j \in \mathcal{N}_p} \sum_{l \in S_p \cap S_j} \biggl[ {\lambda_l^{pj}}^\top (x_l^{(p)} - z_l^{\{p,j\}}) + \frac{\rho}{2} \Bigl\|x_l^{(p)} - z_l^{\{p,j\}}\Bigr\|^2 \biggr]\,,
	\end{equation}
	Note that~$\lambda_l^{pj}$ and~$\lambda_l^{jp}$ are associated to different constraints. The $2$-block ADMM~\eqref{Eq:RelatedWorkADMMIter1}-\eqref{Eq:RelatedWorkADMMIter3} applied to this problem translates into
	\begin{align}
		\bar{x}^{k+1} &= \underset{\bar{x}}{\arg\min}\,\,\, L_{\rho}(\bar{x},\bar{z}^k; \bar{\lambda}^k)
		\label{Eq:AppDerivationKekatosX}
		\\
		\bar{z}^{k+1} &= \underset{\bar{z}}{\arg\min}\,\,\, L_{\rho}(\bar{x}^{k+1},\bar{z}; \bar{\lambda}^k)
		\label{Eq:AppDerivationKekatosZ}
		\\
		\lambda_l^{pj,k+1} &= \lambda_l^{pj,k} + \rho \Bigl(x_l^{(p),k+1} - z_l^{\{p,j\}, k+1}\Bigr)
		\label{Eq:AppDerivationKekatosLambda1}
		\\
		\lambda_l^{jp,k+1} &= \lambda_l^{jp,k} + \rho \Bigl(x_l^{(j),k+1} - z_l^{\{p,j\}, k+1}\Bigr)\,.
		\label{Eq:AppDerivationKekatosLambda2}
	\end{align}
	As in the derivation of Algorithm~\ref{Alg:Zhu} in \aref{App:ADMMAlgsGlobalClass}, we first analyze the minimization with respect to~$\bar{z}$, \eqref{Eq:AppDerivationKekatosZ}; then, we analyze the minimization with respect to~$\bar{x}$, \eqref{Eq:AppDerivationKekatosX}; and, finally, we will see how to simplify the updates of the dual variables~\eqref{Eq:AppDerivationKekatosLambda1} and~\eqref{Eq:AppDerivationKekatosLambda2}.

	\mypar{Minimization in \boldmath{$\bar{z}$}}
	No function~$f_p$ depends on any component of~$\bar{z}$, which means that~\eqref{Eq:AppDerivationKekatosZ} is an unconstrained quadratic program and, thus, it has a closed-form solution. Furthermore, it decomposes across each component. In particular the minimization with respect to~$z_l^{\{p,j\}} = z_l^{\{j,p\}}$ is
	\begin{align}
		  z_l^{\{p,j\},k+1}
		&=
		  \underset{z_l^{\{p,j\}}}{\arg\min}\,\,\, {\lambda_l^{pj,k}}^\top \bigl(x_l^{(p),k+1} - z_l^{\{p,j\}}\bigr) +
		  \frac{\rho}{2} \Bigl\|x_l^{(p),k+1} - z_l^{\{p,j\}}\Bigr\|^2
		  \notag
		  \\
		  &\phantom{sssssssssssssssssssssssssssssssssssss}
		  +
		  {\lambda_l^{jp,k}}^\top \bigl(x_l^{(j),k+1} - z_l^{\{p,j\}}\bigr)
		  + \frac{\rho}{2} \Bigl\|x_l^{(j),k+1} - z_l^{\{p,j\}}\Bigr\|^2
		\notag
		\\
		&=
		  \underset{z_l^{\{p,j\}}}{\arg\min}\,\,\, -\bigl(\lambda_l^{pj,k} + \lambda_l^{jp,k}\bigr)^\top z_l^{\{p,j\}}
		  +
		  \rho \Bigl\|z_l^{\{p,j\}}\Bigl\|^2 - \rho \,{x_l^{(p),k+1}}^\top z_l^{\{p,j\}} - \rho \,{x_l^{(j),k+1}}^\top z_l^{\{p,j\}}
		\notag
		\\
		&=
		  \underset{z_l^{\{p,j\}}}{\arg\min}\,\,\, -\Bigl(\lambda_l^{pj,k} + \lambda_l^{jp,k} + \rho(x_l^{(p),k+1} + x_l^{(j),k+1} )\Bigr)^\top z_l^{\{p,j\}}
		  +
		  \rho \Bigl\|z_l^{\{p,j\}}\Bigr\|^2
		\notag
		\\
		&=
		  \frac{\lambda_l^{pj,k} + \lambda_l^{jp,k}}{2\rho} + \frac{x_l^{(p),k+1} + x_l^{(j),k+1} }{2}\,.
		\label{Eq:AppDerivationKekatosZ2}
	\end{align}
	Replacing~\eqref{Eq:AppDerivationKekatosZ2} in~\eqref{Eq:AppDerivationKekatosLambda1} and~\eqref{Eq:AppDerivationKekatosLambda2}, we get, respectively,
	\begin{align}
		  \lambda_l^{pj,k+1}
		&=
		  \lambda_l^{pj,k}
		  +
		  \rho \,
				\Bigl(
					x_l^{(p),k+1} - \frac{\lambda_l^{pj,k} + \lambda_l^{jp,k}}{2\rho} - \frac{x_l^{(p),k+1} + x_l^{(j),k+1} }{2}
				\Bigr)
		\notag
		\\
 		&=
 		  \frac{\lambda_l^{pj,k} - \lambda_l^{jp,k}}{2}
 		  +
 		  \frac{\rho}{2}\bigl(x_l^{(p),k+1} - x_l^{(j),k+1}\bigr)
 		\label{Eq:AppDerivationKekatosLambda3}
 		\\
 			\lambda_l^{jp,k+1}
 		&=
 		  \lambda_l^{jp,k} + \rho \Bigl(x_l^{(j),k+1} - \frac{\lambda_l^{pj,k} + \lambda_l^{jp,k}}{2\rho} - \frac{x_l^{(p),k+1} + x_l^{(j),k+1} }{2}\Bigr)
 		\notag
 		\\
 		&=
 		  \frac{\lambda_l^{jp,k} - \lambda_l^{pj,k}}{2}
 		  +
 		  \frac{\rho}{2}\bigl(x_l^{(j),k+1} - x_l^{(p),k+1}\bigr)
 		\label{Eq:AppDerivationKekatosLambda4}
	\end{align}
	Summing~\eqref{Eq:AppDerivationKekatosLambda3} with~\eqref{Eq:AppDerivationKekatosLambda4}, we get
	$$
		\lambda_l^{pj,k+1} + \lambda_l^{jp,k+1} = 0\,,
	$$
	which holds for all $k \geq 1$. Let us assume that it also holds for $k = 0$, i.e., $\lambda_l^{pj}$ and~$\lambda_l^{jp}$ are initialized with symmetric values. Then, the first term in~\eqref{Eq:AppDerivationKekatosZ2} is zero, and updating~$z^{\{p,j\}}_l$ simplifies to
	\begin{equation}\label{Eq:AppDerivationKekatosZ3}
		z_l^{\{p,j\},k+1} = \frac{x_l^{(p),k+1} + x_l^{(j),k+1} }{2}\,.
	\end{equation}

	\mypar{Minimization in \boldmath{$\bar{x}$}}
	We now turn to the minimization in~$\bar{x}$~\eqref{Eq:AppDerivationKekatosX}. If we fix each $z_l^{\{p,j\}}$ at~$z_l^{\{p,j\},k}$ and each~$\lambda_l^{pj}$ at~$\lambda_l^{pj,k}$, the augmented Lagrangian~\eqref{Eq:AppDerivationKekatosAugmentedLagrangian} is the sum of~$p$ terms, where the $p$th term depends only on~$x_{S_p}$. Thus, problem~\eqref{Eq:AppDerivationKekatosX} decomposes into~$P$ optimization problems that can be solved in parallel. The problem associated with node~$p$ is
	\begin{align}
			x_{S_p}^{(p),k+1}
		&=
			\underset{x_{S_p}^{(p)}}{\arg\min}\,\,\, f_p\Bigl(x_{S_p}^{(p)}\Bigr) + \sum_{j \in \mathcal{N}_p} \sum_{l \in S_p \cap S_j} \biggl[{\lambda_l^{pj,k}}^\top (x_l^{(p)} - z_l^{\{p,j\},k}) + \frac{\rho}{2} \Bigl\|x_l^{(p)} - z_l^{\{p,j\},k}\Bigr\|^2 \biggr]
			\notag
		\\
		&=
		  \underset{x_{S_p}^{(p)}}{\arg\min}\,\,\, f_p\Bigl(x_{S_p}^{(p)}\Bigr) + \sum_{j \in \mathcal{N}_p} \sum_{l \in S_p \cap S_j} \biggl[
									{\lambda_l^{pj,k}}^\top x_l^{(p)} + \frac{\rho}{2} \Bigl\|x_l^{(p)}\Bigr\|^2 - \rho {z_l^{\{p,j\},k}}^\top x_l^{(p)}
		            \biggr]
		\label{Eq:AppDerivationKekatosXAux1}
		\\
		&=
		  \underset{x_{S_p}^{(p)}}{\arg\min}\,\,\, f_p\Bigl(x_{S_p}^{(p)}\Bigr) + \sum_{l \in S_p} \sum_{j \in \mathcal{N}_p \cap \mathcal{V}_l} \biggl[
									\Bigl(\lambda_l^{pj,k} - \rho z_l^{\{p,j\},k}\Bigr)^\top x_l^{(p)} + \frac{\rho}{2} \Bigl\|x_l^{(p)}\Bigr\|^2
		            \biggr]
		\label{Eq:AppDerivationKekatosXAux2}
		\\
		&=
		  \underset{x_{S_p}^{(p)}}{\arg\min}\,\,\, f_p\Bigl(x_{S_p}^{(p)}\Bigr) + \sum_{l \in S_p} \sum_{j \in \mathcal{N}_p \cap \mathcal{V}_l}	\Bigl(\lambda_l^{pj,k} - \rho z_l^{\{p,j\},k}\Bigr)^\top x_l^{(p)}
		  +
		  \frac{\rho}{2}\sum_{l \in S_p} \sum_{j \in \mathcal{N}_p \cap \mathcal{V}_l} \Bigl\|x_l^{(p)}\Bigr\|^2
		  \notag
		\\
		&=
		  \underset{x_{S_p}^{(p)}}{\arg\min}\,\,\, f_p\Bigl(x_{S_p}^{(p)}\Bigr) + \sum_{l \in S_p} \sum_{j \in \mathcal{N}_p \cap \mathcal{V}_l}	\Bigl(\lambda_l^{pj,k} - \rho z_l^{\{p,j\},k}\Bigr)^\top x_l^{(p)}
		  +
		  \frac{\rho}{2}\sum_{l \in S_p} D_{p,l} \Bigl\|x_l^{(p)}\Bigr\|^2
		\label{Eq:AppDerivationKekatosXAux3}
		\\
		&=
		  \underset{x_{S_p}^{(p)}}{\arg\min}\,\,\, f_p\Bigl(x_{S_p}^{(p)}\Bigr) + \sum_{l \in S_p} \sum_{j \in \mathcal{N}_p \cap \mathcal{V}_l}	\Bigl(\lambda_l^{pj,k} - \frac{\rho}{2} (x_l^{(p),k} + x_l^{(j),k}) \Bigr)^\top x_l^{(p)}
		  +
		  \frac{\rho}{2}\sum_{l \in S_p} D_{p,l} \,\Bigl\|x_l^{(p)}\Bigr\|^2
		\label{Eq:AppDerivationKekatosXAux4}
		\\
		&=
		  \underset{x_{S_p}^{(p)}}{\arg\min}\,\,\, f_p\Bigl(x_{S_p}^{(p)}\Bigr) + \sum_{l \in S_p} 	\Bigl(\gamma_l^{(p),k} - \frac{\rho}{2} ( D_{p,l} x_l^{(p),k} + \sum_{j \in \mathcal{N}_p \cap \mathcal{V}_l} x_l^{(j),k}) \Bigr)^\top x_l^{(p)}
		  +
		  \frac{\rho}{2}\sum_{l \in S_p} D_{p,l} \,\Bigl\|x_l^{(p)}\Bigr\|^2\,,
		\label{Eq:AppDerivationKekatosXAux5}
	\end{align}
	where~$D_{p,l}$ is the degree of node~$p$ in the subgraph induced by~$x_l$, $\mathcal{G}_l$.
	From~\eqref{Eq:AppDerivationKekatosXAux1} to~\eqref{Eq:AppDerivationKekatosXAux2}, we used the fact that, for a fixed node~$p$, $\sum_{j \in \mathcal{N}_p} \sum_{l \in S_p \cap S_j} (\cdot) = \sum_{l \in S_p} \sum_{j \in \mathcal{N}_p \cap \mathcal{V}_l} (\cdot)$. From~\eqref{Eq:AppDerivationKekatosXAux3} to~\eqref{Eq:AppDerivationKekatosXAux4}, we used~\eqref{Eq:AppDerivationKekatosZ3}. Finally, from~\eqref{Eq:AppDerivationKekatosXAux4} to~\eqref{Eq:AppDerivationKekatosXAux5}, we defined $\gamma_l^{(p),k} = \sum_{j \in \mathcal{N}_p \cap \mathcal{V}_l} \lambda_l^{pj,k}$.

	\mypar{Update of the dual variables}
	Note from~\eqref{Eq:AppDerivationKekatosXAux5} that node~$p$ depends only on~$\gamma_l^{(p),k}=\sum_{j \in \mathcal{N}_p \cap \mathcal{V}_l} \lambda_l^{pj,k}$ and not on the individual~$\lambda_l^{pj}$s. Using~\eqref{Eq:AppDerivationKekatosLambda1}, the update of~$\gamma_l^{(p),k}$ comes as
	\begin{align}
		  \gamma_l^{(p),k+1}
		&=
		  \sum_{j \in \mathcal{N}_p \cap \mathcal{V}_l} \lambda_l^{pj,k+1}
		\notag
		\\
		&=
		  \underbrace{\sum_{j \in \mathcal{N}_p \cap \mathcal{V}_l} \lambda_l^{pj,k}}_{=\gamma_l^{(p),k}}
		  +
		  \rho \Bigl( x_l^{(p),k+1} - z_l^{\{p,j\},k+1} \Bigr)
		\notag
		\\
		&=
		  \gamma_l^{(p),k} + \rho \sum_{j \in \mathcal{N}_p \cap \mathcal{V}_l}  \Bigl( x_l^{(p),k+1} - z_l^{\{p,j\},k+1} \Bigr)
		\notag
		\\
		&=
		  \gamma_l^{(p),k} + \rho \sum_{j \in \mathcal{N}_p \cap \mathcal{V}_l} \Bigl( x_l^{(p),k+1} - \frac{x_l^{(p),k+1}+ x_l^{(j),k+1}}{2} \Bigr)
		\notag
		\\
		&=
		  \gamma_l^{(p),k} + \frac{\rho}{2}\sum_{j \in \mathcal{N}_p \cap \mathcal{V}_l} \Bigl( x_l^{(p),k+1} -   x_l^{(j),k+1}\Bigr)\,,
		\label{Eq:AppDerivationKekatosGamma}
	\end{align}
	where we have used~\eqref{Eq:AppDerivationKekatosZ3}. We thus see that~\eqref{Eq:AppDerivationKekatosGamma} corresponds to step~\ref{SubAlg:KekatosDual} of Algorithm~\ref{Alg:Kekatos}, while~\eqref{Eq:AppDerivationKekatosXAux5} corresponds to step~\ref{SubAlg:KekatosProx}.

	% =============================================================================

	% =============================================================================
	% Bibliography

	\backmatter
	\bibliographystyle{ieeetr}
	\addtotoc{Bibliography}
	{\singlespace \small	\bibliography{thesis}}

\begin{thebibliography}{100}

\bibitem{Boyd04-ConvexOptimization}
S.~{Boyd} and L.~{Vandenberghe}, {\em Convex Optimization}.
\newblock Cambridge University Press, 2004.

\bibitem{Bertsekas99-NonlinearProgramming}
D.~{Bertsekas}, {\em Nonlinear Programming}.
\newblock Athena Scientific, 2nd~ed., 1999.

\bibitem{BenTal01-LecturesModernConvexOptimization}
A.~{Ben-Tal} and A.~{Nemirovski}, {\em Lectures on Modern Convex Optimization}.
\newblock MPS-SIAM Series on Optimization, 2001.

\bibitem{Rabbat04-DistributedOptimizationSensorNetworks}
M.~{Rabbat} and R.~{Nowak}, ``Distributed optimization in sensor networks,'' in
  {\em \IPSN}, pp.~20--27, 2004.

\bibitem{Bertsekas10-IncrementalGradientSubgradientProximal}
D.~{Bertsekas}, ``Incremental gradient, subgradient, and proximal methods for
  convex optimization: A survey,'' tech. rep., LIDS-2848, 2010.

\bibitem{Akyildiz02-WirelessNetworksASurvey}
I.~{Akyildiz}, Y.~{Sankarasubramaniam}, and E.~{Cayirci}, ``Wireless sensor
  networks: a survey,'' {\em Computer Networks}, vol.~38, pp.~393--422, 2002.

\bibitem{DeGroot74-ReachingConsensus}
M.~{DeGroot}, ``Reaching a consensus,'' {\em J. American Statistical
  Association}, vol.~69, no.~345, pp.~118--121, 1974.

\bibitem{Boyd04-FastLinearIterationsforDistributedAveraging}
L.~{Xiao} and S.~{Boyd}, ``Fast linear iterations for distributed averaging,''
  {\em Systems and Control Letters}, vol.~53, pp.~65--78, 2004.

\bibitem{Erseghe11-FastConsensusByADMM}
T.~{Erseghe}, D.~{Zennaro}, E.~{Dall'Anese}, and L.~{Vangelista}, ``Fast
  consensus by the alternating direction multipliers method,'' {\em \TSP},
  vol.~59, no.~11, pp.~5523--5537, 2011.

\bibitem{Olshevsky11-ConvergenceSpeedDistributedConsensusAveraging}
A.~{Olshevsky} and J.~{Tsitsiklis}, ``Convergence speed in distributed
  consensus and averaging,'' {\em SIAM Review}, vol.~53, no.~4, pp.~747--772,
  2011.

\bibitem{Oreshkin10-OptimizationAnalysisDistrAveraging}
B.~{Oreshkin}, M.~{Coates}, and M.~{Rabbat}, ``Optimization and analysis of
  distributed averaging with short node memory,'' {\em \TSP}, vol.~58, no.~5,
  pp.~2850--2865, 2010.

\bibitem{Kar09-DistributedConsensusAlgsSN}
S.~{Kar} and J.~M.~F. {Moura}, ``Distributed consensus algorithms in sensor
  networks with imperfect communication: Link failures and channel noise,''
  {\em \TSP}, vol.~57, no.~1, pp.~355--369, 2009.

\bibitem{Donoho06-CompressedSensing}
D.~{Donoho}, ``Compressed sensing,'' {\em \TINFO}, vol.~52, no.~4,
  pp.~1289--1306, 2006.

\bibitem{Candes06-RobustUncertaintyPrinciplesExactSignalReconstructionHighlyIncomplete}
E.~{Cand\`es}, J.~{Romberg}, and T.~{Tao}, ``Robust uncertainty principles:
  Exact signal reconstruction from highly incomplete frequency information,''
  {\em \TINFO}, vol.~52, no.~2, pp.~489--509, 2006.

\bibitem{Donoho98-AtomicDecompositionBasisPursuit}
S.~{Chen}, D.~{Donoho}, and M.~{Saunders}, ``Atomic decomposition by basis
  pursuit,'' {\em SIAM J. Sci. Comp.}, vol.~20, no.~1, pp.~33--61, 1998.

\bibitem{Tibshirani96-RegressionShrinkageLasso}
R.~{Tibshirani}, ``Regression shrinkage and selection via the lasso,'' {\em J.
  Royal Statistical. Soc., Series B}, vol.~58, no.~1, pp.~267--288, 1996.

\bibitem{Bishop06-PatternRecognitionMachineLearning}
C.~{Bishop}, {\em Pattern Recognition and Machine Learning}.
\newblock Springer, 2006.

\bibitem{Slijak07-LargeScaleDynamicSystems}
D.~{\v{S}lijak}, {\em Large-Scale Dynamic Systems}.
\newblock Dover Publications, 2007.

\bibitem{Camponogara02-DistributedMPC}
E.~{Camponogara}, D.~{Jia}, B.~{Krogh}, and S.~{Talukdar}, ``Distributed model
  predictive control,'' {\em \ControlSystemsMagazine}, vol.~22, no.~1,
  pp.~44--52, 2002.

\bibitem{Ahuja93-NetworkFlows}
R.~{Ahuja}, T.~{Magnanti}, and J.~{Orlin}, {\em Network Flows: Theory,
  Algorithms, and Applications}.
\newblock Prentice Hall, 1993.

\bibitem{Krishnamachari05-NetworkingWirelessSensors}
B.~{Krishnamachari}, {\em Networking Wireless Sensors}.
\newblock Cambridge University Press, 2005.

\bibitem{Han12-NoteOnADMM}
D.~{Han} and X.~{Yuan}, ``A note on the alternating direction method of
  multipliers,'' {\em \JournalOptimTheory}, vol.~155, no.~1, pp.~227--238,
  2012.

\bibitem{Glowinski75-ADMMFirst}
R.~{Glowinski} and A.~{Marrocco}, ``Sur l'approximation, par éléments finis
  d'ordre un, et la résolution, par pénalisation-dualité, d'une classe de
  problèmes de dirichelet non linéaires,'' {\em Revue Française d'Automatique,
  Informatique, et Recherche Opérationelle}, vol.~9, no.~2, pp.~41--76, 1975.

\bibitem{Gabay76-ADMMFirst}
D.~{Gabay} and B.~{Mercier}, ``A dual algorithm for the solution of nonlinear
  variational problems via finite element approximations,'' {\em Computers and
  Mathematics with Applications}, vol.~2, no.~1, pp.~17--40, 1976.

\bibitem{Schizas08-ConsensusAdHocWSNsPartI}
I.~{Schizas}, A.~{Ribeiro}, and G.~{Giannakis}, ``Consensus in ad hoc
  \text{WSNs} with noisy links - \text{Part I}: Distributed estimation of
  deterministic signals,'' {\em \TSP}, vol.~56, no.~1, pp.~350--364, 2008.

\bibitem{Zhu09-DistributedInNetworkChannelCoding}
H.~{Zhu}, G.~{Giannakis}, and A.~{Cano}, ``Distributed in-network channel
  decoding,'' {\em \TSP}, vol.~57, no.~10, pp.~3970--3983, 2009.

\bibitem{Bertsekas97-ParallelDistributed}
D.~{Bertsekas} and J.~{Tsitsiklis}, {\em Parallel and Distributed Computation:
  Numerical Methods}.
\newblock Athena Scientific, 1997.

\bibitem{Dongarra96-MPITheCompleteReference}
M.~{Snir}, S.~{Otto}, S.~{Hess-Lederman}, D.~{Walker}, and J.~{Dongarra}, {\em
  MPI: The Complete Reference}.
\newblock MIT Press, 1996.

\bibitem{Fischione11-DesignPrinciplesWSN-chapter}
P.~{Fischione}, P.~{Park}, and K.~{Johansson}, {\em Wireless Network Based
  Control}, ch.~Design Principles of Wireless Sensor Network Protocols for
  Control Applications.
\newblock Springer, 2011.

\bibitem{Lemarechal04-FundamentalsConvexAnalysis}
J.~{Hiriart-Urruty} and C.~{Lemaréchal}, {\em Fundamentals of Convex Analysis}.
\newblock Springer, 2004.

\bibitem{Nesterov04-IntroductoryLecturesConvexOptimization}
Y.~{Nesterov}, {\em Introductory Lectures on Convex Optimization: A Basic
  Course}.
\newblock Kluwer Academic Publishers, 2004.

\bibitem{Bollobas08-ModernGraphTheory}
B.~{Bollob\'as}, {\em Modern Graph Theory}.
\newblock Springer, 2008.

\bibitem{Tan06-DistributedOptimizationCoupledSystemsNUM}
C.~{Tan}, D.~{Palomar}, and M.~{Chiang}, ``Distributed optimization of coupled
  systems with applications to network utility maximization,'' in {\em
  \ICASSP}, pp.~981--984, 2006.

\bibitem{Kschischang01-FactorGraphsAndTheSum-ProductAlgorithm}
F.~{Kschischang}, B.~{Frey}, and H.~{Loeliger}, ``Factor graphs and the
  sum-product algorithm,'' {\em \TINFO}, vol.~47, no.~2, pp.~498--519, 2001.

\bibitem{Boyd11-ADMM}
S.~{Boyd}, N.~{Parikh}, E.~{Chu}, B.~{Peleato}, and J.~{Eckstein},
  ``Distributed optimization and statistical learning via the alternating
  method of multipliers,'' {\em Foundations and Trends in Machine Learning},
  vol.~3, no.~1, pp.~1--122, 2011.

\bibitem{Dantzig60-DecompositionPrincipleLPs}
G.~{Dantzig} and P.~{Wolfe}, ``Decomposition principle for linear programs,''
  {\em Operations Research}, vol.~8, no.~1, pp.~101--111, 1960.

\bibitem{Benders62-PartitioningProcedures}
J.~{Benders}, ``Partitioning procedures for solving mixed-variables programming
  problems,'' {\em \NumerischeMathematik}, vol.~4, pp.~238--252, 1962.

\bibitem{Everett63-GeneralizedLagrangeMultiplierMethod}
H.~{Everett}, ``Generalized \text{Lagrange} multiplier method for solving
  problems of optimum allocation of resources,'' {\em Operations Research},
  vol.~11, no.~3, pp.~399--417, 1963.

\bibitem{Tsitsiklis86-DistributedAsynchronousDeterministicAndStochasticGradientAlgorithms}
J.~{Tsitsiklis}, D.~{Bertsekas}, and M.~{Athans}, ``Distributed asynchronous
  deterministic and stochastic gradient optimization algorithms,'' {\em \TAC},
  vol.~AC-31, no.~9, pp.~803--812, 1986.

\bibitem{Soares12-DCOOLNET}
C.~{Soares}, J.~{Xavier}, and J.~{Gomes}, ``\text{DCOOL-NET}: Distributed
  cooperative localization for sensor networks.''
  \Arxiv{http://arxiv.org/abs/1211.7277}, 2012.

\bibitem{Forero11-DistributedClusteringUsingWSN}
P.~{Forero}, A.~{Cano}, and G.~{Giannakis}, ``Distributed clustering using
  wireless sensor networks,'' {\em \SelectedTopicsSignalProcessing}, vol.~5,
  no.~4, pp.~707--724, 2011.

\bibitem{Bazerque10-DistributedSpectrumSensingCognitiveRadio}
J.~{Bazerque} and G.~{Giannakis}, ``Distributed spectrum sensing for cognitive
  radio networks by exploiting sparsity,'' {\em \TSP}, vol.~58, no.~3,
  pp.~1847--1862, 2010.

\bibitem{Forero10-ConsensusBasedDistributedSVMs}
P.~{Forero}, A.~{Cano}, and G.~{Giannakis}, ``Consensus-based distributed
  support vector machines,'' {\em J. Machine Learning Research}, vol.~11,
  pp.~1663--1707, 2010.

\bibitem{Vazquez06-DistributedSVMs}
A.~{Navia-Vázquez}, D.~{Gutiérrez-González}, E.~{Parrado-Hernández}, and
  J.~{Navarro-Abellán}, ``Distributed support vector machines,'' {\em IEEE
  Trans. Neural Networks}, vol.~17, no.~4, pp.~1091--1097, 2006.

\bibitem{Flouri08-ConsensusAlgsSVM}
K.~{Flouri}, B.~{Beferull-Lozano}, and P.~{Tsakalides}, ``Distributed consensus
  algorithms for \text{SVM} training in wireless sensor networks,'' in {\em
  \Eusipco}, 2008.

\bibitem{Conte12-ComputationalAspectsDistributedMPC}
C.~{Conte}, T.~{Summers}, M.~{Zeilinger}, M.~{Morari}, and C.~{Jones},
  ``Computational aspects of distributed optimization in model predictive
  control,'' in {\em \CDC}, pp.~6819--6824, 2012.

\bibitem{Kekatos12-DistributedRobustPowerStateEstimation}
V.~{Kekatos} and G.~{Giannakis}, ``Distributed robust power system state
  estimation,'' {\em \TPowerSys}, vol.~28, no.~2, pp.~1617--1626, 2012.

\bibitem{Boyd12-DynamicNetworkEnergyManagementProximalMessagePassing}
M.~{Kraning}, E.~{Chu}, J.~{Lavaei}, and S.~{Boyd}, ``Dynamic network energy
  management via proximal message passing,'' {\em Found. Trends in
  Optimization}, vol.~1, no.~2, pp.~70--122, 2013.

\bibitem{Nedic13-DistributedConstrainedOptimPrimalDual}
T.~{Chang}, A.~{Nedi\'c}, and A.~{Scaglione}, ``Distributed constrained
  optimization by consensus-based primal-dual perturbation method.''
  \Arxiv{http://arxiv.org/abs/1304.5590}, 2013.

\bibitem{Anese13-DistributedOptimalPowerFlowForSmartMicrogrids}
E.~{Dall'Anese}, H.~{Zhu}, and G.~{Giannakis}, ``Distributed optimal power flow
  for smart microgrids,'' {\em IEEE Trans. Smart Grid}, vol.~4, no.~3,
  pp.~1464--1475, 2013.

\bibitem{Necoara11-ParallelAndDistributedOptimizationMethodsEstimationAndControlInNetworks}
I.~{Necoara}, V.~{Nedelcu}, and I.~{Dumitrache}, ``Parallel and distributed
  optimization methods for estimation and control in networks,'' {\em Journal
  of Process Control}, vol.~21, pp.~756--766, 2011.

\bibitem{Palomar06-TutorialDecompositionMethods}
D.~{Palomar} and M.~{Chiang}, ``A tutorial on decomposition methods for network
  utility maximization,'' {\em \SelectedAreasCommunications}, vol.~24, no.~8,
  pp.~1439--1451, 2006.

\bibitem{Vandenberghe11-DualDecomposition-lecs}
L.~{Vandenberghe}, ``Dual decomposition,'' Spring 2011-12.
\newblock Lecture Notes, Optimization Methods for Large-Scale Systems
  (EE-236C), UCLA.

\bibitem{Bertsekas03-ConvexAnalysisAndOptimization}
D.~{Bertsekas}, A.~{Nedi\'c}, and A.~{Ozdaglar}, {\em Convex Analysis and
  Optimization}.
\newblock Athena Scientific, 2003.

\bibitem{Vandenberghe11-SubgradientMethod-lecs}
L.~{Vandenberghe}, ``Subgradient method,'' Spring 2011-12.
\newblock Lecture Notes, Optimization Methods for Large-Scale Systems
  (EE-236C), UCLA.

\bibitem{Boyd07-SubgradientMethod-lecs}
S.~{Boyd} and A.~{Mutapic}, ``Subgradient methods,'' Winter 2007.
\newblock Lecture Notes, Convex Optimization II (EE364b), Stanford University.

\bibitem{Beck02-ConvergenceRateAnalysisGradientBasedAlgorithms-thesis}
A.~{Beck}, {\em Convergence Rate Analysis of Gradient Based Algorithms}.
\newblock PhD thesis, Tel-Aviv University, 2002.

\bibitem{Beck09-FISTA}
A.~{Beck} and M.~{Teboulle}, ``A fast iterative shrinkage-thresholding
  algorithm for linear inverse problems,'' {\em SIAM J. Im. Sc.}, vol.~2,
  no.~1, pp.~183--202, 2009.

\bibitem{Beck10-ConvexOptimizatoinSignalProcessingCommunications-BookChapter}
A.~{Beck} and M.~{Teboulle}, {\em Convex Optimization in Signal Processing and
  Communications}, ch.~Gradient-based algorithms with applications to
  signal-recovery problems.
\newblock Cambridge University Press, 2010.

\bibitem{Tseng08-AcceleratedProximalGradientMethodsConvexConcaveOptimization}
P.~{Tseng}, ``On accelerated proximal gradient methods for convex-concave
  optimization.'' Submitted to SIAM J. Optim., 2008.

\bibitem{Nesterov10-FirstOrderMethodsConvexOptimizationInexactOracle}
O.~{Devolder}, F.~{Glineur}, and Y.~{Nesterov}, ``First-order methods of smooth
  convex optimization with inexact oracle,'' {\em \MathematicalProgramming,
  Ser. A}, pp.~1--39, 2013.

\bibitem{Nesterov05-SmoothMinimizationOfNonSmoothFunctions}
Y.~{Nesterov}, ``Smooth minimization of non-smooth functions,'' {\em
  \MathematicalProgramming}, vol.~103, no.~1, pp.~127--152, 2005.

\bibitem{Aspremont08-SmoothOptimizationWithApproximateGradient}
A.~{d'Aspremont}, ``Smooth optimization with approximate gradient,'' {\em SIAM
  J. Optim.}, vol.~19, no.~3, pp.~1171--1183, 2008.

\bibitem{Vandenberghe11-FastProximalGradientMethods-lecs}
L.~{Vandenberghe}, ``Fast proximal gradient methods,'' Spring 2011-12.
\newblock Lecture Notes, Optimization Methods for Large-Scale Systems
  (EE-236C), UCLA.

\bibitem{Zakarian95-NonlinearJacobi-thesis}
A.~{Zakarian}, {\em Nonlinear Jacobi and $\epsilon$-relaxation methods for
  parallel network optimization}.
\newblock PhD thesis, University of Wisconsin, Madison, 1995.

\bibitem{Mota08-DistributedAlgorithmsSparseApproximation-thesis}
J.~{Mota}, ``Distributed algorithms for sparse approximation,'' Master's
  thesis, Instituto Superior T\'ecnico, Technical University of Lisbon,
  Portugal, 2008.
\newblock \url{http://users.isr.ist.utl.pt/~jmota/}.

\bibitem{Tseng01-ConvergenceBlockCoordinateDescent}
P.~{Tseng}, ``Convergence of a block coordinate descent method for
  nondifferentiable minimization,'' {\em \JournalOptimTheory}, vol.~109, no.~3,
  pp.~475--494, 2001.

\bibitem{Hestenes69-MethodOfMultipliers}
M.~{Hestenes}, ``Multiplier and gradient methods,'' {\em \JournalOptimTheory},
  vol.~4, no.~5, pp.~303--320, 1969.

\bibitem{Powell69-MethodNonlinearConstraintsMinimizationProblems-BookChapter}
M.~{Powell}, {\em Optimization}, ch.~A method for nonlinear constraints in
  minimization problems.
\newblock Academic Press, 1969.

\bibitem{Rockafellar76-AugmentedLagrangiansAndApplicationsOfProximalPointAlgorithm}
R.~{Rockafellar}, ``Augmented \text{Lagrangians} and applications of the
  proximal point algorithm in convex programming,'' {\em Mathematics of
  Operations Research}, vol.~1, no.~2, pp.~97--116, 1976.

\bibitem{Eckstein89-SplittingMethodsMonotoneOperatorsParallelOptimization-thesis}
J.~{Eckstein}, {\em Splitting Methods for Monotone Operators with Applications
  to Parallel Optimization}.
\newblock PhD thesis, Laboratory for Information and Decision Systems,
  Massachusetts Institute of Technology, 1989.

\bibitem{Bertsekas76-MethodMultipliersSurvey}
D.~{Bertsekas}, ``Multiplier methods: A survey,'' {\em Automatica}, vol.~12,
  pp.~133--145, 1976.

\bibitem{Bertsekas96-ConstrainedOptimizationLagrangeMultiplierMethods}
D.~{Bertsekas}, {\em Constrained Optimization and Lagrange Multiplier Methods}.
\newblock Athena Scientific, 1996.

\bibitem{Rockafellar73-MultiplierMethodHestenesPowellAppliedConvexProgramming}
R.~{Rockafellar}, ``The multiplier method of \text{Hestenes} and \text{Powell}
  applied to convex programming,'' {\em \JournalOptimTheory}, vol.~12, no.~6,
  pp.~555--562, 1973.

\bibitem{Eckstein12-AugmentedLagrangianADMM}
J.~{Eckstein}, ``Augmented \text{Lagrangian} and alternating direction methods
  for convex optimization: A tutorial and some illustrative computational
  results,'' tech. rep., Rutcor Research Report, 32-2012, 2012.

\bibitem{Martinet70-RegularisationInequations}
B.~{Martinet}, ``Regularisation d'inequations variationelles par approximations
  successives,'' {\em Revue Fran\c{c}aise d'Informatique et de Recherche
  Operationelle}, vol.~4, no.~R-3, pp.~154--158, 1970.

\bibitem{Martinet72-DeterminationPointFixe}
B.~{Martinet}, ``Determination approch\'ee d'un point fixe d'une application
  pseudo-contractante. cas de l'application prox,'' {\em \ComptesRendus},
  vol.~274, no.~A, pp.~163--165, 1972.

\bibitem{Rockafellar76-MonotoneOperatorsProximalPointAlgorithms}
R.~{Rockafellar}, ``Monotone operators and the proximal point algorithm,'' {\em
  SIAM J. Control and Optimization}, vol.~14, no.~5, pp.~877--898, 1976.

\bibitem{Douglas56-NumericalSolutionHeatEquation}
J.~{Douglas} and H.~{Rachford}, ``On the numerical solution of heat conduction
  problems in two and three space variables,'' {\em Trans. American Math.
  Society}, vol.~82, pp.~421--439, 1956.

\bibitem{Lions79-SplittingAlgorithms-DouglasRachford}
P.~{Lions} and B.~{Mercier}, ``Splitting algorithms for the sum of two
  nonlinear operators,'' {\em SIAM J. Num. Analysis}, vol.~16, no.~6,
  pp.~964--979, 1979.

\bibitem{Gabay83-ApplicationsMethodMultipliersVariationalInequalities-BookChapter}
D.~{Gabay}, {\em Augmented Lagrangian Methods: Applications to the Solution of
  Boundary-Value Problems}, ch.~Applications of the method of multipliers to
  variational inequalities.
\newblock North-Holland: Amsterdam, 1983.

\bibitem{Eckstein92-DouglasRachford}
J.~{Eckstein} and D.~{Bertsekas}, ``On the \text{Douglas-Rachford} splitting
  method and the proximal point algorithm for maximal monotone operators,''
  {\em \MathematicalProgramming}, vol.~55, pp.~293--318, 1992.

\bibitem{Mota11-ADMMProof}
J.~{Mota}, J.~{Xavier}, P.~{Aguiar}, and M.~{P\"uschel}, ``A proof of
  convergence for the alternating direction method of multipliers applied to
  polyhedral-constrained functions.'' \Arxiv{http://arxiv.org/abs/1112.2295},
  2011.

\bibitem{Eckstein90-ADMMForLP}
J.~{Eckstein} and D.~{Bertsekas}, ``An alternating direction method for linear
  programming,'' tech. rep., LIDS-P-1967, 1990.

\bibitem{He12-OnTheConvergenceRateOfTheDouglasRachfordADM}
B.~{He} and X.~{Yuan}, ``On the \text{O(1/n)} convergence rate of the
  \text{Douglas-Rachford} alternating direction method,'' {\em SIAM J. Numer.
  Anal.}, vol.~50, no.~2, pp.~700--709, 2012.

\bibitem{He12-OnNonErgodicADMMConvRate}
B.~{He} and X.~{Yuan}, ``On non-ergodic convergence rate of
  \text{Douglas-Rachford} alternating direction method of multipliers.''
  \OptimizationOnline{http://www.optimization-online.org/DB_HTML/2012/01/3318.html},
  2012.

\bibitem{Goldstein12-FastAlternatingDirectionOptimizationMethods}
T.~{Goldstein}, B.~{O'Donoghue}, and S.~{Setzer}, ``Fast alternating direction
  optimization methods,'' tech. rep., CAM report 12-35, UCLA, 2012.

\bibitem{Boley12-LinearConvergenceOfADMMOnAModelProblem}
D.~{Boley}, ``Linear convergence of \text{ADMM} on a model problem,'' tech.
  rep., TR 12-009, Dept. Computer Science, University of Minnesota, 2012.

\bibitem{Ghadimi13-OptimalParameterSelectionForADMM}
E.~{Ghadimi}, A.~{Teixeira}, I.~{Shames}, and M.~{Johansson}, ``Optimal
  parameter selection for the alternating direction method of multipliers
  (\text{ADMM}): quadratic problems.'' \Arxiv{http://arxiv.org/abs/1306.2454},
  2013.

\bibitem{Deng12-GlobalLinearConvergenceGeneralizedADMM}
W.~{Deng} and W.~{Yin}, ``On the global and linear convergence of the
  generalized alternating direction method of multipliers,'' tech. rep., Rice
  University, Dept. Computational and Applied Mathematics, 2012.

\bibitem{Tseng91-ApplicationsSplittingAlgorithm}
P.~{Tseng}, ``Applications of a splitting algorithm to decomposition in convex
  programming and variational inequalities,'' {\em SIAM J. Control and
  Optimization}, vol.~29, no.~1, pp.~119--138, 1991.

\bibitem{Goldfarb09-FastMultipleSplittingAlgs}
D.~{Goldfarb} and S.~{Ma}, ``Fast multiple splitting algorithms for convex
  optimization,'' tech. rep., Department of IEOR, Columbia Univ., 2009.

\bibitem{Goldfarb10-FastAlternatingLinearizationMethodsSumTwoConvexFunctions}
D.~{Goldfarb}, S.~{Ma}, and K.~{Scheinberg}, ``Fast alternating linearization
  methods for minimizing the sum of two convex functions,'' tech. rep.,
  Department of IEOR, Columbia Univ., 2010.

\bibitem{Wei13-OnTheConvergenceAsynchronousDistributedADMM}
E.~{Wei} and A.~{Ozdaglar}, ``On the \text{O(1/k)} convergence of asynchronous
  distributed alternating direction method of multipliers.''
  \Arxiv{http://arxiv.org/abs/1307.8254}, 2013.

\bibitem{Afonso11-AugmentedLagrangianConstrainedOptimizationImagingInverseProblems-CSALSA}
M.~{Afonso}, J.~{Bioucas-Dias}, and M.~{Figueiredo}, ``An augmented
  \text{Lagrangian} approach to the constrained optimization formulation of
  imaging inverse problems,'' {\em IEEE Trans. Im. Proc.}, vol.~20, no.~3,
  pp.~681--695, 2011.

\bibitem{Martins11-AnAugmentedLagrangianApproachMAP}
A.~{Martins}, M.~{Figueiredo}, P.~{Aguiar}, N.~{Smith}, and E.~{Xing}, ``An
  augmented \text{Lagrangian} approach to constrained \text{MAP} inference,''
  in {\em Proc. 28th Intern. Conf. Machine Learning, Bellevue, WA, USA}, 2011.

\bibitem{He12-AlternatingDirectionMethodGaussianBackSubstitution}
B.~{He}, M.~{Tao}, and X.~{Yuan}, ``Alternating direction method with
  \text{Gaussian} back substitution for separable convex programming,'' {\em
  SIAM J. Optim.}, vol.~22, no.~2, pp.~313--340, 2012.

\bibitem{Luo13-LinearConvergenceADMM}
M.~{Hong} and Z.~{Luo}, ``On the linear convergence of the alternating
  direction method of multipliers.'' \Arxiv{http://arxiv.org/abs/1208.3922},
  2013.

\bibitem{Mota11-DistributedBP-ArxivV2}
J.~{Mota}, J.~{Xavier}, P.~{Aguiar}, and M.~{P\"uschel}, ``Distributed basis
  pursuit.'' \Arxiv{http://arxiv.org/abs/1009.1128v2}, version~2, July, 2011.

\bibitem{Nedic07-OnTheRateConvergenceDistributedSubgradientMultiAgent-CDC}
A.~{Nedi\'c} and A.~{Ozdaglar}, ``On the rate of convergence of distributed
  subgradient methods for multi-agent optimization,'' in {\em \CDC},
  pp.~4711--4716, 2007.

\bibitem{Nedic09-DistributedSubgradientMethodsMultiAgentOptimization}
A.~{Nedi\'c} and A.~{Ozdaglar}, ``Distributed subgradient methods for
  multi-agent optimization,'' {\em \TAC}, vol.~54, no.~1, pp.~48--61, 2009.

\bibitem{Lobel11-DistributedMultiAgentOptimization}
I.~{Lobel}, A.~{Ozdaglar}, and D.~{Feijer}, ``Distributed multi-agent
  optimization with state-dependent communication,'' {\em
  \MathematicalProgramming, Ser. B}, vol.~129, pp.~255--284, 2011.

\bibitem{Nedic10-ConvexOptimizatoinSignalProcessingCommunications-BookChapter}
A.~{Nedi\'c} and A.~{Ozdaglar}, {\em Convex Optimization in Signal Processing
  and Communications}, ch.~Cooperative distributed multi-agent optimization.
\newblock Cambridge University Press, 2010.

\bibitem{Ram09-AsynchronousGossipAlgorithmsForStochasticOptimization}
S.~{Ram}, A.~{Nedi\'c}, and V.~{Veeravalli}, ``Asynchronous gossip algorithms
  for stochastic optimization,'' in {\em Joint 48th IEEE Conf. Decision and
  Control and 28th Chinese Control Conf., Shanghai, P.R. China},
  pp.~3581--3586, 2009.

\bibitem{Tsitsiklis84-ProblemsInDecentralizedDecisionMaking-thesis}
J.~{Tsitsiklis}, {\em Problems in decentralized decision making and
  computation}.
\newblock PhD thesis, Massachussets Institute of Technology, 1984.

\bibitem{Rabbat05-QuantizedIncrementalAlgorithmsDistributedOptimization}
M.~{Rabbat} and R.~{Nowak}, ``Quantized incremental algorithms for distributed
  optimization,'' {\em \SelectedAreasCommunications}, vol.~23, no.~4,
  pp.~798--808, 2005.

\bibitem{Johansson09-RandomizedIncrementalSubgradient}
B.~{Johansson}, M.~{Rabi}, and M.~{Johansson}, ``A randomized incremental
  subgradient method for distributed optimization in networked systems,'' {\em
  SIAM J. Optim.}, vol.~20, no.~3, pp.~1157--1170, 2009.

\bibitem{Rabbat05-GeneralizedConsensusAlgs}
M.~{Rabbat}, R.~{Nowak}, and J.~{Bucklew}, ``Generalized consensus algorithms
  in networked systems with erasure links,'' in {\em
  \WorkshopSignalProcAdvancesWirelessComm}, pp.~1088--1092, 2005.

\bibitem{Johansson08-SubgradientMethodsAndConsensusAlgorithms-CDC}
B.~{Johansson}, T.~{Keviczky}, M.~{Johansson}, and K.~{Johansson},
  ``Subgradient methods and consensus algorithms for solving convex
  optimization problems,'' in {\em \CDC}, pp.~4185--4190, 2008.

\bibitem{Zhu12-OnDistributedConvexOptimizationUnderInequalityEqualityConstraints}
M.~{Zhu} and S.~{Mart\'inez}, ``On distributed convex optimization under
  inequality and equality constraints,'' {\em \TAC}, vol.~57, no.~1,
  pp.~151--164, 2012.

\bibitem{Zhu10-OnDistributedOptimizationUnderInequalityPrimalDualMethods}
M.~{Zhu} and S.~{Mart\'inez}, ``On distributed optimization under inequality
  and equality constraints via penalty primal-dual methods,'' in {\em
  \AmericanControlConf}, pp.~2434--2439, 2010.

\bibitem{Sayed12-DiffusionAdaptationDistributedOptimization}
J.~{Chen} and A.~{Sayed}, ``Diffusion adaptation strategies for distributed
  optimization and learning over networks,'' {\em \TSP}, vol.~60, no.~8,
  pp.~4289--4305, 2012.

\bibitem{Jakovetic11-FastDistributedMethods}
D.~{Jakoveti\'c}, J.~{Xavier}, and J.~M.~F. {Moura}, ``Fast distributed
  gradient methods.'' \Arxiv{http://arxiv.org/abs/1112.2972}, 2011.

\bibitem{Duchi12-DualAveragingDistributedOptimization}
J.~{Duchi}, A.~{Argawal}, and M.~{Wainwright}, ``Dual averaging for distributed
  optimization: convergence analysis and network scaling,'' {\em \TAC},
  vol.~57, no.~3, pp.~592--606, 2012.

\bibitem{Nesterov09-PrimalDualSubgradientMethodsConvexProblems}
Y.~{Nesterov}, ``Primal-dual subgradient methods for convex problems,'' {\em
  \MathematicalProgramming, Ser. B}, vol.~120, pp.~221--259, 2009.

\bibitem{Ghadimi13-AcceleratedGradientMethodsNetworkedOptimization}
E.~{Ghadimi}, I.~{Shames}, and M.~{Johansson}, ``Accelerated gradient methods
  for networked optimization.'' \Arxiv{http://arxiv.org/abs/1211.2132}, 2012.

\bibitem{Polyak87-IntroductionToOptimization}
B.~{Polyak}, {\em Introduction to Optimization}.
\newblock Optimization Software, 1987.

\bibitem{Ruszczynski92-AugmentedLagrangianDecomposition}
A.~{Ruszczynski}, ``Augmented \text{Lagrangian} decomposition for sparse convex
  optimization,'' {\em Inter. Inst. Applied Systems Analysis}, 1992.

\bibitem{Mota11-BasisPursuitInSensorNetworks-ICASSP}
J.~{Mota}, J.~{Xavier}, P.~{Aguiar}, and M.~{P\"uschel}, ``Basis pursuit in
  sensor networks,'' in {\em \ICASSP}, pp.~2916--2919, 2011.

\bibitem{Jakovetic11-CooperativeConvexOptimization}
D.~{Jakoveti\'c}, J.~{Xavier}, and J.~M.~F. {Moura}, ``Cooperative convex
  optimization in networked systems: Augmented \text{Lagrangian} algorithms
  with directed gossip communication,'' {\em \TSP}, vol.~59, no.~8,
  pp.~3889--3902, 2011.

\bibitem{Necoara13-RandomCoordinateDescentAlgorithmsMultiAgentConvexOptimization}
I.~{Necoara}, ``Random coordinate descent algorithms for multi-agent convex
  optimization over networks,'' {\em \TAC}, vol.~58, no.~8, pp.~2001--2012,
  2013.

\bibitem{Moreau62-FonctionsConvexesDualesEtPointsProximaux}
J.~{Moreau}, ``Fonctions convexes duales et points proximaux dans un espace
  \text{Hilbertien},'' {\em \ComptesRendus, S\'erie A}, vol.~255,
  pp.~2897--2899, 1962.

\bibitem{Pesquet11-ProximalSplittingMethodsInSignalProcessing-BookChapter}
P.~{Combettes} and J.~{Pesquet}, {\em Fixed-Point Algorithms for Inverse
  Problems in Science and Engineering}, ch.~Proximal Splitting Methods in
  Signal Processing, pp.~185--212.
\newblock Springer, New York, 2011.

\bibitem{Mateos10-DistributedSparseLinearRegression}
G.~{Mateos}, J.~{Bazerque}, and G.~{Giannakis}, ``Distributed sparse linear
  regression,'' {\em \TSP}, vol.~58, no.~10, pp.~5262--5276, 2010.

\bibitem{Yin13-LinearlyConvergentDecentralizedConsensusOptimizationADMM-ICASSP}
W.~{Shi}, Q.~{Ling}, G.~{Wu}, and W.~{Yin}, ``Linearly convergent decentralized
  consensus optimization with the alternating direction method of
  multipliers,'' in {\em \ICASSP}, pp.~4613--4617, 2013.

\bibitem{Yin13-LinearConvergenceADMMDecentralizedOptimization}
W.~{Shi}, Q.~{Ling}, G.~{Wu}, and W.~{Yin}, ``On the linear convergence of the
  \text{ADMM} in decentralized consensus optimization.''
  \Arxiv{http://arxiv.org/abs/1307.5561}, 2013.

\bibitem{Ling12-MultiBlockAlternatingDirectionMethodParallelSplittingConsensusOptimization}
Q.~{Ling}, M.~{Tao}, W.~{Yin}, and X.~{Yuan}, ``A multi-block alternating
  direction method with parallel splitting for decentralized consensus
  optimization,'' {\em EURASIP J. Wireless Comm. and Networking}, vol.~338,
  2012.

\bibitem{Iutzeler13-AsynchronousDistributedOptimizationRandomizedADMM}
F.~{Iutzeler}, P.~{Bianchi}, P.~{Ciblat}, and W.~{Hachem}, ``Asynchronous
  distributed optimization using a randomized alternating direction method of
  multipliers.'' \Arxiv{http://arxiv.org/abs/1303.2837}, 2013.

\bibitem{Kelly97-ChargingRateControlElasticTraffic}
F.~{Kelly}, ``Charging and rate control for elastic traffic,'' {\em
  \EuropeanTransTelecom}, vol.~8, pp.~33--37, 1997.

\bibitem{Kelly98-RateControlCommunicationNetworksShadowPrices}
F.~{Kelly}, A.~{Maulloo}, and D.~{Tan}, ``Rate control for communication
  networks: Shadow prices, proportional fairness and stability,'' {\em J.
  Operational Research Society}, vol.~49, no.~3, pp.~237--252, 1998.

\bibitem{Low99-OptimizationFlowControl}
S.~{Low} and D.~{Lapsley}, ``Optimization flow control, \text{I}: Basic
  algorithm and convergence,'' {\em IEEE/ACM Trans. Networking}, vol.~7, no.~6,
  pp.~861--874, 1999.

\bibitem{Low02-UnderstandingVegas}
S.~{Low}, L.~{Peterson}, and L.~{Wang}, ``Understanding \text{Vegas}: a duality
  model,'' {\em Journal of the ACM}, vol.~49, no.~2, pp.~207--235, 2002.

\bibitem{Shakkotai07-NetworkOptimizationAndControl}
S.~{Shakkottai} and R.~{Srikant}, ``Network optimization and control,'' {\em
  Found. Trends Networking}, vol.~2, no.~3, pp.~271--379, 2007.

\bibitem{Chiang07-LayeringAsOptimizationDecomposition}
M.~{Chiang}, S.~{Low}, A.~{Calderbank}, and J.~{Doyle}, ``Layering as
  optimization decomposition: a mathematical theory of network architectures,''
  {\em \ProcIEEE}, vol.~95, no.~1, pp.~255--312, 2007.

\bibitem{Low00-OptimizationFlowNewton}
S.~{Athuraliya} and S.~{Low}, ``Optimization flow control with
  \text{Newton}-like algorithm,'' {\em J. Telecomm. Syst.}, vol.~15,
  pp.~345--358, 2000.

\bibitem{Bertsekas11-CentralizedDistributedNewton}
D.~{Bertsekas}, ``Centralized and distributed \text{Newton} methods for network
  optimization and extensions,'' tech. rep., LIDS-2866, 2011.

\bibitem{Wei10-DistributedNewtonMethod}
E.~{Wei}, A.~{Ozdaglar}, and A.~{Jadbabaie}, ``A distributed \text{Newton}
  method for network utility maximization,'' in {\em \CDC}, pp.~1816--1821,
  2010.

\bibitem{Wei11-DistributedNewtonMethodNUM}
E.~{Wei}, A.~{Ozdaglar}, and A.~{Jadbabaie}, ``A distributed \text{Newton}
  method for network utility maximization, \text{I}: Algorithm,'' tech. rep.,
  LIDS-2832, 2011.

\bibitem{Beck13-OptimalDistributedGradientMethodsNetworkResourceAllocationProblems}
A.~{Beck}, A.~{Nedi\'c}, A.~{Ozdaglar}, and M.~{Teboulle}, ``Optimal
  distributed gradient methods for network resource allocation problems.''
  \Arxiv{http://web.mit.edu/asuman/www/documents/NUM-FGM.pdf}, 2013.

\bibitem{Zadeh62-OnOptimalControlAndLinearProgramming}
L.~{Zadeh} and B.~{Whalen}, ``On optimal control and linear programming,'' {\em
  IRE Trans. Autom. Control}, vol.~7, no.~4, pp.~45--46, 1962.

\bibitem{Propoi63-UseOfLPMethodsForSynthesizingSampledDataAutomaticSystems}
A.~{Propoi}, ``Use of \text{LP} methods for synthesizing sampled-data automatic
  systems,'' {\em Automn Remote Control}, vol.~24, 1963.

\bibitem{Morari89-ModelPredictiveControlTheoryAndPractice}
C.~{Garc\'ia}, D.~{Prett}, and M.~{Morari}, ``Model predictive control: Theory
  and practice -- a survey,'' {\em Automatica}, vol.~25, no.~3, pp.~335--348,
  1989.

\bibitem{Acar92-SomeExamplesDecentralizedRecedingHorizonControl-CDC}
L.~{Acar}, ``Some examples for the decentralized receding horizon control,'' in
  {\em \CDC}, pp.~1356--1359, 1992.

\bibitem{Fawal98-OptimalControlComplexIrrigationSystemsAugmentedLagrangian}
H.~{Fawal}, D.~{Georges}, and G.~{Bornard}, ``Optimal control of complex
  irrigation systems via decomposition-coordination and the use of augmented
  \text{Lagrangian},'' in {\em IEEE Conf. on Systems, Man, and Cybernetics},
  pp.~3874--3879, 1998.

\bibitem{Jia01-DistributedModelPredictiveControl}
D.~{Jia} and B.~{Krogh}, ``Distributed model predictive control,'' in {\em
  \AmericanControlConf}, pp.~2767--2772, 2001.

\bibitem{Keviczky06-DecentralizedRecedingHorizonControl}
T.~{Keviczky}, F.~{Borrelli}, and G.~{Balas}, ``Decentralized receding horizon
  control for large scale dynamically decoupled systems,'' {\em Automatica},
  vol.~42, pp.~2105--2115, 2006.

\bibitem{Venkat05-StabilityOptimalityDistributedMPC}
A.~{Venkat}, J.~{Rawlings}, and S.~{Wright}, ``Stability and optimality of
  distributed model predictive control,'' in {\em \CDC}, pp.~6680--6685, 2005.

\bibitem{Zhang10-BregmanizedNonlocalRegularizationForDeconvolutionSparseReconstructions}
X.~{Zhang}, M.~{Burger}, X.~{Bresson}, and S.~{Osher}, ``Bregmanized nonlocal
  regularization for deconvolution and sparse reconstruction,'' {\em SIAM J.
  Imaging Sciences}, vol.~3, no.~3, pp.~253--276, 2010.

\bibitem{Zhang10-UnifiedPrimalDualAlgorithmFrameworkBasedBregmanIteration}
X.~{Zhang}, M.~{Burger}, and S.~{Osher}, ``A unified primal-dual algorithm
  framework based on \text{Bregman} iteration,'' {\em J. Sci. Comput.},
  vol.~46, pp.~20--46, 2011.

\bibitem{Wakasa08-DecentralizedMPCDualDecomp}
Y.~{Wakasa}, M.~{Arakawa}, K.~{Tanaka}, and T.~{Akashi}, ``Distributed model
  predictive control via dual decomposition,'' in {\em \CDC}, pp.~381--386,
  2008.

\bibitem{Camponogara11-DistributedOptimizationMPCLinearDynamicNetworks}
E.~{Camponogara} and H.~{Scherer}, ``Distributed optimization for model
  predictive control of linear dynamic networks with control-input and output
  constraints,'' {\em IEEE Trans. Aut. Sc. Engin.}, vol.~8, no.~1, 2011.

\bibitem{Summers12-DistributedModelPredictiveControlViaADMM}
T.~{Summers} and J.~{Lygeros}, ``Distributed model predictive consensus via the
  alternating direction method of multipliers,'' in {\em \Allerton},
  pp.~79--84, 2012.

\bibitem{Dantzig63-LinearProgrammingAndExtensions}
G.~{Dantzig}, {\em Linear Programming and Extensions}.
\newblock Princeton University Press, 1963.

\bibitem{Ahuja91-SomeRecentAdvancesInNetworkFlows}
R.~{Ahuja}, T.~{Magnanti}, and J.~{Orlin}, ``Some recent advances in network
  flows,'' {\em SIAM Review}, vol.~33, no.~2, pp.~175--219, 1991.

\bibitem{Goldberg89-NetworkFlowAlgorithms}
A.~{Goldberg}, E.~{Tardos}, and R.~{Tarjan}, ``Network flow algorithms,'' tech.
  rep., CS-TR-216-89, Dept. Computer Science, Stanford University, CA, 1989.

\bibitem{Bertsekas98-NetworkOptimization}
D.~{Bertsekas}, {\em Network Optimization: Continuous and Discrete Models}.
\newblock Athena Scientific, 1998.

\bibitem{Bertsekas87-DistributedAsynchronousRelaxationMethodsConvexNetworkFlowProblems}
D.~{Bertsekas} and D.~{El Baz}, ``Distributed asynchronous relaxation methods
  for convex network flow problems,'' {\em SIAM J. Control and Optimization},
  vol.~25, no.~1, pp.~74--85, 1987.

\bibitem{Boyd04-SimultaneousRoutingAndResourceAllocationViaDualDecomposition}
L.~{Xiao}, M.~{Johansson}, and S.~{Boyd}, ``Simultaneous routing and resource
  allocation via dual decomposition,'' {\em \TransCommunications}, vol.~52,
  no.~7, pp.~1136--1144, 2004.

\bibitem{Trdlicka10-DistributedMultiCommodityNetworkFlowAlgorithmForEnergyOptimalRoutingWSN}
J.~{Trdli\v{c}ka} and Z.~{Hanz\'alek}, ``Distributed multi-commodity network
  flow algorithm for energy optimal routing in wireless sensor networks,'' {\em
  Radioengineering}, vol.~19, no.~4, pp.~579--588, 2010.

\bibitem{Jadbabaie09-DistributedNewtonMethodNetworkOptimization}
A.~{Jadbabaie}, A.~{Ozdaglar}, and M.~{Zargham}, ``A distributed \text{Newton}
  method for network optimization,'' in {\em \CDC}, pp.~2736--2741, 2009.

\bibitem{Zargham11-AcceleratedDualDescentNetworkOptimization}
M.~{Zargham}, A.~{Ribeiro}, A.~{Jadbabaie}, and A.~{Ozdaglar}, ``Accelerated
  dual descent for network optimization,'' in {\em \AmericanControlConf},
  pp.~2663--2668, 2011.

\bibitem{Zargham12-AcceleratedDualDescentForNetworkFlowOptimization}
M.~{Zargham}, A.~{Ribeiro}, A.~{Ozdaglar}, and A.~{Jadbabaie}, ``Accelerated
  dual descent for network flow optimization.''
  \Arxiv{http://www.seas.upenn.edu/~zargham/ADDextended_PR1.pdf}, 2012.

\bibitem{Mota13-DADMM}
J.~{Mota}, J.~{Xavier}, P.~{Aguiar}, and M.~{P\"uschel}, ``\text{D-ADMM}: A
  communication-efficient distributed algorithm for separable optimization,''
  {\em \TSP}, vol.~61, no.~10, pp.~2718--2723, 2013.

\bibitem{Mota12-DADMM-ICASSP}
J.~{Mota}, J.~{Xavier}, P.~{Aguiar}, and M.~{P\"uschel}, ``\text{D-ADMM}: A
  distributed algorithm for compressed sensing and other separable optimization
  problems,'' in {\em \ICASSP}, pp.~2869--2872, 2012.

\bibitem{Mota12-ConsensusOnColoredNetworks-CDC}
J.~{Mota}, J.~{Xavier}, P.~{Aguiar}, and M.~{P\"uschel}, ``Consensus on colored
  networks,'' in {\em \CDC}, pp.~5116--5121, 2012.

\bibitem{Mota12-DistributedBP}
J.~{Mota}, J.~{Xavier}, P.~{Aguiar}, and M.~{P\"uschel}, ``Distributed basis
  pursuit,'' {\em \TSP}, vol.~60, no.~4, pp.~1942--1956, 2012.

\bibitem{Garey79-ComputersAndIntractability}
M.~{Garey} and D.~{Johnson}, {\em Computers and Intractability}.
\newblock W. H. Freeman and Co., 1979.

\bibitem{Kuhn06-ComplexityDistributedGraphColoring}
F.~{Kuhn} and R.~{Wattenhofer}, ``On the complexity of distributed graph
  coloring,'' in {\em in proceed. of Principles of distributed computing},
  pp.~7--15, 2006.

\bibitem{Leith06-ConvergenceDistributedLearningAlgorithmsOptimalWirelessChannelAllocation}
D.~{Leith} and P.~{Clifford}, ``Convergence of distributed learning algorithms
  for optimal wireless channel allocation,'' in {\em \CDC}, pp.~2980--2985,
  2006.

\bibitem{Duffy08-ComplexityAnalysisDecentralizedGraphColoring}
K.~{Duffy}, N.~{Connell}, and A.~{Sapozhnikov}, ``Complexity analysis of a
  decentralised graph colouring algorithm,'' {\em Information Processing
  Letters}, vol.~107, pp.~60--63, 2008.

\bibitem{Linial92-LocalityDistributedGraphAlgorithms}
N.~{Linial}, ``Locality in distributed graph algorithms,'' {\em SIAM J.
  Comput.}, vol.~21, no.~1, pp.~193--201, 1992.

\bibitem{Kurose05-ComputerNetworking}
J.~{Kurose} and K.~{Ross}, {\em Computer networking: A top-down approach
  featuring the internet}.
\newblock Addison Wesley, 3rd~ed., 2005.

\bibitem{Choi10-AchievingSingleChannelDuplex}
J.~{Choi}, M.~{Jain}, K.~{Srinivasan}, P.~{Levis}, and S.~{Katti}, ``Achieving
  single channel, full duplex wireless communication,'' in {\em Conf. Mobile
  Computing Netw. (Mobicom)}, pp.~1--12, 2010.

\bibitem{Jadbabaie03-CoordinationGroupsMobileAutonomousAgents}
A.~{Jadbabaie}, J.~{Lin}, and S.~{Morse}, ``Coordination of groups of mobile
  autonomous agents using nearest neighbor rules,'' {\em \TAC}, vol.~48, no.~6,
  pp.~988--1001, 2003.

\bibitem{Boyd06-RandomizedGossipAlgorithms}
S.~{Boyd}, A.~{Ghosh}, B.~{Prabhakar}, and D.~{Shah}, ``Randomized gossip
  algorithms,'' {\em \TINFO}, vol.~52, no.~6, pp.~2508--2530, 2006.

\bibitem{OlfatiSaber07-ConsensusAndCooperationInNetworkMultiAgentSystems}
R.~{Olfati-Saber}, J.~{Fax}, and R.~{Murray}, ``Consensus and cooperation in
  networked multi-agent systems,'' {\em \ProcIEEE}, vol.~95, no.~1,
  pp.~215--233, 2007.

\bibitem{Moura10-GossipSurvey}
A.~{Dimakis}, S.~{Kar}, J.~M.~F. {Moura}, M.~{Rabbat}, and A.~{Scaglione},
  ``Gossip algorithms for distributed signal processing,'' {\em \ProcIEEE},
  vol.~98, no.~11, pp.~1847--1864, 2010.

\bibitem{Predd09-ACollaborativeTrainingAlgorithmForDistributedLearning}
J.~{Predd}, S.~{Kulkarni}, and V.~{Poor}, ``A collaborative training algorithm
  for distributed learning,'' {\em \TINFO}, vol.~55, no.~4, pp.~1856--1871,
  2009.

\bibitem{Candes06-CompressiveSampling}
E.~{Cand\`es}, ``Compressive sampling,'' pp.~1--20, European Math. Society,
  Proc. Inter. Congress of Mathematicians, Madrid, Spain, 2006.

\bibitem{Candes08-IntroductionToCompressiveSampling}
E.~{Cand\`es} and M.~{Wakin}, ``An introduction to compressive sampling,'' {\em
  \SigProcMagazine}, vol.~25, no.~2, pp.~21--30, 2008.

\bibitem{Baraniuk07-CompressiveSensing}
R.~{Baraniuk}, ``Compressive sensing,'' {\em \SigProcMagazine}, vol.~24, no.~4,
  pp.~118--121, 2007.

\bibitem{Bryan13-MakingDoWithLess}
K.~{Bryan} and T.~{Leise}, ``Making do with less: An introduction to compressed
  sensing,'' {\em SIAM Review}, vol.~55, no.~3, pp.~547--566, 2013.

\bibitem{Fuchs05-RecoveryExactSparseRepresentationsPresenceBoundedNoise}
J.~{Fuchs}, ``Recovery of exact sparse representations in the presence of
  bounded noise,'' {\em \TINFO}, vol.~51, no.~10, pp.~3601--3608, 2005.

\bibitem{Donoho06-StableRecoverySparseOvercompleteRepresentationsPresenceNoise}
D.~{Donoho}, M.~{Elad}, and V.~{Temlyakov}, ``Stable recovery of sparse
  overcomplete representations in the presence of noise,'' {\em \TINFO},
  vol.~52, no.~1, pp.~6--18, 2006.

\bibitem{Tropp06-JustRelax}
J.~{Tropp}, ``Just relax: Convex programming methods for identifying sparse
  signals,'' {\em \TINFO}, vol.~51, no.~3, pp.~1030--1051, 2006.

\bibitem{Natarajan95-SparseApproximateSolutionToLinearSystems}
B.~{Natarajan}, ``Sparse approximate solutions to linear systems,'' {\em SIAM
  J. Comput.}, vol.~24, no.~2, pp.~227--234, 1995.

\bibitem{Candes05-DecodingByLinearProgramming}
E.~{Cand\`es} and T.~{Tao}, ``Decoding by linear programming,'' {\em \TINFO},
  vol.~51, no.~12, pp.~4203--4215, 2005.

\bibitem{Rudelson08-SparseReconstructionFourierGaussianMeasurements}
M.~{Rudelson} and R.~{Vershynin}, ``On sparse reconstruction from
  \text{Fourier} and \text{Gaussian} measurements,'' {\em Communications on
  Pure and Applied Mathem.}, vol.~61, no.~8, pp.~1025--1045, 2008.

\bibitem{Candes08-RIPAndItsImplicationsForCompressedSensing}
E.~{Cand\`es}, ``The restricted isometry property and its implications for
  compressed sensing,'' {\em \ComptesRendus, S\'erie I}, no.~346, pp.~589--592,
  2008.

\bibitem{Baraniuk08-SimpleProofRIP}
R.~{Baraniuk}, M.~{Davenport}, R.~{DeVore}, and M.~{Wakin}, ``A simple proof of
  the restricted isometry property for random matrices,'' {\em Constructive
  Approximation}, vol.~28, no.~3, pp.~253--263, 2008.

\bibitem{Tropp10-BeyondNyquist}
J.~{Tropp}, J.~{Laska}, M.~{Duarte}, J.~{Romberg}, and R.~{Baraniuk}, ``Beyond
  \text{Nyquist}: Efficient sampling of sparse bandlimited signals,'' {\em
  \TINFO}, vol.~56, no.~1, pp.~520--544, 2010.

\bibitem{Nowak06-CompressiveWirelessSensing}
W.~{Bajwa}, J.~{Haupt}, A.~{Sayeed}, and R.~{Nowak}, ``Compressive wireless
  sensing,'' in {\em \IPSN}, pp.~134--142, 2006.

\bibitem{Rabbat08-CompressedSensingNetworkedData}
J.~{Haupt}, W.~{Bajwa}, M.~{Rabbat}, and R.~{Nowak}, ``Compressed sensing for
  networked data,'' {\em \SigProcMagazine}, vol.~25, no.~2, pp.~92--101, 2008.

\bibitem{Cevher08-DistributedTargetLocalizationSpatialSparsity}
V.~{Cevher}, M.~{Duarte}, and R.~{Baraniuk}, ``Distributed target localization
  via spatial sparsity,'' in {\em \Eusipco}, 2008.

\bibitem{Schmidt13-ScalableSensorNetworkFieldRobustBasisPursuit-thesis}
A.~{Schmidt}, {\em Scalable Sensor Network Field Reconstruction with Robust
  Basis Pursuit}.
\newblock PhD thesis, Carnegie Mellon University, 2013.

\bibitem{Romberg08-EfficientSeismicForwardModeling}
J.~{Romberg}, R.~{Neelamani}, C.~{Krohn}, J.~{Krebs}, M.~{Deffenbaugh}, and
  J.~{Anderson}, ``Efficient seismic forward modeling using simultaneous random
  sources and sparsity,'' in {\em Soc. Expl. Geophysicists Annual Meeting},
  pp.~2107--2110, 2008.

\bibitem{Fridlander07-ExactRegularizationCVXPrograms}
M.~{Friedlander} and P.~{Tseng}, ``Exact regularization of convex programs,''
  {\em SIAM J. Optim.}, vol.~18, no.~4, pp.~1326--1350, 2007.

\bibitem{Mangasarian79-NonlinearPerturbationLPs}
O.~{Mangasarian} and R.~{Meyer}, ``Nonlinear perturbation of linear programs,''
  {\em SIAM J. Contr. Optim.}, vol.~17, no.~6, pp.~745--752, 1979.

\bibitem{Friedlander06-ExactRegularizationLPs}
M.~{Friedlander}, ``Exact regularization of linear programs,'' tech. rep.,
  TR-2005-31, Dept. Computer Science, Univ. of British Columbia, 2006.

\bibitem{Erdos59-OnRandomGraphs}
P.~{Erd\H os} and A.~{R\'enyi}, ``On random graphs,'' {\em Publicationes
  Mathematicae}, vol.~6, pp.~290--297, 1959.

\bibitem{Watts98-CollectiveDynamicsSmallWorldNetworks}
D.~{Watts} and S.~{Strogatz}, ``Collective dynamics of 'small-world'
  networks,'' {\em Nature}, vol.~393, no.~6684, pp.~409--10, 1998.

\bibitem{Barabasi99-EmergenceOfScalingInRandomNetworks}
A.~{Barabasi} and R.~{Albert}, ``Emergence of scaling in random networks,''
  {\em Science}, vol.~286, pp.~509--512, 1999.

\bibitem{Penrose04-RandomGeometricGraphs}
M.~{Penrose}, {\em Random Geometric Graphs}.
\newblock Oxford University Press, 2004.

\bibitem{RossumPython}
G.~van Rossum {\em et~al.}, ``Python programming languange.''
  \url{http://www.python.org/}.

\bibitem{Hagberg08-NetworkX}
A.~{Hagberg}, D.~{Schult}, and P.~{Swart}, ``Exploring network structure,
  dynamics, and function using \text{NetworkX},'' in {\em Python Science
  Conference (SciPy)}, pp.~11--15, 2008.

\bibitem{Stein13-Sage}
W.~Stein {\em et~al.}, {\em {S}age {M}athematics {S}oftware ({V}ersion 5.8)}.
\newblock The Sage Development Team, 2013.
\newblock {http://www.sagemath.org}.

\bibitem{Nedic10-ConstrainedConsensusOptimizationMultiAgentNetworks}
A.~{Nedi\'c}, A.~{Ozdaglar}, and P.~{Parrilo}, ``Constrained consensus and
  optimization in multi-agent networks,'' {\em \TAC}, vol.~55, no.~4,
  pp.~922--938, 2010.

\bibitem{Friedlander08-ProbingParetofrontierBasisPursuit-spgl1}
E.~{Berg} and M.~{Friedlander}, ``Probing the {P}areto frontier for basis
  pursuit solutions,'' {\em SIAM J. Sci. Comput.}, vol.~31, no.~2,
  pp.~890--912, 2008.

\bibitem{Figueiredo07-GradientProjectionSparseReconstruction}
M.~{Figueiredo}, R.~{Nowak}, and S.~{Wright}, ``Gradient projection for sparse
  reconstruction: Application to compressed sensing and other inverse
  problems,'' {\em \SelectedTopicsSignalProcessing}, vol.~1, no.~4,
  pp.~586--597, 2007.

\bibitem{Friedlander07-Sparco}
E.~{Berg}, M.~{Friedlander}, G.~{Hennenfent}, F.~{Herrmann}, R.~{Saab}, and
  O.~{Yilmaz}, ``Sparco: a testing framework for sparse reconstruction,'' tech.
  rep., Dept. Computer Science, University of British Columbia, Vancouver,
  2007.

\bibitem{Raydan97-BarzilaiBorweinUnconstrainedMinimization}
M.~{Raydan}, ``The \text{Barzilai} and \text{Borwein} gradient method for the
  large scale unconstrained minimization problem,'' {\em SIAM J. Optim.},
  vol.~7, no.~1, pp.~26--33, 1997.

\bibitem{OptimizationToolboxMatlab}
{The Mathworks Inc.}, ``Optimization toolbox.''
  \url{http://www.mathworks.com/products/optimization/}, 2012.

\bibitem{UCIMachineLearningRepository}
A.~{Frank} and A.~{Asuncion}, {\em UCI Machine Learning Repository}.
\newblock University of California, School of Information and Computer Science,
  2010.

\bibitem{Mota13-DistributedOptimizationLocalDomains}
J.~{Mota}, J.~{Xavier}, P.~{Aguiar}, and M.~{P\"uschel}, ``Distributed
  optimization with local domains: Applications in \text{MPC} and network
  flows.'' submitted to \TAC, \Arxiv{http://arxiv.org/abs/1305.1885}, 2013.

\bibitem{Mota12-DistributedADMMForMPCAndCongestionControl-CDC}
J.~{Mota}, J.~{Xavier}, P.~{Aguiar}, and M.~{P\"uschel}, ``Distributed
  \text{ADMM} for model predictive control and congestion control,'' in {\em
  \CDC}, pp.~5110--5115, 2012.

\bibitem{Mota13-UnifiedAlgorithmicApproachDistributedOptimization}
J.~{Mota}, J.~{Xavier}, P.~{Aguiar}, and M.~{P\"uschel}, ``A unified
  algorithmic approach to distributed optimization.'' accepted at IEEE Global
  Conference on Signal and Information Processing (GlobalSIP), 2013.

\bibitem{Boyd04-DistributedOptimizationFormationFlight}
R.~{Raffard}, C.~{Tomlin}, and S.~{Boyd}, ``Distributed optimization for
  cooperative agents: Application to formation flight,'' in {\em \CDC},
  pp.~2453--2459, 2004.

\bibitem{Morosan10-BuildingTemperatureRegulationUsingDMPC}
P.~{Moro\c{s}an}, R.~{Bourdais}, D.~{Dumur}, and J.~{Buisson}, ``Building
  temperature regulation using a distributed model predictive control,'' {\em
  Energy and Buildings}, vol.~42, pp.~1445--1452, 2010.

\bibitem{Abur04-PowerSystemStateEstimation}
A.~{Abur} and A.~{Exp\'osito}, {\em Power System State Estimation}.
\newblock Marcel Dekker, 2004.

\bibitem{Giannakis13-MonitoringOptimizationPowerGrids}
G.~{Giannakis}, V.~{Kekatos}, N.~{Gatsis}, S.-J. {Kim}, H.~{Zhu}, and
  B.~{Wollenberg}, ``Monitoring and optimization for power grids: a signal
  processing perspective,'' {\em \SigProcMagazine}, vol.~30, no.~5,
  pp.~107--128, 2013.

\bibitem{Baker12-OptimalIntegrationIntermittent}
K.~{Baker}, G.~{Hug}, and X.~{Li}, ``Optimal integration of intermittent energy
  sources using distributed multi-step optimization,'' in {\em Power and Energy
  Soc. Gen. Meeting}, pp.~1--8, 2012.

\bibitem{Hug08-CoordinatedPowerFlowControlSecurityPowerSystems-thesis}
G.~{Hug}, {\em Coodinated Power Flow Control to Enhance Steady-State Security
  in Power Systems}.
\newblock PhD thesis, Swiss Federal Institue of Technology Zurich, 2008.

\bibitem{Garey77-ComplexityComputingSteinerMinimalTrees}
M.~{Garey}, R.~{Graham}, and D.~{Johnson}, ``The complexity of computing
  \text{Steiner} minimal trees,'' {\em SIAM J. Appl. Math.}, vol.~32, no.~4,
  pp.~835--859, 1997.

\bibitem{Williamson02-PrimalDualMethodApproximation}
D.~{Williamson}, ``The primal-dual method for approximating algorithms,'' {\em
  \MathematicalProgramming}, vol.~91, no.~B, pp.~447--478, 2002.

\bibitem{Goemans97-PrimalDualMethod}
M.~{Goemans} and D.~{Williamson}, {\em Approximation algorithms for
  \text{NP}-hard problems}, ch.~The primal-dual method for approximation
  algorithms and its application to network design problems.
\newblock PWS Publishing Company, 1997.

\bibitem{Robins00-ImprovedSteinerTreeApproximation}
G.~{Robins} and A.~{Zelikovsky}, ``Improved \text{Steiner} tree approximation
  in graphs,'' in {\em ACM-SIAM Symposium Discrete Algs.}, pp.~770--779, 2000.

\bibitem{Drummond09-DistributedDualAscentAlgorithmSteinerProblems}
L.~{Drummond}, M.~{Santos}, and E.~{Uchoa}, ``A distributed dual ascent
  algorithm for \text{Steiner} problems in multicast routing,'' {\em Networks,
  Wiley Periodicals}, vol.~53, no.~2, pp.~170--183, 2009.

\bibitem{Sadeh08-DistributedPrimalDualApproximationAlgorithmsNetworkDesign-thesis}
A.~{Sadeh}, ``Distributed primal-dual approximation algorithms for network
  design problems,'' Master's thesis, Dept. Mathematics and Computer Science,
  The Open University of Israel, 2008.

\bibitem{Vandenberghe11-TheProximalMapping-lecs}
L.~{Vandenberghe}, ``The proximal mapping,'' Spring 2011-12.
\newblock Lecture Notes, Optimization Methods for Large-Scale Systems
  (EE-236C), UCLA.

\bibitem{Birgin00-NonmonotoneSpectralGradient}
E.~{Birgin}, J.~{Martinez}, and M.~{Raydan}, ``Nonmonotone spectral projected
  gradient methods on convex sets,'' {\em SIAM J. Optim.}, vol.~10, no.~4,
  pp.~1196--1211, 2000.

\bibitem{CVXOPT}
M.~{Andersen}, J.~{Dahl}, and L.~{Vandenberghe}, ``\text{CVXOPT}.''
  \url{http://abel.ee.ucla.edu/cvxopt/index.html}, 2012.

\end{thebibliography}
	% =============================================================================

\end{document}